\documentclass[11pt]{amsart}
\usepackage{epsfig}
\usepackage{graphicx, subfigure}
\usepackage{amsmath, amssymb,mathabx,mathrsfs}
\usepackage{color}
\setcounter{secnumdepth}{4}
\setcounter{tocdepth}{2}
\usepackage[toc,page]{appendix}
\usepackage{comment}
\usepackage{hyperref}
\usepackage{tikz}
\usepackage{orcidlink}

\vfuzz2pt
\hfuzz2pt
\newtheorem{thm}{Theorem}[section]
\newtheorem{cor}[thm]{Corollary}
\newtheorem{lem}[thm]{Lemma}
\newtheorem{prop}[thm]{Proposition}
\newtheorem{ex}[thm]{Example}
\theoremstyle{definition}
\newtheorem{defn}[thm]{Definition}
\theoremstyle{remark}
\newtheorem{rem}[thm]{Remark}
\newtheorem{conjecture}[thm]{Conjecture}
\numberwithin{equation}{section}

\newcommand{\R}{\mathbb R}
\newcommand{\T}{\mathbb T}
\newcommand{\Z}{\mathbb Z}
\def\integer{{\mathbb Z}}
\newcommand{\N}{\mathbb N}
\newcommand{\eps}{\varepsilon}

\newcommand{\A}{\mathcal{A}}
\newcommand{\B}{\mathcal{B}}
\newcommand{\C}{\mathcal{C}}

\newcommand{\F}{\mathcal{F}}
\newcommand{\G}{\mathcal{G}}
\newcommand{\OO}{\mathcal{O}}

\newcommand{\U}{\mathcal{U}}

\def\torus{{\mathbb T}}
\def\real{{\mathbb R}}

\def\L{{\mathcal L}}

\def\Id{ {\rm Id}}

\def\cR{\mathcal{R}}

\begin{document}
\title[ ]
      {Geometric, topological and dynamical properties of conformally symplectic systems,
       normally hyperbolic invariant manifolds, and   scattering maps}
\author[M. Gidea]{Marian  Gidea  \orcidlink{0000-0002-4248-9890}}
\address{Yeshiva University, Department of Mathematical Sciences, New York, NY 10016, USA }
\email{Marian.Gidea@yu.edu}
\thanks{Research of M.G. was partially supported by the NSF grant  DMS-2307718. }
\author[R. de la Llave]{Rafael de la Llave \orcidlink{0000-0002-0286-6233}}
\address{School of Mathematics (Emeritus), Georgia Institute of Technology, Atlanta, GA 30332, USA}
\address{Yeshiva University, Department of Mathematical Sciences (affiliate), New York, NY 10016, USA }
\email{rafael.delallave@math.gatech.edu}
\author[T. M-Seara]{Tere M-Seara \orcidlink{0000-0001-8421-8717}}
\address{Departament de Matem\`{a}tiques, Universitat Polit\`{e}cnica de Catalunya, Diagonal 647, 08028 Barcelona, Spain, Centre de Recerca Matem\`{a}tica.}
\email{Tere.M-Seara@upc.edu}
\thanks{Research of T.M-Seara is supported by  the grants PID-2021-122954NB-100 and  PID2024-158570NB-I00, both
financed by  MCIN/AEI/ 10.13039/501100011033/ and by “ERDF A way of making Europe” and by
 the Spanish State Research Agency, through the Severo Ochoa and María de Maeztu Program for Centers and Units of Excellence in R\&D (CEX2020-
001084-M)}

\begin{abstract}

Conformally symplectic  diffeomorphisms $f:M \rightarrow M$ transform a symplectic form $\omega$ on a  manifold $M$ into a multiple of itself, $f^* \omega = \eta \omega$. They appear  naturally in applications. We assume $\omega$ is bounded, as some of the results in this paper may fail otherwise.

We show that there are deep interactions between the topological properties of the manifold, the dynamical properties of the map, and the geometry of invariant manifolds. When $\eta \ne 1$  the manifold $M$  cannot be closed, as the volume grows or decays under  iteration.

We show   that,  when the symplectic form is not exact, the possible conformal factors $\eta$ are related to topological properties of the  manifold. For some manifolds the conformal factors are restricted to be algebraic numbers.

We also find relations between dynamical properties (relations between  growth rate of vectors and $\eta$) and symplectic properties (whether $\omega$ vanishes or is non-degenerate on certain subspaces).

Normally hyperbolic invariant manifolds (NHIMs) and their (un)stable manifolds are important landmarks that organize long-term dynamical behaviour.
We prove that a NHIM is symplectic if and only if the rates satisfy certain pairing rules and if and only if the rates and the conformal
factor satisfy certain (natural) inequalities.

Homoclinic excursions to NHIMs, which are crucial for long-term dynamics -- particularly for Arnold diffusion -- are quantitatively described by scattering maps. These maps give the trajectory asymptotic in the future as a function of the trajectory asymptotic in the past. We prove that the scattering maps are symplectic even if the dynamics is dissipative. We also show that if the symplectic form is exact, then the scattering maps are exact,
even if the dynamics is not exact. We  give a variational interpretation of scattering maps in  the conformally symplectic setting.

We also show that similar properties of NHIMs and scattering maps hold in the case when $\omega$ is presymplectic.
In dynamical systems with many rates (e.g., quasi-integrable systems near multiple resonances), pre-symplectic geometries appear naturally.

\end{abstract}
\maketitle

\tableofcontents

\section{Introduction}\label{sec:introduction}

\subsection{Overview}

We study  interactions between topology, geometry, and dynamics in the context of conformally symplectic maps and their invariant manifolds.

\medskip

Given a symplectic
manifold $(M, \omega)$ of dimension  $d$, a diffeomorphism on $M$ is conformally
symplectic if there exists a conformal factor $\eta >0 $ such that
\begin{equation*}
\label{first-cs}
f^* \omega = \eta \omega .
\end{equation*}

Conformally symplectic maps and their invariant manifolds
present intrinsic mathematical  interest, e.g.,
\cite{AllaisA24,Banyaga02,Lee46,Vaisman75,Vaisman85}.
They also arise in  applications, such as mechanical systems
with friction proportional to velocity, e.g., \cite{CallejaCL22, MargheriOR14, BiascoCh09}, as Euler-Lagrange
equations of exponentially discounted
systems  -- which appear in financial models and control
theory -- e.g., \cite{Bensoussan88}, or  thermostat  systems, e.g.  \cite{wojtkowski1998conformally}

\medskip

The goal of this paper is to study the interactions between the topology of the manifold $M$, the dynamical properties of the map $f$, and the geometric structures that organize the dynamics.

The case $\eta = 1$  corresponds to symplectic maps.
The case $\eta \ne 1$ has some similarities with the symplectic case
but also important differences\footnote{
The limit $\eta \to 1$  is a singular limit, as the properties of the  system change dramatically.}.
Notably, the symplectic volume changes under iteration by a factor, $f^* \left(\omega^{ d/2}\right ) = \eta^{d/2} \left( \omega^{ d/2}\right)$.
So, the only invariant objects are of volume $0$ or $\infty$.
The volume $0$ case includes interesting objects such as
Birkhoff attractors \cite{arnaud2024higherdimensionalbirkhoffattractors},
however,  in this paper  we will concentrate on  invariant manifolds of infinite volume.
In the infinite-volume setting,   uniform boundedness properties of differentiable objects are not straightforward, and  they must be  carefully formulated and handled.

We will assume  that $\|\omega\|$ is bounded (a non-trivial
assumption in non-compact manifolds)
Indeed, we will show that some  of the  results in this paper  fail
for unbounded $\omega$ (see example \ref{ex:unbounded_symplectic_form}).

On the other hand, we do not need that the non-degeneracy properties of
$\omega$ are uniform (that is, we can allow that $|\omega^{ d/2}|$
is not equivalent to the Riemannian volume). Many of the results of the main Theorems \ref{thm:main1} and \ref{thm:main2}
(not the pairing rules obtained in Theorem \ref{thm:main1})  work even when $\omega$ is a presymplectic form (see Theorem \ref{mainpre}).

Many of the remarkable properties of symplectic dynamics extend, often with suitable modifications,
 to the conformally symplectic setting.  Examples include KAM
 \cite{CallejaCL13, CallejaCL22}, and, under convexity assumptions,
 Aubry-Mather theory
 \cite{maro2017aubry}   and Hamilton-Jacobi theory \cite{Gomes08}.
 On the other hand, new phenomena such as attractors appear for conformally
 symplectic systems. In conformally symplectic (but not symplectic)
 systems it is impossible
 to  have invariant manifolds finite non-zero volume.

 \bigskip
 A large part of this paper is devoted to the study of Normally Hyperbolic Invariant Manifolds (NHIMs) and their stable/unstable manifolds,
 which -- together with KAM theory -- are among the principal sources of invariant objects in symplectic dynamics.
 We study the properties of NHIMs, their stable and unstable manifolds and their homoclinic intersections,  in the conformally symplectic setting.

\medskip

 In this overview, we will present informally the main ideas and results in
 this paper. Precise formulations will require preliminary definitions
 undertaken in Section~\ref{sec:preliminaries}.

\subsection{Summary of the results}
We provide a road map for the
results in this paper.

 \subsubsection{Topology and conformally symplectic dynamics}

 \begin{itemize}
 \item We examine the role of exactness in conformally symplectic dynamics.

   In symplectic geometry, it makes a significant
   difference  when the symplectic form is exact, $\omega = d \alpha$,
   and the map preserves the action form $\alpha$  up to
   an exact differential. In conformally symplectic systems the analogue  is:
   \[
   f^* \alpha = \eta \alpha + dP^f,
   \]
   for some \emph{primitive function} $P^f$.

   \smallskip

    Notice that, even if we assume that $\omega$ is bounded, it is not natural -- and
   often impossible -- to assume $\alpha$ is bounded.  See Section~\ref{sec:unbounded_action}.
   In the conformally symplectic case, it happens often that
   a map is exact for some action forms but not for others (see
   Section~\ref{ex:standard_map}).

   This has relations with the de Rham cohomology of the manifold.
   For two
   action forms $\alpha,\tilde\alpha$ for $\omega$, we have $d(\alpha - \tilde \alpha) = 0$ but
   not necessarily $\tilde \alpha -  \alpha  = dG$.
   For any action form $\alpha$, $\alpha + dG$ is an action form too,
   and if $f$ is exact for $\alpha$, then $f$ is also  exact for $\tilde \alpha$.
   We call the addition of $dG$ to an action form a
   \emph{gauge transformation}.
   We  study systematically the
   effect of gauge transformations  on primitive functions.

 \item
   In some manifolds, there are relations between the algebraic topology
   and the conformal factors $\eta$. Consequently, on these manifolds,
   the possible values of $\eta$  for non-exact $\omega$
   are algebraic numbers.
   See Section~\ref{sec:ArnaudF}.
   Using these relations, in Section~\ref{sec:ArnaudF},
   we present an answer to a question raised in \cite{ArnaudF24}.
 \end{itemize}

\subsubsection{Vanishing lemmas}
The interaction between dynamics and geometry arises when we consider rates of growth of vectors
 under iteration of the map $f$.

The following is a description of the results in  Section~\ref{sec:vanishing_lemmas}:
\begin{itemize}
 \item
 Since
 \[
 \omega (x)(u,v) = \eta^{-n} \omega (f^n(x)) ( Df^n(x) u, Df^n(x)v),
 \]
 we see that
 \[
 |\omega (x) (u,v)|  \le \eta^{-n} \|\omega\| \| Df^n(x)u\| \|Df^n(x)v\|.
 \]
From this, we can derive vanishing lemmas stating that
$\omega$ vanishes on subbundles of tangent vectors whose growth rates satisfy certain relations with respect to $\eta$.

\item
   Conversely, if the restriction of the form is non-degenerate, the rates in different subbundles must be related in a way that prevents the vanishing of $\omega$.

   These allows us to prove  global versions  for NHIM of the
   widely studied \emph{pairing rules} \cite{Dessler88,dettmann1996proof,
     wojtkowski1998conformally} for periodic orbits of Lyapunov
   exponents.
 \item
   One consequence of the vanishing lemmas is that, in systems with many rates (e.g., quasi-integrable systems near multiple resonances), presymplectic geometry naturally arises (i.e., $\omega$ is closed but may be degenerate).
 \end{itemize}

\subsubsection{ NHIMs for conformally symplectic systems}

   We recall that a NHIM is an
   invariant manifold  such that there are gaps between the rates of growth
   of vectors tangent to the manifold  and the rates of growth in
     the stable and unstable bundles that span the normal
    bundle.  See \cite{Fenichel71,Fenichel74,Fenichel77,Pesin,BatesLZ08}, and also Appendix~\ref{sec:appendix_NHIM}.

  NHIMs   enjoy regularity properties,
   are persistent under perturbations and, more importantly for us,
   they have (un)stable manifolds
   foliated by strong (un)stable manifolds.

   The fact that the  invariant manifolds for conformally symplectic maps cannot be compact creates some technical
   subtleties in the  analysis of these objects. The results in this paper
   depend only on a few regularity properties that we have identified explicitly
   {\bf (H1)-(H4)}. Some details on the theory  of NHIMs  that puts these properties in
   a broader context are in Appendix~\ref{sec:appendix_NHIM}.

   It should be noted that the unboundedness of manifolds is not merely a technical inconvenience;
   it can give rise to new geometric phenomena, such as an NHIM folding into itself.
   See \cite[Example~3.8]{Eldering12},
   reproduced here in Figure~\ref{fig:manifold_no_unif_tubular}.
   To avoid such pathological examples, we have introduced an explicit assumption {\bf (U2)}
   that the NHIM has a uniform tubular neighborhood.

     \begin{itemize}
     \item
     The main result on NHIM is the following:

     \medskip

     \emph{A NHIM is symplectic if and only if either of the two holds:}
     \begin{itemize}
     \item
     the rates of vectors in the tangent space satisfy some
     pairing rules and the rates along the stable and unstable manifold
     also satisfy other pairing rules. See \eqref{eqn:pairing_rules}.
\item
The rates
 and the conformal factor satisfy certain (natural) inequalities (see \eqref{eqn:conformal_rates})
\end{itemize}
     \medskip

     See Theorem~\ref{thm:main1} and Corollary \ref{partial_converse}.
\medskip
   \item
     A
     consequence of the  NHIM being symplectic is that
     the (un)stable manifold is co-isotropic (hence presymplectic),
     and the kernel of
     $\omega_{\mid W^s_\Lambda}$ integrates to give the strong (un)stable foliation.

     This relation between
     dynamics and  presymplectic geometry  turns out to be crucial for  other subsequent results.
\end{itemize}

\medskip
     The  proof of the results in this paragraph  show that the non-degeneracy of the symplectic form on the NHIM implies pairing rules on the rates, and
     that   pairing rules on the rates imply vanishing lemmas for the
     strong stable and unstable foliations. Interestingly, starting from geometric assumptions, we obtain relations between rates that, in turn, yield new geometric conclusions.

     We also obtain results that  start with relations between rates  and pass through geometric structures to produce new relations between rates.
     See Section~\ref{sec:isotropic}.

\medskip

\subsubsection{Scattering maps for conformally symplectic systems}

It is well known that homoclinic excursions resulting from intersections of stable and unstable manifolds of a NHIM give rise to large-scale motions, symbolic dynamics, and other phenomena.
One of our main goals is to understand the symplectic geometry associated to these homoclinic intersections.

   An important tool in the study of homoclinic excursions to a NHIM  is the \emph{scattering map},
   introduced in \cite{DLS00}.

   We consider a class of homoclinic excursions given by a transversal
   intersection of stable/unstable manifolds (the precise conditions
   are given in Definition \ref{def:channel} in Section~\ref{sec:SM}).
   Given a homoclinic excursion $\{f^n(x)\}_{n \in \mathbb{Z}}$
   to a NHIM $\Lambda$, there are unique points $x_+, x_- \in \Lambda$ such
   that
   \[
   d( f^n(x), f^n( x_\pm) ) \le C e^{-\lambda |n|} \quad \textrm{as} \quad n \to \pm \infty,
   \]
   where $\lambda$ is strictly bigger than the rates in
   the tangent space to the NHIM.

  The scattering map $S$ is defined by $S(x_-) = x_+$.
  As  the homoclinic orbit moves along a transversal intersection,
  the scattering map is  defined in an open set.
  See Figure~\ref{fig:scattering_map} for a pictorial
  representation.

     The scattering map is
     a very useful tool to understand long range motions and instability.
     Most of the applications of the scattering map in the instability problem
     have been for symplectic dynamics,  since it gives
     a global way to connect invariant objects such
     as whiskered tori of (possibly) different topology or dimension
     \cite{DelshamsLS06a,zbMATH06567884}.
     There are also results using the scattering map in
     systems with dissipation \cite{Granados17,GideaLM22,AkingbadeGS23}.

     The scattering map is not itself part of the dynamics; rather, it should be regarded as a comparison between the dynamics restricted to
     the NHIM and the dynamics along the homoclinic excursion.  Nevertheless,
     a surprising result
     \cite{GideaLS20a, GideaLS20} shows that iterations of the  scattering map
     interspersed with  iterations of the  dynamics restricted to the NHIM corresponds to
     true orbits of the map. The basic results of the previous two papers apply to general systems, including conformally symplectic systems.
     See also \cite{DelshamsGR12}.

     Geometric properties of the scattering map in the symplectic case
     were systematically studied in \cite{DLS08}. In this
     paper we  develop  similar results
     for conformally symplectic systems, but we encounter significant differences.

\begin{itemize}

   \item
     The anchor result in this paper is:

\medskip

     \emph{The scattering map  for conformally symplectic maps is symplectic.}

\medskip

      See Theorem~\ref{thm:main2} (A version of this result in the presymplectic case is Theorem~\ref{mainpre}).

     This result is surprising because the dynamics itself could be dissipative;
     nevertheless, the fact is that the scattering map is a comparison
     among different dynamics leads to the cancellation  of the effects of the dissipation,  which are of geometric nature.

     We present seven different proofs of the symplecticity of
     the scattering map.

     The variety of arguments
     allows to extend the result to other models
     that have appeared in
     the literature. See Appendix~\ref{othermodels}.

     \item
       Another result owed to cancellations is:

     If the symplectic form is exact, $\omega = d \alpha$, then
     the scattering map is exact.

     Notice that this result does not use that the map is exact. Again, this shows
     that the scattering map may enjoy properties that the original dynamics does not have.

     We  present two different proofs. They depend crucially on
     the presymplectic nature of $W^s_\Lambda$.

   \item
     If moreover, the map $f$ is exact, we derive formulas for the primitive
     function of the scattering map. See Section~\ref{sec:primitive_series}.

     The formulas consist of a finite series plus an integral term as the remainder.
     Similar formulas were known for symplectic twist maps, and  used
     for numerical computations \cite{Tabacman95}.

     The theory behind these formulas, in the case of unbounded manifolds, presents some notable differences from that of bounded manifolds, which are commonly considered in symplectic systems. As we pointed out,
     there are different action forms corresponding to gauge transformations.
    The gauges chosen can affect the convergence of the series.

    When either the orbit asymptotic in the future or the orbit
    asymptotic in the past
     escape to $\infty$ (i.e., the orbit leaves any compact set after a finite
     number of steps), we show that there are (rather explicit)
     gauges where the series converges, and  there are also
     gauges that make  the series  divergent.

   \item
     We provide a variational interpretation of the primitive function of the scattering map.
     We remark that, under certain convexity assumptions and provided the Legendre transform can be implemented, the primitive function of the scattering map coincides with the renormalized variational principle used to construct connecting orbits. \cite{Rabinowitz08,Bessi96}.
\end{itemize}

   \subsection{Organization of the paper}

   In Section~\ref{sec:preliminaries},
   we give careful presentations of several standard concepts.
   We expect that most readers will be familiar with some versions of these concepts.
   However, since we cross category boundaries and deal with technicalities such as finite differentiability on unbounded manifolds, certain subtleties must be explicitly addressed. This section could be skipped and used as a reference.

   The precise formulation of the results are in Sections~\ref{sec:mainresults} and ~\ref{sec:topological}.

   Section~\ref{sec:examples} provides several examples that illustrate the phenomena and serve as motivation for the results. It also contain examples which show that some hypotheses are necessary in the main theorems of sections ~\ref{sec:mainresults} and ~\ref{sec:topological}.
    The examples do not enter into the formulation of results or the proofs.

   The proofs of the main results
   are contained in the following sections. Notably, Section~\ref{sec:vanishing_lemmas}
 includes statements and proofs of vanishing lemmas which
 will be a basic tool for other results.
 The proofs of the main results Theorem~\ref{thm:main1} and
 Theorem~\ref{thm:main2} are in Section~\ref{sec:proofAmain1total}
 and in Section~\ref{sec:proof_main_2}.

Section~\ref{sec:primitive_series} studies the
primitive function of the scattering map and provides formulas for it when the map $f$ is exact conformally symplectic.

The short Section
\ref{sec:proofs_presymplectic},  which provides the proof of theorem \ref{mainpre}, checks that   the proofs for some of the results for conformally symplectic maps   also apply to
presymplectic  maps.

In Appendix~\ref{sec:appendix_NHIM}, we  collect without proofs
some of the results on invariant objects for  NHIMs.
In Appendix ~\ref{sec:NHIM_results}, we   formulate  and prove   several
results on hyperbolic bundles. Two are  elementary results,
Lemma~\ref{lem:rates_inverses}, \ref{rem:ratesplusminus},
about forward and backward rates. Lemma~\ref{lem:globalrates}
is a delicate perturbative
result about rates of cocycles.
In Appendix~\ref{sec:fiber_contraction}, we present a detailed
proof with explicit constants of a result
sometimes called the  Fiber Contraction Theorem or the Inclination Lemma.
Some versions of this result are known in
the  theory of NHIMs (without
explicit constants, or with compactness assumptions).
In Appendix~\ref{othermodels} we describe other relevant models that appeared in
the recent literature, and sketch how several of the results here can be adapted.
We hope that  new results can be obtained for these and other
models.

\section{Preliminaries}
\label{sec:preliminaries}
In this section, we recall some standard definitions and tools to set the
notation.
This section  can be skipped and referred to as needed.  This paper involves
connections between different categories, so some precise definitions are
needed.

\subsection{Some standard notions in differential geometry}
\label{sec:prelimgeom}

\subsubsection{Riemannian manifolds, differentiable maps and forms}

Throughout this paper, we assume that $M$ is  a $d$-dimensional, orientable,  connected,
$r$ times differentiable Riemannian manifold, with $r\ge r_0$ for some $r_0$ sufficiently large.
We also assume that the Riemannian metric is $r$ times differentiable with
the derivatives uniformly bounded (it may be more convenient
assuming  that the metric is $r+1$ times differentiable, see
{\bf (U1')}).

Since the goal is dealing with conformally symplectic systems,
this will require that the manifold $M$ is unbounded,
hence, we will need to make explicit uniformity assumptions for the size
and regularity of objects.
Later, in Section~\ref{sec:NHIM},  when we consider normally hyperbolic invariant manifolds (NHIMs)  contained in $M$, we will need to deal with finite differentiability. (Even if the system is infinitely differentiable or analytic,
 the NHIM could be  finitely differentiable even if it is compact.
See Example \ref{regularitylimited}. The regularity of the NHIM
and several other objects associated to it is limited by  formulas
involving the hyperbolicity rates).

\begin{defn}\label{def:Cr}
Given an open set $\mathcal{O} \subset M$,
we say that map  $f:\mathcal{O}\to M$ is of class $\C^r$ if it has
\emph{continuous and uniformly bounded derivatives} of order $1, \ldots, r$.

Similarly, we say that a form, or a vector field, or other
object    is $\C^r$ if
the object and its derivatives up to order $r$ are
\emph{continuous and uniformly bounded}.
\end{defn}

The Definition~\ref{def:Cr} is different from other notions
of $C^r$-differentiable maps on non-compact manifolds.
For example, it is different from differentiability in the sense of
the Whitney. In this paper, Definition~\ref{def:Cr}
is the only notion of differentiability used.

The sets of $\C^r$  vector fields,  forms, and other objects with a linear structure, have a $\C^r$ norm given by the supremum
of the size of the objects and their derivatives, which makes them into
a Banach space.
Under suitable uniformity assumptions (see \eqref{U1}), the set of $\C^r$ diffeomorphisms is a Banach manifold \cite{Banyaga97}.

\subsubsection{Distances and regularity of   manifolds}
A  Riemannian manifold $M$ is said to be bounded if it has finite diameter, i.e., $\sup_{x,y\in M} d(x,y)<\infty$, where $d$ is the Riemannian distance function.
As mentioned earlier, in this paper we will consider unbounded manifolds.

The following definitions are standard, but we record them
here since there are subtle. See also
\cite{PughSW97}.

\begin{defn}\label{distance}
  We define the $\C^0$  distance among two submanifolds  of a common
  Riemannian manifold,   $N_1, N_2 \subset M$, as the Hausdorff distance:
  \[
  d_{\C^0}( N_1, N_2) =  \sup_{x_1 \in N_1} \inf_{x_2 \in N_2} d(x_1, x_2)
  +\sup_{x_2 \in N_2} \inf_{x_1 \in N_1} d(x_1, x_2) .
  \]

  For $\C^1$ manifolds, we define:
  $d_{\C^1}( N_1, N_2) = d_{C^0} (T N_1, T N_2) $
  where $TN$ is the tangent bundle of $N$.
  We recall that, if $M$ is a Riemannian manifold,
  we can define a natural metric in $TM$.

    Analogously, we define higher differentiable distances
    by
    $d_{\C^r}(N_1, N_2) = d_{C^{r-1}}( TN_1, TN_2)$.

    We also define the $\C^r$ distance among maps
    as the  $\C^r$ distance among their graphs.  For
    diffeomorphisms, the distance among diffeomorphisms
    is the maximum of the distance between the maps
    themselves and the distance among the inverses.
\end{defn}

In this paper, we will need to study some foliations of
a manifold  by submanifolds.
  Foliations have regularity characterized by two
  numbers. One of them is the regularity of the leaves
  and another one is the dependence of the  leaves on
  the base point along a  transversal.   We use the definition of \cite{HPS,PughSW97}.

  \begin{defn} \label{Ctildemm}
    We say that a foliation is of class $\C^{\tilde m, m}$
    when the leaves are uniformly $\C^m$ manifolds
    and the dependence of the leaves in the $\C^m$ topology on the base point   is locally $\C^{\tilde m}$.
  \end{defn}

  Some formulations in terms of coordinates
  appear in \eqref{eqn:mixed partials reg 1}
  and \eqref{eqn:mixed partials reg m}.

\subsubsection{Exponentials and connectors}\label{sec:metrics}

We think of the geodesic flow as a second order equation on
 the manifold $M$.

Given $x \in M$, $v \in T_xM$, the exponential mapping
$\exp_x(v)$ is defined as the point in the manifold which
is  the solution
at time $1$  of the geodesic flow with initial point $x$ and
initial velocity $v$. So, we can think of the exponential
mapping at $x$ as a mapping from a neighborhood of $0 \in T_xM$
to a neighborhood of $x$ in $M$,

It is well known \cite[Prop 4.5.2]{BurnsG05}
that on a ball $B_{\rho_x}(0)\subseteq T_xM$  of radius $0 < \rho_x \ll 1$,
the exponential mapping is a diffeomorphism and induces a system of smooth coordinates on the neighborhood $\exp_x(B_{\rho_x}(0))$ of $x$ in $M$,
referred to as geodesic coordinates.
For any $x \in M$, the supremum of all such radii  $\rho_x$ is called the injectivity radius.

If the metric is $\C^2$, it follows
$\exp_x(B_{\rho_x/2}(0))$  contains a ball in $M$
centered in $x$ and with radius bounded from below.

If $N \subset M$ is  a $\C^2$ submanifold of $M$, we can restrict the metric
of $M$ to $N$ and use the exponential mapping in $N$ using the
geodesic flow in $N$. The
geodesic flow  for the metric on $M$ with initial velocity in $TN$,
in general, does not map to
$N$.

The exponential mapping can also be  used to identify
neighboring tangent spaces.
If $\exp_x(v) = y$, consider the linear map
\begin{equation}
\label{eqn:connectors}
S_x^y:=D \exp_x(v): T_x M \to T_y M .
\end{equation}
If $x,y$ are sufficiently close (depending only on
derivatives of the metric) then $S_x^y$  is invertible,
and we think of it as providing an identification of the two
tangent spaces.  The maps $S_x^y$ are called
\emph{connectors} in \cite{HirschPPS69}, where they are used  in applications to hyperbolic systems.

\subsubsection{Pullback operator and cells}

We  recall  the following standard definition of pullback.

If $f:M\to M$ is a $\C^r$-diffeomorphism and  $\beta$ is a $\C^r$-differentiable  $k$-form on $M$,
the pullback of $\beta$ is given  by
\begin{equation}\label{eq:pullback}
(f^*\beta)(z)(v_1,\ldots,v_k)=\beta(f(z))(Df_z(v_1),\ldots,Df_z(v_k))
\end{equation}
for  $z \in M $ and $v_1,\ldots,v_k\in T_zM$.

A $k$-cell $\sigma$ on $M$ is the image  of $[0,1]^k$ through an orientation preserving,  $\C^1$-embedding $\sigma: [0,1]^k \to M$.
(We use the same notation for the mapping  and its image. )
A  choice of orientation on  $[0,1]^k$ induces an orientation on the $k$-cell  via $\sigma$.

We will often use an equivalent definition of the pullback of $\beta$ in integral form:
\[
\int_\sigma f^* \beta = \int_{f(\sigma)}\beta, \quad  \forall \  k-\text{cell} \ \sigma.
\]

\subsection{A standing assumption on the manifold}

Throughout the paper we will assume:

 \begin{flalign}\label{U1}\tag{\bf{U1}}
&\textrm{On the manifold $M$, there exists  a uniform $\C^r$ system of}&\\ \nonumber &\textrm{coordinates.}&
\end{flalign}

That is, there exist $\rho>0$ such that for every $x \in M$,  there is a $\C^r$-coordinate system on the  ball $B_\rho(x)\subseteq M$.
The system of coordinates
is uniform in the sense that the coordinate map that  takes the balls $B_\rho(x)\subset M$
onto the ball $B_1(0)\subset \real^d$, as well as the inverse coordinate maps, have  $\C^r$ norms that are bounded uniformly.

\begin{rem}
Assumption \eqref{U1}, under the previous hypothesis that the metric is uniformly $\C^r$ with bounded derivatives, implies that the Riemannian metric on $M$ is complete.

Indeed, let $\gamma$ be an arbitrary geodesic with unit speed.
At any time $t$,  the geodesic has  initial condition $(\gamma(t), \dot\gamma(t))$,  where$\|\dot\gamma(t)\| =1$. Since there is a ball of radius independent
of the point inside of the manifold, the geodesic can be prolonged
by an amount of time independent of the point.
\end{rem}

\smallskip

Using the exponential mapping we can give a concrete
construction of the coordinate systems assumed  in
the standing hypothesis \eqref{U1}.

The following assumption implies \eqref{U1}:
\begin{flalign}
& \textrm{Assume} &
\label{U1'}{\tag {\bf U1'}}
\end{flalign}
\begin{itemize}
\item
The metric $g$ on the manifold $M$  is $\C^{r+1}$;
\item
The metric $g$ on the manifold $M$ has an injectivity
 radius bounded from below away from $0$.
\end{itemize}

\begin{rem}
The fact that \eqref{U1'}
implies \eqref{U1} is standard.
  Since the injectivity radius is bounded
  away from zero implies that the size of
  the neighborhoods covered by the exponential mapping is
  bounded uniformly away from zero.
  The assumption on the metric $g$ to be $\C^{r+1}$ is
  made to obtain that the system of geodesic coordinates is $\C^r$.

Hypothesis {\bf (U1')} is sufficient but not
necessary to obtain {\bf (U1)}.
To obtain that the geodesic
flow (and the exponential mapping) are $\C^r$,
one does not need to assume that all the derivatives of $g$ are bounded.
It suffices to assume only  that the derivatives of the Riemann tensor are bounded.
This property of  a manifold is referred to as having \emph{bounded geometry}
\cite{Cheeger1982finite, Eldering12}.
\end{rem}

\subsection{Symplectic and  presymplectic forms}
\label{sec:symplectic_presymplectic}

\begin{defn}\label{defn:symplectic}
A $2$-form on   a $\C^r$-manifold $M$ of even dimension $d$
\[
\omega: T\, M\times  TM \to \R
\]
is a symplectic form if it is closed, i.e., $d\omega = 0$,
and non-degenerate, i.e., $\iota_v\omega=\omega(v,\cdot)= 0 \implies v = 0$.
\end{defn}

The form $\omega$ is exact if $\omega=d\alpha$ for some
$1$-form\footnote{We refer to $\alpha$ as an action form for $\omega$
following
\cite{Haro00}, but other names are used in the literature
such as \emph{symplectic potential}, \emph{Liouville form}, or \emph{primitive form}.}
$\alpha$ (not necessarily unique).

Throughout the paper, for  a submanifold  $N \subset M$ we denote the form
$\omega _{\mid N}$ as acting in $TN$, that is:
\[
\omega_{\mid N}: TN\times TN \to \R.
\]

\begin{defn}\label{defn:isotropic }
A submanifold $L\subset M$ is said to be  isotropic if $\omega_{\mid L} = 0$,
that is, for each $y\in L$, $T_yL\subseteq T_yL^\omega$,
where
$T_yL^\omega=\{v\in T_yM\,|\, \omega(y)(v,u)=0,\, \forall u\in T_yL\}$ is the symplectic orthogonal of $T_yL$.
A submanifold $L\subset M$ is said to be  coisotropic if  for each $y\in L$, $T_yL^\omega\subseteq T_yL$.
A  submanifold $L \subset M$  is Lagrangian if it is both isotropic and coisotropic, or, equivalently,
 $\omega_{\mid L} = 0$ and
$\textrm{dim}(L) = \textrm{dim}(M)/2$. (See, e.g., \cite{Weinstein71,Weinstein73}.)
\end{defn}

We will also pay attention to forms
that are degenerate and have constant rank (dimension of the kernel)
\cite{Souriau97}.

\begin{defn}\label{defn:presymplectic}
  A $2$-form  $\omega$ on a $\C^r$-manifold $M$ (not necessarily even dimensional)  is presymplectic when
   $d\omega  = 0$ (but $\omega$ may be degenerate).
\end{defn}

The degeneracy of the form can be characterized by its
kernel
\[
K_x(\omega)  = \{v \in T_x M \,| \,  \iota_v(\omega) =0\}.
\]
The form $\omega$ is non-degenerate at $x$ iff $K_x(\omega)  = \{0\}$, in which case the manifold must be even dimensional.

In some works \cite{Souriau97,LibermannM87},  the definition of presymplectic forms  also includes the constant rank condition that $\dim K_x(\omega)=$ const. on open sets.
In general,  the constant rank is not an open condition, since a  small perturbation of the form may decrease the dimension of the kernel.

\begin{rem}
In this paper,  presymplectic forms appear as restrictions of
symplectic forms to invariant manifolds with some rates.
In this case, the constant rank of the kernel
is a consequence of the rate conditions.
Since the kernels are related to the rates, they appear naturally
and are stable under perturbations.
In the opposite direction,  the presymplectic structure
shows that the distributions of vectors having these rates
integrate to a foliation, which is not expected for general distributions.
\end{rem}

\subsection{Conformally symplectic maps}\label{sec:conf_symp}

\begin{defn}\label{def:conformally_symplectic}
A $\C^r$-map  $f: M \to M$
is   conformally symplectic if
\begin{equation}\label{eqn:conformally_symplectic}
  (f^*\omega)(x)=\eta\cdot\omega(x),\textrm { for all } x\in M,
\end{equation}
for some number  $\eta>0$.

The definition of a  conformally symplectic map between two different symplectic manifolds is analogous.
\end{defn}

We will refer to $\eta$ as the conformal factor.

The condition~\eqref{eqn:conformally_symplectic} is equivalent to:
\[
\int_\sigma f^* \omega
= \eta \int_{\sigma} \omega,\,\forall \textrm{ 2-cell } \sigma.
\]

\begin{rem}\label{rem:conformal_factor_constant}
  It is well known(\cite{LibermannM87,Banyaga02}) that
  if $\omega$ is a two form of rank greater or equal  than $4$,
  then, if, for a $1$-form $\beta$,
  we have $\beta \wedge \omega = 0$, we
  conclude $\beta = 0$.

  A consequence is that,
  if ${\rm dim}(M) \ge 4$,  $\omega$ is symplectic,
  $f^* \omega = \eta \omega$ for some function $\eta$,
  implies that  $\eta$ is a constant.

  Applying $d$ to \eqref{eqn:conformally_symplectic},  $0=f^*(d\omega)=d(f^*\omega)=d\eta\wedge \omega$.
  Hence $d\eta = 0$.
\end{rem}

When $\eta=1$ a system satisfying \eqref{eqn:conformally_symplectic} is symplectic, so the results in this paper (that do not include explicitly $\eta \ne 1$) imply the results in \cite{DLS08}.
When $\eta\neq 1$,  the symplectic  volume contracts or expands, and, therefore, invariant manifolds have volume zero or infinite.
The interesting case is when the symplectic  volume is infinite.
In many mechanical applications,
the physical friction satisfies
$\eta<1$, so we will take this as the default case.

\begin{rem}\label{rem:manifold_unbounded}
If $\|\omega\|$ is bounded (which we will assume in \eqref{eqn:bounded_omega}), the fact that a manifold $M$ has infinite symplectic  volume implies that it is unbounded.
 Indeed,  the symplectic volume and Riemannian volume satisfy
$ \int_\sigma\omega^{d/2} \le  C \|\omega\|^{d/2} \int_\sigma d\textrm{Vol}$
for any $d$-dimensional cell $\sigma \subseteq M$,  where $\textrm{Vol}$ is the Riemannian volume and $C>0$.
Therefore, the fact that the symplectic volume is infinite implies that so is  the Riemannian volume.
Then the diameter has to be infinite too.
\end{rem}

Dealing with unbounded manifolds, we will make explicit assumptions on the uniformity of the objects considered.
Particularly, in Section \ref{sec:uniformity}, we will assume that $\omega$ is uniformly bounded in $\C^0$, and
in sections \ref{proofnonclosed} and \ref{sec:iteration} we will assume that $\omega$ is uniformly bounded in $\C^1$.
We will also assume uniform bounds on the derivatives of the map, etc.

\subsection{Exact conformally symplectic maps}

If the symplectic form is exact,  i.e., $\omega=d\alpha$,   then \eqref{eqn:conformally_symplectic}
is equivalent to
\[
d(f^*\alpha-\eta\alpha)=0,
\]
Therefore,
\begin{equation*}\label{conf-symplectic}
 f^* \alpha = \eta \alpha + \beta \textrm{ for some $1$-form $\beta$ with } d\beta  = 0 .
\end{equation*}

In many cases of interest, there are topological
reasons why the symplectic form $\omega$  has  to be
exact; see \cite{ArnaudF24} and Section  \ref{sec:exactness}.

We say that a   map is exact conformally symplectic for the action form $\alpha$ when
the form $\beta$ above is not only closed but also exact. That is:
\begin{equation}\label{eqn:exact_conformally_symplectic}
  f^*\alpha=\eta\alpha+dP^f_\alpha
\end{equation}
for some function $P^f_\alpha:M\to\mathbb{R}$, called the primitive function of $f$.

If a conformally symplectic map has a primitive function, it has
practical consequences.
For instance, under twist conditions, a primitive function
leads  to generating functions and, therefore, variational principles for orbits.
The conformally symplectic systems
have variational principles very similar
to those of the symplectic systems but involving
\emph{discounts}. These discounted variational principles
have a direct interpretation  in finance.  See
Section~\ref{sec:variational}.

\begin{lem} \label{exact_composition}
  Let $(M_i, \omega_i = d \alpha_i)$, $i = 1,2,3$
  be exact symplectic manifolds:

  Let $g:M_1 \rightarrow M_2$,
  $f:M_2 \rightarrow M_3$
  be exact conformally symplectic with respect to
  the corresponding forms:
  \[
  \begin{split}
    & g^* \alpha_2 = \eta_g \alpha_1 + dP^g,\\
    & f^* \alpha_3 = \eta_f \alpha_2 + dP^f .\\
  \end{split}
  \]

  Then, $f\circ g$ is exact conformally symplectic and the primitive of $f\circ g$ is given by:
  \begin{equation}\label{eqn:primitivefg}
  P^{f\circ g}=\eta_f  P^g + P^f\circ g .
  \end{equation}
\end{lem}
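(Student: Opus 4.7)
The proof is a direct computation exploiting the functoriality of the pullback together with the fact that pullback commutes with the exterior derivative. The plan is to compute $(f\circ g)^*\alpha_3$ in two ways using the identity $(f\circ g)^* = g^*\circ f^*$, substitute the hypotheses, and read off both the conformal factor and the primitive function.

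First, I apply $g^*$ to the identity $f^*\alpha_3 = \eta_f \alpha_2 + dP^f$. By the functoriality of pullback,
\[
(f\circ g)^*\alpha_3 \;=\; g^*(f^*\alpha_3) \;=\; g^*(\eta_f\alpha_2 + dP^f) \;=\; \eta_f\, g^*\alpha_2 + g^*(dP^f).
\]
Here I use linearity of $g^*$ (with $\eta_f$ a constant, which is automatic when $\dim M_2 \ge 4$ by Remark~\ref{rem:conformal_factor_constant}) and the fact that pullback commutes with $d$, so $g^*(dP^f) = d(g^*P^f) = d(P^f\circ g)$ since the pullback of a function is simply composition.

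Next, I substitute the hypothesis $g^*\alpha_2 = \eta_g\alpha_1 + dP^g$ into the preceding identity:
\[
(f\circ g)^*\alpha_3 \;=\; \eta_f\bigl(\eta_g\alpha_1 + dP^g\bigr) + d(P^f\circ g) \;=\; (\eta_f\eta_g)\,\alpha_1 + d\bigl(\eta_f P^g + P^f\circ g\bigr).
\]
This shows at once that $f\circ g$ is exact conformally symplectic with respect to $\alpha_1$ and $\alpha_3$, with conformal factor $\eta_{f\circ g} = \eta_f\eta_g$ and primitive function $P^{f\circ g} = \eta_f P^g + P^f\circ g$, which is exactly \eqref{eqn:primitivefg}.

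There is really no hard step; the only thing to double-check is that the constants $\eta_f,\eta_g$ can indeed be pulled outside the $d$ operator (which requires $\eta_f$ constant, guaranteed by Remark~\ref{rem:conformal_factor_constant} under the standing dimension assumption) and that $g^*P^f = P^f\circ g$, which is the definition of pullback on $0$-forms. The only mild caveat worth noting in the write-up is that the formula shows the primitive picks up a \emph{weighting} by $\eta_f$ on the inner factor, which is the conformally symplectic analogue of the symplectic composition rule ($\eta_f = 1$) and will be important later when iterating $f$ to track how primitives accumulate geometric series in $\eta$.
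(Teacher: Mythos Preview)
Your proof is correct and follows exactly the same computation as the paper's: apply $(f\circ g)^* = g^*f^*$, substitute the two hypotheses in turn, and read off the conformal factor and primitive. The appeal to Remark~\ref{rem:conformal_factor_constant} is unnecessary here since the hypotheses already present $\eta_f,\eta_g$ as constants, but this does not affect the argument.
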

\begin{proof}
The proof is an straightforward calculation.
  \begin{equation}\label{eqn:exact_composition}
  \begin{split}
    (f \circ g)^* \alpha_3  &= g^* f^* \alpha_3  = g^*( \eta_f \alpha_2  + d P^f  )=
    \eta_f  ( \eta_g  \alpha_1  +  dP^g ) + (d g)^* P^f \\
    &= \eta_f \cdot \eta_g  \alpha_1 + d( \eta_f  P^g + P^f\circ g) .
\end{split}
  \end{equation}
\end{proof}

The same arguments also show that the inverse of a conformally symplectic exact
map $f$ is also exact with primitive:
  \begin{equation}\label{eqn:primitiveinverse}
  P^{f^{-1}}=-\frac{1}{\eta_f}   P^f\circ f^{-1} .
  \end{equation}

\subsubsection{Gauge transformations}\label{sec:gauge}
The action form $\alpha$ for $\omega$ is non-unique and
the notion of exactness may depend on the form $\alpha$ considered.
See   Example~\ref{ex:standard_map}, which shows that a map may be exact
for some $\alpha$  but not for others.

In general, if a map
is exact for $\alpha$, it will also be exact under the \emph{gauge transformation} of the action form
\footnote{In electromagnetism, where the electromagnetic field
is an exact 2-form exterior derivative of an electromagnetic
1-form (called vector potential),  one often
uses adding gradient
to the vector potential \cite{Thirring, Zangwill}. The term ``gauge" was introduced in \cite{Weyl51} for electromagnetic vector potentials }
\begin{equation}\label{eqn:tilde_alpha}
 \tilde \alpha = \alpha + dG
\end{equation}
for any globally defined function $G$.
Gauge transformations
are changes
to the action form that do not change its cohomology class.

The  gauge transformation \eqref{eqn:tilde_alpha}
induces a change of the primitive function of $f$.
For the form $\tilde \alpha$ in \eqref{eqn:tilde_alpha}, we have
\begin{equation}\label{eqn:Pf_gauge}
P^{f}_{\tilde \alpha} = P^{f}_{\alpha} + G\circ f - \eta G.
\end{equation}

%

\begin{rem}
The theory of exactness and gauge transformations
for maps from one manifold to another has some differences
from the theory  for maps from a manifold to itself.
Since the  action forms in the domain and the range are different,
the theory of maps to different manifolds has more flexibility that
the theory of self-maps.

Let $(M_i,\omega_i)$, $i=1,2$, be symplectic manifolds and $f:M_1\to M_2$   be a conformally symplectic map, that is,
$f^*\omega_2 = \eta \omega_1$.

If $M_2$ is exact symplectic with $\omega_2=d\alpha_2$, then $d(f^*\alpha_2)=\eta\omega_1$, so $M_1$ is also exact symplectic for the action form $\alpha_1=\frac{1}{\eta}f^*\alpha_2$,  and $f$ is exact  with primitive $P_{\alpha_1,\alpha_2}^f=0$.

The condition that $f$ is exact for $\alpha_1$ and $\alpha_2$ is that $f^*\alpha_2=\eta\alpha_1+dP^f_{\alpha_1,\alpha_2}$.

If we change $\alpha_i$ to $\alpha_i + dG_i$, $i = 1,2$, we see that
 $P^{f}_{\alpha_1+dG_1,\alpha_2+dG_2} = P^{f}_{\alpha_1,\alpha_2} + G_2\circ f - \eta G_1$.
Hence, we can make any primitive to be zero, by gauge
transformations either  in the domain  or in  the range.

In the case that $M_2 \subset M_1$, it is natural to
consider that the forms in $M_2$ are restrictions of those in $M_1$
and that $G_2$ is the restriction of $G_1$ to $M_2$.
\end{rem}

As mentioned before, for  unbounded
manifolds, boundedness properties of maps and forms are important.
In most of the results of this paper, we assume  $\omega$ is bounded (see \eqref{eqn:bounded_omega}).
On the other hand,
we will not impose any boundedness assumption on $\alpha$ or on $G$.
In some  cases,  there are lower bounds for any action form in
terms of the Riemannian geometry of the manifold.
These lower bounds grow to infinity as the point goes to infinity.
See Section~\ref{sec:unbounded_action}, Lemma ~\ref{lem:lowerbound}.
The boundedness of $\omega$ happens in many cases of interest, but
boundedness of $\alpha$ may be impossible in some manifolds.

\subsection{Presymplectic maps}
\label{sec:presymp_maps}
Presymplectic maps appear naturally when we consider
NHIMs with rates that are incompatible with having
a symplectic structure (see
Part (A) of Theorem~\ref{thm:main1} and Section \ref{sec:isotropic}).

\begin{defn}
\label{defn:confpresymplectic}
Let $\omega$ be a presymplectic form (see Definition~\ref{defn:presymplectic})
on   $M$.
We say that a $\C^r$-map $f$ is conformally  presymplectic
if there exists a real valued function $\eta:M\to \mathbb{R}$  and constants $\eta_\pm >0$
such that
\begin{equation}
\label{confpresymplectic}
(f^* \omega)(x)   = \eta(x) \omega(x), \quad 0 < \eta_-\le \eta(x) \le \eta_+ < \infty , \textrm{ for all } x\in M.
\end{equation}
\end{defn}

While, in general, the conformal factor $\eta(x)$ of a conformally  presymplectic map is a function,  if the  kernel of the presymplectic form has codimension at least $4$, the   conformal factor  has to be a constant (see Proposition \ref{prop:conformal_factor_constant}), similarly to  the case of conformally symplectic maps (see Remark \ref{rem:conformal_factor_constant}).

\subsection{Normally hyperbolic invariant manifolds}
\label{sec:NHIM}

In this section, we recall the definition of a normally hyperbolic invariant manifold  (NHIM).

\begin{defn} \label{def:nhim}
Let $M$ be a manifold endowed with a smooth Riemannian metric and $f:M\to M$ be a $\C^1$-diffeomorphism.

Let $\Lambda\subset M$ be a unbounded, boundaryless,
and connected submanifold invariant  by $f$.

We say that $\Lambda$ is a NHIM for $f$ if there exists a splitting
\begin{equation}
\label{eq:bundles}\tag{{\bf B}}
T_xM=T_x\Lambda\oplus  E_x^s\oplus E_x^u, \textrm{ for all }x\in \Lambda
\end{equation}
that is invariant under $Df$, and, furthermore, there exist rates
\begin{equation}\label{eq:rates0}\tag{{\bf R}}
0<\lambda_\pm<1, \quad 0\leq  \mu_\pm, \quad \lambda_+\mu_-<1, \quad \lambda_-\mu_+<1
\end{equation}
and constants $C_\pm, D_\pm>0$ such that, for all $x\in M$ we have:
\begin{equation}\label{eqn:NHIM}\tag{{\bf H}}
\begin{split}
v\in T_x\Lambda\Leftrightarrow&\| Df^n(x)(v)\|\leq D_+\mu_+^n\|v\| \textrm{ for all } n\geq 0, \textrm { and }\\
 &\| Df^n(x)(v)\|\leq D_-\mu_-^{|n|}\|v\| \textrm{ for all } n\leq 0,\\
 v\in E_x^s \Leftrightarrow&\| Df^n(x)(v)\|\leq C_+\lambda_+^n\|v\| \textrm{ for all } n\geq 0,\\
 v\in E_x^u \Leftrightarrow&\| Df^n(x)(v)\|\leq C_-\lambda_-^{|n|}\|v\| \textrm{ for all } n\leq 0.
\end{split}
\end{equation}
(See Fig.~\ref{fig:rates}.)
\end{defn}

\begin{center}
\begin{figure}[h]
\begin{tikzpicture}
\draw[blue, line width=0.75mm] (0,0) -- (2,0);
\draw[thick] (2,0) -- (4,0);
\draw[orange, line width=0.75mm] (4,0) -- (8,0);
\draw[thick] (8,0) -- (10,0);
\draw[red, line width=0.75mm] (10,0) -- (12,0);
\node[draw] at (1,-1.25) {Contraction rates};
\filldraw[black] (2,0) circle (2pt) node[anchor=north]{$\lambda_+$};
\filldraw[black] (4,0) circle (2pt) node[anchor=north]{$\frac{1}{\mu_-}$};
\filldraw[black] (8,0) circle (2pt) node[anchor=north]{$\mu_+$};
\filldraw[black] (10,0) circle (2pt) node[anchor=north]{$\frac{1}{\lambda_-}$};
\node[draw] at (11,-1.25) {Expansion rates};
\end{tikzpicture}
\caption{Hyperbolic rates}
\label{fig:rates}
\end{figure}
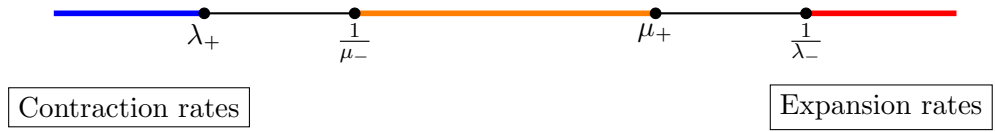
\end{center}

We denote  the dimension of $\Lambda$ by $d_c$, and the dimensions of $E_x^s$, $ E_x^u$ by $d_s$, $d_{u}$, respectively,
where $d_c+d_s+d_u=d$ with $d=\dim(M)$.

Condition \eqref{eq:rates0} implies that  there are gaps between the rates along the stable/unstable bundles and the rates along the NHIM, i.e.,
\begin{equation}\label{eqn:lambda_mu_gap} \lambda_+<\frac{1}{\mu_-} \textrm{ and } \mu_+<\frac{1}{\lambda_-}.\end{equation}
As a consequence of the rate conditions \eqref{eqn:NHIM}, by Lemma \ref{rem:ratesplusminus} we have that \begin{equation}\label{eqn:mu_gap} \frac{1}{\mu_-}\le \mu_+.\end{equation}

Since invariant manifolds for conformally symplectic systems are unbounded (see Remark \ref{rem:manifold_unbounded})
we need to make explicit
uniformity assumptions on the properties of the objects considered.

We have already made the assumption \eqref{U1} (which is implied by  \eqref{U1'}) that on the manifold $M$, there exists  a uniform $\C^r$ system of coordinates.

\medskip

Given a NHIM $\Lambda$, we fix  a uniform $\rho$-neighborhood of  $\Lambda$ in $M$
\begin{equation}\label{eqn:OOrho}
\OO_\rho=\{y\in M\,|\, d(y,\Lambda)<\rho\}, \textrm { for }\rho>0 .
\end{equation}
and we assume:

\begin{flalign}\label{U2}\tag{\bf{U2}}
&\textrm{We assume that  the manifold $\Lambda$ has a uniform tubular neigh-
}&
\\ \nonumber &\textrm{borhood: there is a $\C^r$ diffeomorphism, with a $\C^r$  inverse,}&
\\ \nonumber &\textrm{from a uniform neighborhood of the zero section of $E^s\oplus E^u$ }&
\\ \nonumber &\textrm{to the  uniform neighborhood $\OO_\rho$ of $\Lambda$ (see  \eqref{eqn:OOrho}).}&
\end{flalign}

\begin{rem}
An assumption  that implies assumption \eqref{U2} is  that the exponential  mapping
\begin{equation}\label{eqn:bounded_geometry}
  \mathcal{C}(x,s,u) = \exp_x(s+u),\quad x\in \Lambda, s\in E^s_x, u\in E^u_x,
\end{equation}
defines  a $\C^r$-diffeomorphism from a uniform
neighborhood of the zero section of $E^s\oplus E^u$ to
the  uniform neighborhood $\OO_\rho$ of $\Lambda$.
\end{rem}

Definition~\ref{def:nhim} says that in a small neighborhood of $\Lambda \subset M$ of any point $x\in \Lambda$,
the manifold $M$ is the product of the manifold $\Lambda$ and of the fibers of
the bundles $E^s$ and $E^u$.
For compact manifolds, this gives a tubular neighborhood of $\Lambda$.
See \cite{HPS}.

For unbounded manifolds, it can happen that  the manifold
$\Lambda$ folds  back and comes close to itself.
A concrete example appears in \cite[Example~3.8]{Eldering12}, which we illustrate in Fig.~\ref{fig:manifold_no_unif_tubular}.
The content of assumption \eqref{U2} is that this folding
back of the manifold $\Lambda$ does not happen.

\begin{figure}
\centering
\includegraphics[width=0.5\textwidth]{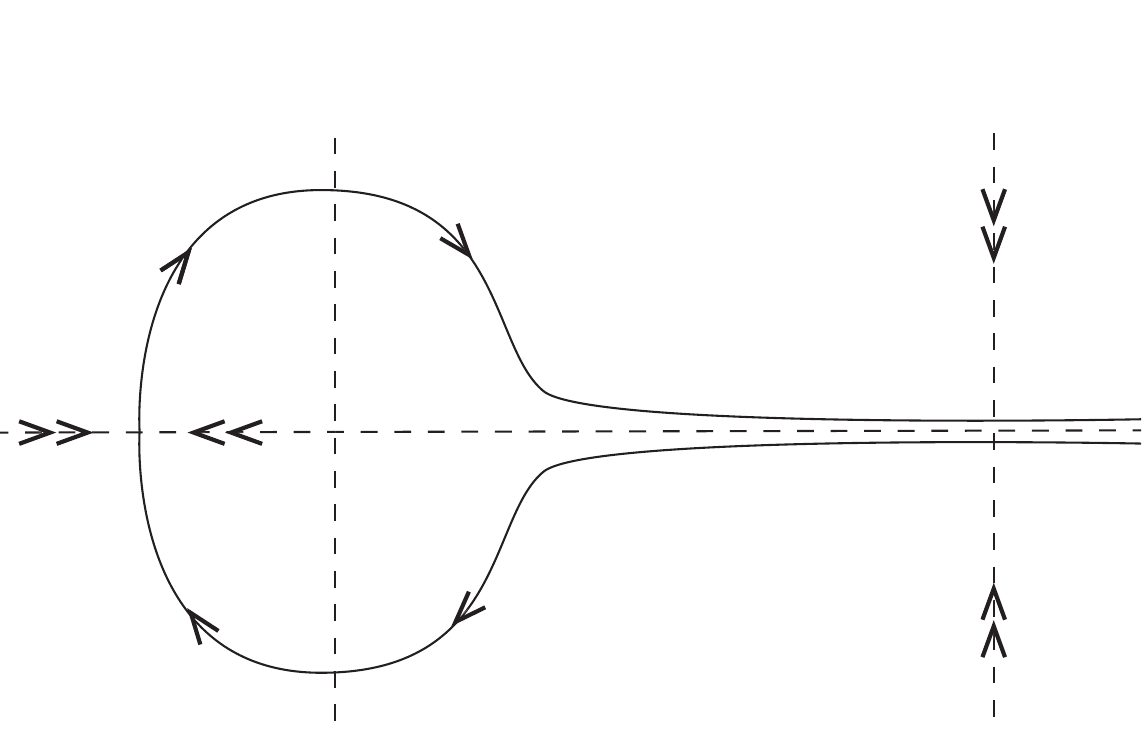}
\caption{A NHIM without a uniform tubular neighborhood. After \cite[Example~3.8]{Eldering12}}
\label{fig:manifold_no_unif_tubular}
\end{figure}

It is important to note that the assumptions \eqref{U1} and \eqref{U2} are of a very different nature, namely:
\begin{itemize}
\item
\eqref{U1} describes  a  local property of $M$;
\item
\eqref{U2} describes a global property of $\Lambda$.
\end{itemize}

In particular, as shown by the previous example,
the assumption  \eqref{U2}  does not follow
from \eqref{U1} or the other assumptions for a NHIM.

We also  assume that $f$, $f^{-1}$  are $r$ times differentiable with uniformly bounded derivatives in $\OO_\rho$:
\begin{equation}\label{U3}\tag{{\bf U3}}
\textrm{$f \in  \C^r(\OO_\rho)$ and $f^{-1}\in  \C^r(f(\OO_\rho))$.}
\end{equation}

A consequence of \eqref{U2} and \eqref{U3}   is that there exist  local stable and unstable manifolds $W^{s,\textrm{loc}}_\Lambda$, $W^{u,\textrm{loc}}_\Lambda$ in $\OO_\rho$,
as well as stable and unstable fibers $W^{s,\textrm{loc}}_x$, $W^{u,\textrm{loc}}_x$, for $x\in\Lambda$:

\begin{equation}\label{decomposition_strong}
W^{u,s, \textrm{loc}}_\Lambda= \bigcup _{x\in\Lambda}W^{s,u,\textrm{loc}}_x .
\end{equation}

The union on the right-hand side of \eqref{decomposition_strong}
is a disjoint union. Therefore, the decomposition
of $W^{u,s, \textrm{loc}}_\Lambda$ into strong stable
leaves is a foliation and, moreover,
the leaves are identified uniquely by their
intersection with $\Lambda$.

These (un)stable and strong (un)stable manifolds and their regularity are described  in Appendix \ref{sec:appendix_NHIM} (see  Theorem \ref{localstable}).
We will take advantage of the remarkable fact that $W^{u,s, \textrm{loc}}_\Lambda$ admit topological
characterizations, but has consequences for rates of convergence and regularity.

We will assume that the rates on the manifolds $\lambda _\pm$, $\mu_\pm$ (see \eqref {eq:rates0}) are such that:
\begin{itemize}
\item[ {\bf (H1)}]
The manifold $\Lambda$  is $\C^1$.
\item[ {\bf (H2)}]
The manifolds $W^{s,\textrm{loc}}_\Lambda$,
  $W^{u ,\textrm{loc}}_\Lambda$ are  $\C^1$.
  \item[ {\bf (H3)}]
  The manifolds $W^{s,\textrm{loc}}_x$,
  $W^{u ,\textrm{loc}}_x$ are  $\C^1$ uniformly in $x$.
  \item[ {\bf (H4)}]
  The  foliations  of the (un)stable manifold
  by strong (un)stable  manifolds
\begin{equation}\label{foliationsstablemanifolds}
W^{s,\textrm{loc}}_\Lambda = \bigcup_{x \in \Lambda}  W^{s,\textrm{loc}}_x,
  \quad
 W^{u,\textrm{loc}}_\Lambda = \bigcup_{x \in \Lambda}  W^{u,\textrm{loc}}_x,
\end{equation}
are of type $\C^{1,1}$. (See Definition~\ref{Ctildemm}.)
\end{itemize}

The hypotheses {\bf (H1)}, {\bf (H2)} {\bf (H3)}
are sufficient to use standard
differential geometry tools and define forms, etc.
The hypothesis {\bf (H4)} is the natural one  to ensure that  the wave maps
$\Omega_\pm$  (projections along the foliation, formally defined in Definition~\ref{wavemap}), are
$\C^1$.

In some proofs we will require  slightly stronger  regularity properties,
which we will make explicit.  These amount to assumptions on
the rates.

\medskip

In this paper,
we will assume that
\begin{equation}
  \label{eqn:mubounds}
  \tag{{\bf N}}
\mu_+\ge 1, \quad \mu_-  \ge 1
\end{equation}

The assumption \eqref{eqn:mubounds}  is to
avoid complicated statements. If $\mu_+ < 1$,
there is a unique fixed point $p \in \Lambda$,  and
$\Lambda$ is a submanifold of the stable manifold of $p$. Many of the regularity statements obtained in the general theory of NHIMs remain true, but they are far from optimal.
Also, in some estimates (derived from \cite[Proposition 15]{DLS08} or similar)
we use bounds like $C_1 \mu_+^n + C_2 \mu_+^{2n}  \le C \mu_+^{2n}$, for $n \ge 0$.
If we do not assume \eqref{eqn:mubounds},
different algebraic expressions for the bounds would be required for $\mu_+\ge 1$ and  for $\mu_+ < 1$, and therefore many statements would need to distinguish between the two different cases.

In some examples, we do not use \eqref{eqn:mubounds} but we mention this
explicitly.

\medskip

We will not use the persistence of NHIMs and their dependence on parameters in this paper,
but we will use the existence and properties of (un)stable manifolds foliated by
strong (un)stable manifolds. Of course, persistence
and dependence on parameters of NHIMs is likely  to  become useful
in future work.

%
%

\subsection{A system of coordinates}
\label{rem:new coordinates}

In $\OO_\rho$ (see \eqref{eqn:OOrho}), by taking the  system of coordinates assumed in \eqref{U1} and restricting it  to
  $W^{s,u,\textrm{loc}}_\Lambda $ we can  define a new system of coordinates on $W^{s,u,\textrm{loc}}_\Lambda$ as below.
  We use that $W^{s,u,\textrm{loc}}_\Lambda$ is foliated by $W^{s,u,\textrm{loc}}_x$, and that the foliation is $\C^{1,1}$.
In a small enough neighborhood, we can consider the foliation
by the $W^{s,u,\textrm{loc}}_x$ as a Cartesian product. Any point in $W^{s,u,\textrm{loc}}_x$ is given by the coordinate $y$ on the strong (un)stable manifold.
Thus, we obtain a new system of coordinates $\varphi$  around any $x \in \Lambda$ such   that
\begin{equation}\label{eqn:varphi_x_y}
  \{ \varphi(x, y ) | \  y \in B_{\tilde \rho}(0) \} = W^{s,u,\textrm{loc}}_x,
\end{equation}
for $B_{\tilde \rho}(0)\subseteq E^{s,u}_x$. Condition \textbf{(H4)} implies
that there exists a constant $C$ so that
\begin{equation}\label{eqn:mixed partials reg 1}
     \| \partial_x  \partial_y \varphi(x,y)\| \le C  .
\end{equation}

If the foliation is $\C^{\tilde m,m}$, then
there exist a constant $C$ so that for all $0\le i \le \tilde m$,
    $0 \le j \le m $,
\begin{equation}\label{eqn:mixed partials reg m}
   \| \partial_x^i \partial_y^j \varphi(x,y)\| \le C  .
\end{equation}

A more geometric version of the system of coordinates $\varphi$ is  obtained
using the exponential mapping.
Given $(x, y)$, $x \in \Lambda$, $y \in E^{s,u}_x$,
we define  $(x,y) \mapsto \exp_x(y)$, where
$\exp$ now denotes the exponential mapping on $W^{s,u, \textrm{loc}}_x$.

\begin{rem}
 The system of coordinates $\varphi$ constructed using the exponential mapping on $W^s_x$
 has remarkable regularity properties.
Given the smoothness of  the $W^{s,\textrm{loc}}_x$  the dependence of $\varphi(x,y)$ is
basically as smooth as the map $f$.
On the other hand, the dependence on $x$ is
not as differentiable and is limited by the hyperbolicity rates.
See Appendix ~\ref{sec:appendix_NHIM}.

Even if the coordinate system $\varphi$ has been constructed without reference
to symplectic forms, the vanishing lemmas (e.g.  Remark~\ref{rem:estimates_omega}, Lemma~\ref{lem:vanishingmanifolds})
show that  the coordinate system $\varphi$  enjoys rather remarkable symplectic properties.
We anticipate that these properties will be crucial in the proofs of Sections~\ref{sec:proof_main_2_b_2},\ref{sec:magic}, \ref{sec:Cartanexact}.
It is also a possibility of one step in Section~\ref{sec:iteration}.

\end{rem}

\subsection{Optimal rates}
\label{sec:optimalrates}
The way we have formulated the rate conditions in \eqref{eqn:NHIM},
$\mu_+$, $\mu_-$, $\lambda_+$, $\lambda_-$  are only bounds on the growth
of vectors and   can be replaced by other rates.
Hence, the only way that one can find
relations among them is
for the  `optimal'  bounds, which we denote by $\mu_+^*$, $\mu_-^*$, $\lambda_+^*$, $\lambda_-^*$, respectively.

More precisely, we define:
\begin{equation}
\label{eqn:optimal_rates}
\begin{split}
\lambda_+^*=&\inf\{\lambda_+\,|\,\|Df^n(x)v\|\le C_+\lambda_+^n \|v\|,\forall n\geq 0,\,  \forall x\in \Lambda,\, \forall v\in E^s_x\},\\
\lambda_-^*=&\inf\{\lambda_-\,|\,\|Df^n(x)v\|\le C_-\lambda_-^n \|v\|,\forall n\leq 0,\,   \forall x\in \Lambda,\,  \forall v\in E^u_x\},\\
\mu_+^*=&\inf\{\mu_+\,|\,\|Df^n(x)v\|\le D_+\mu_+^n \|v\|,\forall n\geq 0,\,  \forall  x\in \Lambda,\,  \forall v\in T_x\Lambda\},\\
\mu_-^*=&\inf\{\mu_-\,|\,\|Df^n(x)v\|\le D_-\mu_-^n \|v\|,\forall n\leq 0,\,  \forall x\in \Lambda,\,  \forall v\in T_x\Lambda\}.
\end{split}
\end{equation}

Notice that, in general, we do not have that
\[
 \exists C_+^*>0\, \textrm{ s.t. } \|Df^n(x)v\|\le C_+^* (\lambda_+^*)^n\|v\|,\,\forall n\geq 0,\,  \forall x\in \Lambda,\,  \forall  v\in E^s_x ,
 \]
but only that
\[
\forall \eps>0 \,  \exists C_+(\eps) \, \textrm{ s.t. } \|Df^n(x)v\|\le C_+(\eps)(\lambda_+^* + \eps)^n\|v\|, \,\forall n\geq 0,\,  \forall x\in \Lambda,\,   \forall v\in E^s_x .
\]
Similar statements hold for the other optimal rates.

\begin{rem}
From \eqref{eqn:lambda_mu_gap} and \eqref{eqn:mu_gap}, we obtain the relations:
\begin{eqnarray}
\label{eq:lambda_ratesplusminus}
\lambda_+^*\lambda_-^*<& 1,\\
\label{eq:ratesplusminus}
\mu_+^*\mu_-^*\ge& 1 .
\end{eqnarray}
Note that \eqref{eqn:mu_gap} (and so \eqref{eq:ratesplusminus}) is trivial if we assume \eqref{eqn:mubounds}.
\end{rem}

\subsection{The scattering map}
\label{sec:SM}
We recall here the scattering map \cite{DLS00, DLS08}.

Assume that $\Lambda$ is a NHIM for $f$ and the conditions  \textbf{(H1-H4)} are satisfied.

\subsubsection{The wave maps}

\begin{defn}\label{wavemap}
For a point $x\in W_\Lambda^{s,\textrm{loc}}$ (resp.\ $x\in W_\Lambda^{u,\textrm{loc}}$), we
denote by $x_+$ (resp.\ $x_-$) the unique point in $\Lambda$ which satisfies $x\in W_{x_+}^{s,\textrm{loc}}$ (resp.\ $x\in W_{x_-}^{u,\textrm{loc}}$).

Consequently the \emph{wave maps}:
\begin{equation}  \label{waveoperators}
\begin{split}
\Omega_{\pm} :W_{\Lambda}^{s,u, \textrm{loc}} & \longrightarrow \Lambda, \\
x & \mapsto x_{\pm},
\end{split}
\end{equation}
are well defined.

\end{defn}

The standing assumption   \textbf{(H4)} implies that the regularity
of the wave maps is at least $\C^1$.
If the foliations of the stable (unstable) manifolds were
$\C^{\tilde m, m}$ as in Theorem~\ref{localstable},
the wave maps would be $\C^{\tilde m}$.

From the definition of $\Omega_\pm$, it follows that the wave maps satisfy the following equivariance relations:
\begin{equation}\label{eqn:intertwining}
  \begin{split}
&\Omega_\pm \circ f^n_{\mid W^{s,u}_\Lambda}=f_{\mid\Lambda}^n\circ \Omega_\pm, \textrm{ for }n\in \Z,
\end{split}
\end{equation}
 where we denote by $f_{\mid N}$ the restriction of the map to a submanifold $N\subseteq M$.

 Notice that \eqref{eqn:intertwining} allows to define  the
 wave maps in the global (un)stable manifolds.
 They will be differentiable as many times as the map $f$ and  the foliation by the strong (un)stable maps.
 In particular, we can define the pullback by the wave maps.
 Nevertheless, it could  happen that  the derivatives of $\Omega_\pm$  are not
 uniformly bounded along the global (un)stable manifolds
 (e.g. if the (un)stable manifold oscillates).

 \subsubsection{Homoclinic channels}
 \label{sec:homoclinic_channel}
 The goal of this section is to define homoclinic intersections between $W^{u,\textrm{loc}}_\Lambda$ and $W^{s,\textrm{loc}}_\Lambda$ that give rise to a smooth family of homoclinic orbits to $\Lambda$.

We assume that   there is  a homoclinic manifold
$\Gamma \subset W_\Lambda^s\cap W_\Lambda^u$ (we require  more conditions on $\Gamma$ below).

More concretely, assume there exist  $N_-$, $N_+$
\[
\Gamma\subseteq f^{N_-}(W^{u,\textrm{loc}}_\Lambda \cap\OO_\rho)\cap f^{-N_+}(W^{s,\textrm{loc}}_\Lambda\cap\OO_\rho),
\]
where $\OO_\rho$ is defined in \eqref{eqn:OOrho},  and, abusing notation, we write:

\[
W^{s,\textrm{loc}}_\Lambda = \bigcup_{0\le n \le  N_+} f^{-n}(W^{s,\textrm{loc}}_\Lambda\cap\OO_\rho)  \textrm{ and }W^{u,\textrm{loc}}_\Lambda=\bigcup_{0\le n \le  N_-} f^{n}(W^{u,\textrm{loc}}_\Lambda\cap\OO_\rho).
\]
consequently
\begin{equation}
\label{eqn:Gamma_O N_plus_N_minus}
\Gamma\subseteq W^{u,\textrm{loc}}_\Lambda \cap W^{s,\textrm{loc}}_\Lambda .
\end{equation}
Since only a finite number of iterates are involved, the regularity of
$W^{s,u,\textrm{loc}}_\Lambda$ (as well as of its foliation) is the same as that for $W^{s,u,\textrm{loc}} \cap \OO_\rho$.

Now, we consider some neighborhoods of the stable
and unstable manifolds:
\begin{equation}
\label{eqn:N_minus_plus}
\OO^{+}_{\rho_+}  = \OO_{\rho_+} (W^{s,\textrm{loc}}_\Lambda),\,
\OO^{-}_{\rho_-} = \OO_{\rho_-}  (W^{u,\textrm{loc}}_\Lambda),
\end{equation}
and let
\begin{equation}\label{eqn:OO}
\OO:=\OO_\rho\cup \OO^+_{\rho_+} \cup \OO^-_{\rho_-},
\end{equation}

We   assume that:
\begin{equation} \label{eqn:f_f_inv_OO}\tag{{\bf U4}}
f \in \C^r( \OO) \textrm{ and }
    f^{-1}  \in \C^r( f(\OO)) .
\end{equation}
Observe that hypothesis \ref{eqn:f_f_inv_OO}  implies hypothesis \eqref{U3}, but we keep  them separated because
   some local results   require only  \eqref{U3} while more global ones require  \ref{eqn:f_f_inv_OO}.

\begin{rem}
Assumptions \eqref{U3} and \eqref{eqn:f_f_inv_OO} are satisfied if we make the simpler assumption that $f$  and $f^{-1}$ are $\C^r$ on $M$.
However, there are examples, for instance in Celestial Mechanics, where $f$ is unbounded (due to singularities of the vector field) but \eqref{U3} and \eqref{eqn:f_f_inv_OO}  hold.
\end{rem}

\subsubsection{Definition of the scattering map}
In this section we define the scattering map. By  assumption \eqref{eqn:Gamma_O N_plus_N_minus},
the stable and unstable manifolds of $\Lambda$, $W_\Lambda^{s,\textrm{loc}}$ and  $W_\Lambda^{u,\textrm{loc}}$, intersect  along the homoclinic manifold $\Gamma$.

We furthermore assume that the intersection between $W_\Lambda^{s}$ and  $W_\Lambda^{u}$  along the homoclinic manifold $\Gamma$ is transversal (see condition \eqref{intersection}) and that $\Gamma$ is transversal to the strong (un)stable foliations \eqref{foliationsstablemanifolds} (see condition
\eqref{gammatransversal}).
More concretely:

\begin{equation}\label{intersection}
  \begin{split}
    & \forall\ x\in\Gamma, \textrm{we have:} \\
&T_x M = T_x W_\Lambda^{s } + T_x W_\Lambda^{u },\\
&T_x W_\Lambda^{s } \cap T_x W_\Lambda^{u} = T_x \Gamma.
\end{split}
\end{equation}

\begin{equation}\label{gammatransversal}
\begin{split}
  & \forall\ x\in\Gamma, \textrm{we have:} \\
&T_x \Gamma \oplus T_x W_{x_+}^{s,\textrm{loc}} = T_x W_\Lambda^{s,\textrm{loc}},\\
&T_x \Gamma \oplus T_x W_{x_-}^{u,\textrm{loc}} = T_x W_\Lambda^{u,\textrm{loc}}.
\end{split}
\end{equation}

Given a manifold $\Gamma$  verifying \eqref{intersection} and  \eqref{gammatransversal}, we can consider the wave maps $\Omega _{\pm}$ of
\eqref{waveoperators} restricted to $\Gamma $.

Under the assumptions \eqref{intersection}, \eqref{gammatransversal} and {\bf (H4)},
we have that $\Gamma$ is $\C^{1}$ and that $\Omega _{\pm}$ are $\C^{1}$ local diffeomorphisms  from $\Gamma $ to $\Lambda$.

\begin{defn}\label{def:channel}
We say that $\Gamma$ is a \emph{homoclinic channel} if:
\begin{enumerate}
\item
$\Gamma \subset W_{\Lambda}^{s,\textrm{loc}}\cap W_{\Lambda}^{u,\textrm{loc}}$ verifies \eqref{intersection} and \eqref{gammatransversal}.
\item
  The wave  map  ${\Omega_-}_{\mid \Gamma}:\Gamma
\to  \Omega _{-} (\Gamma)\subset \Lambda$ is
a  $\C^{1}$-diffeomorphism.
\end{enumerate}
\end{defn}

The last hypothesis in Definition ~\ref{def:channel}, that ${\Omega_-}_{\mid \Gamma} $ is a diffeomorphism from
its domain to its range, can always be arranged by restricting $\Gamma$ to a smaller
neighborhood where the implicit function theorem applies.

\begin{rem}
If $\Gamma$ verifies the definition of a homoclinic
channel, so do subsets of $\Gamma$. Therefore, there is
no loss of generality in considering small enough channels.
One can assume  without loss of generality
that they are bounded.
\end{rem}

We denote by $\Omega_{\pm}^{\Gamma}=({\Omega_{\pm}}) _{ \mid \Gamma }$, and
$H_{\pm}^{\Gamma} = \Omega _{\pm}^{\Gamma}(\Gamma )\subset \Lambda
$, so that
\begin{equation*}\label{essential}
\Omega_{\pm}^\Gamma :\Gamma \longrightarrow H_{\pm }^{\Gamma}
\end{equation*}
are  $\C^{1}$-diffeomorphisms.


We define the scattering map associated to $\Gamma$ as follows:

\begin{defn}\label{def:scattering}
Given a homoclinic channel $\Gamma$ and $\Omega_\pm^{\Gamma}:\Gamma \to H_{\pm}^{\Gamma}$ the associated wave maps, we
define the scattering map associated to $\Gamma$ to be the $\C^1$-diffeomorphism
\[
S : H_-^\Gamma \subset \Lambda  \to H_+^\Gamma \subset \Lambda
\]
 given by
\begin{equation} \label{eq:scattering}
S= S^\Gamma=\Omega_+^\Gamma \circ (\Omega_-^\Gamma)^{-1}.
\end{equation}See Fig.~\ref{fig:scattering_map}
\end{defn}

\begin{figure}
\centering
\includegraphics[width=0.65\textwidth]{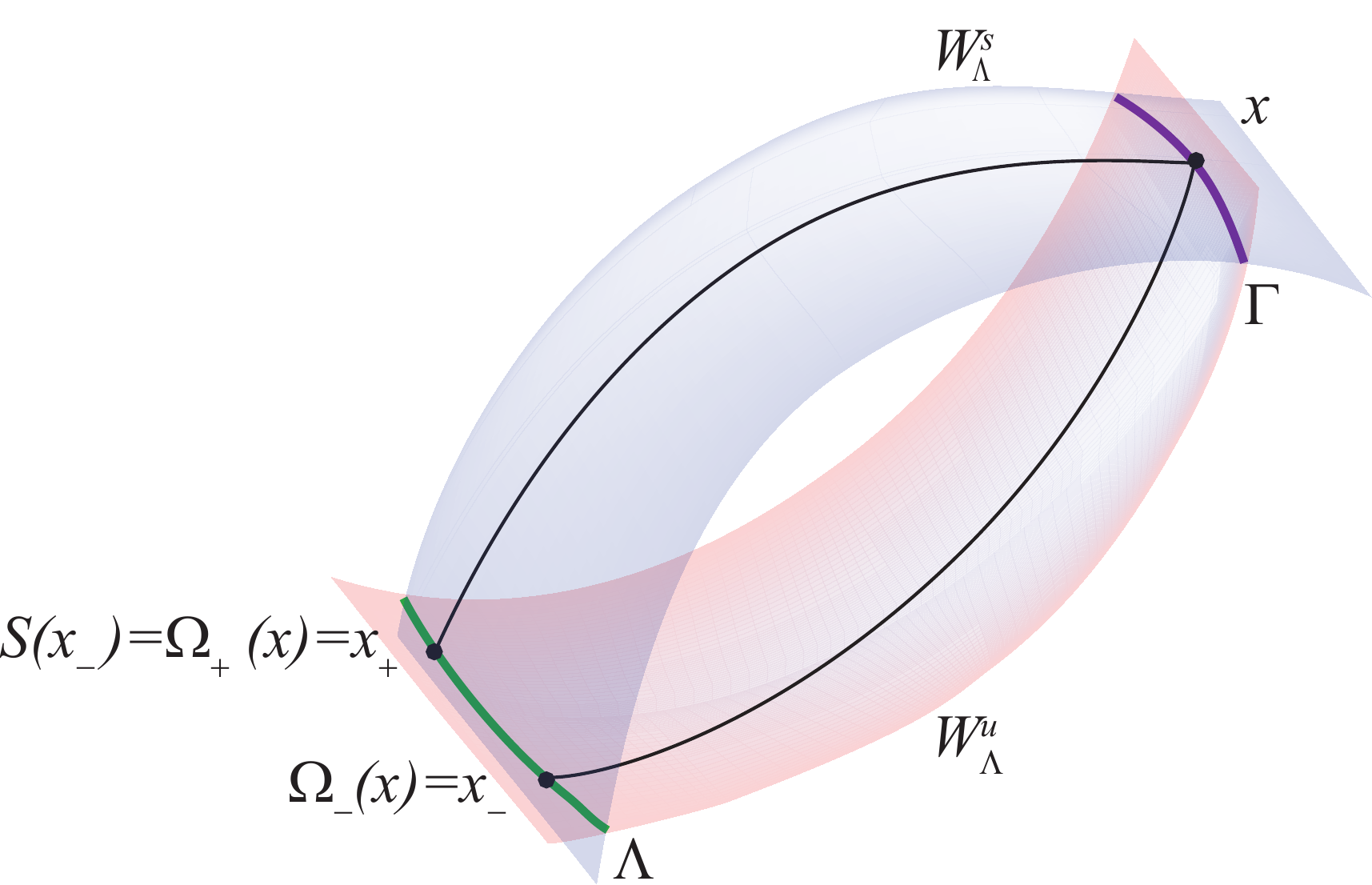}
\caption{The scattering map.}
\label{fig:scattering_map}
\end{figure}

\medskip

Since the projections $\Omega_\pm$ satisfy \eqref{eqn:intertwining}, we have the  $\Omega_\pm^\Gamma$ and $S^\Gamma$ satisfy the equivariance relations
\begin{equation}\label{eqn:iterationsomega}
\begin{split}
\Omega_\pm^{\Gamma}=&f_{\mid\Lambda}^{-n}\circ\Omega_\pm^{f^n(\Gamma)}\circ f^{n},\\
S^{\Gamma}=&f_{\mid\Lambda}^{-n}\circ S^{f^n(\Gamma)}\circ f^{n}.
\end{split}
\end{equation}

\begin{rem}
The fact that the scattering map is $\C^1$ is a consequence of  {\bf (H4)}. If the foliation of the stable (unstable) manifolds are
$\C^{\tilde m, m}$
then the scattering map is $\C^{\tilde m}$.

In general, the scattering map  depends on $\Gamma$ and is only locally defined. In \cite{DLS00} there are examples where the local domain
of the scattering map cannot be extended to a  global  one  (moving along a cycle in $\Lambda$ leads to
lack of monodromy).

The scattering map provides an efficient way to quantify
the effect of  homoclinic trajectories on the NHIM $\Lambda$.
In \cite{DLS00,DelshamsLS06a} it is shown that it can be used to study the heteroclinic intersections between invariant objects in $\Lambda$.
In \cite{GideaLS20,GideaLS20a} it is shown that iterations of the map restricted to the NHIM $\Lambda$ combined with iterates of the scattering map $S$ are closely followed by  true orbits.

In typical situations, we  have many scattering maps (due to the existence of
multiple intersections of the stable and unstable manifolds).
All these scattering maps can be used to generate rich dynamical behavior.

\end{rem}

\subsection{Uniformity assumptions on the symplectic form}
\label{sec:uniformity}

In Section \ref{sec:mainresults} we will assume that $M$ is endowed with a symplectic (presymplectic) form $\omega$.
We assume the  $\omega$ is  $\C^0$ on $\OO$ (see \eqref{eqn:OO}), i.e.:
\begin{equation}\label{eqn:bounded_omega}
\tag{{\bf U5}}  \|\omega _{\mid\OO}\| =\sup _{x\in \OO}\|\omega(x)\|  \le M_\omega<\infty.
\end{equation}

In Section \ref{proofnonclosed} and Section \ref{sec:iteration} we will require the stronger condition  that $\omega$ is $\C^1$ on $\OO$:
\begin{equation}\label{doundedomega}\tag{\bf U5$^\prime$}
  \|\omega_{\mid\OO}\|_{\C^1} =\sup _{x\in \OO}\|\omega(x)\|+ \sup _{x\in \OO}\|D\omega(x)\|\le M_\omega<\infty.
\end{equation}

Note that in Section \ref{sec:unbounded_forms} we will also consider the case when $\omega$ is unbounded.

\section{Main results on NHIMs and scattering maps}\label{sec:mainresults}

In this section we state  results on NHIMs. The proofs appear in Sections~\ref{sec:proofAmain1total} and \ref{sec:proof_main_2}.
Other results which do not involve involve NHIMs appear in Section~\ref{sec:topological}.

\subsection{Standing assumptions}
\label{sec:standing}
Let us summarize the standing assumptions introduced so far:
\begin{itemize}
\item[(i)]
$(M,\omega)$ is an orientable, non-compact, connected, symplectic, Riemannian manifold satisfying condition \eqref{U1},
\item[(ii)]
$f:M\to M$ is a conformally symplectic diffeomorphism of  factor $\eta>0$ (see Definition \ref{def:conformally_symplectic}),
\item[(iii)]
$\Lambda$ is a NHIM for $f$ satisfying  \eqref{eq:bundles} and the rate conditions \eqref{eq:rates0}, \eqref{eqn:NHIM} and \eqref{eqn:mubounds},  the regularity conditions  \textbf{(H1), (H2), (H3), (H4)},
and the uniformity condition \eqref{U2},
\item[(iv)]
$\Gamma$ is a homoclinic channel (see Definition \ref{def:channel}),
\item [(v)]
$f$ satisfies the uniformity conditions  \eqref{U3} and \eqref{eqn:f_f_inv_OO},
\item[(vi)]
The symplectic form $\omega$ satisfies the boundedness condition  {\bf (\ref{eqn:bounded_omega})}.
\end{itemize}

We note that the condition \eqref{eqn:f_f_inv_OO} implies \eqref{U3}, but some of the results only use \eqref{U3}.
Similarly the condition \textbf{(H4)} implies \textbf{(H2)} and \textbf{(H3)}, but some of the results only use \textbf{(H2)} or \textbf{(H3)}.
This is why we list all of these conditions separately.


\subsection{Symplectic properties of NHIMs and pairing rules}
\label{geomNHIM}

The first main result of this paper is:

\begin{thm}\label{thm:main1}
Under  the standing assumptions from Section \ref{sec:standing} we have:

\begin{description}
\item[(A) Symplecticity of the NHIM]
If the conformal factor $\eta$ and the hyperbolic rates  $\lambda_\pm$, $\mu_\pm$  in \eqref{eq:rates0} satisfy the inequalities
\begin{equation}\label{eqn:conformal_rates}\tag{{\bf S}}
\begin{split}
\mu_+ \lambda_+ \eta^{-1}<&1,\\
\mu_- \lambda_-\eta<&1,\\
\end{split}
\end{equation}
then the manifold $\Lambda$  is symplectic and $f_{\mid\Lambda}$ is conformally symplectic of conformal factor $\eta$.

\item[(B) Pairing Rules]
The manifold $\Lambda$ is symplectic if and only if the optimal hyperbolicity rates $\mu_\pm ^*$, $\lambda_\pm ^*$ defined in \eqref{eqn:optimal_rates}
satisfy
\begin{equation}
\label{eqn:pairing_rules}\tag{{\bf P}}
\begin{split}
\frac{\lambda_+^*}{\lambda_-^*}=&\eta, \textrm { and} \\
\frac{\mu_+^*}{\mu_-^*}=&\eta.
\end{split}
\end{equation}
\end{description}
\end{thm}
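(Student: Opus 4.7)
My plan rests on the fundamental identity
\[
\omega(x)(u,v) \;=\; \eta^{-n}\,\omega(f^n(x))\bigl(Df^n(x)u,\,Df^n(x)v\bigr),\qquad n\in\Z,
\]
which, combined with the uniform bound \eqref{eqn:bounded_omega} and the rate estimates \eqref{eqn:NHIM}, produces vanishing lemmas stating that $\omega$ must vanish on pairs of subbundles whose growth rates sit on the ``wrong side'' of $\eta$. These are exactly the kind of vanishing lemmas announced in Section~\ref{sec:vanishing_lemmas}. Throughout I work pointwise in the splitting $T_xM = T_x\Lambda \oplus E_x^s \oplus E_x^u$ and rely on dimension counts.

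For Part (A), I would produce two such vanishing lemmas. For $u\in T_x\Lambda$ and $v\in E_x^s$, forward iteration yields
\[
|\omega(x)(u,v)| \;\le\; M_\omega\,\eta^{-n}\,D_+C_+\,(\mu_+\lambda_+)^n\,\|u\|\,\|v\| \;=\; C\,(\mu_+\lambda_+\eta^{-1})^n\,\|u\|\,\|v\|,
\]
which tends to zero by the first inequality in \eqref{eqn:conformal_rates}. For $u\in T_x\Lambda$ and $v\in E_x^u$, iterating backward (taking $n=-m$, $m\to\infty$) analogously gives decay at rate $(\eta\mu_-\lambda_-)^m$, vanishing by the second inequality. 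Hence $E_x^s\oplus E_x^u \subseteq (T_x\Lambda)^\omega$; non-degeneracy of $\omega$ on $M$ forces $\dim(T_x\Lambda)^\omega = d-d_c = d_s+d_u$, so equality holds. Since the NHIM splitting \eqref{eq:bundles} gives $T_x\Lambda\cap(E_x^s\oplus E_x^u)=\{0\}$, $\omega|_{T_x\Lambda}$ is non-degenerate, so $\Lambda$ is symplectic; restricting $f^*\omega=\eta\omega$ to $T\Lambda$ then gives the claimed conformal factor.

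For Part (B), the implication $\eqref{eqn:pairing_rules}\Rightarrow\Lambda$ symplectic I would reduce to Part (A): substituting the pairing rules gives
\[
\mu_+^*\lambda_+^*\eta^{-1}\;=\;\mu_+^*\lambda_-^*\;=\;\eta\mu_-^*\lambda_-^*,
\]
and this common value $\mu_+^*\lambda_-^*$ is strictly less than $1$ by the NHIM gap $\lambda_-\mu_+<1$ in \eqref{eq:rates0}. One then selects non-optimal rates $\mu_\pm,\lambda_\pm$ slightly above the optimal values so that \eqref{eqn:conformal_rates} still holds strictly, and applies Part (A). For the converse, non-degeneracy of $\omega|_\Lambda$ yields a musical isomorphism $\flat\colon T_x\Lambda\to T_x^*\Lambda$, $u\mapsto\omega(u,\cdot)$, and the conformal relation intertwines $Df|_\Lambda$ with $\eta\,(Df|_\Lambda)^{-*}$. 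Consequently the spectrum of forward growth rates on $T\Lambda$ is symmetric under $r\mapsto\eta/r$; identifying the supremum forward rate as $\mu_+^*$ and the infimum forward rate as $1/\mu_-^*$ (the latter obtained by inverting the backward bound on $\mu_-^*$), this symmetry forces $\mu_+^*/\mu_-^*=\eta$. Applied to $\omega|_{E^s\oplus E^u}$, which is symplectic as the symplectic complement of $T\Lambda$ in $TM$ and in which $\omega$ pairs $E^s$ against $E^u$, the same principle yields $\lambda_+^*/\lambda_-^*=\eta$.

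The step I expect to be the main obstacle is making the spectral-symmetry argument rigorous for \emph{optimal} rates on a non-compact NHIM without an Oseledets decomposition or compactness. Because optimal rates are defined as infima/suprema over all base points and vectors, matching the top forward rate with the bottom one requires producing, for a near-extremal vector $u$, an $\omega$-partner $v$ that simultaneously realizes the opposite extremum up to an $\eps$ error. This is where the uniform boundedness \eqref{eqn:bounded_omega} of $\omega$, the uniformity assumptions \eqref{U1} and \eqref{U2}, and the regularity hypotheses \textbf{(H1)}--\textbf{(H4)} on the NHIM really enter; in particular, the musical isomorphism $\flat$ must have both norm and inverse norm controlled uniformly along $\Lambda$, which in turn requires a uniform lower bound on the non-degeneracy of $\omega|_\Lambda$.
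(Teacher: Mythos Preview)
Your Part (A) and the implication \eqref{eqn:pairing_rules}$\Rightarrow$ symplectic in Part (B) are correct and coincide with the paper's approach. The gap is in the converse, symplectic $\Rightarrow$ \eqref{eqn:pairing_rules}, where your musical-isomorphism route differs from the paper and carries exactly the obstruction you flag but do not resolve. The intertwining $\flat_{f(x)}\,Df(x)=\eta\,(Df(x))^{-*}\,\flat_x$ transfers \emph{optimal} rates only if both $\|\flat_y\|$ and $\|\flat_y^{-1}\|$ are bounded uniformly along the non-compact $\Lambda$; the former is \eqref{eqn:bounded_omega}, but a uniform lower bound on the non-degeneracy of $\omega|_\Lambda$ is \emph{not} among the standing assumptions and can genuinely fail. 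The paper's Remark at the end of Section~\ref{sec:pairing_rules} singles out precisely this: the $J$-operator method (your $\flat$) ``seems to require that $J^{-1}_{f^n(x)}$ is uniformly bounded'', and their argument is designed to avoid it. Instead, the paper runs the vanishing lemma in contrapositive form (Lemma~\ref{cor:pairing}): since \emph{every} $u\in T_x\Lambda$ obeys $\|Df^n u\|\le D_+(\mu_+^*+\eps)^n\|u\|$, any nonzero $v$ satisfying $\|Df^{n_j}v\|\le C_2\beta^{n_j}\|v\|$ along a subsequence with $(\mu_+^*+\eps)\beta<\eta$ would force $\omega(x)(u,v)=0$ for all $u$ by Lemma~\ref{lem:vanishing_pairing}, contradicting \emph{pointwise} non-degeneracy at $x$. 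Hence every $v$ eventually grows at rate at least $\beta$, giving $1/\mu_-^*\ge\eta/\mu_+^*$; the reverse inequality comes from $f^{-1}$. Only the upper bound on $\omega$ enters.

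There is a second, independent gap in your $\lambda$-pairing sketch: the assertion that $E^s\oplus E^u=(T\Lambda)^\omega$ with $\omega$ pairing $E^s$ against $E^u$ already uses the vanishings $\omega(T\Lambda,E^{s,u})=0$ and $\omega(E^{s,u},E^{s,u})=0$ of Lemma~\ref{lem:vanishing}, which require \eqref{eqn:conformal_rates}, not merely that $\Lambda$ is symplectic---so the argument is circular as written. The paper avoids this by staying in $T_xM$, where $\omega$ is non-degenerate by hypothesis: for $v\in E^s_x$ one finds $w\in T_xM$ with $\omega(x)(v,w)\ne0$ growing at rate at least $(\eta-\eps)/(\lambda_+^*+\eps)$, decomposes $w=w^u+w^{ts}$ along $E^u_x\oplus(T_x\Lambda\oplus E^s_x)$, and then uses the already-established $\mu$-pairing $\mu_+^*=\eta\mu_-^*$ together with the NHIM gap $\lambda_+^*\mu_-^*<1$ to get $\eta/\lambda_+^*>\mu_+^*$, so the $w^{ts}$ component (rate $\le\mu_+^*$) is subdominant and $w^u\in E^u_x$ must carry the fast growth, yielding $1/\lambda_-^*\ge\eta/\lambda_+^*$.
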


The proof of Theorem \ref{thm:main1} part {\bf(A)} is given in Section \ref{sec:proofAmain1}; the main ingredient   is a  vanishing lemma (Lemma \ref{lem:vanishing}). Part {\bf(B)} is proved in Section \ref{sec:pairing_rules} using another
vanishing lemma (Lemma \ref{cor:pairing}).

\begin{cor} \label{partial_converse}
Under  the standing assumptions from Section \ref{sec:standing} we have:

If $\Lambda$ is symplectic, then  it  has rates $\lambda_\pm$, $\mu_\pm$ satisfying \eqref{eq:rates0} that moreover satisfy
\eqref{eqn:conformal_rates}.
\end{cor}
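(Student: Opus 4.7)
The plan is to extract the desired rates by perturbing the optimal ones slightly, once the pairing rules have been used to relate them to the conformal factor.

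First, I would invoke Theorem~\ref{thm:main1}(B): since $\Lambda$ is symplectic, the optimal rates satisfy the pairing rules
\[
\lambda_+^*=\eta\,\lambda_-^*,\qquad \mu_+^*=\eta\,\mu_-^*.
\]
Next, I would exploit that $\Lambda$ is a NHIM, so there actually exist admissible rates satisfying \eqref{eq:rates0}. In particular, for some $\lambda_\pm>\lambda_\pm^*$ and $\mu_\pm>\mu_\pm^*$ (that may be taken as close to the optimal rates as desired, since the optimal rates are infima over admissible rates), the strict inequalities $\lambda_+\mu_-<1$, $\lambda_-\mu_+<1$, $\lambda_\pm<1$ hold. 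Taking these as close to the optimal values as we please forces the strict inequalities at the infimum:
\[
\lambda_+^*\mu_-^*<1,\qquad \lambda_-^*\mu_+^*<1,\qquad \lambda_\pm^*<1.
\]
(Had, say, $\lambda_+^*\mu_-^*=1$, any admissible pair would overshoot $1$, contradicting the NHIM condition.)

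Substituting the pairing rules into either of the two rate-crossing inequalities yields the same key bound
\[
\eta\,\lambda_-^*\mu_-^* \;=\; \lambda_+^*\mu_-^* \;=\; \lambda_-^*\mu_+^* \;<\;1,
\]
so $\lambda_-^*\mu_-^*<\eta^{-1}$ and, multiplying by $\eta^2$, $\lambda_+^*\mu_+^*<\eta$. These are precisely the optimal-rate analogues of \eqref{eqn:conformal_rates}.

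Finally, I would choose $\lambda_\pm\in(\lambda_\pm^*,\lambda_\pm^*+\delta)$ and $\mu_\pm\in(\mu_\pm^*,\mu_\pm^*+\delta)$ and pick associated constants $C_\pm,D_\pm$ as in \eqref{eqn:NHIM}; taking $\delta>0$ small enough, continuity preserves all the strict inequalities above, so \eqref{eq:rates0} and \eqref{eqn:conformal_rates} hold simultaneously. I do not foresee a real obstacle: the only subtle point is that the optimal rates need not be attained, but since each inequality we need is strict at the optimum, the standard trick of bumping each rate by the same small $\delta$ closes the argument.
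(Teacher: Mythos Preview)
Your proof is correct and follows essentially the same route as the paper: invoke the pairing rules from Theorem~\ref{thm:main1}(B), combine them with the strict NHIM gap $\lambda_\pm^*\mu_\mp^*<1$ to obtain \eqref{eqn:conformal_rates} at the optimal rates, then perturb slightly. The only cosmetic difference is that the paper substitutes the pairing rule for $\mu$ (writing $1/\mu_-^*=\eta/\mu_+^*$) while you substitute the one for $\lambda$ (writing $\lambda_+^*=\eta\lambda_-^*$); the algebra is equivalent.
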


\begin{proof}
Since $\Lambda$ is symplectic, by  Theorem \ref{thm:main1} part {\bf(B)}   the optimal rates satisfy the pairing rules \eqref{eqn:pairing_rules}.
By the Definition ~\ref{def:nhim} they also satisfy the rate conditions \eqref{eq:rates0}.
Then we have:
\[
\begin{split}
&\lambda^*_+ < \frac{1}{\mu^*_-}  = \frac{\eta}{\mu^*_+}, \\
&\frac{1}{\lambda^*_-} > \mu^*_+  =  {\eta}{\mu^*_-}.
\end{split}
\]
With algebraic manipulations,   this is precisely \eqref{eqn:conformal_rates}.
Now, if we have that $\lambda_+^*\mu_+^*\eta^{-1}<1$, there exist $\lambda_+>\lambda_+^*$ and $\mu_+>\mu_+^*$ still satisfying
$\lambda_+\mu_+\eta^{-1}<1$.
An analogous reasoning gives the existence of $\lambda_->\lambda_-^*$ and $\mu_->\mu_-^*$  still satisfying
$\lambda_-\mu_-\eta<1$.
This concludes the proof.
\end{proof}

Theorem~\ref{thm:main1} and Corollary~\ref{partial_converse} show that condition \eqref{eqn:conformal_rates}
is necessary and sufficient for the manifold $\Lambda$ to be symplectic.

Note that in Theorem~\ref{thm:main1}, the hypothesis is a condition on the rates \ref{eqn:conformal_rates}, and the conclusion is another condition on the rates \eqref{eqn:pairing_rules}. However, to arrive at this conclusion, we must go through the geometry, by showing that
$\Lambda$ is symplectic.

In Section~\ref{sec:isotropic}, after developing some tools, we show that some conditions on the rates of an isotropic invariant manifold    obstruct normal hyperbolicity.

\subsection{Symplectic properties of scattering maps}
\label{geomScat}

\begin{thm}\label{thm:main2}
Under  the standing assumptions from Section \ref{sec:standing}, assume that the conformal factor $\eta$ and the hyperbolic rates  $\lambda_\pm$, $\mu_\pm$  in \eqref{eq:rates0} satisfy the inequalities \eqref{eqn:conformal_rates}.

Then we have:

\begin{description}
\item[(A) Symplecticity of the homoclinic channel]

The manifold  $\Gamma$ is symplectic.
\item[(B)  Symplecticity of the scattering map]

The wave maps $\Omega_\pm:W^{s,u,\mathrm{loc}}_\Lambda  \to \Lambda$ defined in \eqref{waveoperators}
and \eqref{eqn:intertwining} satisfy:
\begin{equation}
\label{eqn:symplecticwavemaps}
\begin{split}
 (\Omega_+)^* (\omega_{\mid \Lambda}) = \omega_{\mid W^{s,\mathrm{loc}}_{ \Lambda }},\\
 (\Omega_-)^* (\omega_{\mid \Lambda}) = \omega_{\mid W^{u,\mathrm{loc}}_{ \Lambda }} .
\end{split}
\end{equation}
As a consequence, since $\Gamma$ is symplectic $\Omega_\pm^\Gamma =(\Omega_\pm) _{\mid \Gamma} $ are symplectic maps,  and the scattering map  $S=S^\Gamma= \Omega_+^\Gamma \circ (\Omega_-^\Gamma) ^{-1}$
is symplectic:
\[
S^*(\omega_{\mid\Lambda})=\omega_{\mid\Lambda}.
\]
\item[(C)  Exact symplecticity of the scattering map]
Assume further that the symplectic form is exact $\omega = d\alpha$.

Then,
\begin{equation}\label{eqn:wave_exact}
\begin{split}
(\Omega_+)^*(\alpha_{\mid \Lambda})-\alpha_{\mid W^{s,\mathrm{loc}}_\Lambda} =dP^+_\alpha ,\\
(\Omega_-)^*(\alpha_{\mid \Lambda})-\alpha_{\mid W^{u,\mathrm{loc}}_\Lambda} =dP^-_\alpha ,
\end{split}
\end{equation}
where $P^\pm_\alpha$ are  functions on $W^{s,u,\textrm{loc}}_\Lambda$, respectively.

Hence, the scattering map $S$ is exact with respect to $\alpha$, that is
\[
S^*(\alpha_{\mid \Lambda}) =\alpha _{\mid \Lambda}+dP_\alpha^S ,
\]
where $P_\alpha^S$ is  a function on $\Lambda$.

Explicit formulas for $P_\alpha^\pm$ and $P_\alpha^S$ in the case when $f$ is also exact are provided in Lemma~\ref{lem:sum_path_plus_minus}.
\end{description}
\end{thm}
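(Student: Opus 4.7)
The plan is to derive all three parts from Theorem~\ref{thm:main1}(A), which yields the symplecticity of $\Lambda$, combined with the vanishing-lemma technology anticipated in Section~\ref{sec:vanishing_lemmas}. The geometric backbone I will use in all three parts is: under \eqref{eqn:conformal_rates},
\textbf{(i)} $W^{s,\mathrm{loc}}_\Lambda$ is coisotropic, and pointwise $\ker\bigl(\omega|_{W^{s,\mathrm{loc}}_\Lambda}\bigr) = T_x W^{s,\mathrm{loc}}_{x_+}$ (the strong stable direction), with the symmetric statement for $W^{u,\mathrm{loc}}_\Lambda$; and \textbf{(ii)} the leaves $W^{s,\mathrm{loc}}_x$ and $W^{u,\mathrm{loc}}_x$ are isotropic. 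Both are obtained from the identity $\omega(v,w) = \eta^{-n}\omega(Df^n v, Df^n w)$ with $v\in TW^{s,\mathrm{loc}}_{x_+}$ (rate $\lambda_+^n$) and $w\in TW^{s,\mathrm{loc}}_\Lambda$ (rate $\mu_+^n$), since $\lambda_+\mu_+/\eta<1$ by \eqref{eqn:conformal_rates}; the dual uses $\lambda_-\mu_-\eta<1$.

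For (A), at any $x\in\Gamma$ the transversality conditions \eqref{intersection} and \eqref{gammatransversal} give the splitting $T_xM = T_x\Gamma \oplus T_xW^{s,\mathrm{loc}}_{x_+} \oplus T_xW^{u,\mathrm{loc}}_{x_-}$. If $v\in T_x\Gamma$ satisfies $\omega|_\Gamma(v,\cdot) = 0$, then because $v\in T_xW^{s,\mathrm{loc}}_\Lambda$, coisotropy (i) yields $\omega(v,T_xW^{s,\mathrm{loc}}_{x_+})=0$, and because $v\in T_xW^{u,\mathrm{loc}}_\Lambda$, coisotropy yields $\omega(v,T_xW^{u,\mathrm{loc}}_{x_-})=0$. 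Hence $\omega(v,\cdot)=0$ on $T_xM$, and non-degeneracy of $\omega$ forces $v=0$. Closedness of $\omega|_\Gamma$ is automatic.

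For (B), set $\beta_+ := \Omega_+^*(\omega|_\Lambda) - \omega|_{W^{s,\mathrm{loc}}_\Lambda}$. The equivariance \eqref{eqn:intertwining} combined with $f^*\omega = \eta\omega$ gives $f^*\beta_+ = \eta\beta_+$, so
\[
\beta_+(x)(v,w) \;=\; \eta^{-n}\,\beta_+(f^n x)\bigl(Df^n v,\,Df^n w\bigr),\qquad n\ge 0.
\]
The plan is to decompose $v,w\in T_xW^{s,\mathrm{loc}}_\Lambda$ as $v=v^0+v^\perp$ with $v^0\in\ker D\Omega_+ = T_xW^{s,\mathrm{loc}}_{x_+}$ and $v^\perp$ in a chosen complement (similarly for $w$), and then to analyze each of the four bilinear pieces. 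On kernel entries, $\Omega_+^*\omega$ vanishes automatically (since $D\Omega_+ v^0=0$) and $\omega$ vanishes by (i); mixed kernel/complement entries are controlled by $\eta^{-n}\lambda_+^n\mu_+^n\to 0$, which is precisely \eqref{eqn:conformal_rates}; complement/complement entries reduce to the case $x\in\Lambda$, where $D\Omega_+|_{T_x\Lambda}=\mathrm{id}$ and the two terms of $\beta_+$ coincide by (i). Thus $\beta_+\equiv 0$, giving the first identity of \eqref{eqn:symplecticwavemaps}; the symmetric construction for $\Omega_-$ (using $\lambda_-\mu_-\eta<1$) gives the second. Restricting to $\Gamma$ and composing via $S=\Omega_+^\Gamma\circ(\Omega_-^\Gamma)^{-1}$ together with (A) yields $S^*(\omega|_\Lambda)=\omega|_\Lambda$. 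The main technical obstacle of the proof sits here: organizing the bilinear decomposition so that only the product $\lambda_+\mu_+/\eta$ (and its dual) enters, rather than $\mu_+^2/\eta$.

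For (C), define $\gamma_+ := \Omega_+^*\alpha - \alpha$ on $W^{s,\mathrm{loc}}_\Lambda$. Differentiating and using (B) gives $d\gamma_+ = \Omega_+^*(\omega|_\Lambda) - \omega|_{W^{s,\mathrm{loc}}_\Lambda} = 0$, so $\gamma_+$ is closed. Because $W^{s,\mathrm{loc}}_\Lambda$ deformation retracts onto $\Lambda$ along the strong-stable foliation \eqref{foliationsstablemanifolds}, the inclusion $\Lambda\hookrightarrow W^{s,\mathrm{loc}}_\Lambda$ induces an isomorphism on $H^1$; since $\gamma_+|_\Lambda = \alpha|_\Lambda-\alpha|_\Lambda = 0$, the class $[\gamma_+]$ vanishes and thus $\gamma_+ = dP^+_\alpha$ for some function $P^+_\alpha$ on $W^{s,\mathrm{loc}}_\Lambda$. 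A concrete primitive is
\[
P^+_\alpha(x) := -\int_{x_+}^{x} \alpha,
\]
along the contractible leaf $W^{s,\mathrm{loc}}_{x_+}$, which is well defined because the leaves are isotropic (so $\alpha|_{W^{s,\mathrm{loc}}_{x_+}}$ is closed on a simply connected manifold). The symmetric argument gives $P^-_\alpha$ on $W^{u,\mathrm{loc}}_\Lambda$. On $\Gamma$, where both formulas apply, $\Omega_+^{\Gamma *}\alpha - \Omega_-^{\Gamma *}\alpha = d(P^+_\alpha - P^-_\alpha)|_\Gamma$, and pulling back by $(\Omega_-^\Gamma)^{-1}$ yields
\[
S^*\alpha - \alpha|_\Lambda \;=\; d\bigl((P^+_\alpha - P^-_\alpha)\circ(\Omega_-^\Gamma)^{-1}\bigr),
\]
so we may take $P^S_\alpha = (P^+_\alpha - P^-_\alpha)\circ(\Omega_-^\Gamma)^{-1}$. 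A subtlety to address is that $\alpha$ may be unbounded, so I will verify that the line integrals converge on bounded pieces (or use a gauge transformation \eqref{eqn:tilde_alpha} that tames $\alpha$ near $\Lambda$); the explicit series forms of Section~\ref{sec:primitive_series} give the natural candidates when $f$ is exact.
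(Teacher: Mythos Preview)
Your outlines for (A) and (C) are essentially correct. For (A), your argument via the splitting $T_xM=T_x\Gamma\oplus T_xW^{s,\mathrm{loc}}_{x_+}\oplus T_xW^{u,\mathrm{loc}}_{x_-}$ together with Lemma~\ref{lem:vanishingmanifolds} is more direct than the paper's route (which iterates $\Gamma$ forward until it is $\C^1$-close to $\Lambda$ and then invokes non-degeneracy of $\omega|_\Lambda$). For (C), your cohomological phrasing (deformation retract along the strong-stable foliation) is a clean variant of the paper's Stokes computation in Section~\ref{sec:proofpartCStokes}, and your explicit primitive $P^+_\alpha(x)=-\int_{x_+}^x\alpha$ along the fiber coincides (up to sign) with the paper's \eqref{eq:primitiveomega+}. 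The unboundedness worry is a non-issue here since the integration is along paths inside the bounded leaves $W^{s,\mathrm{loc}}_x\cap\OO_\rho$.

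There is, however, a genuine gap in (B). Your decomposition disposes of the kernel and mixed terms via the rate $\lambda_+\mu_+\eta^{-1}<1$, but the complement/complement piece does \emph{not} ``reduce to the case $x\in\Lambda$'' by a rate argument: iterating $\beta_+(x)(v^\perp,w^\perp)=\eta^{-n}\beta_+(f^nx)(Df^nv^\perp,Df^nw^\perp)$ produces the factor $\mu_+^2\eta^{-1}$, which is not controlled by \eqref{eqn:conformal_rates}. The paper confirms this obstruction: its proofs that avoid closedness (Sections~\ref{sec:proof_main_2_b_L}, \ref{proofnonclosed}, \ref{sec:iteration}) all require the stronger hypotheses $\mu_+^2\lambda_+\eta^{-1}<1$ or $\mu_-\mu_+^2\lambda_+\eta^{-1}<1$. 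The missing ingredient under just \eqref{eqn:conformal_rates} is $d\omega=0$. Once you use it, your setup finishes immediately: you have shown $i_V\beta_+=0$ for $V$ tangent to the strong-stable leaves, and $d\beta_+=\Omega_+^*(d\omega)-d\omega=0$, so $\beta_+$ is a \emph{basic} form for the foliation $\{W^{s,\mathrm{loc}}_x\}$; since its restriction to the transversal $\Lambda$ vanishes (there $\Omega_+=\mathrm{id}$), $\beta_+\equiv0$. This is exactly the mechanism behind the paper's coordinate proof (Section~\ref{sec:proof_main_2_b_2}, equation~\eqref{eqn:partial_omega}) and its Cartan-formula proof (Section~\ref{sec:magic}).
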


A remarkable aspect of part {\bf(C)} of
Theorem~\ref{thm:main2} is that $\Omega_\pm^\Gamma$   and $S$ are  exact symplectic
for all action forms $\alpha$.
For conformally symplectic systems, one expects that a map could be exact for some action
form but not for others. See Example~\ref{ex:standard_map}.

In Section~\ref{sec:gauge} we have studied the effect of gauge transformations
(changing $\alpha$ into $\alpha + dG$ for some function $G$) on the primitive
functions of exact (conformally) symplectic maps.
A remarkable result (see \eqref{eqn:Pf_gauge}) is that  the primitive of the scattering map
is invariant under normalized gauge changes, that is, gauge functions $G$ that vanish on $\Lambda$.

The proof of Theorem~\ref{thm:main2} is given in Section \ref{sec:proof_main_2}.

In Section \ref{sec:proof_main_2_b} we give seven different proofs of part \textbf{(B)}
of Theorem~\ref{thm:main2}. Some of them require slightly different hypotheses.
For instance, some of the proofs do not use that $\omega$ is closed or non-degenerate, other proofs
assume that $\omega$ is $\C^1$-bounded, and other ones assume
different conditions among  the hyperbolic rates and the conformal factor.
Thus, they  are also applicable in non-symplectic contexts, as in the applications described in the Appendix
\ref{othermodels}.

In Section~\ref{sec:proofpartC} we give two different proofs of part \textbf{(C)} of
Theorem~\ref{thm:main2}.
One  is based on Stokes theorem, and the second one on Cartan's magic formula.

In Section~\ref{othermodels}, we present several problems
that have
appeared in the literature, for which the methods developed
here apply even if the forms playing a role  are not symplectic.

\medskip
To avoid developing complicated language,  when
$\Omega_+^* \omega = \omega$, we will say that $\Omega_+$ is
symplectic even if $\omega$ is not assumed to be closed or non-degenerate.
\medskip

The variety of  proofs shows that the remarkable cancellations leading to
the symplectic properties of the scattering maps are at the crossroads of
several ideas in symplectic geometry. It seems that this paper
has  only started to explore the possibilities.

\subsection{Results for presymplectic systems}
\label{sec:presymplectic}

In this section we present some results analogous to Theorem \ref{thm:main1} and  Theorem \ref{thm:main2} for presymplectic systems.
We assume that $\omega$ is a presymplectic form on $M$ (see Definition \ref{defn:presymplectic}) and $f$ is a conformally presymplectic map (see Section \ref{sec:presymp_maps}).
A motivation for us is to study some NHIMs that appear in quasi-integrable systems near multiple  resonances.

Similarly to the symplectic case (see Remark \ref{rem:conformal_factor_constant}), under conditions on dimensionality, the conformal presymplectic factor needs to be a constant.

\begin{prop}\label{prop:conformal_factor_constant}
If  for any $x\in M$ we have that $\textrm{codim}(K_x(\omega))\ge 4$, then the conformal presymplectic factor $\eta(x)$ is a constant.
\end{prop}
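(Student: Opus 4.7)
The plan is to mimic the symplectic argument of Remark \ref{rem:conformal_factor_constant}, but taking care of the degeneracy of $\omega$. The starting point is to differentiate the defining relation. Since $\omega$ is presymplectic, $d\omega=0$, so applying $d$ to $f^*\omega = \eta\omega$ gives
\[
0 \;=\; f^*(d\omega) \;=\; d(f^*\omega) \;=\; d\eta\wedge\omega + \eta\,d\omega \;=\; d\eta\wedge\omega .
\]
Thus at every point $x\in M$ the $1$-form $\beta(x):=d\eta(x)$ satisfies $\beta\wedge\omega = 0$. The whole proposition then reduces to a pointwise linear-algebra statement:

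\textbf{Key algebraic lemma.} \emph{Let $\omega$ be an alternating $2$-form on a finite-dimensional vector space $V$, and suppose $\mathrm{codim}(K(\omega))\ge 4$, where $K(\omega):=\{v\in V\mid \iota_v\omega=0\}$. Then for any $\beta\in V^*$, $\beta\wedge\omega=0$ implies $\beta=0$.}

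I would prove this lemma by a splitting argument that reduces it to the non-degenerate case cited in Remark \ref{rem:conformal_factor_constant}. Choose a complement $W$ of $K(\omega)$ in $V$, so that $\omega|_W$ is non-degenerate of rank $\ge 4$ and $\omega$ vanishes whenever one of its arguments lies in $K(\omega)$. Decompose $\beta = \beta_K+\beta_W$ where $\beta_K$ annihilates $W$ and $\beta_W$ annihilates $K(\omega)$. Evaluating $\beta\wedge\omega$ on triples $(v_1,v_2,v_3)$ with $v_1\in K(\omega)$ and $v_2,v_3\in W$ gives
\[
(\beta\wedge\omega)(v_1,v_2,v_3) \;=\; \beta(v_1)\,\omega(v_2,v_3),
\]
and since $\omega|_W\not\equiv 0$ we conclude $\beta_K=0$. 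Restricting to $v_1,v_2,v_3\in W$, we obtain $\beta_W|_W\wedge\omega|_W=0$ on $W$; but $\omega|_W$ is a symplectic form of rank $\ge 4$ on $W$, so the classical result recalled in Remark \ref{rem:conformal_factor_constant} yields $\beta_W=0$.

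With this lemma in hand, the proof of the proposition finishes quickly: at every $x\in M$, the hypothesis $\mathrm{codim}(K_x(\omega))\ge 4$ applies, so $d\eta\wedge\omega=0$ forces $d\eta(x)=0$. Hence $\eta$ is locally constant, and by connectedness of $M$ (part of our standing assumptions in Section \ref{sec:prelimgeom}), $\eta$ is globally constant. The main obstacle is the algebraic lemma itself, since in the presymplectic case one cannot invoke the non-degenerate statement directly; however, the $K(\omega)$-versus-$W$ decomposition above disposes of it cleanly. Note that the argument makes no use of the constant-rank assumption on $\omega$ across $M$: what is required is only that the \emph{codimension} of the kernel be at least $4$ at each individual point, which is exactly what is assumed.
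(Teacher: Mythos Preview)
Your proof is correct and follows essentially the same approach as the paper's: both derive $d\eta\wedge\omega=0$, then split $T_xM$ into the kernel $K_x(\omega)$ and a complement on which $\omega$ is non-degenerate, showing $d\eta$ vanishes on the kernel via a direct evaluation and on the complement via the classical symplectic result of Remark~\ref{rem:conformal_factor_constant}. The paper packages the kernel step as a separate Lemma~\ref{lem:conformal_factor_constant}, but the substance is identical.
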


\begin{proof}
We will use the following

\begin{lem}\label{lem:conformal_factor_constant}
Assume $\omega(x)\neq 0$ for any $x\in M$.
Then $d\eta =0$ on $K(\omega)$.
\end{lem}

\begin{proof}
We have
\[
0=f^*(d\omega)=d f^*\omega=d(\eta\omega)=d\eta\wedge \omega.
\]

Then for all $u\in K_x(\omega)$, and all $v,w\in T_xM$ we have
\[
0=d\eta(x)(u)\omega (x)(v,w)+\textrm{ other terms with }\omega (x)(u,\cdot).
\]
Since $u\in K_x(\omega)$, the other terms are $0$.
Since  $\omega(x) \neq 0$, there exist $v,w$ such that $\omega(x)(v,w)\neq 0$.
Therefore $d\eta (x) (u)=0$.
As the result is true for any $x\in M$ the lemma is proved.
\end{proof}

Now we proceed with the proof of Proposition \ref{prop:conformal_factor_constant}.
There exist $A\subset TM$ such that \[TM=K(\omega)\oplus A \textrm { and } \omega_{\mid A} \textrm { is non-degenerate.}\]
Then using the same argument as in the symplectic case  (see Remark \ref{rem:conformal_factor_constant}) we obtain that
\[
\dim A\ge 4 \Rightarrow d\eta_{\mid A}=0,
\]
therefore, using the result of the previous lemma we obtain  $d\eta \equiv 0$ and consequently $\eta=\textrm{const.}$
\end{proof}

Now we examine the integrability of the kernel of  a presymplectic form.
For a fixed $x$, the kernel $K_x (\omega)$ is a linear subspace of $T_x M$, and the family $K_x (\omega)$, $x\in M$, determines a \emph{distribution} on $M$.
Rank-$1$ kernels are just  multiples of a vector field
and can be integrated by solving ODE's. For higher rank kernels, the integrability is
non-trivial (see \cite{LibermannM87,Souriau97,AlishahL12}).

We say that a presymplectic $\C^r$-form $\omega$ has the constant rank property
on an open set $\U \subset M$ if  the dimension of $K_x$, for $x\in \U$,  is constant.

\begin{lem}\label{frobenius}
Let $\omega$ be a $\C^r$ ($r\ge 1$) presymplectic form
with constant rank on $M$.

Then, $K(\omega)$ is an integrable distribution.
That is, there exists a foliation $\F$ whose leaves are
$\C^r$ isotropic manifolds.

The form induced by $\omega$ on the quotient of the manifold $M$
by the foliation $\F$ is  a symplectic form.
\end{lem}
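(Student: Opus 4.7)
The plan is to reduce this to a Frobenius-type integrability argument using Cartan's magic formula, together with the standard construction of the reduced symplectic form on the leaf space.

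First I would verify that the distribution $K(\omega)$ is involutive. Take two $\C^r$ vector fields $X,Y$ that take values in $K(\omega)$, i.e., $\iota_X\omega = \iota_Y\omega = 0$. Applying Cartan's magic formula and using $d\omega = 0$ gives $\L_X\omega = \iota_X d\omega + d\iota_X\omega = 0$, and likewise $\L_Y\omega = 0$. Then
\begin{equation*}
\iota_{[X,Y]}\omega = \L_X\iota_Y\omega - \iota_Y\L_X\omega = 0.
\end{equation*}
Hence $[X,Y]\in K(\omega)$. Since $K(\omega)$ has constant rank and is $\C^r$ by the hypothesis that $\omega$ is $\C^r$, the Frobenius theorem produces a $\C^r$ foliation $\F$ whose leaves are tangent to $K(\omega)$. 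Each leaf $L$ satisfies $T_yL = K_y(\omega)\subseteq T_yL^\omega$, so $\omega_{\mid L}=0$ and the leaves are isotropic in the sense of Definition~\ref{defn:isotropic }, extended to the presymplectic setting.

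Next I would construct the symplectic form on the (local) quotient. Work in a foliation chart so that $\pi:\U\to \U/\F$ is a $\C^r$ submersion. A $2$-form $\omega$ descends through $\pi$ to a form $\bar\omega$ with $\pi^*\bar\omega=\omega$ precisely when $\omega$ is basic, i.e., $\iota_X\omega=0$ and $\L_X\omega=0$ for every $\pi$-vertical vector field $X$. Vertical vector fields are exactly those taking values in $K(\omega)$, so both conditions were established in the previous step. This defines $\bar\omega$ unambiguously: if $\bar v_i\in T_{[y]}(\U/\F)$ and $v_i\in T_yM$ are any lifts, set $\bar\omega([y])(\bar v_1,\bar v_2) = \omega(y)(v_1,v_2)$; the answer is independent of both the lifts (by $\iota_X\omega=0$) and the representative $y$ in its leaf (by $\L_X\omega=0$ integrated along the leaf).

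Finally I would check that $\bar\omega$ is symplectic. Closedness follows from $\pi^* d\bar\omega = d\pi^*\bar\omega = d\omega = 0$ together with the fact that $\pi^*$ is injective on forms on the quotient. Non-degeneracy is immediate: if $\iota_{\bar v}\bar\omega = 0$ for some $\bar v = D\pi(v)$, then $\iota_v\omega$ vanishes on all vectors, hence $v\in K_y(\omega)$, hence $\bar v = 0$. The expected obstacle is not the local construction but the global one: the leaf space $M/\F$ need not be Hausdorff or even a manifold, so the statement is really a \emph{local} symplectic reduction, and I would phrase the conclusion accordingly (working in a foliated chart, or passing to a transversal slice, so that the quotient acquires a well-defined $\C^r$ manifold structure on which $\bar\omega$ lives).
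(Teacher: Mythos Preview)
Your proof is correct and follows essentially the same route as the paper: establish involutivity of $K(\omega)$ and then invoke Frobenius. The only cosmetic difference is that the paper computes involutivity directly from the Koszul formula for $d\omega(u,v,w)$, whereas you obtain it via Cartan's magic formula and the identity $\iota_{[X,Y]}=\L_X\iota_Y-\iota_Y\L_X$; your treatment of the reduced form on the quotient (well-definedness, closedness, non-degeneracy, and the caveat that the leaf space is only locally a manifold) is in fact more detailed than the paper's one-line remark.
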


\begin{proof}
For any $\C^r$-vector fields ($r\ge 1$) $u,v,w$, applying the standard formula for  the derivative of $\omega$ yields
\[
\begin{split}
0 = (d\omega)(u,v,w) &=  u \omega(v,w) - v \omega(u,w)   + w \omega(u,v) \\
              & +\omega([u,v],w) - \omega([v,w], u ) + \omega([w,u], v)
\end{split}
\]
If we now assume that $u,v \in K$, we obtain that for any $w$ one of the terms survives.  Hence, for any $w$, we
have $\omega([u,v],w)  = 0$, i.e.
$[u,v] \in K$.

That is, the distribution $K$ is closed under taking commutators.
This is the hypothesis of Frobenius theorem.
A version of Frobenius theorem with low regularity appears in \cite[pp. 123-124]{Hartman}. See also \cite{Yao23}.

Applying now Frobenius theorem, we obtain the existence of a foliation $\mathcal{F}$ integrating the distribution given by the kernel $K(\omega)$.

The fact that the form is non-degenerate follows, quotienting by the kernel we obtain a non-degenerate form.
\end{proof}

Assume that $f$ is conformally presymplectic.
If $u \in K_x(\omega)$ then, for any $v\in T_{x}M$,  $\omega(x)(u,v)=0$ and, consequently:
\[
\omega(f(x)) (Df(x)u, Df(x)v)=f^*\omega(x)(u,v) =\eta \omega(x)(u,v)=0
\]
therefore $Df(x)u \in K_{f(x)}(\omega)$.
That is, a conformally presymplectic map $f$ transforms
$K_x(\omega)$ into $K_{f(x)}(\omega)$.
Consequently, when  the rank is constant and the foliation
$\F$ exists,  the leaves of the foliation $\F$ are
preserved by $f$. It is then natural
to define an induced map $\tilde{f}$ in the space of leaves.

We can obtain a concrete representation  of the space of leaves and the dynamics on it by taking transversal sections
$T_x, T_{f(x)} $
to the foliation $\F$ at $x$ and $f(x)$, respectively.
Each transversal  can be endowed with the restriction of
$\omega$, which is non-degenerate since the transversal
excludes the kernel of the form.
Note that $\omega_{\mid T_x}$ and $ \omega_{\mid T_{f(x)}}$
are closed because the exterior derivative commutes with the restriction.
Then, given  $y \in T_x$,
associate to it $\tilde y =\tilde f(y)\in T_{f(x)}$
defined by $\tilde y = H \circ f(y)$
where $H$ is the holonomy map  sending $x$ to $f(x)$.
If $f$ is conformally presymplectic, then
$\tilde f$ is conformally symplectic from
$(T_x,\omega_{\mid T_x})$ to $(T_{f(x)},\omega_{\mid T_{f(x)}})$.
Similar constructions appear in \cite[p. 106 ff.]{LibermannM87}.
We will apply a similar construction to the
scattering map.

A useful  consequence is the following:
\begin{prop}\label{prop:presymplectic foliation of Lambda}
  Let $\Lambda$ be a NHIM for a conformal
  presymplectic map $f$.

  Assume that $\omega_{\mid\Lambda} $ is constant
  rank presymplectic.

Let $\{\F_x  \}_{x \in U}$ be the leaves of the foliation in $\Lambda$
  integrating $K(\omega_{\mid\Lambda})$.

Then, $\{ W^s_{\F_x } \}_{x \in U}$ is a foliation of $W^s_\Lambda$ integrating $K( \omega|_{W^s_\Lambda})$.
\end{prop}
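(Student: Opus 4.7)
The plan is, first, to show $W^s_{\F_x}$ is a $\C^1$ submanifold and that these sets partition $W^s_\Lambda$; and second, to identify $T W^s_{\F_x}$ pointwise with $K(\omega|_{W^s_\Lambda})$, so that $\{W^s_{\F_x}\}$ is exactly the foliation integrating this kernel distribution.

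First I would use hypothesis \textbf{(H4)} to regard $\Omega_+: W^s_\Lambda \to \Lambda$ as a $\C^1$ fibration with fibers $W^s_z$. For any $\C^1$ submanifold $N \subset \Lambda$, the saturated set $\Omega_+^{-1}(N)=\bigcup_{z\in N} W^s_z$ is then a $\C^1$ submanifold of $W^s_\Lambda$ of dimension $\dim N + \dim E^s$. Applied to $N=\F_x$, this shows each $W^s_{\F_x}$ is a $\C^1$ submanifold; these are pairwise disjoint because $\Omega_+$ is single-valued and the leaves $\F_x$ are disjoint, and they cover $W^s_\Lambda$ because the leaves $\F_x$ cover $\Lambda$.

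For the second part, I would apply the presymplectic analog of Theorem~\ref{thm:main2}(B) (contained in Theorem~\ref{mainpre}), which gives the wave-map identity $\Omega_+^*(\omega|_\Lambda)=\omega|_{W^s_\Lambda}$. Fix $y\in W^s_\Lambda$ and set $x:=\Omega_+(y)\in\F_x$. Since $D\Omega_+(y):T_y W^s_\Lambda \to T_x\Lambda$ is surjective with kernel $T_y W^s_x$, the pullback identity yields
\[
K(\omega|_{W^s_\Lambda})_y \;=\; \bigl(D\Omega_+(y)\bigr)^{-1}\bigl(K(\omega|_\Lambda)_x\bigr).
\]
By construction of $\F$ through Lemma~\ref{frobenius}, $K(\omega|_\Lambda)_x=T_x\F_x$, so the right-hand side decomposes as $T_y W^s_x$ together with a $\C^1$ horizontal lift of $T_x\F_x$. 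From the fibered description, this is precisely $T_y W^s_{\F_x}$. In particular, $K(\omega|_{W^s_\Lambda})$ has constant rank $\dim\F_x+\dim E^s$, and its integral manifolds are the $W^s_{\F_x}$.

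The delicate step is the wave-map identity $\Omega_+^*(\omega|_\Lambda)=\omega|_{W^s_\Lambda}$ in the presymplectic setting, which is where the isotropy of the strong stable fibers (from the vanishing lemmas of Section~\ref{sec:vanishing_lemmas}) is used crucially; everything else is fiber-bundle bookkeeping, relying on the fact that for a submersion the kernel of the pullback form is the preimage of the kernel of the base form. An alternative route would apply Lemma~\ref{frobenius} directly on $W^s_\Lambda$ to produce \emph{some} foliation integrating $K(\omega|_{W^s_\Lambda})$, and then match it with $\{W^s_{\F_x}\}$ using the tangent-space identification above and a dimension count.
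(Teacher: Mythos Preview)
Your proof is correct and takes a somewhat different route from the paper. The paper's argument invokes Proposition~\ref{lem:degeneracy}(i) (isotropic $N\subset\Lambda$ $\Rightarrow$ $W^s_N$ isotropic) to conclude that each $W^s_{\F_x}$ is isotropic in $W^s_\Lambda$, i.e.\ $T W^s_{\F_x}\subseteq K(\omega|_{W^s_\Lambda})$; the reverse inclusion is left implicit via a dimension count. Your argument bypasses Proposition~\ref{lem:degeneracy} and applies the wave-map identity $\Omega_+^*(\omega|_\Lambda)=\omega|_{W^s_\Lambda}$ from Theorem~\ref{mainpre}(B) directly: since $D\Omega_+(y)$ is surjective, the linear-algebra fact $K(L^*\beta)=L^{-1}(K(\beta))$ for a surjection $L$ gives $K(\omega|_{W^s_\Lambda})_y=(D\Omega_+(y))^{-1}(T_x\F_x)=T_y W^s_{\F_x}$ in one stroke, simultaneously establishing constant rank and both inclusions. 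Both routes ultimately rest on the same wave-map identity (the paper's proof of Proposition~\ref{lem:degeneracy}(i) in Section~\ref{sec:proofisotropic} is itself a one-line consequence of it), but your formulation is more explicit about why equality, and not just inclusion, holds, and it correctly cites the presymplectic version of the wave-map identity rather than the symplectic Proposition~\ref{lem:degeneracy}.
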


\begin{proof}[Proof of Proposition \ref{prop:presymplectic foliation of Lambda}]
 Since $\omega_{\mid\Lambda}$ is presymplectic, we can use  Lemma \ref{frobenius} to integrate the kernel $K(\omega_{\mid\Lambda})$,  yielding a foliation $\F$ of $\Lambda$.
 Let $\{\F_x  \}_{x \in \Lambda}$ be the leaves of the foliation.

  We observe that $\F_x$ is an isotropic submanifold in $\Lambda$.
  By applying Proposition~\ref{lem:degeneracy},
  we obtain that  $W^s_{\F_x}$ is an isotropic submanifold in
  $W^s_\Lambda$. It is also a foliation of $W^s_\Lambda$.
\end{proof}

\begin{rem}\label{rem:constant_rank}
  The constant rank property of presymplectic forms is taken as part of the definition
   in some treatments \cite{Souriau97,LibermannM87}.
   As we noted earlier, the constant rank assumption is not an open condition,
   since adding an arbitrary  small perturbation to the
  form may decrease the dimension of the  kernel.

In this paper, the kernel of $\omega$ is obtained by
applying  vanishing lemmas (see Section \ref{sec:vanishing_lemmas}) assuming conditions on the rates.
Since the rates in bundles are continuous under small perturbation, we conclude
that, in such a case, perturbations do not change the dimension of the kernel.
Hence, in such cases,  the constant rank assumption is very natural.
More concretely, the symplectic form $\omega$ restricted to $W^{s,u}_\Lambda$ is presymplectic and
has constant rank; see Proposition~\ref{lem:degeneracy}.
The foliation integrating the kernel of $\omega_{\mid W^{s,u}_\Lambda}$ is $\{W^{s,u}_x\}_{x\in\Lambda}$.
The directions complementary to the kernel integrate to give  symplectic manifolds transverse to the leaves (such an example is a homoclinic channel as given in Definition \ref{def:channel}).

Another example when the constant rank property is implied by the hyperbolic rates is shown in Example~\ref{intermediatenew}.
 \end{rem}

\subsubsection{The scattering map for conformally presymplectic systems}

The main result for conformally presymplectic systems is:
\begin{thm}\label{mainpre}
Assume that $\omega$ is a presymplectic form on $M$, $f$  is a  conformally presymplectic map, and the standing assumptions (i), (iii)-(vi) from Section~\ref{sec:standing} hold for $f$ and $\omega$.
Assume the conformal factor \eqref{confpresymplectic} satisfies the rate conditions:

\begin{equation}\label{eqn:rates_conditions_pre}
\tag{{\bf S'}}
\begin{split}
\mu_+ \lambda_+ \eta_-^{-1}<&1,\\
\mu_- \lambda_-\eta_+<&1.\\
\end{split}
\end{equation}

Then:
\begin{description}
\item [(A) Presymplecticity of  NHIM and of homoclinic channel]{$ $}
$\Lambda$ and $\Gamma$ are presymplectic.
\item[(B) Presymplecticity of scattering map]
The wave maps
\[
\Omega_{\pm}:W^{s,u,\mathrm{loc}}_\Lambda\to\Lambda
\]
 preserve the presymplectic form $\omega$ in the sense of \eqref{eqn:symplecticwavemaps}.

As a consequence, the scattering map $S$ associated  to $\Gamma$  is presymplectic.
\item[(C) Exact presymplecticity of  scattering map]
Assume further that the presymplectic form is exact, i.e., $\omega = d\alpha$.
Then, the scattering map $S$ is exact, that is
\[
S^*\alpha =\alpha+dP^S .
\]
\item[(D) Dynamics in the kernel of the  presymplectic form]
The foliation  $\mathcal{F}$ described in
Lemma~\ref{frobenius} is preserved by both the dynamics $f$ and the scattering map $S$.

When these dynamics are projected onto any transversal section to the foliation, they give  conformally symplectic and symplectic  maps respectively.

\end{description}

\end{thm}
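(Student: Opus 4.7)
The strategy is to adapt the proofs of Theorems \ref{thm:main1} and \ref{thm:main2} to the presymplectic setting, exploiting the fact that although $\eta$ is now a function, it is uniformly bounded between $\eta_-$ and $\eta_+$. The iterated conformal cocycle satisfies
\begin{equation*}
\eta_-^n \le \prod_{k=0}^{n-1} \eta(f^k(x)) \le \eta_+^n \quad \text{for } n \ge 0,
\end{equation*}
so every estimate in the vanishing lemmas used for the symplectic case carries over with $\eta$ replaced by $\eta_-$ for forward iteration and by $\eta_+$ for backward iteration. This is precisely why the hypotheses take the form \eqref{eqn:rates_conditions_pre}. Part (A) is essentially immediate: restriction commutes with $d$, so $\omega_{\mid\Lambda}$ and $\omega_{\mid\Gamma}$ inherit closedness from $\omega$ and are presymplectic by definition.

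For part (B), the first step is to prove a presymplectic vanishing lemma. For $u \in T_x\Lambda$ and $v \in E^s_x$, iterating $(f^n)^*\omega = \left(\prod \eta(f^k(x))\right)\omega$ yields
\begin{equation*}
|\omega(x)(u,v)| \le M_\omega \eta_-^{-n} D_+ C_+ (\mu_+ \lambda_+)^n \|u\|\|v\| = M_\omega D_+ C_+ (\mu_+\lambda_+\eta_-^{-1})^n \|u\|\|v\|,
\end{equation*}
which vanishes as $n\to\infty$ by the first inequality of \eqref{eqn:rates_conditions_pre}. The symmetric argument with $f^{-1}$ handles $v \in E^u_x$ using the second inequality. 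This shows $\omega$ vanishes on mixed pairs $T_x\Lambda \times E^{s,u}_x$ and on $E^{s,u}_x \times E^{s,u}_x$ individually. The identity $\Omega_\pm^* \omega_{\mid\Lambda} = \omega_{\mid W^{s,u,\mathrm{loc}}_\Lambda}$ then follows exactly as in the proof of Theorem \ref{thm:main2}(B): the kernel of $D\Omega_\pm$ coincides with $E^{s,u}$, and the vanishing lemma guarantees that the components of $\omega$ along these directions do not contribute. Presymplecticity of $\Gamma$ and of the scattering map $S = \Omega_+^\Gamma \circ (\Omega_-^\Gamma)^{-1}$ follow formally. Part (C) goes through via either the Stokes-theorem argument or the Cartan-formula argument of Section \ref{sec:proofpartC}, since both proofs use closedness of $\omega$ and $\alpha$ but do not require non-degeneracy; the primitive function exists on $\Lambda$ because the closed forms $\Omega_\pm^*\alpha_{\mid\Lambda} - \alpha_{\mid W^{s,u}_\Lambda}$ vanish when integrated along paths that lie in the (strong) stable/unstable leaves, by the vanishing lemma applied to $\alpha$.

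The main subtlety is part (D). Applying Lemma \ref{frobenius} to $\omega_{\mid\Lambda}$ (whose kernel has constant rank by Remark \ref{rem:constant_rank}, since the rank is determined by the rates) produces the isotropic foliation $\mathcal{F}$ of $\Lambda$. Because $f$ is conformally presymplectic, $Df(x)$ sends $K_x(\omega_{\mid\Lambda})$ into $K_{f(x)}(\omega_{\mid\Lambda})$, so $f_{\mid\Lambda}$ permutes the leaves of $\mathcal{F}$. The same argument applied to $S$, which satisfies $S^*\omega_{\mid\Lambda} = \omega_{\mid\Lambda}$ by (B), shows that $S$ also preserves $\mathcal{F}$. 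Choosing transversal sections $T_x, T_{f(x)}$ to $\mathcal{F}$ and composing $f$ (respectively $S$) with the holonomy map along $\mathcal{F}$ produces induced maps $\tilde f$ and $\tilde S$ between transversals, as described in the construction preceding Proposition \ref{prop:presymplectic foliation of Lambda}. Since $\omega$ restricted to a transversal is non-degenerate by construction, $\tilde f$ inherits the conformal factor $\eta(x)$ and is conformally symplectic, while $\tilde S$ inherits $S^*\omega = \omega$ exactly and is therefore symplectic. The only delicate point is verifying that the holonomy is well defined and $\C^1$ with uniform estimates so that $\tilde f$ and $\tilde S$ are genuine diffeomorphisms, which follows from the $\C^{1,1}$ regularity of the foliation provided by Frobenius' theorem under hypothesis \textbf{(H4)}.
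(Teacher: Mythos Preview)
Your approach is essentially the paper's own: adapt the basic inequality \eqref{iterates} to the cocycle form with $\eta_-$ for forward iteration and $\eta_+$ for backward iteration, observe that the vanishing lemmas (Lemmas~\ref{lem:vanishing} and \ref{lem:vanishingmanifolds}) then go through verbatim under~\eqref{eqn:rates_conditions_pre}, and rerun the Stokes/Cartan proofs of Theorem~\ref{thm:main2}(B),(C), which use closedness but not non-degeneracy. Part~(A) is indeed automatic since presymplecticity is just closedness of the restriction. For~(D) you correctly invoke the kernel-preservation argument from the discussion preceding Proposition~\ref{prop:presymplectic foliation of Lambda}.

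Two small slips to fix. In~(C), the phrase ``vanishing lemma applied to $\alpha$'' is not what you mean: the vanishing lemma is for $\omega=d\alpha$, and the point is that $\omega_{\mid W^s_x}=0$ makes $\alpha_{\mid W^s_x}$ closed on a simply-connected leaf, so its integral along paths in the leaf defines a single-valued primitive. In~(D), hypothesis~\textbf{(H4)} concerns the strong stable/unstable foliation of $W^{s,u}_\Lambda$, not the kernel foliation $\mathcal{F}$ of $\omega_{\mid\Lambda}$; the regularity of $\mathcal{F}$ comes directly from Frobenius' theorem applied to the $\C^r$ distribution $K(\omega_{\mid\Lambda})$ (Lemma~\ref{frobenius}), and has nothing to do with~\textbf{(H4)}. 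Neither slip affects the validity of the overall argument.
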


The proof of Theorem \ref{mainpre} is given in Section \ref{sec:proofs_presymplectic}.

\section{Results on topology of manifolds with conformally symplectic dynamics}
\label{sec:topological}

In this section, we show that there are interactions between the
(co)homology of the manifold  and the set of conformally symplectic
factors. In particular, we  present an answer to a question
posed in \cite[p. 160]{ArnaudF24}.

We do not need  many of the analytical assumptions in Section \ref{sec:standing}, particularly those concerning NHIMs.

We assume  that we have  a well defined cohomology theory
and that the $1$ and $2$-cohomology considered are finite dimensional (hence,
we can define pull-back operators and they are finite dimensional).
Then, we  obtain results for maps which are conformally symplectic with respect to  forms in this class.

For unbounded manifolds
there are several possibilities of cohomology theories and they  may give different
obstructions to conformal factors.

When we discuss applications to concrete examples, we will make explicit
the cohomology theory we are using.

\begin{rem}
  For unbounded manifolds, it is very natural to have infinite dimensional cohomology
  (for example, an unbounded cylinder with infinitely many handles attached).
We do not explore these cases in this paper.
\end{rem}

\subsection{Topological obstructions to exactness}
\label{sec:exactness}
For a diffeomorphism $f$, we denote by $f^\# $ the induced map  on
cohomology and by $f_\#$ the induced map on homology.
We will only  consider the action on $1$- and  $2$-(co)homology, and when  we need to
make explicit the order of the cohomology, we will add a number  to the symbol $\#$.

We reserve the notation $f^*$
for pull-back
Denoting by $[ \beta]$
the cohomology class of a closed form,
we have $f^\#[\beta] = [f^* \beta]$.

\begin{lem}
\label{lem:obstruction}
Let  $f$ be a conformally symplectic map for a non-exact form~$\omega$, and  $f^{\#2} :H^2(M)\to H^2(M)$ be the homomorphism induced by $f$ on the cohomology group of order $2$.
Then the conformal factor $\eta$ is an eigenvalue for $f^{\#2}$.
\end{lem}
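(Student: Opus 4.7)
The plan is to apply the conformally symplectic identity at the level of de~Rham cohomology; the proof is essentially one line once the right objects are identified. First I will observe that the symplectic form $\omega$ is closed by definition, hence it defines a class $[\omega] \in H^2(M)$. The assumption that $\omega$ is \emph{non-exact} translates precisely into the statement that this class is nonzero: $[\omega] \neq 0$ in $H^2(M)$.

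Second, I will apply the conformally symplectic relation $f^{*}\omega = \eta\,\omega$ and push it to cohomology. By the defining property of the induced map, $f^{\#2}[\beta] = [f^{*}\beta]$ for any closed $2$-form $\beta$, so linearity of the cohomology class yields
$$f^{\#2}[\omega] \;=\; [f^{*}\omega] \;=\; [\eta\,\omega] \;=\; \eta\,[\omega].$$
Together with $[\omega] \neq 0$, this exhibits $[\omega]$ as an eigenvector of $f^{\#2}$ with eigenvalue $\eta$, which is the desired conclusion.

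There is essentially no obstacle to this argument; its only conceptually interesting point is recognizing that the non-exactness hypothesis is precisely what promotes $\eta$ from a scalar annihilating the zero class to a bona fide eigenvalue. I would emphasize in the write-up that this lemma is the mechanism through which topological finiteness restricts $\eta$: if $H^2(M)$ is finite-dimensional and $f^{\#2}$ preserves an integral lattice (so that in a suitable basis it is represented by a matrix with integer entries), then $\eta$ must be a root of a monic integer polynomial, hence an algebraic integer. This is the link to the question of \cite{ArnaudF24} that motivates Section~\ref{sec:ArnaudF}.
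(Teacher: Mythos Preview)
Your proof is correct and follows exactly the same approach as the paper: pass the relation $f^*\omega=\eta\omega$ to cohomology to get $f^{\#2}[\omega]=\eta[\omega]$, and use non-exactness to ensure $[\omega]\neq 0$. The paper's version is the same one-line argument, without your added commentary on algebraic integers (which correctly anticipates how the lemma is used later in Section~\ref{sec:ArnaudF}).
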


\begin{proof}
Since $f^*\omega=\eta\omega$, we have
$f^{\#2} [\omega]=\eta[\omega]$. Then  the conformal factor $\eta$ is an eigenvalue for $f^{\#2}$ because for a non-exact form,  $[\omega]\neq 0$.
\end{proof}

The result  below is a converse of Lemma \ref{lem:obstruction}.

\begin{lem} \label{generating}
  Assume $\eta$ is not an eigenvalue of $f^{\#2}$.

  Then,  the symplectic form $\omega$ is exact, $\omega = d \alpha$, for some $1$-form $\alpha$.
\end{lem}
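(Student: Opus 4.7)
The plan is to run the argument of Lemma~\ref{lem:obstruction} in reverse: that lemma shows that if $\omega$ is non-exact then $\eta$ is forced to be an eigenvalue of $f^{\#2}$, so the contrapositive immediately yields exactness. Concretely, I would argue as follows.

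Since $\omega$ is a symplectic form it is closed, so it defines a class $[\omega]\in H^2(M)$ in whichever cohomology theory we have fixed (the standing assumption at the top of Section~\ref{sec:topological} guarantees finite-dimensionality, so the induced map $f^{\#2}:H^2(M)\to H^2(M)$ is a linear endomorphism of a finite-dimensional vector space and its spectrum is well defined). The conformally symplectic relation $f^*\omega=\eta\omega$ gives, after passing to cohomology classes,
\[
f^{\#2}[\omega]=[f^*\omega]=[\eta\omega]=\eta[\omega].
\]
Thus $[\omega]$ satisfies the eigenvalue equation $(f^{\#2}-\eta\,\mathrm{Id})[\omega]=0$. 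By hypothesis $\eta$ is not an eigenvalue of $f^{\#2}$, so $f^{\#2}-\eta\,\mathrm{Id}$ is injective, forcing $[\omega]=0$. This means $\omega$ is exact, i.e.\ $\omega=d\alpha$ for some $1$-form $\alpha$, which is the desired conclusion.

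There is essentially no hard step here: the proof is a one-line contrapositive of Lemma~\ref{lem:obstruction}. The only point requiring care is that the argument depends on having fixed a cohomology theory in which $[\omega]$ makes sense and $f^{\#2}$ acts as an endomorphism of a finite-dimensional space, so that the notion of eigenvalue matches the one used in the hypothesis; this is precisely the framework the section opens with, so no additional work is needed. (If one wanted to drop finite-dimensionality, the statement should be rephrased in terms of $f^{\#2}-\eta\,\mathrm{Id}$ being injective rather than $\eta$ not being an eigenvalue, but the proof is otherwise identical.)
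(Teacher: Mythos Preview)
Your proof is correct and follows essentially the same approach as the paper: pass the relation $f^*\omega=\eta\omega$ to $H^2(M)$ to get $f^{\#2}[\omega]=\eta[\omega]$, and then use that $\eta$ is not an eigenvalue to conclude $[\omega]=0$. The paper's proof is the same one-line argument, stated slightly more tersely.
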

\begin{proof}
  Taking $2$-cohomology on the definition of conformally symplectic map
  \eqref{eqn:conformally_symplectic},
  we obtain:
  \[
  f^{\#2} [\omega] - \eta [\omega] = 0 .
  \]
  Hence, if $\eta$ is not an eigenvalue of $f^{\#2}$, then
 $[\omega] = 0$. Therefore,  there is a $1$-form
  $\alpha$ so that
$\omega = d \alpha$.
\end{proof}

\begin{lem}\label{generating2}
  Assume that
  $\omega = d\alpha$ for some $1$-form $\alpha$.
  \begin{itemize}
  \item [(i)] If $\eta$ is not an eigenvalue of $f^{\#1}$, then  there exists a closed $1$-form $\beta$
  so that $\tilde \alpha = \alpha + \beta$
  satisfies $\omega = d \tilde \alpha$ and
  \begin{equation}\label{newgauge}
  f^* {\tilde \alpha} -\eta {\tilde \alpha} = d P
  \end{equation}
  for some primitive function $P$, and therefore $f$ is exact with respect to $\tilde\alpha$.
  The $1$-form $\beta$  is unique up to
  the addition of an exact form $dG$,  for some function $G:M\to\mathbb{R}$.
  Hence, the function $P$ is unique up to the addition of
  $G \circ f - \eta G$.

  \item[(ii)] If $\eta$ is an eigenvalue of $f^{\#1}$ we have the following dichotomy:
  \begin{itemize}
  \item[(ii.a)] If $[f^*\alpha-\eta\alpha]\not \in \textrm{Range}(f^{\#1}-\eta\textrm{Id})$ then there  is no closed $1$-form $\beta$ such that $f$ is exact for $\tilde\alpha$.
  \item[(ii.b)] If $[f^*\alpha-\eta\alpha]\in \textrm{Range}(f^{\#1}-\eta\textrm{Id})$ then there exists a closed $1$-form $\beta$ such that $f$  is exact for $\tilde\alpha$. Moreover, such $ \beta $ is unique up to   the addition  of   an exact form
$dG$  and of a linear combination of
    $\beta_1,\ldots,\beta_l$, where $[\beta_i]$, $i=1,\ldots,l$, are
    a basis of the space of eigenvectors of $f^{\#1}$  corresponding to  $\eta$  and  $l$ is the algebraic multiplicity of $\eta$
    as an eigenvalue of $f^{\#1}$.
  \end{itemize}

\item [(iii)] If $f$ is symplectic, i.e., $\eta=1$, and $f^{\#1}=\textrm{Id}$, then either $f$ is exact for all actions $\alpha$ or $f$  is not exact
  for any $\alpha$.
  \end{itemize}
\end{lem}
\begin{proof}
Let  $\tilde\alpha=\alpha +\beta$, with $\beta$ closed $1$-form.

The map $f$ is exact with respect to $\tilde \alpha$  if and only if we have \eqref{newgauge}.
Taking $1$-cohomology   we get
  \begin{equation}\label{transformedgauge}
    [f^* {\tilde \alpha} -\eta {\tilde \alpha}]
    = [f^* { \alpha} -\eta {\alpha}] + f^{\#1} [\beta] -\eta [\beta]=0, \end{equation}
which is equivalent to
\begin{equation}\label{eqn:range} (f^{\#1}-\eta\textrm{Id})[\beta]=- [f^*\alpha-\eta\alpha].\end{equation}

  For part (i), as $\eta$ is not an eigenvalue of $f^{\#1}$,  we can find a unique solution $[\beta]$ for \eqref{eqn:range}.
This determines $\beta$ up to
    the addition of  the differential of a function $G$.
    The corresponding change in the primitive function $P$ follows from \eqref{eqn:Pf_gauge}.

Part (ii.a) follows from linear algebra.

For part (ii.b), if $[\beta]$ and $[\beta']$ are two solutions of  \eqref{eqn:range}, then $[\beta]-[\beta']$ is in the eigenspace of $\eta$.

For part (iii),
we have that $\eta=1$ is an eigenvalue of $f^{\#1}=\textrm{Id} $ and   $\textrm{Range}(f^{\#1}-\eta\textrm{Id}) =\{0\}$.

Then,  we observe that for any closed form $\beta$,
\[
f^{\#1}(\alpha +  \beta) - (\alpha + \beta) =
[f^{\#1} \alpha - \alpha] .
\]
Therefore, the map $f$ is exact for all $\alpha + \beta$  --
all possible action forms -- if and only if $f$ is exact for $\alpha$.
\end{proof}

\begin{rem}\label{maybeout}
The assumption in part (iii)   that $f^{\#1}$ is the identity
is natural for  symplectic maps on compact manifolds.
This occurs  when one considers a cohomology theory
which is invariant under homotopy and $f$ lies in
the connected component of the identity in the group of symplectic diffeomorphisms.
This is the case, for instance, of the time-$1$ map  of  an (autonomous) Hamiltonian flow.

The dichotomy in (iii) of $f$ being exact for all actions forms or for none,
is not true non-symplectic maps (see Example~\ref{ex:standard_map}),
or for symplectic maps not in the connected component of identity.
\end{rem}

\begin{rem}
\label{presymplectic_obstruction}
The result in Lemma~\ref{lem:obstruction} does
not depend on  the fact that $\omega$ is non-degenerate.
Hence,  the topological obstruction applies
just as well to the conformal factors for
non-exact pre-symplectic forms,
when the conformal factor is  a constant.
\end{rem}

\begin{rem}
  \label{several}
  In unbounded manifolds there could be several different cohomology theories giving different results.
  Since we think of  Lemma~\ref{lem:obstruction} as providing obstructions for possible conformal factors $\eta$, the existence of several theories   provides different obstructions for each cohomology theory.

  Some of these theories may lead to trivial obstructions,
  and several cohomology theories may lead to the same obstruction.
  In  Example~\ref{concrete} we will apply simplicial cohomology
  to obtain  obstructions for $\mathcal{R}$. For this example, we show that these obstructions
  are optimal  (see Proposition \ref{prop:ArnaudF24}). In Remark~\ref{rem:Borel_Moore} we will apply Borel-Moore homology
obtaining only trivial obstructions. The survey \cite{BanyagaHS24}
presents some other cohomology theories
useful for (locally) conformally symplectic geometry.
\end{rem}

\begin{rem}
  \label{rem:locallyconstant}
  Many (co)-homologies theories are homotopy invariant,  i.e., homotopic maps induce  maps on  (co)-homology that are isomorphic\footnote{However, this is not the case for the Borel-Moore cohomology; see Remark~\ref{rem:Borel_Moore}}.
  In such a case, the  conformal factor $\eta$ for a non-exact
  symplectic  remains
  constant under $\C^1$-homotopy.

  Under assumption \eqref{U1}, if $g,f$ are $\C^1$-close
  (hence $\C^1$ homotopic) conformally
  symplectic mappings for the same non-exact form,
  then the conformal factors have to be the same.

  Hence, for a non-exact form, the set of conformally symplectic factors is locally
  constant for $f$ in the $\C^1$ topology.
\end{rem}

\begin{rem}
  A particular case of Remark~\ref{rem:locallyconstant}
  is that 
  a symplectic diffeomorphism  for a non-exact form cannot be perturbed into a conformally symplectic one.
  The Example~\ref{ex:perturbed} presents a different argument  for the same
  phenomenon.

  In contrast, for exact forms
  $\omega = d\alpha$,
  given a function $H$,
  we can define a vector field  $X$ by
  $i_X\omega = d H + \sigma \alpha$.

  Assuming that the flow of $X$ exists\footnote{This is not obvious in unbounded manifolds since $\alpha$ may be unbounded (see Section~\ref{sec:unbounded_action}) and the standard Cauchy-Picard theorem
  does not apply. Indeed, there are
  examples of action forms with solutions that reach infinity
  in arbitrarily short times.},  the time-$t$ map of the flow   is conformally symplectic for $\omega$
  with a conformal factor $ \exp(\sigma t)$.

  Hence, if the hypothesis of exactness in Lemma~\ref{lem:obstruction}
  does not hold, many of the conclusions fail.
\end{rem}

\begin{rem} \label{rem:homotopic}
Assume that we consider a  cohomology theory that is homotopy invariant.
If $f$ is homotopic to the identity, (which is implied by $f$ being
the time-$1$ map of a time-dependent
flow or by  $f$  being  $\C^1$-close to identity)
then  the induced maps $f^\#$ on $1$ and $2$-cohomology are the identity.
If the conformal factor $\eta\neq 1$,
then $\eta$ is not an eigenvalue of $f^{\#1}$, $f^{\#2}$ and
Lemmas~\ref{generating} and  ~\ref{generating2} (i) apply.
In this case,  we recover \cite[Proposition 9] {ArnaudF24}.

Time-$1$ maps of conformally symplectic vector fields are exact because
there are explicit formulas for the primitive function  \cite{ArnaudF24}.
\end{rem}

\subsection{Topological obstructions to conformal factors}
\label{sec:ArnaudF}
The composition of two conformally symplectic maps with conformal factors $\eta_1$, $\eta_2$, is a conformally symplectic map  with conformal factor  $\eta_1\cdot\eta_2$. Therefore the set of conformal factors forms a multiplicative subgroup $\mathcal{R} \subseteq {\R}_+^*$. The topology of the underlying manifold $M$
presents obstructions to the possible $\mathcal{R}$'s (for instance,  when $M$ is compact $\mathcal{R}=\{1\}$).
Also, conditions on the conformal factor $\eta$ imply conditions on the symplectic structure of $M$.

\medskip

In this section we will address the following question formulated in  \cite[p. 160]{ArnaudF24}:

\begin{center}{\em ``Can $\mathcal{R}$ be strictly between $\{1\}$ and ${\R}_+^*$?''}
\end{center}

We will not attempt a general setting as in \cite{ArnaudF24}, rather  we are
just presenting  some examples in a concrete
manifold: the Cartesian product of a torus and an Euclidean space.

For the obstructions we present,
one also needs to specify a cohomology theory.
The  manifold considered is a deformation retract of the torus, and many
homology and cohomology theories agree.
But there are others, such as Borel-Moore cohomology (see Remark~\ref{rem:Borel_Moore})
that give different answers.
We will use the standard De Rham (co)homology.

Note also that the definition of $\mathcal{R}$ requires specifying
a set of allowed diffeomorphisms (e.g., specifying the growth properties
at $\infty$).
We will
consider diffeomorphisms with uniformly
bounded derivatives and bounded forms.

\begin{prop}\label{prop:ArnaudF24}
 There exists a  symplectic manifold $(M, \omega)$ such that the set \[\mathcal{R} =
\{\eta \in \real_+ \,|\, \exists f:M\rightarrow M \quad \textrm{ s.t. } f^*\omega = \eta\omega\}\] satisfies:
  \begin{itemize}
  \item
    $\mathcal{R}$ contains a number different from $1$;
  \item
    $\mathcal{R}$  consists only of algebraic
    numbers of degree $d(d-1)/2$ that are products of
    two algebraic numbers of degree $d$.
  \end{itemize}
Hence for $(M, \omega)$,  $\mathcal{R}$ is neither $\{1\}$ nor $\mathbb{R}_+$.
\end{prop}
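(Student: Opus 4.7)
The plan is to take $M = \torus^d \times \real^d$ for some $d \ge 3$, with coordinates $(x_1,\dots,x_d,y_1,\dots,y_d)$ and symplectic form
\[
\omega = \Omega_{xx} + \sum_{i=1}^d dx_i \wedge dy_i, \qquad \Omega_{xx} = \sum_{i<j} a_{ij}\, dx_i \wedge dx_j,
\]
with $(a_{ij})$ a constant real antisymmetric array. A direct wedge computation shows $\omega^{\wedge d}=d!\,dx_1\wedge dy_1\wedge\cdots\wedge dx_d\wedge dy_d$, so $\omega$ is non-degenerate irrespective of the choice of $\Omega_{xx}$, and it is bounded in $\C^0$ since the coefficients are constants. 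Because the Darboux summand is exact and $M$ deformation retracts onto $\torus^d$, one has $[\omega]=[\Omega_{xx}]\in H^2(\torus^d)$, and this class is nonzero whenever $\Omega_{xx}\neq 0$; thus any cohomology theory compatible with the homotopy type of $\torus^d$ will yield the same obstructions.

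Next I would extract the algebraic constraint on $\cR$. For any conformally symplectic $f:M\to M$ with factor $\eta$, Lemma~\ref{lem:obstruction} forces $\eta$ to be an eigenvalue of $f^{\#2}$ on $H^2(M)$. The induced map $f^{\#1}$ on $H^1(M;\Z)\cong H^1(\torus^d;\Z)\cong\Z^d$ is represented by an integer matrix $A\in M_d(\Z)$ (determined by the homotopy class of $f$ on the torus factor), and under the isomorphism $H^2(\torus^d)\cong \Lambda^2 H^1(\torus^d)$ one obtains $f^{\#2}=\Lambda^2 A$, an integer matrix of size $d(d-1)/2$. Its characteristic polynomial therefore has integer coefficients and degree $d(d-1)/2$, and each eigenvalue is of the form $\lambda_i\lambda_j$ ($i<j$), where $\lambda_1,\dots,\lambda_d$ are the eigenvalues of $A$, each a root of the characteristic polynomial of $A$ (an integer polynomial of degree $d$). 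Hence every $\eta\in\cR$ is algebraic, a product of two algebraic numbers of degree at most $d$, and itself of degree at most $d(d-1)/2$. In particular $\cR$ lies in a countable subset of $\real_+^*$, so $\cR\ne\real_+^*$.

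To exhibit a non-trivial element of $\cR$, I would pick $A\in SL(d,\Z)$ with irreducible characteristic polynomial over $\mathbb{Q}$ and real positive pairwise distinct eigenvalues $\lambda_1,\dots,\lambda_d$ (an Anosov automorphism of $\torus^d$; such matrices are standard for $d\ge 3$). Writing the dual eigenbasis $\{e_i^*\}$ of $A$, set $\Omega_{xx}:=e_1^*\wedge e_2^*$, which satisfies $A^T\Omega_{xx}A=\lambda_1\lambda_2\,\Omega_{xx}$. With $\eta:=\lambda_1\lambda_2$ define $f(x,y):=(Ax,\,\eta(A^{-1})^T y)$; a short matrix calculation (using $A^T B=\eta I$ for the Darboux block and $A^T\Omega_{xx}A=\eta\Omega_{xx}$ for the torus block) yields $f^*\omega=\eta\omega$. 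Since $\lambda_1\lambda_2=1/\lambda_3\ne 1$, we get $\eta\in\cR\setminus\{1\}$, completing the proof that $\{1\}\subsetneq\cR\subsetneq\real_+^*$.

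The main obstacle I foresee is exhibiting a real antisymmetric $\Omega_{xx}$ with the required eigenvalue for the action $\Omega\mapsto A^T\Omega A$: this forces the choice of $A$ to have at least two real eigenvalues, which is why I insist on a hyperbolic toral automorphism with all-real spectrum (standard but requires pointing to a specific construction). The additional statement that $\eta$ has algebraic degree exactly $d(d-1)/2$ (rather than merely at most) follows from irreducibility of the characteristic polynomial of $\Lambda^2 A$ over $\mathbb{Q}$, a genericity condition on $A$ that can be checked for explicit examples; all remaining steps (boundedness of $\omega$ and $Df$, exactness of the Darboux term, and the identification of $f^{\#2}$ with $\Lambda^2 A$) are routine.
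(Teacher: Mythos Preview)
Your proposal is correct and follows essentially the same approach as the paper: the manifold $M=\torus^d\times\real^d$, the symplectic form is the standard Darboux form plus a constant $2$-form on the torus factor built from eigenvectors of a hyperbolic $A\in SL(d,\Z)$, the explicit conformally symplectic map is $f(x,y)=(Ax,\eta(A^{-1})^Ty)$ with $\eta=\lambda_1\lambda_2$, and the obstruction on $\cR$ comes from Lemma~\ref{lem:obstruction} together with the identification $f^{\#2}=\Lambda^2(f^{\#1})$ as an integer matrix of size $d(d-1)/2$. The only notable variation is that you observe $\omega^{\wedge d}$ is independent of $\Omega_{xx}$ and hence non-degenerate for any choice of torus $2$-form, whereas the paper introduces a small parameter $\eps$ in front of $\Omega_{xx}$ to ensure non-degeneracy; your observation is a mild simplification. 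Your remark that ``degree exactly $d(d-1)/2$'' (as opposed to ``at most'') requires irreducibility of the characteristic polynomial of $\Lambda^2 A$ is well taken---the paper's own proof via Lemma~\ref{prop:coefficients} establishes only the upper bound, so the statement should be read accordingly.
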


In what follows, we describe several constructions that lead to a proof of Proposition \ref{prop:ArnaudF24}.
The symplectic manifold $(M, \omega)$ claimed in this proposition
  is explicitly constructed in  Example~\ref{concrete}.

  We will show, using cohomology, that all elements of $\mathcal{R}$
  have to satisfy some conditions. Then, we will provide examples
  showing that for any number $\eta$ that satisfies the above restrictions,
  there is a diffeomorphism $f$ that is conformally symplectic for $\omega$
  with conformal factor $\eta$.

\subsubsection{Computation of $f^{\#2}$ on some manifolds}
The matrix elements of $f^{\#2}$ in the standard De Rham cohomology
can be computed explicitly
when $M = \R^d \times \T^d$, where $\T=\R/\Z$ and $d\ge 2$.  We denote the coordinates
on $M$
by $(I,\theta)$. (Later, in  Remark~\ref{rem:Borel_Moore},
we will present results for Borel-Moore  theory).

We note that $\tilde f$, the lift of $f$ to
the universal cover has to satisfy for some $A \in GL(d, \Z)$,
\begin{equation}
\label{lift}
\tilde f(I,\theta + e ) =
\tilde f(I, \theta) + (0, A e), \quad \forall e \in \Z^d .
\end{equation}
Since $A$ is an operator over integers, it is a topological invariant, as small perturbations of $f$ cannot change $A$.  We will
compute $f^{\#2}$ in terms of $A$.

Let:
\[
\{ \sigma_{ij}\}_{1\le i < j \le d}= d \theta_i \wedge d \theta_j
\]
be a basis for the $2$-dimensional de Rham cohomology of $M$. The dimension of the $2$-cohomology
is $d(d-1)/2$.

For $1 \le k \le l \le d$,  we define the $2$-cell
$\gamma_{k,l}(t,s) \subset M$, $(t,s) \in [0,1]^2$,
 by setting $\theta_k = t$, $\theta_l= s$ (mod $1$),
and all the other coordinates in $M$ to zero; that is,
$\gamma_{k,l}$ is a $2$-torus embedded in $M$.

We have:
\[
\int_{\gamma_{kl}}\sigma_{ij} = \delta^i_k \delta^j_l
\]
where $\delta$ is the Kronecker symbol.
Therefore,  we can compute the matrix
elements of $f^{\#2}$ by computing
$\int_{\gamma_{kl}}f^*\sigma_{ij} $.

The following result will be proved next by a direct calculation
as well as a more conceptual argument.

\begin{lem}
  \label{prop:coefficients}
  With the notations in \eqref{lift}, we have:
  \begin{equation}
    \label{coefficients}
    \int_{\gamma_{kl}}f^*\sigma_{ij} = -A_{il} A_{jk} + A_{ik}A_{jl} .
  \end{equation}
\end{lem}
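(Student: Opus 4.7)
The plan is to reduce the computation to knowing how $f^*$ acts on the basic 1-forms $d\theta_i$ modulo exact forms, and then to combine two such pullbacks by bilinearity of the wedge product. The key observation is that the integral of an exact 2-form over the closed 2-cycle $\gamma_{kl}$ vanishes, so only the ``matrix part'' of $f^* d\theta_i$ contributes.

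First, lift $f$ to the universal cover $\R^d \times \R^d$, writing $\tilde f(I,\tilde\theta) = (\tilde f_I(I,\tilde\theta), \tilde f_\theta(I,\tilde\theta))$. The equivariance~\eqref{lift} translates to $\tilde f_{\theta,i}(I,\tilde\theta + e) = \tilde f_{\theta,i}(I,\tilde\theta) + (Ae)_i$ for every $e \in \Z^d$. Defining
\begin{equation*}
g_i(I,\tilde\theta) := \tilde f_{\theta,i}(I,\tilde\theta) - \sum_{j} A_{ij}\tilde\theta_j,
\end{equation*}
the function $g_i$ is $\Z^d$-periodic in $\tilde\theta$, hence descends to a real-valued function $g_i: M \to \R$. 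Taking exterior derivatives and projecting yields the identity
\begin{equation*}
f^* d\theta_i = \sum_j A_{ij}\, d\theta_j + dg_i
\end{equation*}
of 1-forms on $M$.

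Second, I would expand the pullback of the wedge,
\begin{equation*}
f^* \sigma_{ij} = (f^* d\theta_i) \wedge (f^* d\theta_j) = \sum_{p,q} A_{ip} A_{jq}\, d\theta_p \wedge d\theta_q + \beta_{ij},
\end{equation*}
where $\beta_{ij}$ gathers the terms containing a factor $dg_i$ or $dg_j$. Since $d\theta_q$ and $dg_j$ are closed, each such term is globally exact: $dg_i \wedge d\theta_q = d(g_i\, d\theta_q)$ and $dg_i \wedge dg_j = d(g_i\, dg_j)$. Hence $\int_{\gamma_{kl}} \beta_{ij} = 0$ by Stokes on the closed cycle $\gamma_{kl}$. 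For the remaining terms, the cell $\gamma_{kl}(t,s)$ sets $\theta_k = t$, $\theta_l = s$ and all other coordinates to zero, so $\int_{\gamma_{kl}} d\theta_p \wedge d\theta_q = \delta_p^k \delta_q^l - \delta_p^l \delta_q^k$. Summing yields $A_{ik}A_{jl} - A_{il}A_{jk}$, which is the claimed formula.

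The main subtlety, rather than a true obstacle, is checking the global exactness of $\beta_{ij}$ on $M$: this rests on the fact that $g_i$ itself is globally defined on $M$, which uses the full strength of~\eqref{lift} to cancel the linear piece $\sum_j A_{ij}\tilde\theta_j$. A more conceptual phrasing of the same argument is that $f^{\#2} = \Lambda^2 f^{\#1}$, so \eqref{coefficients} is simply the matrix of the second exterior power $\Lambda^2 A$ in the basis $\{[\sigma_{ij}]\}$ Kronecker-dual to the cycles $\{[\gamma_{kl}]\}$; this viewpoint is also what feeds into the eigenvalue statement of Proposition~\ref{prop:ArnaudF24}.
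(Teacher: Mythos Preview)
Your proof is correct. The paper gives two proofs: a direct computation that rewrites $\int_{\gamma_{kl}} f^*\sigma_{ij}$ as a boundary line integral $\int_{\partial f(\gamma_{kl})} \theta_i\, d\theta_j$ on the universal cover and telescopes the four edge contributions using the periodicity relations \eqref{lift}; and a conceptual proof that $f$ is homotopic to $(I,\theta)\mapsto(f_I(I,\theta),A\theta)$, so $f^{\#2}=A^{\wedge 2}$. Your argument is essentially an explicit incarnation of the second proof: rather than invoking homotopy invariance abstractly, you exhibit the difference $f^*\sigma_{ij} - \sum_{p,q}A_{ip}A_{jq}\,d\theta_p\wedge d\theta_q$ as a globally exact form on $M$, which is precisely what the homotopy would produce. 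This is arguably cleaner than the paper's first proof and more self-contained than its second, since you avoid both the edge-by-edge bookkeeping and the appeal to homotopy invariance of cohomology.
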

Therefore, in this basis of cohomology, all the coefficients
of the matrix of $f^{\#2}$ are integers and the eigenvalues of $f^{\#2}$
are algebraic numbers of degree equal to the dimension of
$H^2(M)$. In our case, the degree is  $d(d-1)/2$.

\begin{rem}
Since $f^{\#2}:H^2(M)\rightarrow H^2(M)$ is the wedge product of $f^{\#1}: H^1(M) \rightarrow H^1(M)$
with itself, it follows that the eigenvalues of $f^{\#2}$
are the product of two eigenvalues of $f^{\#1}$,
which are algebraic numbers of degree $d$.

In our case,  any conformal factor will be
the product of two algebraic numbers of degree $d$.
This is related to a question raised in \cite[p. 165]{ArnaudF24},
where a very similar  phenomenon is observed in some examples.
\end{rem}

As a corollary, we have:
\begin{cor}
  Let  $M = \R^d \times \torus^d$.

  Assume there is a symplectic form $\omega$ and a conformally
  symplectic diffeomorphism $f$ with a conformal factor $\eta$ that is not
  the product of two algebraic numbers of degree $d$

  Then, $\omega$ is exact.
\end{cor}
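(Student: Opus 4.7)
The plan is to prove this as a direct contrapositive of Lemma~\ref{lem:obstruction}, combined with the explicit description of the $f^{\#2}$-action provided by Lemma~\ref{prop:coefficients} and the remark immediately preceding the corollary. First, I would assume for contradiction that $\omega$ is non-exact. Then Lemma~\ref{lem:obstruction} forces the conformal factor $\eta$ to be an eigenvalue of the induced map $f^{\#2}:H^2(M)\to H^2(M)$, so the whole task reduces to describing the spectrum of $f^{\#2}$ and showing every eigenvalue is a product of two algebraic numbers of degree $d$.

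Next I would exploit the topology of $M=\R^d\times\torus^d$: the projection onto the torus factor is a deformation retract, so $H^1(M)\cong H^1(\torus^d)$ is $d$-dimensional with basis $\{d\theta_1,\dots,d\theta_d\}$, and $H^2(M)$ is generated by the wedges $d\theta_i\wedge d\theta_j$ with $i<j$. By the naturality of pull-back with wedge product, $f^{\#2}=f^{\#1}\wedge f^{\#1}$ acting on $\Lambda^2 H^1(M)$. The lift relation \eqref{lift} shows that $f^{\#1}$ is represented in the basis $\{d\theta_i\}$ by the integer matrix $A\in GL(d,\Z)$ (this is the content behind the computation in Lemma~\ref{prop:coefficients}, whose $2\times 2$ minors $-A_{il}A_{jk}+A_{ik}A_{jl}$ are exactly the matrix entries of $A\wedge A$).

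From this, the spectrum of $f^{\#2}$ is the set of products $\lambda_i\lambda_j$, $i<j$, where $\lambda_1,\dots,\lambda_d$ are the eigenvalues of $A$. Since $A$ has integer entries, each $\lambda_i$ is an algebraic number of degree at most $d$ (a root of the integer monic characteristic polynomial $\det(\lambda\,\mathrm{Id}-A)$). Hence every eigenvalue of $f^{\#2}$ is a product of two algebraic numbers of degree $\le d$. Combining with the first step, $\eta$ must be such a product, contradicting the hypothesis. Therefore $\omega$ is exact.

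The argument is essentially bookkeeping: the only step that requires any care is the identification $f^{\#2}=f^{\#1}\wedge f^{\#1}$ on $M=\R^d\times\torus^d$, and verifying that the relevant de Rham cohomology is really computed by the $d\theta_i\wedge d\theta_j$ (which follows from the deformation retraction onto $\torus^d$). No obstacle of a genuinely analytic nature arises, since we do not need uniformity or boundedness of $\omega$ here—only its cohomology class and the integer matrix $A$ attached to $f$ via \eqref{lift}. In particular, if one wanted to weaken ``algebraic of degree~$d$'' to ``algebraic of degree~$d(d-1)/2$'', one could avoid the $f^{\#1}\wedge f^{\#1}$ factorization entirely and simply invoke that the matrix entries of $f^{\#2}$ are integers, so its eigenvalues are roots of an integer polynomial of degree $\dim H^2(M)=d(d-1)/2$.
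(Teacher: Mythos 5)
Your proof is correct and follows essentially the same path as the paper: invoke Lemma~\ref{lem:obstruction} to place $\eta$ in the spectrum of $f^{\#2}$, identify $f^{\#2}$ with $A^{\wedge 2}=f^{\#1}\wedge f^{\#1}$ via the lift relation \eqref{lift} and Lemma~\ref{prop:coefficients}, and conclude that eigenvalues of $A^{\wedge 2}$ are products of eigenvalues of the integer matrix $A$. Your remark that the precise conclusion is ``degree $\le d$'' (and your observation that the non-factorized route only gives degree $d(d-1)/2$) is a fair reading of the same imprecision present in the paper's own remark preceding the corollary.
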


\begin{proof}[Proof of Lemma \ref{prop:coefficients}]
To prove \eqref{coefficients} let
\[
\mathcal{I} \equiv
\int_{\gamma_{kl}}f^*\sigma_{ij} =
\int_{\partial f(\gamma_{kl})} \theta_i d\theta_j .
\]
where the second integral is interpreted in the universal cover so
that we can use the variable $\theta$.

Introduce the notation:
\def\p{ {\varphi}}
\[
\p_i(t,s) = f_{\theta_i}(\gamma_{kl}(t,s)),
\]
where the subindex on $f$ indicates the
$\theta_i$-component of $f(\gamma_{kl}(t,s))$.
The periodicity conditions
\eqref{lift} give:
\begin{equation}
  \label{lift2}
\begin{split}
  & \p_i(1,s) - \p_i(0,s) = A_{ik} ,\\
  & \p_i(t,1) - \p_i(t,0) = A_{il} ,\\
  & \partial_2  \p_i(1,s) - \partial_2 \p_i(0,s)  = 0 ,\\
  & \partial_1  \p_i(t,1) -  \partial_1\p_i(t,0)  = 0 ,
\end{split}
\end{equation}
as well as analogous formulas for $j$ taking the place of $i$.

We have $\partial f(\gamma_{kl})= f(\partial \gamma_{kl})$ consists of
four segments and, using \eqref{lift2},

\[
\begin{split}
\mathcal{I} =
  &\int_0^1 \p_i(t,0) \partial_1 \p_j(t,0) \, dt
   +\int_0^1 \p_i(1,s) \partial_2 \p_j(1,s) \,ds \\
    &-\int_0^1 \p_i(t,1) \partial_1 \p_j(t,1) \, dt
  - \int_0^1 \p_i(0,s) \partial_2 \p_j(0,s) \,ds \\
= &\int_0^1 \p_i(t,0) \partial_1 \p_j(t,0) \, dt
   +\int_0^1 \p_i(1,s) \partial_2 \p_j(0,s) \,ds\\
           &-\int_0^1 \p_i(t,1) \partial_1 \p_j(t,0) \, dt
- \int_0^1 \p_i(0,s) \partial_2 \p_j(0,s) \,ds \\
= &\int_0^1 [ \p_i(t,0) - \p_i(t,1)] \partial_1 \p_j(t,0) \, dt +\int_0^1[\p_i(1,s) - \p_i(0,s)] \partial_2 \p_j(0,s) \, ds \\
= &\int_0^1 - A_{il} \partial_1 \p_j(t,0) \, dt +\int_0^1A_{ik}  \partial_2 \p_j(0,s) \, ds \\
= & -A_{il} [\p_j(1,0) - \p_j(0,0)] + A_{ik} [\p_j(0,1) - \p_j(0,0)] \\
= & -A_{il} A_{jk} + A_{ik}A_{jl} .
\end{split}
\]

\end{proof}
  Now, we present a more conceptual (but more sophisticated)  second
proof of Lemma~\ref{prop:coefficients}.

\begin{proof}[Alternative proof of Lemma \ref{prop:coefficients}]
  Observe that we can define a homotopy
  in the space of differentiable maps
  connecting $F^1(I,\theta) = f(I,\theta)$
  and $F^0(I,\theta) = (f_I (I, \theta), A \theta)$.

  Since the action on de Rham cohomology remains constant
  under a homotopy, we obtain that the action of $F^1$ on 2-cohomology
  is the same as that of $F^0$. The latter is just $A^{\wedge 2}$
 (the  wedge product \footnote{We recall that $A^{\wedge 2}$ is defined by
  $A^{\wedge 2}(\alpha \wedge \beta) = (A \alpha) \wedge (A \beta)$ for all
  vectors $\alpha, \beta$} of $A$  with itself),
  which agrees with the direct calculation.

  We work in the lift and we have
  \[
  \begin{split}
    &\tilde F^1(I,\theta)= (\tilde f_I(I,\theta), \tilde f_\theta(I, \theta) ) ,\\
    &\tilde F^0(I,\theta) = (\tilde f_I(I,\theta), A \theta) .
  \end{split}
  \]
  where the subindex  under $f$ indicates taking the component.

  We have for all $e \in \Z^d$,
  \[
  \begin{split}
    & \tilde f_\theta(I, \theta + e) =
    \tilde f_\theta(I, \theta) + Ae ,\\
    & \tilde f_I(I, \theta + e) = \tilde f_I(I, \theta) .
  \end{split}
  \]

  We set for $t \in [0,1]$,
  \[
  \tilde F^t(I,\theta) = (\tilde f_I(I,\theta),  \tilde f^t_\theta (I,\theta)) ,
  \]
  with
  \[
  \tilde f^t_\theta(I, \theta) = t \tilde f_\theta (I,\theta) + (1-t)A e .
  \]

  Note that $\tilde F^t$ remains uniformly differentiable if $\tilde f$ is.

  The function $  \tilde f^t_\theta(I, \theta)$ satisfies
  \[
  \tilde f^t_\theta(I, \theta + e ) =
  \tilde f^t_\theta(I, \theta) + Ae .
  \]

  Therefore, $\tilde F^t$ is the lift of a function on the manifold.
  \end{proof}

\subsubsection{A concrete example}

In the following example, we construct an explicit
conformally symplectic map with a non-exact symplectic form.
Note how it is important that  the conformal factor has to be chosen to be an
eigenvalue of the action in cohomology and, therefore be
an algebraic number.

  \begin{ex}
  \label{concrete}
  Consider $M = \R^d \times \T^d$.

  Let $A \in SL(d, \R) $ be such that it has a complete
  set of eigenvalues/eigenvectors  $\lambda_i, v_i$, some of them real
  and whose product is not $1$ (this is easy to arrange  when $d \ge 3$).

  For a pair of different real eigenvalues of $A$, $\lambda_i$, $\lambda_j$
  denote:
  \[
  \begin{split}
  & \omega_0 = \sum_k dI_k \wedge d\theta_k ,\\
  & \omega_1 = (v_i \cdot d\theta) \wedge (v_j \cdot d\theta) .
\end{split}
  \]
  Denote $\eta_A= \lambda_i \lambda_j$,
  which we assume is not $1$. The number $\eta_A$ is
  an eigenvalue of $A^{\wedge 2}$ which is the same as the action of
  $A$ in $H^2(\T^d) = H^2( M)$.  The dimension of $H^2(\T^d)$ is
  $\ell = d(d-1)/2$.

  We have the following
  elementary facts:
  \begin{itemize}
  \item
    $A^* \omega_1 = \eta_A \omega_1$.
 \item
For $|\eps| \ll 1$,
$\omega = \omega_0 + \eps \omega_1$, is not degenerate.
\item
  For $\eps \ne 0$, $\omega$ is not exact.
\end{itemize}

  We define
  \begin{equation}
\label{fdefined}
  f_A(I,\theta) = ( \eta_A A^{-t } I, A \theta).
  \end{equation}
where  $A^{-t} = (A^t)^{-1}$ is the inverse of the transpose.

  We have that $f_A ^*\omega_0 = \eta_A \omega_0$.
  Similarly, $f^*_A\omega_1 = \eta_A \omega_1$.

  Hence
  \[
  f^*_A \omega = \eta_A \omega .
  \]
\end{ex}

Example~\ref{concrete} depends on the choice
of an automorphism $A$ of the torus.  To emphasize
this we denote the $f$ in \eqref{fdefined}
by $f_{\mid A}$.
Note that in the definition of $f$,
the factor $\eta$ is chosen depending on $A$,
so it will be denoted also $\eta_A$.

\begin{proof}[Proof of Proposition \ref{prop:ArnaudF24}]
 Consider $(M,\omega)$ as in Example \ref{concrete}, where $\omega = \omega_0 + \eps \omega_1$ with $\eps\neq0$.
 Since $\omega$ is not exact, for any conformally symplectic map $f$ on $M$, the symplectic factor $\eta$ is an eigenvalue of $f^{\# 2}$.

 By Lemma \ref{prop:coefficients}, $\eta$ must be an algebraic number of degree $d(d-1)/2$.
 Thus $\mathcal{R}\neq \R+^*$.
 For the conformally symplectic map $f_A$ from Example \ref{concrete}, $\eta_A\neq 1$.
 Thus,  $\mathcal{R}$ is strictly between $\{1\}$ and $\R_+^*$.
\end{proof}

\begin{rem}
  If $\lambda_i$ is an eigenvalue of $A$ of multiplicity $2$,
  the construction of Example~\ref{concrete} leads to
  maps that are conformally symplectic with respect to
  several non-cohomologous symplectic forms.
\end{rem}

\begin{rem}
We will now explore the possibility of
constructing more examples by optimizing the choice of $A$.

The main  observation is that if $B$ is an automorphism of
the torus with simple eigenvalues such that $BA = AB$, then
$A$ and $B$ have a common set of eigenvectors\footnote{
If $Av = \sigma v$, multiplying by $B$ on the left and
commuting, we have $A (Bv)= \sigma(B v)$,
Since we are assuming the space of eigenvectors of $A$ with eigenvalue $\sigma$
is $1$-dimensional, we have that there exists $\nu \in \R$ such that $Bv = \nu v$.}, even though the eigenvalues
may be quite different and, indeed, independent over the rationals.
Now, if $A, B$ have the same eigenvectors, then the maps
$f_A, f_B$ as in \eqref{fdefined}  are conformally
symplectic with factors $\eta_A, \eta_B$ respectively.
Therefore, the set $\cR$ contains the multiplicative group generated
by $\eta_A, \eta_B$. This group could well be dense.

The construction of integer matrices that commute and have simple eigenvalues
is not obvious.
We just point out
that several such  examples, with many
extra properties, appear in \cite[p. 61 ff.]{KatoKN11}
motivated by the theory of Abelian actions on the torus.
For some of these examples,  $\mathcal{R}$ is dense in $\R$.
\end{rem}

\begin{rem}\label{rem:Borel_Moore}
  Example \ref{concrete}  can be also analyzed using the
  Borel-Moore (co)homology theory, which is different from
  the simplicial (co)homology we used so far.

  We recall the Borel-Moore  homology groups of the Euclidean space
  are $H^{BM}_d(\R^d) = \Z$
  and $H^{BM}_k( \R^d) =  \{0\}$ for $k\neq d$. The B-M homology  groups of
  the torus
$H^{BM}_k(\T^d)=\Z^{d\choose{k} }$
  (which is the same as the simplicial  homology).  The  homology
  groups of $\R^d \times \T^d$ can be computed using
  K\"unneth formula.

  Thus, in Example~\ref{concrete}
  the Borel-Moore is different from the usual simplicial
  (co)homology, and the obstructions provided by Borel-Moore to
  $\mathcal{R}$  are all   trivial.

This concrete example that
we have developed is special, but the main
ingredients (finite dimensional cohomology, with some
duality -- via periods -- to homology with integer coefficients)
could hold in greater generality, even if they fail in certain manifolds
(e.g.,  in cylinders with infinitely many handles attached, which
has an infinite dimensional homology).

\begin{conjecture}\label{rem:duality1}

We expect that for \emph{``many''}  manifolds
with finite dimensional (co)homology theories we have:
\begin{center}
\em $\mathcal{R}$ consists of algebraic numbers.
\end{center}

\end{conjecture}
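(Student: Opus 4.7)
The plan is to formalize the mechanism already visible in Example~\ref{concrete} and Lemma~\ref{prop:coefficients}: the induced map $f^{\#2}$ on $H^2(M;\mathbb{R})$ preserves a natural integral lattice, so its characteristic polynomial has integer coefficients, and by Lemma~\ref{lem:obstruction} every conformal factor $\eta \in \cR$ for a non-exact form is an eigenvalue of some such $f^{\#2}$, hence algebraic.

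First, I would make the informal term \emph{``many manifolds''} precise by hypothesizing a cohomology functor $H^*$ with the following two properties: (i) $H^2(M;\mathbb{R})$ is finite-dimensional; (ii) there is a full-rank lattice $\Lambda^2 \subset H^2(M;\mathbb{R})$ such that every $\C^1$-diffeomorphism $f$ induces $f^{\#2}(\Lambda^2) \subseteq \Lambda^2$. Singular (or simplicial) cohomology modulo torsion exhibits this structure on every $M$ of finite topological type, and likewise on the de~Rham theory through the image of integer singular cohomology under periods. This class covers the concrete Example~\ref{concrete} and the ambient manifolds of Proposition~\ref{prop:ArnaudF24}.

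Second, once $\Lambda^2$ is fixed, a choice of $\mathbb{Z}$-basis identifies $f^{\#2}$ with a matrix in $M_n(\mathbb{Z})$, where $n = \dim H^2(M;\mathbb{R})$. Its characteristic polynomial $\chi_f(t) \in \mathbb{Z}[t]$ is monic, so every eigenvalue is an algebraic integer of degree at most $n$. Applying Lemma~\ref{lem:obstruction}, each $\eta \in \cR$ associated to a non-exact $\omega$ is such an eigenvalue, and therefore algebraic. Taking the union over all admissible $f$ preserves algebraicity, though in general not the uniform degree bound unless $n$ is fixed. A small but necessary subtlety is that $[\omega]$ need not lie in the $\mathbb{Q}$-span of $\Lambda^2$; however the cyclic invariant subspace $V_\omega = \mathrm{span}\{(f^{\#2})^k[\omega]\}_{k\ge 0} \subseteq H^2(M;\mathbb{R})$ is still finite-dimensional and $f^{\#2}$-stable, so the minimal polynomial of $f^{\#2}\vert_{V_\omega}$ divides $\chi_f(t) \in \mathbb{Z}[t]$, and $\eta$ remains algebraic.

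The hard part will be twofold. On the one hand, isolating the sharpest geometric hypothesis that guarantees the existence of a diffeomorphism-invariant integral lattice on $H^2$: as Remark~\ref{rem:Borel_Moore} emphasizes, on non-compact manifolds the competing theories (singular, de~Rham, Borel-Moore, $L^2$, compactly supported) produce incompatible integrality structures, some of which give only trivial obstructions, and the excerpt explicitly excludes cases like a cylinder with infinitely many handles attached where $H^2$ is infinite-dimensional. On the other hand, one should match the upper bound (algebraicity) with lower bounds -- producing families of examples, e.g.\ via the commuting integer matrices of \cite[p.~61 ff.]{KatoKN11}, where $\cR$ is a genuinely dense subgroup of a group of algebraic numbers -- so that the conjecture captures a nontrivial rigidity rather than a tautology.
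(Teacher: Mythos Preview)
The statement you are addressing is explicitly labeled a \emph{conjecture} in the paper, and the paper offers no proof of it; immediately after stating it the authors write ``We hope that the precise hypotheses needed could be well known to experts.'' So there is no paper proof to compare against---the paper only illustrates the mechanism in the special case $M=\mathbb{R}^d\times\mathbb{T}^d$ via Lemma~\ref{prop:coefficients} and Example~\ref{concrete}.

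Your sketch correctly abstracts that mechanism: a diffeomorphism-invariant full-rank integral lattice in $H^2(M;\mathbb{R})$ forces $f^{\#2}$ to have integer characteristic polynomial, and then Lemma~\ref{lem:obstruction} makes every conformal factor for a non-exact $\omega$ an eigenvalue, hence algebraic. Your two ``hard parts'' are precisely the open issues the paper flags in Remark~\ref{rem:Borel_Moore} and the surrounding discussion (the choice of cohomology theory on non-compact manifolds, and the infinite-dimensional counterexamples). So you have not proved the conjecture any more than the paper has, but your outline is faithful to what the authors have in mind. One point worth making explicit in any eventual statement: the restriction to non-exact $\omega$ is essential, since the paper itself observes (via the vector fields $i_X\omega = dH + \sigma\alpha$) that for exact $\omega$ one can have $\mathcal{R}=\mathbb{R}_+^*$.
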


We hope that the precise hypotheses needed could be
well known to experts.
\end{rem}

\section{Examples}\label{sec:examples}

In this section, we present several examples of systems
that illuminate several of the issues addressed by the theorems.
We also note that some of the further  results  depend on
constructing examples.

\medskip

\subsection{An example of a conformally symplectic map that is exact for one action form but not for another}
\label{sec:standard_map}
The next example stresses that the exactness property of a conformally symplectic map may depend on the choice of action form.

\begin{ex}
\label{ex:standard_map}
A paradigmatic example of conformally symplectic system
is the dissipative standard map with two parameters $\eps, \mu$
acting on $\R \times \T$
\begin{equation} \label{DSM}
f(I, \theta) = ( \eta I + \mu + \eps V'(\theta),
\theta +  \eta I + \mu + \eps V'(\theta) )
\end{equation}

The symplectic form considered in \eqref{DSM}   is $\omega=d I \wedge d \theta$.
This is an exact form and we can take the action form $\tilde \alpha_\sigma = (I   +\sigma) d \theta $ where $\sigma$ is any constant.

We have
\[
\begin{split}
f^* &  \tilde \alpha_\sigma   =
(\eta I + \mu + \eps V'(\theta) + \sigma)
(d \theta + \eta d I + \eps V''(\theta)
d \theta )  \\
&= \eta I d\theta  + \eta ^2 I dI  + \eta \eps I   V''(\theta) d \theta
+ ( \mu + \sigma ) d \theta + (\mu + \sigma) \eta dI  \\
&+ (\mu +\sigma)\eps V''(\theta) d \theta
+ \eps V'(\theta) d \theta + \eps \eta V'(\theta) d I
+ \eps ^2 V'(\theta) V''(\theta) d \theta  \\
&= \eta I d\theta + ( \mu + \sigma)  d \theta \\
& + d\left( \frac{\eta^2}{2} I^2 + \eps \eta I V'(\theta) +\eta( \mu + \sigma)
I + \eps (\mu+\sigma) V'(\theta)+ \eps V(\theta)+\eps ^2 \frac{(V'(\theta))^2}{2} \right) .
\end{split}
\]

Note that the term $d \theta$ on the  right hand side
prevents exactness \footnote{The integral of
$d \theta$ over a  non-contractible loop in
the cylinder is not zero; $\theta$ cannot be made into a
continuous variable over the manifold.},
so the map \eqref{DSM} is  exact if and only if  $\eta \sigma = \mu + \sigma$.
Hence, if we choose $\sigma_* = \mu/(\eta -1)$,
we have  that the mapping is exact for  the action form
$\tilde \alpha_{\sigma_*}$, and the primitive function is

\[
P_{\sigma_*}  =  \frac{\eta^2}{2} I^2 + \eps \eta I V'(\theta) +\frac{\eta^2\mu}{\eta-1}
I +\frac{ \eps\eta\mu}{\eta-1}  V'(\theta)+ \eps V(\theta)+\eps ^2 \frac{(V'(\theta))^2}{2}  + C .
\]

In particular, if $\sigma=0$ and the action form is the Liouville form $\tilde \alpha_0=I d\theta$,
the mapping is exact  if and only if $\mu=0$.
\end{ex}

When $\eta=1$ and $\mu=0$ \eqref{DSM}  becomes  the conservative standard map.
The drift parameter $\mu$  is fundamental in applications of  the KAM theorem to conformally symplectic
systems; one needs to adjust a drift parameter $\mu$ to find an invariant torus
of preassigned frequency (see \cite{CallejaCL13}).

When $\eta = 1$, $\eps = 0$, \eqref{DSM} becomes  the integrable area preserving twist map
whose phase space
is foliated by quasi-periodic orbits. For $\eta < 1$, $\eps = 0$,
there is  only one quasi-periodic orbit, which illustrates that quasi-periodic orbits may disappear
under arbitrarily small dissipation.

Note that the limit $\eta \to 1$ is singular. If we fix $\mu \ne 0$,
the $\sigma_*(\eta)$ that makes \eqref{DSM} exact, goes to $\infty$ as $\eta$
converges to $1$.

\medskip
An interesting  variant of \eqref{DSM} (Fr\"oeschl\'e map)
is to take
the variables $I,\theta$ to be $n$-dimensional
and the drift parameter $\mu$ to be also an $n$-dimensional
vector. In such a case, $V'$ means  the gradient.
The calculations presented here show that when $\eta \ne 1$,
for any $\mu$ we can find a unique $n$-vector $\sigma_*(\mu)$
so that the map is exact for $I\cdot d\theta + \sigma(\mu) \cdot d\theta$.

\subsection{Symplectic systems that can not be perturbed into conformally symplectic ones}

The second example  shows  some symplectic maps
that cannot be perturbed into conformally symplectic ones.
This  is due to global properties of the manifold.
If this was a model of a mechanical system,
it would show that if one adds friction, the friction cannot be
just proportional to the velocity due to the global shape of
the manifold.

\begin{ex}\label{ex:perturbed}
  Consider the phase space $ M = \torus^{2n}  \times \real^{m} \times \real^m$
    (with coordinates $(\theta,x,y)$),
    endowed with the symplectic form
    $\omega = \sum_{i=1}^n d\theta_i \wedge d\theta_{i+n} + \sum_{i = 1}^m dx_i \wedge dy_i$.

Define the dynamics on $M$ by
\[
f(\theta, x,y)=
(A\theta, \lambda x ,\lambda^{-1} y )
\]
where $A \in Sp(2n, \Z) $ is a symplectic matrix whose spectrum is contained  between $1/\mu$ and $\mu$, for some $\mu \ge 1$, and  $0 < \lambda < 1 $ is a sufficiently  small number so that $\lambda \mu <1$ and therefore  the set
\[
\Lambda:=\{ (\theta, 0, 0)\,|\, \theta \in \torus^{2n}\} \subset M
\]
is a normally hyperbolic invariant manifold (see Definition \ref{def:nhim}).

The map $f$ is symplectic for $\omega$.

There is no conformally symplectic $\C^1$-small perturbation of
the map $f$ with a conformal factor different from $1$.
\end{ex}

\begin{proof}
If such a perturbation existed, by the theory of NHIM, there
should be an invariant manifold $\C^1$-close to $\Lambda\sim\torus^{2n} $ with rates $\lambda_\pm$, $\mu_\pm$ close to $\lambda$ and $\mu$, and conformally symplectic factor close to $1$.
Then, for small enough perturbations, conditions \eqref{eq:rates0} and  \eqref{eqn:conformal_rates} would be satisfied and, applying  Theorem \ref{thm:main1}, the persistent  NHIM would be
symplectic and the dynamics on  it would be conformally symplectic
Therefore, the NHIM would have infinite volume.

On the other hand the manifold would be $\C^1$-close to
$\Lambda\sim\torus^{2n} $ and hence have finite volume, which
contradicts the fact that the dynamics on it is conformally
symplectic.
%
%
%
\end{proof}

\subsection{Minimal set of constraints  on rates for the existence of a symplectic NHIM}

From \eqref{eq:rates0}, \eqref{eq:ratesplusminus}, \eqref{eqn:pairing_rules}, we see that the minimal set of constraints  for the existence of a  symplectic  NHIM for a conformally symplectic map, in terms of the optimal rates  \eqref{eqn:optimal_rates} and the conformal factor $\eta$, is
\begin{equation}\label{eqn:minimal_constraints}
\begin{split}
 \lambda_+^*\mu_-^*< 1 &\textrm{ and } \lambda_-^*\mu_+^*<1 \\
 \mu_+^*\mu_-^*\ge& 1\\
 \frac{\lambda_+^*}{\lambda_-^*}=\eta &\textrm { and } \frac{\mu_+^*}{\mu_-^*}=\eta .
 \end{split}
\end{equation}

The following example shows that  there are no other constraints besides \eqref{eqn:minimal_constraints} for the existence of a symplectic NHIM for a conformally symplectic map.

\begin{ex}[Flexibility]\label{lem:flexibility}
Suppose that $0<\eta\le 1 $ and we are given a set of positive real numbers $\lambda_+^*$, $\lambda_-^*$, $\mu_+^*$ $\mu_-^*$ satisfying \eqref{eqn:minimal_constraints}.
Then, there exists a conformally symplectic map $f$,  with symplectic factor $\eta$, possessing a symplectic NHIM $\Lambda$,
such that the corresponding rates \eqref{eq:rates0}  are $\lambda_+^*$, $\lambda_-^*$, $\mu_+^*$ $\mu_-^*$, respectively.

We denote by $\textrm{Diag}(a_1,\ldots,a_k)$   a diagonal matrix of entries $a_1,\ldots, a_k$.

Choose  $0<\lambda_n<\ldots <\lambda_1:=\lambda_+^*$ and take
\[
A=\textrm{Diag}(\lambda_1,\ldots,\lambda_n).
\]

Using \eqref{eqn:minimal_constraints},
the condition $\mu_+^*\mu_-^* \ge 1$ in \eqref{eqn:minimal_constraints} can be replaced by the equivalent condition
\begin{equation}\label{eq:hypothesis}
 (\mu^*_+)^2\ge \eta .
\end{equation}
Choose:
$0<\mu_d\le\ldots \le \mu_1:=\mu_+^*$ subject to the following condition
\begin{equation}\label{eq:hypothesis2}
\mu_d\ge \frac{\eta}{\mu_1} = \frac{\eta}{\mu^*_+} .
\end{equation}
Choosing $\mu_d$ as in \eqref{eq:hypothesis2} is possible since  by  \eqref{eq:hypothesis} we have $\mu_1\ge \frac{\eta}{\mu_1}$.
Then take
\[
M=\textrm{Diag}(\mu_1,\ldots,\mu_d ).
\]

Consider the  symplectic form  on $\real^{2n+2d}$:
\[
 \omega=\sum_{i=1}^{n} dy_i\wedge dx_i + \sum_{i=1}^{d} dv_i\wedge du_i .
 \]
Define  the conformally symplectic  map:
\begin{equation}\label{eqn:simple_1}
\begin{split}
f(x,y,u,v)=\left (A x,\eta A^{-1} y, M u, \eta M^{-1} v\right).
 \end{split}
\end{equation}
We have
\[
\begin{split}
\Lambda &=\{(x,y,u,v)\,|\, x=y=0\} \\
E^s_z  &=\{(x,y,u,v)\,|\, y=0, \, u=0, \, v=0\}, \ \forall z=(0,0,u_0,v_0)\in \Lambda \\
E^u_z  &=\{(x,y,u,v)\,|\, x=0, \, u=0, \, v=0\}, \ \forall z=(0,0,u_0,v_0)\in \Lambda \\
T_z \Lambda  &=\{(x,y,u,v)\,|\, x=y=0\}, \ \forall  z=(0,0,u_0,v_0)\in \Lambda .
\end{split}
\]

We now show  that  $\Lambda$ is a  symplectic  NHIM with corresponding optimal rates   $\lambda_+^*=\lambda_1$, $\lambda_-^*=\frac{\lambda_1}{\eta}$, $\mu_+^*=\mu_1$, $\mu_-^*=\frac{\mu_1}{\eta}$. For  $z\in \Lambda$ and $n>0$
\[
\begin{split}
\|Df^n(z)w\| & \le C(\lambda_1)^n \|w\|= C(\lambda_+^*)^n \|w\|, \ \text{for} \ w=(x,0,0,0) \in E^s_z ,\\
\|Df^{-n}(z)w\| & \le C(\frac{\lambda_1}{\eta})^n \|w\|= C(\frac{\lambda_+^*}{\eta})^n \|w\|, \ \text{for} \ w=(0,y,0,0) \in E^u_z .
\end{split}
\]
Moreover, if $w=(0,0,u,v) \in T_z\Lambda$, since $\frac{\eta}{\mu_d}\le\mu_1$ by \eqref{eq:hypothesis} we obtain:
\[
\begin{split}
\|Df^{n}(z)w\| & \le C\left(\max{\left(\mu_1, \frac{\eta}{\mu_d}\right)}\right)^n \|w\|=
C(\mu_+^*)^n \|w\|, \\
\|Df^{-n}(z)w\|  & \le C\left(\max{\left(\frac{1}{\mu_d}, \frac{\mu_1}{\eta}\right)}\right)^n \|w\|
= C\left(\frac{\mu_+^*}{\eta}\right)^n \|w\|.
\end{split}
\]
\end{ex}

\subsection{ Degenerate forms in a manifold and no paring rules}

Now, we present an example illustrating the degeneracy of the forms in invariant manifolds that do
not satisfy the pairing rules.

\begin{ex}
  \label{intermediatenew}
  Fix the conformal factor $\eta > 0$.
  Consider numbers \[0 < a < b < c < d  < 1 <d^{-1} \eta < c^{-1} \eta < b^{-1} \eta < a^{-1} \eta ,\]
  and the map on $\R^8$
  \[
  f(x_1,\ldots,x_4,y_1,\ldots,y_4)
  =  (a x_1,b x_2, c x_3, d x_4,  a^{-1} \eta y_1, b^{-1} \eta y_2, c^{-1} \eta y_3, d^{-1}\eta  y_4 ).
  \]
 The map $f$ is conformally symplectic of factor $\eta$
  for the symplectic form
  \[
  \omega = d y_1 \wedge dx_1 + d y_2 \wedge dx_2 + d y_3\wedge d x_3 +
 dy_4 \wedge d x_4 .
  \]
 We consider the NHIM given by
\[
\Lambda_0 = \{(0,0,0,x_4,0,0,0,y_4)\}
\]
and
$\omega_0 = \omega|_{\Lambda_0}= d y_4 \wedge d x_4 $
is non-degenerate on $\Lambda_0$.
We have that
\[f_{\mid\Lambda_0}(0,0,0,x_4,0,0,0,y_4)=(0,0,0,dx_4,0,0,0,d\eta^{-1}y_4). \]
The NHIM $\Lambda_0$ has $5$-dimensional stable and unstable manifolds given by
\[W^s_\Lambda=\{(x_1,x_2,x_3,x_4,0,0,0,y_4)\} \textrm{ and } W^u_\Lambda=\{(0,0,0,0,x_4,y_1,y_2,y_3,y_4)\}.\]
The optimal rates
are $\lambda_+^*=c$,  $\lambda_-^*=\eta^{-1}c$, $\mu_+^*=\eta d^{-1}$, and $\mu_-^*=d^{-1}$.
Note that $\Lambda_0$ satisfies the pairing rules.

We also consider the NHIM
\[
\Lambda = \{ (0,x_2,x_3,x_4,0, 0,0,y_4)\}.\]
We have that
\[f_{\mid\Lambda}(0,x_2,x_3,x_4,0,0,0,y_4)=(0,b x_2, c x_3,dx_4,0,0,0,d\eta^{-1}y_4). \]
The manifold $\Lambda$  has $5$-dimensional stable and $7$-dimensional unstable manifolds given by
\[W^s_\Lambda=\{(x_1,x_2,x_3,x_4,0,0,0,y_4)\} \textrm{ and } W^u_\Lambda=\{(0,x_2,x_3,x_4,y_1,y_2,y_3,y_4)\}.\]

The corresponding optimal rates  are $\lambda_+^*=a$,  $\lambda_-^*=\eta^{-1}c$, $\mu_+^*= \eta d^{-1}$, and $\mu_-^*=b^{-1}$, and they
do not satisfy the pairing rules.
The form $\omega$ is degenerate on $\Lambda$,  as we have $\omega_{\mid\Lambda} = \omega_0$.
Note that $\Lambda$ is contained in the stable manifold of $\Lambda_0$.

Even if $\Lambda$ is not symplectic, we can identify presymplectic forms in $\Lambda$.

Clearly $\Lambda_0 \subset \Lambda $ and $\Lambda_0$ is NHIM
for the dynamical system $f_{\mid\Lambda}$.

For every $(x_4, y_4)$
we can write the $2$-dimensional leaf:
\[
\L_{(x_4,y_4)} = \{  (0,x_2,x_3,x_4, 0, 0,0,y_4)\, |\, x_2, x_3 \in \R\}.
\]

As $(x_4, y_4)$ range over $\Lambda_0$, the leaves $\L_{(x_4, y_4)}$
foliate $\Lambda$.

Also $\omega_{\mid \L_{(x_4,y_4)}} = 0$. So that the foliation of $\Lambda$
given by the leaves $\L$ is the foliation integrating the
kernel of $\omega_{\mid\Lambda}$ and $\Lambda_0$ is the
symplectic quotient.

From the dynamical point of view,
we can think of $\L_{(x_4,y_4)}$
as the stable  manifold of $(0,0,0,x_4,0,0,0,y_4)$
in $f_{\mid\Lambda}$.

However, considered as a dynamical system in $\R^8$,
$\L_{(x_4,y_4)}$ is only a weak stable manifold for
$(0,0,0,x_4,0,0,0,y_4)$. From the point of view of weak
stable manifolds, integrability of the foliation is surprising (see \cite{JiangPL1995}).
\end{ex}

\subsection{Unbounded forms and no pairing rules}

In the next example we show that the standing assumption \eqref{eqn:bounded_omega} on the boundedness of the symplectic form is essential for the pairing rules. 

\begin{ex}\label{ex:unbounded_symplectic_form}
Let $M= \real  \times \T^1 \times\real ^2$ be a manifold, and let the (unbounded) symplectic form on $M$ be
\[\omega =  e^I  dI \wedge d \theta + dy \wedge dx, \textrm{ for } (I,\theta,x,y)\in  \real  \times \T^1 \times\real ^2.\]
For  $t>0$ define the map
\[ f(I, \theta, y,x) =    \left(I +t, \theta,  10 e^t y, x/10\right).\]
We have \[f^* \omega = e^t \omega\]
that is, $f$ is conformally symplectic with factor $\eta=e^t$.

The set \[\Lambda=\{ (I, \theta, 0, 0)\,|\, ( I,\theta) \in  \real  \times \T^1\}\]
is a NHIM and is symplectic.
The optimal rates are:
\[\mu_+^* = \mu_-^* = 1,\,\lambda_+^* = 1/10, \lambda_-^*  = e^{-t} /10.\]
The pairing rules \eqref{eqn:pairing_rules} do not hold in this example, since
$\frac{\mu_+^*}{\mu_-^*}=1\neq \eta$.
\end{ex}

\section{Vanishing lemmas}
\label{sec:vanishing_lemmas}
This section is devoted to formulating and proving vanishing lemmas, which
are an important ingredient in the proofs of the main results of Section~\ref{sec:mainresults}.

We will show that, under  assumptions on the rates on convergence of the differential of the map and
the conformal factor, several blocks of the symplectic form
in the decomposition corresponding to the invariant spaces have to vanish.
The idea is to use the invariance equation \eqref{eqn:conformally_symplectic} for
sufficiently high iterates.

This idea is very general and applies in many other contexts   and other types  of rates,
for example Sacker-Sell spectrum and Lyapunov exponents,  and for other geometries such as
locally conformal systems.

We call attention to the fact that  the proofs of the lemmas in this section
do not use that the form $\omega$ is closed, and only Lemma  \ref{cor:pairing} uses that the form  $\omega$ is non-degenerate in the tangent bundle of the considered submanifold.
We also include in this section Proposition \ref{lem:degeneracy}, which gives results about isotropic manifolds.
The proof of Proposition \ref{lem:degeneracy} requires that the form is symplectic and is an easy consequence of Theorems \ref{thm:main1} and \ref{thm:main2} whose proof appears later in Section~\ref{sec:proofisotropic}.

\subsection{A basic inequality}
\label{sec:basic}

Most of the vanishing lemmas in this section rely on the following  elementary result:

\begin{lem}\label{lem:iterates}
Let $f$ be  a conformally symplectic  diffeomorphism $f:M\to M$ with conformal factor $0<\eta$ as in  \eqref{eqn:conformally_symplectic}.
Then for all $x\in M$, $n\in\mathbb{Z}$, and $u,v \in T_xM$  we have
\begin{equation}\label{iterates}
\begin{split}
|\omega(x)(u,v)| \le   \eta^{-n}\|\omega (f^n(x))\|\|Df^n(x)u\|\|Df^n(x)v \|.
\end{split}
\end{equation}
\end{lem}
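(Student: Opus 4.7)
The proof is essentially a one-line iteration of the defining relation, and my plan is to spell out those lines cleanly and verify that the $n \in \mathbb{Z}$ case (both positive and negative) works uniformly.

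First I would establish the iterated identity
\[
(f^n)^* \omega = \eta^n \omega, \qquad n \in \mathbb{Z}.
\]
For $n \ge 0$ this follows by induction from $f^*\omega = \eta\omega$ using that pullback is a contravariant functor: if $(f^k)^*\omega = \eta^k \omega$, then $(f^{k+1})^*\omega = f^*((f^k)^*\omega) = \eta^k f^*\omega = \eta^{k+1}\omega$. For $n < 0$, apply $(f^{-n})^*$ to both sides of the identity for $-n$, or equivalently note that from $f^*\omega = \eta\omega$ one gets $(f^{-1})^*\omega = \eta^{-1}\omega$ by pulling back via $f^{-1}$ and using $(f^{-1})^* f^* = \mathrm{Id}$; then iterate as before.

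Next, unwinding \eqref{eq:pullback} applied to the identity above gives the pointwise equality
\[
\eta^n \, \omega(x)(u,v) = \omega(f^n(x))(Df^n(x)u,\, Df^n(x)v)
\]
for every $x \in M$, $u,v \in T_x M$, and $n \in \mathbb{Z}$. Since $\eta > 0$, I can divide through by $\eta^n$ and take absolute values to obtain
\[
|\omega(x)(u,v)| = \eta^{-n}\,\bigl|\omega(f^n(x))(Df^n(x)u,\, Df^n(x)v)\bigr|.
\]

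Finally, I invoke the definition of the pointwise operator norm of a $2$-form, namely
\[
\|\omega(y)\| = \sup\bigl\{\,|\omega(y)(a,b)| \, :\, a,b\in T_y M,\ \|a\|,\|b\|\le 1\,\bigr\},
\]
which yields $|\omega(y)(a,b)| \le \|\omega(y)\|\,\|a\|\,\|b\|$ for arbitrary tangent vectors $a,b \in T_yM$. Applying this at $y = f^n(x)$ with $a = Df^n(x)u$ and $b = Df^n(x)v$ gives \eqref{iterates}. No closedness or non-degeneracy of $\omega$ is used, and there is no real obstacle: the only mild point worth noting is that the same statement holds for all integers $n$ (positive and negative) because $\eta > 0$ ensures $\eta^n$ is well defined and the inverse iteration preserves the conformal relation.
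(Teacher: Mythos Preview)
Your proof is correct and follows exactly the same approach as the paper: iterate the conformal relation to get $\omega(x)(u,v) = \eta^{-n}\omega(f^n(x))(Df^n(x)u, Df^n(x)v)$, then bound the right-hand side by the operator norm of $\omega$ at $f^n(x)$. You simply spell out in more detail the induction for $n\in\mathbb{Z}$ and the operator-norm inequality that the paper leaves implicit.
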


\begin{proof}
Since $f$ is conformally symplectic we have
\begin{equation*}
\begin{split}
\omega (f^n(x))(Df^n(x)u,Df^n(x)v)= \eta^n\omega(x)(u,v) ,
\end{split}
\end{equation*}
so
\begin{equation}\label{eq:basicequatily}
\begin{split}
\omega(x)(u,v) =\frac{1}{\eta^n}\omega (f^n(x))(Df^n(x)u,Df^n(x)v),
\end{split}
\end{equation}
which yields \eqref{iterates}.
\end{proof}

\subsection{General vanishing lemmas}
In this section  we will give two general vanishing lemmas for a conformally symplectic diffeomorphism.
These lemmas will be the main ingredients of the
proof of the pairing rules for a symplectic normally hyperbolic invariant manifold given in Section \ref{sec:pairing_rules}.

\begin{lem}\label{lem:vanishing_pairing}
Let $f$ be  a conformally symplectic  diffeomorphism $f:M\to M$ with conformal factor $0<\eta$ as in  \eqref{eqn:conformally_symplectic}.
Let $L\subseteq M$ be a submanifold invariant under $f$.
Assume that the symplectic form $\omega$ is  uniformly bounded in a neighborhood of $L$.

Take $x\in L$,  and assume that there exist two constants  $C_1=C_1(x),C_2=C_2(x)>0$ and  two vectors $u,v\in T_x L$, such that:

\begin{itemize}
\item [(A1)]
There exists $0<\alpha<1$, such that for all $n\geq 0$
\begin{equation}
\|Df^n(x)u\|\leq C_1\alpha^n\|u\|,
\end{equation}
\item [(A2)]
There exists   $0<\beta$ with $\alpha\beta<\eta$ such that there exists an increasing sequence of positive integers
 $\{n_j\}_{j=1,\ldots,\infty}$ such that
\begin{equation}
\|Df^{n_j}(x)v\|\leq C_2\beta^{n_j}\|v\|, \textrm{ for } j\geq 0.
\end{equation}
\end{itemize}
Then $\omega (x)(u,v)=0$.
\end{lem}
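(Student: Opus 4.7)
The plan is to combine the basic inequality from Lemma~\ref{lem:iterates} with the decay hypotheses (A1), (A2) along a single well-chosen sequence of iterates. The key observation is that \eqref{iterates} turns the geometric quantity $|\omega(x)(u,v)|$ into a product that explicitly features $\eta^{-n}$ together with norms of $Df^n(x)u$ and $Df^n(x)v$, so any exponential decay of these last two quantities that jointly beats $\eta^{n}$ will force the form to vanish.

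First I would apply Lemma~\ref{lem:iterates} at each $n=n_j$ in the sequence provided by (A2), obtaining
\[
|\omega(x)(u,v)| \;\le\; \eta^{-n_j}\,\|\omega(f^{n_j}(x))\|\,\|Df^{n_j}(x)u\|\,\|Df^{n_j}(x)v\|.
\]
Next I would use the uniform boundedness of $\omega$ in a neighborhood of $L$ (which applies since $L$ is invariant and hence $f^{n_j}(x) \in L$) to bound $\|\omega(f^{n_j}(x))\| \le M_\omega$. Then (A1), which holds for \emph{all} $n\ge 0$ and in particular for each $n_j$, gives $\|Df^{n_j}(x)u\| \le C_1 \alpha^{n_j}\|u\|$, and (A2) gives $\|Df^{n_j}(x)v\| \le C_2 \beta^{n_j}\|v\|$. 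Multiplying these together yields
\[
|\omega(x)(u,v)| \;\le\; M_\omega\, C_1\, C_2\, \|u\|\,\|v\| \left(\frac{\alpha\beta}{\eta}\right)^{n_j}.
\]

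Finally, I would let $j\to\infty$: since $\alpha\beta<\eta$, we have $\alpha\beta/\eta<1$, and because $\{n_j\}$ is an increasing sequence of positive integers it tends to infinity, so the right-hand side converges to $0$. The left-hand side is independent of $j$, hence $\omega(x)(u,v)=0$, as required.

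There is no real obstacle here; the argument is purely a careful bookkeeping of the exponentially decaying factors. The subtle point worth flagging is that the hypothesis (A1) must hold on the \emph{full} forward orbit (not just a subsequence) so that we can evaluate it at the specific indices $n_j$ supplied by (A2); using only a subsequence in both (A1) and (A2) would be insufficient unless one could align the two subsequences. Note also that neither closedness nor non-degeneracy of $\omega$ is used, which matches the remark in the paragraph preceding this lemma.
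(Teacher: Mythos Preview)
Your proof is correct and follows essentially the same approach as the paper: apply the basic inequality \eqref{iterates} at the indices $n_j$, bound $\|\omega(f^{n_j}(x))\|$ using the uniform bound on $\omega$ near $L$, insert the growth estimates from (A1) and (A2), and let $j\to\infty$ using $\alpha\beta\eta^{-1}<1$. Your remark that (A1) must hold along the full forward orbit so it can be evaluated at the $n_j$'s is a valid and useful observation.
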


\begin{proof}
Using  that
\begin{equation}\begin{split}
|\omega(f^{n_j}(x))(Df^{n_j}(x)u,Df^{n_j}(x)v)|\le &\|\omega_{\mid L}\| C_1 C_2(\alpha\beta)^{n_j}\|u\|\|v\| ,
\end{split}\end{equation}
\eqref{iterates}  gives:
\[
|\omega(x)(u,v)|\le \|\omega_{\mid L}\| C_1 C_2(\alpha\beta\eta^{-1})^{n_j}\|u\|\|v\|
\]
since $\alpha\beta\eta^{-1}<1$, taking $n_j\to \infty$, we obtain $\omega (x)(u,v)=0$.
\end{proof}

The next result is a converse of Lemma~\ref{lem:vanishing_pairing}.

If we assume that $\omega_{\mid L}$ is non-degenerate, for any $x\in L$, and $0\neq u\in T_xL$ we have $\iota_u(\omega (x))\neq 0$, i.e.,  there exists $v\in T_xL$ such that $\omega(x)(u,v)\neq 0$.
Hence, the hypothesis of the previous lemma have to fail.
If for a point $x$  in an invariant  manifold with
a symplectic form there is a vector decreasing exponentially fast,
there has to be another one growing exponentially with a rate that matches.
This is the key to pairing rules, but one has to fix some details of uniformity.

\begin{lem}\label{cor:pairing}
Let $f$ be  a conformally symplectic  diffeomorphism $f:M\to M$ with conformal factor $0<\eta$ as in  \eqref{eqn:conformally_symplectic}, and
$L\subseteq M$  a submanifold invariant under $f$.

Assume that the symplectic form $\omega$ is  uniformly bounded in a neighborhood of $L$,
and that $\omega_{\mid L}$ is non-degenerate.
Take $x\in L$ and assume that for some $0<\alpha<1$, there exists $u\in T_xL$ that satisfies (A1) of Lemma~\ref{lem:vanishing_pairing}.

Then, for any $\beta$ with $\alpha\beta<\eta$, there exists $v\in T_xL$ that
fails (A2) of Lemma~\ref{lem:vanishing_pairing}
\end{lem}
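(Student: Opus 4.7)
My plan is to deduce Lemma \ref{cor:pairing} as a direct contrapositive of Lemma \ref{lem:vanishing_pairing}, using the non-degeneracy of $\omega_{\mid L}$ to produce an explicit witness $v$. First, since $u \ne 0$ and $\omega(x)$ restricted to $T_x L$ is non-degenerate, there exists $v \in T_x L$ with $\omega(x)(u,v) \ne 0$. I claim that this particular $v$ must fail condition (A2) for any $\beta$ satisfying $\alpha \beta < \eta$.

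To see this, I would argue by contradiction: suppose that for some constant $C_2 > 0$ there were an increasing sequence of positive integers $\{n_j\}$ with $\|Df^{n_j}(x) v\| \le C_2 \beta^{n_j} \|v\|$. Then, combined with the hypothesis that $u$ satisfies (A1) and with the rate inequality $\alpha \beta < \eta$, the pair $(u,v)$ fulfills every hypothesis of Lemma \ref{lem:vanishing_pairing}. The conclusion of that lemma forces $\omega(x)(u,v) = 0$, contradicting the defining property of $v$. Hence no such $C_2$ and no such sequence can exist, which is exactly the failure of (A2) for $v$.

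There is no substantive obstacle: the argument is a short logical consequence of the vanishing lemma already proved, plus the pointwise non-degeneracy. The one subtlety worth flagging is the asymmetric nature of the conclusion — we obtain that no $C_2$ works along \emph{any} increasing subsequence, but this does \emph{not} upgrade to a uniform lower bound of the form $\|Df^n(x) v\| \ge c\, \gamma^n \|v\|$ for all $n \ge 0$ with some explicit rate $\gamma$. Such a uniform lower bound would require additional structure (e.g., a splitting, or uniform hyperbolicity), which is not assumed here; one only learns that the set of integers $n$ at which $\|Df^n(x) v\| \le C_2 \beta^n \|v\|$ is finite, for every $C_2 > 0$.
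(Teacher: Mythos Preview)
Your proof is correct and is exactly the paper's approach: pick $v\in T_xL$ with $\omega(x)(u,v)\neq 0$ by non-degeneracy, then observe that (A2) for this $v$ would trigger Lemma~\ref{lem:vanishing_pairing} and force $\omega(x)(u,v)=0$, a contradiction. One small correction to your closing remark: the failure of (A2) \emph{does} upgrade to a lower bound $\|Df^n(x)v\|\ge \tilde C_2(x)\,\beta^n\|v\|$ for all $n\ge 0$ at the fixed point $x$ (since only finitely many $n$ violate any given bound, and $Df^n(x)$ is invertible), as the paper notes in \eqref{lowbound}; what fails is uniformity of $\tilde C_2$ in $x$, not in $n$.
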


The negation of (A2) is very strong.
It means that, for the point $x\in L$ and for the vector $v$, we have that  for every $C_2>0$ there are only finitely $n_j$'s such that $\|Df^{n_j}(x)v\|\leq C_2 \beta^{n_j}\|v\|$.
Therefore, there exists $n_0(C_2)$ such that
\[
\|Df^{n}(x)v\|\ge  C_2
\beta ^n \|v\| \ \ \forall  n\ge n_0(C_2).
\]
Increasing  the constant $C_2$, we obtain the previous  inequality for all  $n\ge 0$.
\begin{equation} \label{lowbound}
\|Df^{n}(x)v\|\ge  \tilde C_2(x)
\beta ^n \|v\| \ \ \forall n\ge 0.
\end{equation}

A subtle point is that the $\tilde{C}_2$ appearing in \eqref{lowbound}
may depend on the point $x \in L$  even if the assumptions in  (A1)
hold with uniform constants. The reason is that the failure of (A2)
may happen for different sequences depending on the point.

This will be enough for our purposes in Section~\ref{sec:pairing_rules}
which only need the lower bounds for large enough $n$ and some $x$.

\begin{rem}
  For the experts in Fenichel theory,  we point out that
  the \emph{uniformity lemma} \cite{Fenichel74} allows to go from \eqref{lowbound} to
  bounds with uniform constants. Unfortunately, one of the assumptions of
the uniformity lemma is compactness of the manifold, which is not true in our
setting.
\end{rem}

\subsection{Vanishing lemmas on NHIMs}

This subsection gives a useful vanishing lemma for a NHIM under assumptions \eqref{eqn:conformal_rates} on the rates and the conformal factor, and assuming the form $\omega$ is bounded.
It will be crucial to prove that the NHIM   is symplectic in Section~\ref{sec:proofAmain1}.
\begin{lem}[Infinitesimal Vanishing Lemma]
\label{lem:vanishing}

We take the standing assumptions from Section~\ref{sec:standing}.

Let $x\in\Lambda$.
Then,  we have:
\begin{equation}
\label{eqn:vanishing}
\begin{split}
\mu_+ \lambda_+ \eta^{-1}  < 1 &\implies  \omega(x)(v_t,v_s)= 0, \quad \forall v_t\in T_x\Lambda,
\, \forall v_s\in E_x^s ,
\\
\mu_- \lambda_- \eta  < 1 & \implies  \omega(x)(v_t,v_u)= 0, \quad \forall v_t\in T_x\Lambda,
\, \forall v_u\in E_x^u,
\\
\lambda_+^2 \eta^{-1}<1 & \implies  \omega(x)(v^1_s,v^2_s)=0, \quad \, \forall v^1_s,  v^2_s\in E_x^s,
\\
\lambda_-^2 \eta < 1 & \implies  \omega(x)(v^1_u,v^2_u)=0, \quad  \forall v^1_u , v^2_u\in E_x^u.
\end{split}
\end{equation}
\end{lem}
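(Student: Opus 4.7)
The plan is to apply the basic inequality of Lemma~\ref{lem:iterates} in each of the four cases, choosing the sign of the iterate $n$ so as to exploit the rate of growth/decay of each bundle, and then use the standing assumption \eqref{eqn:bounded_omega} that $\|\omega\|$ is uniformly bounded to push the iterate to infinity.

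First I would treat the mixed statement $\omega(x)(v_t,v_s)=0$ for $v_t\in T_x\Lambda$, $v_s\in E^s_x$. For $n\ge 0$, the rate conditions \eqref{eqn:NHIM} give $\|Df^n(x)v_t\|\le D_+\mu_+^n\|v_t\|$ and $\|Df^n(x)v_s\|\le C_+\lambda_+^n\|v_s\|$. Plugging into \eqref{iterates} and using \eqref{eqn:bounded_omega} yields
\[
|\omega(x)(v_t,v_s)|\;\le\; M_\omega D_+ C_+\,(\mu_+\lambda_+\eta^{-1})^n\,\|v_t\|\|v_s\|.
\]
Under the hypothesis $\mu_+\lambda_+\eta^{-1}<1$, letting $n\to\infty$ forces the left-hand side to vanish. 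For the analogous pairing $\omega(x)(v_t,v_u)=0$ with $v_u\in E^u_x$, I would instead use $n=-m$ with $m\ge 0$: the bounds for $T_x\Lambda$ and $E^u_x$ in backward time and the factor $\eta^{-n}=\eta^m$ combine to give an estimate proportional to $(\mu_-\lambda_-\eta)^m$, which tends to zero by the hypothesis $\mu_-\lambda_-\eta<1$.

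The two remaining vanishings, namely $\omega(x)(v^1_s,v^2_s)=0$ on $E^s_x$ and $\omega(x)(v^1_u,v^2_u)=0$ on $E^u_x$, are handled the same way, with both vectors belonging to the same bundle. For the stable case, applying \eqref{iterates} with $n\ge 0$ gives a bound proportional to $(\lambda_+^2\eta^{-1})^n$, and for the unstable case, using $n=-m\le 0$ gives a bound proportional to $(\lambda_-^2\eta)^m$. Under the respective hypotheses $\lambda_+^2\eta^{-1}<1$ and $\lambda_-^2\eta<1$, letting the iterate go to infinity concludes.

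There is essentially no serious obstacle here: the argument is a direct four-fold application of Lemma~\ref{lem:iterates} combined with the rate bounds \eqref{eqn:NHIM} and the uniform boundedness of $\omega$ on (a neighborhood of) $\Lambda$, which is guaranteed by \eqref{eqn:bounded_omega} since $\Lambda\subset\OO$. The only point worth noting is that uniform boundedness of $\omega$ at the iterated points $f^n(x)$ is essential for passing to the limit; without \eqref{eqn:bounded_omega} one cannot conclude, as Example~\ref{ex:unbounded_symplectic_form} shows.
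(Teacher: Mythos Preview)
Your proposal is correct and follows essentially the same approach as the paper: in each of the four cases you apply the basic inequality \eqref{iterates} from Lemma~\ref{lem:iterates}, insert the rate bounds from \eqref{eqn:NHIM} (choosing $n\to+\infty$ for the stable cases and $n\to-\infty$ for the unstable ones), and use the uniform bound \eqref{eqn:bounded_omega} on $\omega$ to conclude. The paper's proof is identical in structure and detail.
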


\begin{proof}

For $v_t\in T_x\Lambda$, $v_s\in E^s_x$, by\eqref{eqn:NHIM} we have
\begin{equation}\begin{split}
\|Df^n(x) v_t\|\leq& D_+\mu_+^n\|v_t\|\textrm { for } n\geq 0,\\
\|Df^n(x) v_s\|\leq& C_+\lambda_+^n\|v_s\|\textrm { for } n\geq 0,
\end{split}
\end{equation}
so, by \eqref{iterates} and \eqref{eqn:bounded_omega},
\[
|\omega(x)(v_t,v_s)| \leq M_\omega  (C_+D_+)(\lambda_+\mu_+\eta^{-1})^n\|v_t\|\|v_s\| \textrm { for } n\geq 0.
\]
As $ \lambda_+\mu_+\eta^{-1} <1$, and $n$ is arbitrary, we obtain $\omega (x)(v_t,v_s)=0$.

Analogously,  from
\begin{equation}\begin{split}
\|Df^n(x) v_t\|\leq& D_-\mu_-^{|n|}\|v_t\|\textrm { for } n\leq 0,\\
\|Df^n(x) v_u\|\leq& C_-\lambda_-^{|n|}\|v_u\|\textrm { for } n\leq 0,
\end{split}
\end{equation}
it follows
\[
|\omega(x)(v_t,v_u) | \leq M_\omega  (C_-D_-)(\lambda_- \mu_- \eta )^{|n|}\|v_t\|v_u\|\| \textrm{ for } n\leq 0
\]
and, since by assumption, $ \lambda_- \mu_- \eta   <1$,
we obtain $\omega(x)(v_t,v_u)=0$.

Similarly
\[
|\omega(x)(v^1_s,v^2_s)| \leq\ M_\omega (C_+)^2(\lambda_+^2 \eta^{-1} )^{n }\|v^1_s\|\|v^2_s\| \textrm{ for } n\geq 0,
\]
and $ \lambda_+^2 \eta^{-1} <1$ imply
$\omega(x)(v^1_s,v^2_s)=0$.

An analogous argument shows that $ \lambda_-^2 \eta   < 1$ implies $\omega(x)(v^1_u,v^2_u)=0$.
\end{proof}

A corollary of Lemma~\ref{lem:vanishing} is that
the manifold $W^{s,u, \textrm{loc}} $ are co-isotropic.

\begin{rem}
\label{rem:estimates_omega}

In the neighborhood $\OO_\rho$ (see \eqref{eqn:OOrho}), it is natural to obtain a system of coordinates
in $W^{s,\mathrm{loc}}_\Lambda$ to a neighborhood of
the zero section of $E^s_\Lambda$.

We could use any system of coordinates whose coordinate is tangent to
the stable bundle. For example the coordinate system in Section \ref{rem:new coordinates}, which has  useful geometric properties.
In a sufficiently small neighborhood, we can trivialize the stable bundle.

We want to express the symplectic form $\omega$ in
the $x,y$ system of coordinates. Note that $(x,0)$ corresponds to
points in $\Lambda$.

Assuming \eqref{eqn:conformal_rates},
the Lemma~\ref{lem:vanishing} gives us that $\omega(x,0)$
  -- the form $\omega$ in  the manifold $\Lambda$, has the representation
  \[
  \omega(x,0) = \begin{pmatrix}
    \omega_{xx}(x)  & 0 \\
    0 & 0
  \end{pmatrix}.
  \]
  Therefore, if $\omega$  is differentiable and satisfies \eqref{doundedomega}
  we have, by the mean value theorem:

\begin{equation}
  \label{eqn:decomposition2}
  \omega(x,s)= \begin{pmatrix}
    \omega_{xx}(x)  & 0 \\
    0 & 0
  \end{pmatrix}
  + e(x,y), \textrm{ with } \|e(x,y) \| \le C M'_\omega \|y\| .
\end{equation}

We will use these estimates in Section~\ref{sec:iteration}.
\end{rem}

\begin{rem}\label{rem:ds=du}
As a consequence of Lemma \ref{lem:vanishing}, we obtain that the stable and unstable bundles have equal dimension, i.e., $d_s=d_u$.
The reason is that relative to $T_\Lambda M\oplus E^s\oplus E^u$ we have
\[\omega=\begin{pmatrix}
      \omega_{xx} & 0 & 0 \\
      0 & 0 & \omega_{y_s y_u} \\
      0 &\omega_{y_uy_s}& 0 \\
    \end{pmatrix}
    \]
and $\omega_{y_sy_u}$, $\omega_{y_uy_s} $ are non-degenerate.
Therefore, $E^s$ is isomorphic to $E^u$.
\end{rem}

\subsection{Vanishing lemmas on stable/unstable manifolds}

The following lemma can be considered as
an analogue of Lemma~\ref{lem:vanishing} for the stable and unstable manifolds
of a NHIM.
It will be used in the proof of part {\rm (B)} of Theorem~ \ref{thm:main2}.

\begin{lem}[Vanishing Lemma]
\label{lem:vanishingmanifolds}
We adopt the standing assumptions from Section \ref{sec:standing}.

Let $y\in W^{s,\mathrm{loc}}_x$ for some $x \in \Lambda$.
Then,  we have:
\begin{equation}
\label{eqn:vanishingstable}
\begin{split}
\mu_+ \lambda_+ \eta^{-1}  < 1 &\implies  \omega(y)(v_t,v_s)= 0, \quad \forall v_t\in T_yW^{s,\mathrm{loc}}_\Lambda,
\, \forall v_s\in T_y W_x^{s,\mathrm{loc}} ,\\
\lambda_+^2 \eta^{-1}<1 & \implies  \omega(y)(v^1_s,v^2_s)=0, \quad \, \forall v^1_s,  v^2_s\in T_yW^{s,\mathrm{loc}}_x.
\end{split}
\end{equation}

Analogously, let $y\in W^{u,\mathrm{loc}}_x$ for some $x \in \Lambda$.
Then,  we have:
\begin{equation}
\label{eqn:vanishingunstable}
\begin{split}
\mu_- \lambda_- \eta  < 1 & \implies  \omega(y)(v_t,v_u)= 0, \quad \forall v_t\in T_yW^{u,\mathrm{loc}}_\Lambda,
\, \forall v_u\in T_yW^{u,\mathrm{loc}}_x,
\\
\lambda_-^2 \eta < 1 & \implies  \omega(x)(v^1_u,v^2_u)=0, \quad  \forall v^1_u , v^2_u\in T_yW^{u,\mathrm{loc}}_x.
\end{split}
\end{equation}
Consequently:
\begin{equation} \label{eqn:vanishing_manifolds}
\begin{split}
\lambda_+^2 \eta^{-1}<1 & \implies W^{s,\mathrm{loc}}_x \textrm{ is isotropic},\ \forall x\in \Lambda,\\
\lambda_- ^2 \eta  < 1 &  \implies W^{u,\mathrm{loc}}_x \textrm{ is isotropic}, \ \forall x\in \Lambda.
\end{split}
\end{equation}
\end{lem}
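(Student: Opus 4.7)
The plan is to mimic the proof of Lemma~\ref{lem:vanishing} on $\Lambda$, replacing the infinitesimal rates on the invariant bundles by the corresponding growth rates for tangent vectors to the stable manifold and its strong-stable foliation. The basic inequality \eqref{iterates} from Lemma~\ref{lem:iterates} remains the only tool that uses conformal symplecticity: for any $y\in M$, $u,v\in T_yM$ and $n\in\Z$,
\[
|\omega(y)(u,v)|\le \eta^{-n}\,\|\omega(f^n(y))\|\,\|Df^n(y)u\|\,\|Df^n(y)v\|,
\]
and since $y\in W^{s,\mathrm{loc}}_\Lambda$ implies $f^n(y)\in \OO_\rho$ for all $n\ge 0$, the norm of $\omega$ at $f^n(y)$ is bounded by $M_\omega$ thanks to \eqref{eqn:bounded_omega}.

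First I would collect the growth bounds for tangent vectors along the stable manifold and along its strong-stable leaves. By \textbf{(H2)} and invariance of $W^{s,\mathrm{loc}}_\Lambda$, any $v_t\in T_y W^{s,\mathrm{loc}}_\Lambda$ is pushed forward into $T_{f^n(y)} W^s_\Lambda$ and, using the characterization of $W^{s,\mathrm{loc}}_\Lambda$ via rates together with $\mu_+\ge 1\ge \lambda_+$ from \eqref{eqn:mubounds}, there is a uniform constant $K_1$ with
\[
\|Df^n(y)v_t\|\le K_1\,\mu_+^n\,\|v_t\|,\qquad n\ge 0.
\]
Similarly, by \textbf{(H3)} and invariance of the strong-stable foliation under $f$, any $v_s\in T_y W^{s,\mathrm{loc}}_x$ satisfies
\[
\|Df^n(y)v_s\|\le K_2\,\lambda_+^n\,\|v_s\|,\qquad n\ge 0,
\]
for a uniform $K_2$. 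Both bounds are standard consequences of the NHIM theory recalled in Appendix~\ref{sec:appendix_NHIM}, and it is here that I would invoke the uniformity hypotheses \eqref{U2}, \eqref{U3}.

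Once these rate bounds are in hand, the proof becomes a verbatim repetition of Lemma~\ref{lem:vanishing}. Substituting into \eqref{iterates},
\[
|\omega(y)(v_t,v_s)|\le M_\omega\,K_1 K_2\,(\mu_+\lambda_+\eta^{-1})^n\,\|v_t\|\|v_s\|,
\]
so the assumption $\mu_+\lambda_+\eta^{-1}<1$ forces $\omega(y)(v_t,v_s)=0$ as $n\to\infty$. Exactly the same estimate with two stable vectors gives
\[
|\omega(y)(v^1_s,v^2_s)|\le M_\omega\,K_2^2\,(\lambda_+^2\eta^{-1})^n\,\|v^1_s\|\|v^2_s\|,
\]
yielding the second implication in \eqref{eqn:vanishingstable}. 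The statements on $W^{u,\mathrm{loc}}_\Lambda$ follow by applying the same argument to $f^{-1}$, which is conformally symplectic with factor $\eta^{-1}$, using backward iterates $n\le 0$ and the rates $\mu_-,\lambda_-$; the reversal explains why the inequalities become $\mu_-\lambda_-\eta<1$ and $\lambda_-^2\eta<1$. Finally, \eqref{eqn:vanishing_manifolds} is immediate: the second implication of \eqref{eqn:vanishingstable} says that $\omega$ restricted to $T_y W^{s,\mathrm{loc}}_x$ vanishes for every $y\in W^{s,\mathrm{loc}}_x$, i.e.\ $W^{s,\mathrm{loc}}_x$ is isotropic, and analogously for $W^{u,\mathrm{loc}}_x$.

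The main obstacle I anticipate is the first bullet: justifying the uniform forward-growth bound $\|Df^n(y)v_t\|\lesssim \mu_+^n\|v_t\|$ for $v_t\in T_y W^{s,\mathrm{loc}}_\Lambda$ at points $y\notin\Lambda$. The NHIM hypothesis \eqref{eqn:NHIM} only gives rates for $x\in\Lambda$, so one has to transfer the bound along strong-stable leaves. This is a standard ingredient in the Fenichel/HPS theory (it is exactly the characterization of the stable manifold as the set of points whose forward orbit shadows one in $\Lambda$ with rate at most $\mu_+$), and I would cite the corresponding statement from Appendix~\ref{sec:appendix_NHIM}; nothing deeper than \textbf{(H2)}--\textbf{(H4)} and \eqref{U2} is needed, but it is the one place where care with uniformity on the unbounded manifold is required.
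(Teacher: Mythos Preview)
Your proposal is correct and follows essentially the same route as the paper: apply the basic inequality \eqref{iterates}, bound $\|\omega(f^n(y))\|$ by $M_\omega$, and feed in growth rates for $Df^n(y)$ on $T_yW^{s,\mathrm{loc}}_\Lambda$ and $T_yW^{s,\mathrm{loc}}_x$. The one point you flag as the ``main obstacle'' --- transferring the rates $\mu_+,\lambda_+$ from points of $\Lambda$ to points $y\in W^{s,\mathrm{loc}}_\Lambda$ --- is exactly what the paper isolates as Lemma~\ref{lem:globalrates} (equation \eqref{eqn:ratesinmanifolds}) in Appendix~\ref{sec:NHIM_results}, which it then cites directly; the bounds hold for $n\ge N$ with the \emph{same} rates and slightly larger constants, and that is all the argument needs.
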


Note that in \eqref{eqn:vanishingstable}, we have $\lambda_+\mu_+\eta^{-1}<1$ implies $\lambda_+^2\eta^{-1}<1$, since $\lambda_+<\mu_+$.

The conclusions in
the first lines of  \eqref{eqn:vanishingstable},
\eqref{eqn:vanishingunstable} can
be stated geometrically as saying that,
for all $x \in \Lambda$:
\begin{equation}\label{vanishing_geometric}
  \begin{split}
& \forall y \in W^s_x,  v_s  \in T_y W^s_x
\implies i_{v_s} (\omega_{\mid W^s_\Lambda}) = 0 ,\\
    & \forall y \in W^u_x,  v_u \in T_y W^u_x
\implies i_{v_u}(\omega_{\mid W^u_\Lambda}) = 0 .
\end{split}
\end{equation}
In other words, $W^{s,u}_\Lambda$ is presymplectic and the foliation
given by the kernel is  the foliation of strong stable/unstable manifolds.

\begin{proof}[Proof of Lemma \ref{lem:vanishingmanifolds}]
We consider the case of the stable manifold. For points in the unstable manifold we proceed analogously.
Take $y \in  W^{s,\textrm{loc}}_\Lambda $ and $x \in \Lambda$ such that $y\in W^{u,\mathrm{loc}}_x$.

For $v_t\in T_y W^{s,\mathrm{loc}}_\Lambda$, $v_s\in T_y W^{s,\mathrm{loc}}_x$ we have, using the bounds \eqref{eqn:ratesinmanifolds} in Lemma \ref{lem:globalrates}, we have that there exist $D_+, C_+$, such that:
\begin{equation}\begin{split}
\|Df^n(y) v_t\|\leq& D_+\mu_+^n\|v_t\|\textrm { for } n\geq N,\\
\|Df^n(y) v_s\|\leq& C_+\lambda_+^n\|v_s\|\textrm { for } n\geq N.
\end{split}
\end{equation}
Using \eqref{iterates} we obtain:
\[
|\omega(y)(v_t,v_s)| \leq M_\omega  (C_+D_+)(\lambda_+\mu_+\eta^{-1})^n\|v_t\|\|v_s\| \textrm { for } n\geq N.
\]
Since $ \lambda_+\mu_+\eta^{-1} <1$ by assumption, and $n\geq N$ is arbitrary, we obtain $\omega (y)(v_t,v_s)=0$.

Analogously,  for  $v^1_s, v^2_s\in T_yW^{s,\mathrm{loc}}_x$:
\[
|\omega(y)(v^1_s,v^2_s)|\le C_+^2 M_\omega (\lambda_+^2\eta^{-1})^n\|v^1_s\| \|v^2_s\|, \ \forall n\ge N,
\]
and therefore, if $ \lambda_+^2\eta^{-1} <1$, we have $\omega(y)(v^1_s,v^2_s)=0$ for any $v^1_s,v^2_s \in T_y{W^{s,\mathrm{loc}}_x}$.
As this is true for any
$y\in W^{s,\mathrm{loc}}_x$, we obtain that $\omega_{\mid W^{s,\mathrm{loc}}_x}=0$, and therefore $W^{s,\mathrm{loc}}_x$ is isotropic.

\end{proof}

\subsection{Results on isotropic and coisotropic manifolds}

The next Proposition~\ref{lem:degeneracy} gives results which ensure that the form $\omega$ vanishes on some  manifolds.
This result will not be used in the  proofs of Theorems \ref{thm:main1} and \ref{thm:main2}. The proof of Proposition~\ref{lem:degeneracy}
is given in Section \ref{sec:proofisotropic} after the proofs of these theorems.

\begin{prop}[(Co)isotropic submanifolds]
  \label{lem:degeneracy}
 We take the standing assumptions from Section \ref{sec:standing}.
\begin{itemize}
\item[(i)]
If $N\subset \Lambda$ is an  isotropic submanifold  (not necessarily invariant),  that is, $\omega_{\mid N}=0$,  then we have:
\begin{equation} \label{eqn:vanishing_lagrangianmanifolds}
\begin{split}
\mu_+\lambda_+ \eta^{-1}<1 & \implies W^s_N \textrm{ is isotropic},  \ \text{that is} \ \omega_{\mid W^s_N} = 0 ,
\\
\mu_-\lambda_- \eta  < 1 &\implies  W^u_N  \textrm{ is isotropic}, \ \text{that is} \  \omega_{\mid W^u_N}= 0.
\end{split}
\end{equation}
\item[(ii)]
The stable and unstable manifolds of $\Lambda$ satisfy:
\begin{equation}\label{eqn:vanishing_2}
\begin{split}
\mu_+ \lambda_+ \eta^{-1}  < 1 &   \implies W^s_\Lambda  \textrm{ is coisotropic},\\
\mu_- \lambda_- \eta  < 1 & \implies  W^u_\Lambda  \textrm{ is coisotropic}.
\end{split}
\end{equation}
\end{itemize}
\end{prop}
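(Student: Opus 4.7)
The plan is to deduce both parts as straightforward consequences of Theorems~\ref{thm:main1} and~\ref{thm:main2}, invoked under the standing assumptions together with the rate conditions~\eqref{eqn:conformal_rates}. By Theorem~\ref{thm:main1}(A) the NHIM $\Lambda$ is symplectic, and Theorem~\ref{thm:main2}(B) then provides the pullback identities
\[
\Omega_{\pm}^{*}\bigl(\omega|_{\Lambda}\bigr)=\omega|_{W^{s,u,\mathrm{loc}}_{\Lambda}},
\]
expressing $\omega$ restricted to a (un)stable manifold as the wave-map pullback of the symplectic form on $\Lambda$.

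For part~(i), I pick $y\in W^s_N$ and $v_1,v_2\in T_y W^s_N$. Since $W^s_N=\Omega_+^{-1}(N)$, the derivative $D\Omega_+$ carries $T_y W^s_N$ into $T_{\Omega_+(y)}N$, so the pullback identity reduces
\[
\omega(y)(v_1,v_2)=\omega|_{\Lambda}\bigl(\Omega_+(y)\bigr)\bigl(D\Omega_+(v_1),D\Omega_+(v_2)\bigr)
\]
to the evaluation of $\omega|_{\Lambda}$ on a pair of vectors tangent to $N$; by the isotropy hypothesis $\omega|_N=0$, this vanishes. The argument for $W^u_N$ is symmetric, using $\Omega_-$.

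For part~(ii), I first establish the dimensional equality $d_s=d_u$. At any $y\in\Lambda$ the splitting $T_yM=T_y\Lambda\oplus E^s_y\oplus E^u_y$, combined with Lemma~\ref{lem:vanishing}, puts $\omega$ in block form with a nondegenerate $\omega|_{\Lambda}$-block and a second block acting on the pair of isotropic subspaces $E^s_y,E^u_y$; nondegeneracy of $\omega$ on $M$ forces this second block to be nondegenerate, whence $d_s=d_u$. For arbitrary $y\in W^s_{\Lambda}$, the pullback identity together with the nondegeneracy of $\omega|_{\Lambda}$ shows that the intrinsic kernel of $\omega|_{W^s_{\Lambda}}$ at $y$ equals $\ker D\Omega_+(y)=T_y W^{s,\mathrm{loc}}_{\Omega_+(y)}$, of dimension $d_s$. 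On the other hand, the ambient symplectic orthogonal $(T_y W^s_{\Lambda})^{\omega}\subset T_yM$ has dimension $d-(d_c+d_s)=d_u$ and already contains $T_y W^{s,\mathrm{loc}}_{\Omega_+(y)}$ by Lemma~\ref{lem:vanishingmanifolds}; since $d_s=d_u$, the two subspaces coincide, giving $(T_y W^s_{\Lambda})^{\omega}=T_y W^{s,\mathrm{loc}}_{\Omega_+(y)}\subset T_y W^s_{\Lambda}$, which is coisotropy. The case of $W^u_{\Lambda}$ is analogous.

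The main obstacle is the identification of the ambient symplectic orthogonal $(T W^s_{\Lambda})^{\omega}$ with the strong stable foliation direction $T W^{s,\mathrm{loc}}_{\Omega_+(y)}$. This step requires two ingredients in tandem: the pullback formula from Theorem~\ref{thm:main2}(B), which pins down the intrinsic kernel of $\omega|_{W^s_{\Lambda}}$ as the vertical direction of $\Omega_+$, and the equality $d_s=d_u$ extracted from the vanishing lemmas and nondegeneracy of $\omega$ on $M$. With both in place the dimension count closes the argument, and what remains is routine.
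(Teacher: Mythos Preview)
Your proof is correct. Part~(i) is essentially identical to the paper's argument: both use the pullback identity $(\Omega_+)^*\omega_{|\Lambda}=\omega_{|W^s_\Lambda}$ from Theorem~\ref{thm:main2}(B) and restrict to $W^s_N=\Omega_+^{-1}(N)$.

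For part~(ii) the approaches differ. The paper proceeds by constructing, for each $y\in W^s_\Lambda$ with $x=\Omega_+(y)$, an isotropic submanifold $N\subset\Lambda$ of dimension $d_c/2$ through $x$ (via Darboux), and then applies part~(i) to conclude that $W^s_N$ is isotropic; since $\dim W^s_N=d_c/2+d_s=\tfrac{1}{2}(d_c+d_s+d_u)$, the manifold $W^s_N$ is Lagrangian in $M$, and any subspace containing a Lagrangian subspace is coisotropic. Your route is more linear-algebraic: you first establish $d_s=d_u$ directly from the block form of $\omega$ on $T_x\Lambda\oplus E^s_x\oplus E^u_x$ given by Lemma~\ref{lem:vanishing}, and then identify $(T_yW^s_\Lambda)^\omega$ with $T_yW^{s,\mathrm{loc}}_{\Omega_+(y)}$ by a dimension count, using the containment from Lemma~\ref{lem:vanishingmanifolds}. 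Your argument is slightly more self-contained in that it makes the equality $d_s=d_u$ explicit (the paper's dimension computation $d_c/2+d_s=\tfrac12(d_c+d_s+d_u)$ uses it implicitly), and it avoids invoking Darboux; the paper's version, on the other hand, yields the pleasant geometric byproduct that $W^s_\Lambda$ is actually foliated by Lagrangian submanifolds $W^s_N$. As a minor remark, your identification of the \emph{intrinsic} kernel of $\omega_{|W^s_\Lambda}$ is correct but not needed for the coisotropy conclusion, which follows already from Lemma~\ref{lem:vanishingmanifolds} and the dimension equality.
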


We note that $\omega_{\mid W^{s,u}_\Lambda}$ is presymplectic and its kernel $K_x(\omega)$ has constant rank equal to $d_u=d_s$; see Remark~\ref{rem:constant_rank}. Recall that $d_s=d_u$ from Remark~\ref{rem:ds=du}.

\subsection{Some properties of rates in isotropic invariant manifolds}
\label{sec:isotropic}
In this section we start to explore
the interaction between rates and isotropic invariant manifolds.

In this section we assume that $\Lambda$ is an invariant manifold satisfying conditions \eqref{eq:bundles} and \eqref{eqn:NHIM}, but not necessarily \ref{eq:rates0}.
In particular, $\Lambda$ is not necessarily normally hyperbolic. It is easy to see that Theorem \ref{thm:main1} {\bf(A)} still holds under these hypotheses\footnote{Its proof uses Lemma \ref{lem:vanishing} which only requires \eqref{eq:bundles}, \eqref{eqn:NHIM} and \eqref{eqn:conformal_rates}}.
A  consequence  of Corollary~\ref{partial_converse} is
that if  $\Lambda$ is not symplectic,
then it does not satisfy \eqref{eqn:conformal_rates}.
One interesting case is when $\Lambda$ is an isotropic manifold.
By the vanishing lemmas, there are assumptions on
rates that imply that $\Lambda$ is  isotropic. Hence, we have
some inequalities on rates (involving only $\mu_\pm, \eta$) that imply other inequalities on rates
(involving $\mu_\pm, \lambda_\pm, \eta$)
by passing through isotropic manifolds.

This reveals some relation between rates, isotropy, normal hyperbolicity
that we illustrate in an example. A fuller theory is being developed
incorporating other ingredients.

\begin{cor}
Assume the setting in Section \ref{sec:standing} without (iii) and (iv), and that  $\Lambda$ is an invariant manifold satisfying \eqref{eq:bundles} and \eqref{eqn:NHIM}.

  Then:
  \begin{equation}
    \label{corrates}
    \begin{split}
      &   \ (\mu_+^* )^2 \eta^{-1} < 1   \quad  \textrm{OR} \quad
       \ (\mu_-^* )^2 \eta < 1  \\
      & \phantom{AAAAA} \implies \\
      & \omega|_\Lambda = 0 \\
      & \phantom{AAAAA} \implies \\
      &   \mu_+^* \lambda_+^*  \eta^{-1} \ge 1    \quad \textrm{OR} \quad
       \mu_-^*  \lambda_-^* \eta \ge 1.
    \end{split}
  \end{equation}

In particular, $\Lambda$ is not normally hyperbolic.
\end{cor}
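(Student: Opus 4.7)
The plan is to establish the two implications separately and then combine them with the NHIM gap conditions to get the concluding clause. Both implications rely on the basic iterate inequality \eqref{iterates} of Lemma~\ref{lem:iterates}, applied with \emph{both} vectors tangent to $\Lambda$.

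\textbf{First implication.} Assume $(\mu_+^*)^2\eta^{-1}<1$. By definition of the optimal rate, I may pick $\mu_+>\mu_+^*$ so that $\mu_+^2\eta^{-1}<1$ and the forward bound of \eqref{eqn:NHIM} on $T_x\Lambda$ holds with this $\mu_+$ and some constant $D_+$. For any $u,v\in T_x\Lambda$, combining \eqref{iterates} with the uniform bound \eqref{eqn:bounded_omega} yields
\[
|\omega(x)(u,v)|\le M_\omega D_+^2\,(\mu_+^2\eta^{-1})^n\,\|u\|\,\|v\|\;\xrightarrow[n\to\infty]{}\;0,
\]
so $\omega(x)(u,v)=0$, i.e.\ $\omega|_\Lambda=0$. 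The case $(\mu_-^*)^2\eta<1$ is completely analogous, using \eqref{iterates} with $n=-m\to-\infty$ and the backward bound on $T_x\Lambda$, producing a factor $(\mu_-^2\eta)^m\to 0$.

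\textbf{Second implication.} I argue by the contrapositive: suppose $\mu_+^*\lambda_+^*\eta^{-1}<1$ \emph{and} $\mu_-^*\lambda_-^*\eta<1$. By definition of the optimal rates I can enlarge each one slightly to find $\mu_\pm>\mu_\pm^*$, $\lambda_\pm>\lambda_\pm^*$ still satisfying the bounds in \eqref{eqn:NHIM} and such that both inequalities of \eqref{eqn:conformal_rates} persist. As highlighted by the footnote opening this section, the proof of Theorem~\ref{thm:main1}(A) only requires \eqref{eq:bundles}, \eqref{eqn:NHIM} and \eqref{eqn:conformal_rates}, not the gap condition \eqref{eq:rates0}. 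Hence Theorem~\ref{thm:main1}(A) forces $\Lambda$ to be symplectic, so $\omega|_\Lambda$ is non-degenerate, contradicting $\omega|_\Lambda=0$ on a manifold of positive dimension.

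\textbf{Non--normal hyperbolicity.} Chaining the two implications, the hypothesis forces $\mu_+^*\lambda_+^*\eta^{-1}\ge 1$ or $\mu_-^*\lambda_-^*\eta\ge 1$. Suppose, towards contradiction, that $\Lambda$ also satisfies \eqref{eq:rates0}; then Lemma~\ref{rem:ratesplusminus} gives \eqref{eq:ratesplusminus}, i.e.\ $\mu_+^*\mu_-^*\ge 1$. Under $(\mu_+^*)^2<\eta$ we have $\mu_-^*\ge 1/\mu_+^*>1/\sqrt{\eta}$, and if $\mu_+^*\lambda_+^*\ge\eta$ then $\lambda_+^*\ge\eta/\mu_+^*>\sqrt{\eta}$, so $\lambda_+^*\mu_-^*>1$, which contradicts the NHIM gap $\lambda_+\mu_-<1$. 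The hypothesis $(\mu_-^*)^2\eta<1$ combined with $\mu_-^*\lambda_-^*\eta\ge 1$ contradicts $\lambda_-\mu_+<1$ by the symmetric calculation. The main technical point --- and what I expect to be the chief obstacle --- is ruling out the two ``mismatched'' pairings of hypothesis with conclusion (for example $(\mu_+^*)^2<\eta$ together with $\mu_-^*\lambda_-^*\eta\ge 1$), where the one--line use of \eqref{eq:ratesplusminus} is insufficient and a finer accounting of the optimal rates, of the NHIM gap, and of the possible range of $\eta$ is needed to close the argument.
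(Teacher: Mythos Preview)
Your proofs of the two implications are correct and essentially identical to the paper's. For the final clause, the paper's argument is exactly the matched-case computation you give: it observes that under the NHIM gaps one has $\lambda_+^*<\mu_+^*$ and $\lambda_-^*<\mu_-^*$, so $(\mu_+^*)^2\eta^{-1}<1$ forces $\mu_+^*\lambda_+^*\eta^{-1}<1$, and symmetrically $(\mu_-^*)^2\eta<1$ forces $\mu_-^*\lambda_-^*\eta<1$; it then asserts a contradiction with the disjunctive conclusion without treating the cross cases you flag.

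The obstacle you identify is genuine rather than a missing trick. Consider $M=\mathbb{R}^6$ with $\omega=\sum_{i=1}^3 dy_i\wedge dx_i$, $\eta=4$, and the conformally symplectic linear map
\[
f(x_1,x_2,x_3,y_1,y_2,y_3)=\bigl(x_1,\,x_2,\,\tfrac12 x_3,\,4y_1,\,4y_2,\,8y_3\bigr).
\]
The plane $\Lambda=\{x_3=y_1=y_2=y_3=0\}$ is invariant with $T\Lambda=\langle\partial_{x_1},\partial_{x_2}\rangle$, $E^s=\langle\partial_{x_3}\rangle$, $E^u=\langle\partial_{y_1},\partial_{y_2},\partial_{y_3}\rangle$, and optimal rates $\mu_+^*=\mu_-^*=1$, $\lambda_+^*=\tfrac12$, $\lambda_-^*=\tfrac14$. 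All of \eqref{eq:rates0} holds ($\lambda_+^*\mu_-^*=\tfrac12$, $\lambda_-^*\mu_+^*=\tfrac14$), so $\Lambda$ is a bona fide NHIM. Yet $(\mu_+^*)^2\eta^{-1}=\tfrac14<1$, $\omega|_\Lambda=0$, and one lands in the mismatched case: $\mu_+^*\lambda_+^*\eta^{-1}=\tfrac18<1$ while $\mu_-^*\lambda_-^*\eta=1\ge 1$. Thus the hypothesis of the corollary is compatible with $\Lambda$ being normally hyperbolic, and neither your argument nor the paper's closes the gap because the ``In particular'' clause, as literally stated, does not follow from the chain of implications plus \eqref{eq:rates0} alone.
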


  \begin{proof}
    We start by proving  the first implication in  \eqref{corrates}

  If  $ (\mu_+^* )^2 \eta^{-1} < 1 $,
 then for some $\mu_+$  with
     $(\mu_+ )^2 \eta^{-1} < 1 $ and some $C>0$, we
  have for all $v \in T_x \Lambda$,
  \[
  \| Df^n(x) v\|  \le C \mu_+^n  \|v\| \quad \textrm{ for } n > 0 .
  \]
  Hence, using \eqref{iterates} and taking the limit as $n \to \infty$,
  we have that $\omega(x)(u,v) = 0$, $ \forall u,v \in T_x\Lambda$, $\forall x \in
  \Lambda$.

  The identical argument for
$ \ (\mu_-^* )^2 \eta < 1$, taking $n \to -\infty$ is left to the reader.

  The second implication in  \eqref{corrates}
  is just the failure  of  \eqref{eqn:conformal_rates}.

  Finally, we note that if $\Lambda$ is a NHIM, then by \eqref{eqn:lambda_mu_gap} and \eqref{eqn:mu_gap},   we have $\lambda_+^*<\mu_+^*$, therefore
  $(\mu_+^* )^2 \eta^{-1}< 1$   implies  $\mu_+^* \lambda_+^*  \eta^{-1} < 1$;
  similarly, $(\mu_-^* )^2 \eta < 1$   implies  $\mu_-^*  \lambda_-^* \eta < 1$. This contradicts the last conclusion of \eqref{corrates}.
  Hence, $\Lambda$ cannot be normally hyperbolic.
\end{proof}

  Isotropic, specially Lagrangian manifolds have extra properties
  among rates that are incompatible with normal hyperbolicity.
  In this paper, we will only mention an example,
  and postpone a fuller exploration involving other
  concepts to future work.

  \begin{ex}
    Consider $\R^{10}$ endowed with the form
    $\omega = \sum_j d x_j \wedge d y_j$
and the map $f$ given by:
\[
\begin{split}
    f &(
    x_1,
    x_2,
    x_3,
    x_4,
    x_5,
    y_1,
    y_2,
    y_3,
    y_4,
    y_5)
    \\ = &
    \left( \lambda_+ x_1,
      \mu_-^{-1} x_2,
      \mu x_3,
      \mu_+ x_4,
      \lambda_-^{-1} x_5, \lambda_+^{-1} \eta y_1,
                    \mu_- \eta y_2,
                    \mu^{-1} \eta y_3,
                    \mu_+^{-1} \eta y_4,
                    \lambda_- \eta y_5 \right).
          \end{split}
\]

We just assume
\[
   \lambda_+ < \mu_-^{-1} \le  \mu  \le \mu_+ <
   \lambda_-^{-1} .
   \]

The $3$-D manifold $\Lambda$  corresponding to the variables $x_2,x_3,x_4$ (and the other variables set to
zero) is an invariant manifold which is isotropic.

The key point of the example is that we have introduced an intermediate rate $\mu$ in $\Lambda$.
The presence of a rate $\mu$ along the manifold forces
the presence of a rate $\mu^{-1}\eta$ in the normal bundle.

If $\mu_-^{-1} < \mu^{-1} \eta< \mu_+$ (which in our case could well happen)
we obtain that the presence of vectors with an intermediate rate $\mu$ as above is incompatible
with $\Lambda$ being normally hyperbolic.

Similar phenomena appear in the use of automatic reducibility in whiskered tori
\cite{CallejaCL20}.

\end{ex}

\subsection{Vanishing lemmas for derivatives of a general $2$-form $\omega$
}

For a general $2$-form $\omega$ (which may be non-closed or
be degenerate)  we have that
\[
f^*\omega = \eta \omega \quad \implies \quad
f^* (d\omega)  = \eta (d \omega).
\]

Hence, procedures similar to those  used
to prove  Lemma~\ref{lem:vanishingmanifolds}
can be applied to obtain a  vanishing Lemma~\ref{derivative_vanish}, where we assume that  $d\omega$ is bounded and  some adequate assumptions on rates, and we conclude that   $d\omega$ vanishes on several blocks.

This will be enough to give a proof of a variant of part (B) of Theorem~\ref{thm:main2} in  Section~\ref{proofnonclosed} under the assumptions of Lemma~\ref{derivative_vanish}.
In particular, the proof in Section~\ref{proofnonclosed}  does not assume that $\omega$ is closed and can be
extended to cases studied in \cite{wojtkowski1998conformally}.

\begin{lem} \label{derivative_vanish}

We make the standing assumptions from Section \ref{sec:standing} with {\textbf{(U5')}} instead of {\textbf{(U5)}}.
We assume only  that $\omega$ is a $2$-form not necessarily closed or non-degenerate.

We assume that $f$ satisfies \eqref{eqn:conformally_symplectic} and  hence
satisfies also:
\begin{equation}\label{conformalderivative}
f^* (d\omega)  = \eta (d \omega).
\end{equation}

Then we have:
\begin{itemize}
\item[(A)]
If
  \begin{equation} \label{tts_rates}
    \mu_+^2 \lambda_+ \eta^{-1}  < 1,
    \end{equation}
  then  for every $y \in W^s_\Lambda$ and $x\in\Lambda $ such that $y\in W^s_x$,
  for all
  $v_t, w_t \in  T_y W^s_\Lambda$, and for all $u_s \in T_y  W^s_x$,
  we have:
  \begin{equation} \label{vanish_matrix_tts}
    d\omega (y) (v_t, w_t, u_s) = 0.
  \end{equation}

  Analogously, if:
  \begin{equation} \label{ttu_rates}
    \mu_-^2 \lambda_- \eta  < 1.
  \end{equation}
    then,   for every $y \in W^u_\Lambda$ and $x\in\Lambda $ such that $y\in W^u_x$,
  for all
  $v_t, w_t \in  T_y W^u_\Lambda$, and for all $u_u \in T_y  W^u_x$,
  we have:
  \begin{equation} \label{vanish_matrix_ttu}
    d\omega (y)(v_t, w_t, u_u) = 0.
  \end{equation}
\item[(B)]
  \begin{equation}
\begin{split}
\lambda_+^3 \eta^{-1} < 1 & \implies  \forall x \in \Lambda, \
d(\omega_{\mid W^s_x})= 0, \\
\lambda_-^3 \eta < 1 & \implies \forall x \in \Lambda, \
d(\omega_{\mid W^u_x})= 0.
\end{split}
\end{equation}
\end{itemize}
\end{lem}

\begin{proof}
 As in the previous lemmas, we observe that, because of
  \eqref{conformalderivative}, we have
  for all $n \in \Z$ (and for all $y$ and all $(v,w,u)$),
  \[
  d\omega(y) ( v, w, u) = \eta^{-n} d\omega(f^n(y))(
  Df^n(y)v, Df^n(y) w, Df^n(y)u )
  \]
  With the respective assumptions on rates and  uniform boundedness
  of the derivative $d\omega$, we obtain the desired result taking the limit
  $n \to \pm \infty$ in the different cases, as we  did in the proofs
  of Lemma~\ref{lem:vanishing} and Lemma~\ref{lem:vanishingmanifolds}.
\end{proof}

\subsection{Vanishing lemmas for some examples of unbounded symplectic forms}
\label{sec:unbounded_forms}
In this section, we develop a  result Lemma~\ref{prop:unbounded},which is very
similar to Lemma~\ref{lem:vanishing}, but which applies to some unbounded
symplectic forms. The system is assumed to have a compact invariant set
$\mathcal{A}$ -- that serves as the origin to measure distances, e.g.
$\mathcal{A}$ could be a fixed  point.
We also assume that the symplectic form  at a point $x$  is bounded
by a power $\alpha$ of the distance from $x$ to $\mathcal{A}$ and that the hyperbolicity
rates together with $\alpha$ and $\eta$ satisfy some relations.

We hope that Lemma~\ref{prop:unbounded}  indicates the ingredients needed in a systematic theory dealing with
unbounded forms. However, this result will not be used in this paper.

\begin{lem}[Infinitesimal Vanishing Lemma for Some  Unbounded Forms]\label{prop:unbounded}
  {$ $}

  We take as granted the standing assumptions from Section \ref{sec:standing} without \eqref{eqn:bounded_omega}.

  We  assume that   there exists a compact invariant set $\A \subset \Lambda$ such
  that for some $A,B , \alpha > 0$
    we have for all $x\in\Lambda$
    \begin{equation} \label{form_assumption}
      \| \omega(x) \| \le B+A \cdot d(x,\A)^\alpha ,
    \end{equation}
    where  $d$ is the Riemannian distance measured along $\Lambda$.

Then, for all $x\in\Lambda$, $v^1_s,v^2_s\in E^s_x$, $v^1_u,v^2_u\in E^u_x$,, $v^1_t,v^2_t\in T_x\Lambda$, we have:
\begin{equation}
\label{eqn:vanishing_all}
\begin{split}
\mu_+^{1+\alpha} \lambda_+ \eta^{-1}  < 1 &\implies  \omega(x)(v^1_t,v^1_s)= 0,
\\
\mu_-^{1+\alpha} \lambda_- \eta  < 1 & \implies  \omega(x)(v^1_t,v^1_u)= 0,
\\
\mu_+^\alpha \lambda_+^2\eta^{-1}<1 & \implies  \omega(x)(v^1_s,v^2_s)=0,
\\
\mu_-^\alpha \lambda_-^2 \eta < 1 & \implies  \omega(x)(v^1_u,v^2_u)=0,
\\
\mu_+^{2+\alpha} \eta^{-1}  < 1 &\implies  \omega(x)(v^1_t,v^2_t)= 0,\\
\mu_-^{2+\alpha} \eta   < 1 &\implies  \omega(x)(v^1_t,v^2_t)= 0.
\end{split}
\end{equation}
\end{lem}

\begin{rem}
One may wonder whether the assumptions of  Lemma~\ref{prop:unbounded} are contradictory.
An example in $M = \real \times \torus$ is obtained by choosing any function $h:\real \to \real$, satisfying  $|h(I)|\le |I|^\alpha$, and setting $\omega = h(I) dI \wedge d \theta$.
We consider a map of the form $f(I,\theta) = (g(I), \theta)$.
If  $h$, $g$ satisfy the separable differential equation $h(g(I)) g'(I) =\eta h(I)$, with $g(0)=0$
then the map $f$ is conformally symplectic for $\omega$ and the set $\A=\{ (0,\theta)\}$ satisfies the hypotheses of the lemma.
We need to choose $h$ so that the solution $g$ gives a diffeomorphism.

\end{rem}

\begin{proof}
Since $\A$ is a compact set, for every $x\in\Lambda$ we have
\[
d(x,\A)=\inf_{y\in \A}d(x,y)<+\infty .
\]
Condition \eqref{eqn:NHIM} implies that, for some constants $\tilde{D}_+, \tilde{D}_->0$ independent of
$x$, we have
\begin{equation} \label{orbit_ growth_assumption}
    \begin{split}
       & d(f^{n} (x), \A) \le \tilde{D}_+\mu_+^n d(x, \A), \quad n \ge 0, \\
       & d(f^{-n} (x), \A) \le \tilde{D}_-\mu_-^n d(x,\A), \quad n \ge 0 . \\
      \end{split}
    \end{equation}
From \eqref{orbit_ growth_assumption} and \eqref{form_assumption}, for $n\ge 0$, we have
\[
\begin{split}
\sup_{x \in \Lambda} \| \omega (f^n(x)) \|  \le& B+A\cdot d(f^n(x),\A)^\alpha\le B+A (\tilde{D}_+) ^\alpha \mu_+^{\alpha n}d(x,\A)^\alpha\\
  =&B+A^+_x  \mu_+^{\alpha n},\\
\sup_{x \in \Lambda} \| \omega (f^{-n}(x)) \|  \le& B+A\cdot d(f^{-n}(x),\A)^\alpha\le B+A (\tilde{D}_-) ^\alpha \mu_-^{\alpha n}d(x,\A)^\alpha\\
   =&B+A^-_x  \mu_-^{\alpha n},
\end{split}\]
where $A^+_x=A(\tilde{D}_+ )^\alpha  \cdot d(x,\A)^\alpha$ and $A^-_x=A(\tilde{D}_- )^\alpha  \cdot d(x,\A)^\alpha$.

    Using \eqref{iterates}, for $v^1_s, v^2_s  \in E^s_x$, $v^1_t,v^2_2\in T_x\Lambda$,
    we have for $n \ge 0$:
    \begin{equation}\label{stable_estimates}
    \begin{split}
      & | \omega(x)(v^1_s, v^2_s)| \le \left [\tilde B (\eta^{-1}\lambda_+^2)^n +\tilde A _x(\eta^{-1} \lambda_+^2 \mu_+ ^ \alpha)^ n \right]\|v^1_s\| \|v^2_s\| ,\\
      & | \omega(x)(v^1_s, v^1_t)| \le \left[\tilde B (\eta^{-1}\lambda_+ \mu_+)^n +  \tilde A _x(\eta^{-1}\lambda_+ \mu_+^{1+\alpha})^n \right] \|v^1_s\| \|v^1_t\| , \\
      & | \omega(x)(v^1_t, v^2_t)| \le \left[\tilde  B (\eta^{-1}\mu_+^2)^n +  \tilde A _x(\eta^{-1} \mu_+^{2+\alpha})^n \right] \|v^1_t\| \|v^2_t\| ,\\
      & | \omega(x)(v^1_t, v^2_t)| \le \left[\tilde B (\eta \mu_-^2)^n +  \tilde A _x(\eta  \mu_-^{2+\alpha})^n \right] \|v^1_t\| \|v^2_t\|,
    \end{split}
    \end{equation}
    for some constant  $\tilde B>0$ and   some   $\tilde A _x>0$  that  is independent of $n$ but depends on  $d(x,\A)^\alpha$.

    Under the hypothesis on the rates and using  \eqref{eqn:mubounds}, the limit
   as $n \rightarrow +\infty$
    of the right-hand-size of
    \eqref{stable_estimates}  is zero.

   A similar argument gives the vanishing of the symplectic form in the case when one of the tangent vectors is unstable (or both are unstable).
\end{proof}

\section{Proof  of Theorem \ref{thm:main1}}\label{sec:proofAmain1total}

\subsection{Proof of Theorem \ref{thm:main1}  (A) on the symplecticity of the NHIM}\label{sec:proofAmain1}
We want to show that
$\omega_\Lambda:=\omega_{\mid\Lambda}$
is a symplectic form and hence $\Lambda$ is symplectic.

Since the exterior derivative commutes with restriction to submanifolds,
we have:
\[
d(\omega_{\mid \Lambda})=(d\omega)_{\mid\Lambda}=0,
\]
so to prove that $\omega_\Lambda$ is symplectic we only have to prove that $\omega_\Lambda$ is non-degenerate.

For $x\in\Lambda$, if $v_*\in T_x\Lambda$ and $\omega(x)(v_*,v_t)=0$ for all $v_t\in T_x\Lambda$, then,
as \eqref{eqn:conformal_rates} is satisfied, we can apply Lemma~\ref{lem:vanishing}, and  we have
$\omega(x)(v_*,v)=0$ for all $v\in T_xM$. By the non-degeneracy of $\omega$ in $TM$  we conclude $v_*=0$.

The dynamics on $\Lambda$ is conformally symplectic because
 $f^*\omega=\eta\omega$ and the restriction commutes with the pullback. Hence,
$(f_{\mid\Lambda})^*\omega_\Lambda=\eta\omega_\Lambda$.

\subsection{Proof of  Theorem \ref{thm:main1} (B) on pairing rules}
\label{sec:pairing_rules}

In this section we show  that the geometry imposes certain symmetries on the possible rates.
In the case of symplectic maps, these symmetries (and their proofs) have been folklore
 but we have not been able to locate a specific reference.
Here we derive the symmetries for conformally symplectic maps, and note that the proof also
applies to the symplectic case.
For conformally symplectic systems, there are arguments for periodic orbits and for
Lyapunov exponents \cite{dettmann1996proof,wojtkowski1998conformally}, but the argument here is different and is based on the vanishing lemmas.

We are under the assumption that $\Lambda$ is symplectic and therefore $\omega (x)$  is non-degenerate for any $x \in \Lambda$.

For the optimal rate  $\mu_+^*$ we have that $\forall\eps>0, \,\exists  D_+=D_+(\eps)$ such that:
\[
\forall x \in\Lambda\, \forall u\in T_x\Lambda\,
\|Df^n(x)u\|\leq D_+(\mu_+^*+\eps)^n\|u\|, \ \forall n\geq 0.
\]

Taking $x \in \Lambda$ and
applying Lemma  \ref{cor:pairing} for $L=\Lambda$, $\alpha=\mu_+^*+\eps$ and $\beta=\frac{\eta-\eps}{\mu_+^*+\eps}$, as $\alpha\beta<\eta$,  we obtain  there exists
$v\in T_x\Lambda$ where $\omega(x)(u,v)\ne 0$ and  there exists $D_2 > 0$  such that
\[
\|Df^n(x)v\|\geq D_2 \left (\frac{\eta-\eps}{\mu_+^*+\eps}\right)^n\|v\|, \ \forall n\ge 0.
\]

Since $\mu_-^*$ is defined as an optimal rate, by Lemma~\ref{lem:rates_inverses} we have
\[
\frac{1}{\mu_-^*}\geq \frac{\eta-\eps}{\mu_+^*+\eps},
\]
and, since this holds for all $\eps>0$, we obtain
\[
\frac{1}{\mu_-^*}\geq \frac{\eta}{\mu_+^*}.
\]

Applying the same argument for the inverse map $f^{-1}$   we also have
\[
\frac{1}{\mu_+^*}\geq \frac{\eta^{-1}}{\mu_-^*}.
\]
We conclude
\begin{equation}\label{eqn:MUMU}
 \frac{\mu_+^*}{\mu_-^*}=\eta.
\end{equation}

A similar argument, which we now detail,
 yields
\[
\frac{\lambda_+^*}{\lambda_-^*}=\eta.
\]

For the optimal rate  $\lambda_+^*$ we have that $\forall\eps>0, \,\exists C_+=C_+(\eps)$ such that:
\[
\forall x \in\Lambda\, \forall u \in E^s_x \,
\|Df^n(x)v\|\leq C_+(\lambda_+^*+\eps)^n\|v\|, \ \forall n\geq 0.
\]

Taking $x\in \Lambda$ and applying Lemma \ref{cor:pairing} for $\alpha=\lambda_+^*+\eps$   and $\beta=\frac{\eta-\eps}{\lambda_+^*+\eps}$, as $\alpha\beta<1$,
we conclude that there is a vector $w \in T_x M$ such
that $\omega(x)(v, w) \ne  0 $ and
a constant  $C_2>0$ such that
\begin{equation}\label{eqn:PR}
 \|Df^n(x) w  \| \ge C_2 \left ( \frac{\eta-\eps}{\lambda_+^* +\eps} \right)^n \|w\|, \  \forall n\ge 0.
\end{equation}
Let $w=w^u+w^{ts}$, where $w^u\in E^u_x$ and $w^{ts}\in T_xW^s_\Lambda$.

Using \eqref{eqn:PR}, \ref{eqn:ratesinmanifolds} and \eqref{eqn:MUMU} we obtain
\begin{equation*}
\begin{split}
 \|Df^n(x) w^u  \| \ge & \|Df^n(x) w  \| - \|Df^n(x) w^{ts}  \| \\
 \ge& C_2 \left ( \frac{\eta-\eps}{\lambda_+^* +\eps} \right)^n \|w\| - D_+(\mu^*_+ +\eps)^n\|w^{ts}\|\\
 = &C_2 \left ( \frac{\eta-\eps}{\lambda_+^* +\eps} \right)^n \|w\| - D_+(\eta\mu^*_- +\eps)^n\|w^{ts}\| \\
 \ge & C_3 \left ( \frac{\eta-\eps}{\lambda_+^* +2\eps} \right)^n \|w^u\|,
\end{split}
\end{equation*}
for $n\ge 0$ sufficiently large and $\eps$ sufficiently small, where the last inequality is due to the fact that $\lambda^*_+\mu^*_-< 1$.
Note that $w^u \ne 0$ because we have upper bounds for the growth of $w^{ts}$ which are incompatible with the
lower bounds for the growth of $w$.

Since any uniform
bound $\lambda_-$ with
\[
\|Df^n(x) w^u\| \ge \tilde C (\lambda_-)^{-n} \|w^u\|
\]
has to satisfy
\[
\lambda_-^*\le \lambda_-
\]
we conclude,
by the same argument as before (letting $\eps\to 0$) that
\[
\lambda_-^*  \le \eta^{-1}    \lambda_+^* .
\]
Applying this result to $f^{-1}$ in place of $f$ we obtain the desired result.
\qed

\begin{rem}
  It is interesting to compare the proofs of pairing rules for rates
  above with the proofs of pairing rules for periodic orbits or
  for Lyapunov exponents in
  \cite{Dessler88,dettmann1996proof,wojtkowski1998conformally}.
  The proofs in the above references  are based on defining the operator
  $J_x: T_xM \rightarrow T_x M$ by
  $\omega(x)(u,v) = g_x(u, J_x v) $ where $g$ is the Riemannian metric.

  Then, the conformal symplectic property of the map  is translated into
  $ Df^n (x)^T J_{f^n(x)} Df^n(x)= \eta^n J_x$,
  where the transpose is with respect to the metric.
  Hence,
  \begin{equation}\label{relation}
    Df^n(x)  = \eta^{n}
    J^{-1}_{f^n(x)}  (Df^n (x))^{-T} J_x.
  \end{equation}

We think of \eqref{relation} as a relation
among linear operators in tangent spaces. In the literature,
sometimes, \eqref{relation}
is  described as a  relation among matrices using
a global frame (introduced already in the setup).
We emphasize that  \eqref{relation} has an intrinsic meaning
without a global frame.

  The equation \eqref{relation} relates  the
  rates of growth of $Df^n(x)$ and those of $(Df^n(x))^{-T}$
leading to pairing rules.
 Using \eqref{relation} to relate asymptotic rates,  seems to require
    that $J^{-1}_{f^n(x)}$ is uniformly bounded.

    The method we use here to obtain pairing rules  does not require
    that $\|J^{-1}_{x}\|$ is uniformly bounded nor the existence off a global frame.
  \end{rem}

\section{Proof of  Theorem \ref{thm:main2}}\label{sec:proof_main_2}

\subsection{Proof of Theorem \ref{thm:main2}  (A) on symplecticity of the homoclinic channel}

We first prove  that,
if $\Gamma$ is a homoclinic channel (see Definition~\ref{def:channel}), then
$\omega_{\mid\Gamma}$ is non-degenerate, hence
$(\Gamma,\omega_{\mid\Gamma})$ is a symplectic manifold.

Conditions \eqref{eqn:conformal_rates} allow to apply part \textbf{(A)} of Theorem \ref{thm:main1} obtaining that $\Lambda$ is symplectic.

If $\Gamma$ is sufficiently $\C^1$-close to $\Lambda$,
from $\omega_{\mid\Lambda}$ being non-degenerate we deduce
$\omega_{\mid\Gamma}$ is non-degenerate.

If $\Gamma$ is not  $\C^1$-close to $\Lambda$, by the Fiber Contraction Theorem  (see Lemma \ref{convergence_rates})  we have
\begin{equation}\label{eqn:lambdalemma}
  d_{\C^1}(f^n(\Gamma),\Lambda)\leq C   (\lambda_+\mu_-)^{n}, \textrm { for } n\geq 0.
\end{equation}
Then, there exists $N>0$ such that $f^N(\Gamma)$ is sufficiently $\C^1$-close to $\Lambda$
so that
$\omega_{\mid f^N(\Gamma)}$ is non-degenerate as in  the previous case.
Since $f$ is conformally symplectic we have
\[
\omega_{\mid f^N(\Gamma)}=(f^*)^N\omega_{\mid\Gamma}=\eta^N \omega_{\mid\Gamma}.
\]
Since $\omega_{\mid f^N(\Gamma)}$ is non-degenerate it follows that $\omega_{\mid  \Gamma}$ is non-degenerate.

\subsection{Proof of Theorem \ref{thm:main2}  (B) on symplecticity of the scattering map}\label{sec:proof_main_2_b}

In this section we give seven different proofs of Theorem~\ref{thm:main2}  (B)  or of some  versions of it
(some versions do not assume that $\omega$ is closed or use  different assumptions on rates or boundedness
of the derivatives of $\omega$).
We note that some of these proofs do not use that $\omega$ is non-degenerate, so they work without change in  the presymplectic case (see Section \ref{sec:proofs_presymplectic}) just taking into account that the conformal factor can be a function.

The first proof, given in Section~\ref{sec:main_2_B_proof_v} is based on vanishing lemmas.
A proof  adapting the  one from \cite{DLS08} from the symplectic case to the conformally symplectic case  is given in Section \ref{sec:original_proof}.
In Section~\ref{sec:proof_main_2_b_2} we give a proof based on the system of coordinates defined in Section \ref{rem:new coordinates}.
In Section \ref{sec:magic} we give a proof based on Cartan's magic formula.
These four proofs use the standing assumptions from Section
\ref{sec:standing} and conditions \eqref{eqn:conformal_rates}.
They use strongly that $\omega$ is closed, but they do
not use that $\omega$ is non-degenerate, so that these proofs
apply to presymplectic forms.

A fifth proof, given in Section~\ref{sec:proof_main_2_b_L}, uses the
study of graphs, but requires \eqref{doundedomega} and different rate conditions.
We give a sixth proof based on vanishing lemmas in Section~\ref{proofnonclosed}, which does not use that $\omega$ is a closed form,  but also requires \eqref{doundedomega} and different rate conditions.
In Section~\ref{sec:iteration}  we give a seventh proof, based on iterations, which also uses \eqref{doundedomega}.

We also give two proofs  of part (C)  in Section~\ref{sec:proofpartC}.
We note that they are based on vanishing lemmas. The first one, given in Section \ref{sec:proofpartCStokes} and based on Stokes' Theorem, uses that $\omega$
is exact (hence closed) but it does not use that $\omega$ is non-degenerate.
The second one, given in Section \ref{sec:Cartanexact} also uses that $\Omega$ is exact and uses Cartan's magic formula.

\begin{rem}
To prove that
\[
(\Omega_+)^*(\omega_{\mid \Lambda})=\omega_{\mid W^{s,\textrm{loc}}_\Lambda}
\]
it is enough to work on $W^{s,\textrm{loc}}_\Lambda \cap \OO_{\rho}$.
The reason is that, taking $n>0$ big enough but fixed, $f^n(W^{s,\textrm{loc}}_\Lambda) \subset \OO_\rho$ and,  by \eqref{eqn:intertwining} we obtain that:
\[
(\Omega_+)_{\mid W^{s,\textrm{loc}}_\Lambda}=f^{-n}_{\mid \Lambda}\circ
(\Omega_+)_{\mid f^n( W^{s,\textrm{loc}}_\Lambda)} \circ f^n_{\mid W^{s,\textrm{loc}}_\Lambda}
\]
and therefore we obtain the equality in all $W^{s,\textrm{loc}}_\Lambda$
and indeed on $W^s_\Lambda$.
\end{rem}

\subsubsection{A proof of Theorem \ref{thm:main2}  (B)  based on Stokes' theorem}
\label{sec:main_2_B_proof_v}
We prove that the $2$-form $\omega$ is invariant under the pullback of $\Omega _+$, as the proof for $\Omega _-$ is analogous.

\begin{figure}
\centering
\includegraphics[width=0.5\textwidth]{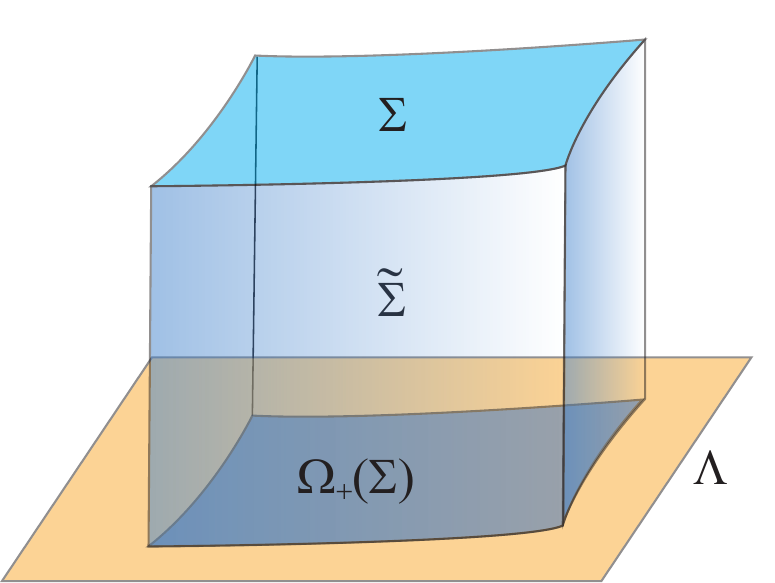}
\caption{}
\label{stokes_2_cell}
\end{figure}

It is enough to take  any  $y\in W^{s,\textrm{loc}}_{ \Lambda}$,
and any two tangent vectors $v^1,v^2\in T_yW^s_\Lambda$. We will prove that
\begin{equation}
\label{eqn:pullback_Omega}(\Omega_+^* \omega)(y)(v^1,v^2) = \omega(y)(v^1,v^2).
\end{equation}

We  define a $2D$-cell\footnote{A concrete, but slightly more costly
in the regularity  is to write explicitly the cell
as
$\Sigma( t_1,t_2)=\exp_y(\eps (t_1v^1+t_2v^2)),\quad 0\leq t_1,t_2\leq 1$,
for $0<\eps$ sufficiently small, where now $\exp$ denotes the exponential
mapping for the metric restricted to $W^s_\Lambda$.} $\Sigma\subseteq W^{s,\textrm{loc}}_\Lambda$
in such a way that it is tangent to  $v_1$ and
$v_2$ using the uniform system of coordinates  assumed
to exist in {\bf(U1)} and restricting it to
$W^{s,\mathrm{loc}}_\Lambda$.

By the transversality assumption on the vectors $v^1,v^2$,
it follows that different points $a\in  {\Sigma}$ project  onto different  points $\Omega_+(a)\in \Lambda$.

Define a $3D$-cell $\tilde{\Sigma}$ in $W^{s,\textrm{loc}}_\Lambda$ by
 \begin{equation}\label{eqn:3Dcell}
    \tilde{\Sigma}(t,t_1,t_2)=\gamma(t; \Sigma (t_1,t_2),\Omega_+(\Sigma(t_1,t_2))), \quad 0\leq t, t_1,t_2\leq 1
 \end{equation}
where
\begin{equation}
\label{eqn:gamma_def}
\gamma(\cdot, a,\Omega_+(a)) \textrm{ is a   a path in } W^s_{\Omega_+(a)}\textrm{ from } a \textrm{ to }\Omega_+(a) .
\end{equation}
The family of paths is chosen so that they depend smoothly on $a\in  \Sigma$ and
that $\tilde{\Sigma}$ forms  a   $3D$-cell inside  $W^s_\Lambda$.
The projection $\Omega_+(\Sigma)$ is a $2D$-cell inside $\Lambda$.
See Fig.~\ref{stokes_2_cell}.
Let $c(s)$ be a piecewise smooth parametrization of $\partial  \Sigma$, for $s\in\partial([0,1]^2)$.

Since $\omega$ is a closed form, using Stokes Theorem  we   compute

\begin{equation}\label{Stokes}
0=\int_{\tilde{\Sigma}} d\omega =
\int_{\partial \tilde{\Sigma}}\omega =
\int_\Sigma \omega-\int_{\Omega_+(\Sigma)}\omega+\int_\Upsilon \omega,
\end{equation}
where $\Upsilon$ is the $2D$-cell completing the boundary of $\tilde\Sigma$.
We consider it parameterized by
\[
\Upsilon(t,s)=\gamma(t,c(s)),\textrm{ for } (t,s) \in [0,1]\times \partial([0,1]^2).
\]

Now we compute the integral along $\Upsilon$:
\[\begin{split}
\int_\Upsilon \omega=&\int _{s\in \partial([0,1]^2) }\int _{t\in[ 0,1] }\omega (\Upsilon(t,s)) (\partial _t \Upsilon(t,s), \partial s\Upsilon(t,s))dt ds=0,
\end{split}
\]
where we have used that
\[
\left(\partial _t \Upsilon(t,s), \partial s\Upsilon(t,s)\right) \in T_{ \Upsilon(t,s)   }W^s_{\Omega_+(\Upsilon(t,s))} \times T_{ \Upsilon(t,s)   } W^{s,\mathrm{loc}}_\Lambda ,
\]
and therefore, conditions \eqref{eqn:conformal_rates} allow to apply Lemma \ref{lem:vanishingmanifolds} obtaining that  $\omega (\Upsilon(t,s)) (\partial _t \Upsilon(t,s), \partial_s\Upsilon(t,s))=0$.

In conclusion
\begin{equation}\label{cell_vanish}
\int_{\Sigma} \omega=\int_{\Omega_+(\Sigma)} \omega.
\end{equation}

Since \eqref{cell_vanish} holds for any $2$-cell  $\Sigma$ that is  transverse to the fiber
$W^s_{\Omega_+(y)}$, and for any $y\in W^{s,\textrm{loc}}_\Lambda$, it follows that
\[
\Omega_+: W^{s,\textrm{loc}}_\Lambda\to \Lambda
\]
satisfies  $\Omega_+^* (\omega_{\mid \Lambda})=\omega_{\mid W^{s,\textrm{loc}}_\Lambda}$.

Finally,
as $\Omega _+^\Gamma$
is a restriction of this map to the symplectic manifold $\Gamma$, it is  symplectic.
\bigskip

An analogous reasoning gives that $\Omega_-^\Gamma$ is symplectic, and then,  $(\Omega _-^\Gamma)^{-1}$ is also symplectic and therefore $S=\Omega _+^\Gamma\circ (\Omega _-^\Gamma)^{-1}$ is  symplectic.
%
%
%

\subsubsection{A proof of Theorem \ref{thm:main2}  (B)  by adapting the proof of \cite{DLS08} from the symplectic case to the conformally symplectic case}
\label{sec:original_proof}
The proof of \cite{DLS08} uses a similar geometric  construction as the proof in
Section~\ref{sec:main_2_B_proof_v}.
The paper \cite{DLS08} starts from the same cell depicted in
Fig.~\ref{stokes_2_cell} and obtains the desired result  by showing that the integral
of $\omega$ over $\Upsilon$
is zero.

The vanishing of this 2D integral is obtained using that for every $n > 0$
\[
\int_{\Upsilon} \omega = \eta^{-n} \int_{\Upsilon} ({f^n})^* \omega =
\eta^{-n} \int_{f^n (\Upsilon)} \omega.
\]
We now observe that the Riemannian area of $f^n( \Upsilon)$ is bounded from above by $C(\lambda_+ \mu_+)^n$.
Since $\omega$ is bounded, under the rate conditions \eqref{eqn:conformal_rates}, we obtain that
$ \eta^{-n} \left| \int_{f^n (\Upsilon)} \omega \right|$ can be made
as small as desired by taking $n$ large.

The proof in Section~\ref{sec:main_2_B_proof_v}, can be considered as a \emph{``disintegration''}
of the  argument in \cite{DLS08}. We can think of the vanishing lemma as dividing
$\Upsilon$ into infinitesimal cells and showing that each infinitesimal integral is exactly zero.
Proving first the infinitesimal result gives more flexibility
and the vanishing lemmas are used also to prove the pairing rules.

\subsubsection{A proof  of Theorem~\ref{thm:main2}  (B)  based on a system of coordinates }
\label{sec:proof_main_2_b_2}

A more explicit version  of Lemma~\ref{lem:vanishingmanifolds}
can be obtained by  using the system of coordinates defined in Section \ref{rem:new coordinates}, which exists thanks to hypothesis \eqref{U1}.
Using these coordinates we can make the symplectic form $\omega$ explicit.

Recall that the coordinate system $\varphi$ is so that
\[
\{\varphi(x,y)\,|\, y\in B_\rho(0)\}=W^{s,\mathrm{loc}}_x.
\]

Note that the representation of $\Omega_+$ in this system of
coordinates is  given  by:
\begin{equation}\label{eqn:Omega_plus coords}
\Omega_+(x,y) = (x, 0)
\end{equation}
The coordinate $x$ is defined precisely as taking the
projection $\Omega_+$, therefore $\Omega_+$ is represented
by setting the $y$ coordinate to $0$.

We can identify:
\[
T_{(x,y)}W^s_\Lambda=\{(t,w),\, t \in \R ^{d_c},\, w \in \R ^{d_s}\} ,
\]
so we can choose a basis $(t_1,0), \dots (t_{d_c},0), (0,w_1),\dots (0, w_{d_s})$ of $T_{(x,y)} W^s_\Lambda$ independent of the point $(x,y)$.

It is important to remark that, in these  coordinates, we can use the results of Lemma \ref{lem:globalrates} so that
\begin{equation}\label{eq:vsn}
\begin{split}
\|Df^{n}(x,y)(t,0) \| &\le D_+\mu_+^n \|t\|,\\
\|Df^{n}(x, y)(0,w) \| & \le C _+\lambda _ +^n \|w\|,
\end{split}
\quad n\ge 1 .
\end{equation}

By hypotheses \eqref{eqn:conformal_rates},  we can  use \eqref{eqn:vanishingstable} of  the Vanishing Lemma~\ref{lem:vanishingmanifolds},
to
obtain that the symplectic form can be represented as:
\begin{equation}
  \label{decomposition3}
  \omega (x,y) = \begin{pmatrix}
    \omega_{xx}(x,y)  & 0 \\
    0 & 0
  \end{pmatrix} .
\end{equation}

When  $\omega_{\mid \Lambda}$ is non-degenerate, the kernel of $\omega|_{W^{s,\textrm{loc}}_\Lambda}$
in this neighborhood is the tangent to the  $W^s_x$ leaves of the strong stable
foliation
\footnote{
This is consistent with Lemma~\ref{frobenius} that shows
that the kernel of a presymplectic form integrates to a foliation.}.

Now we proceed to the  proof  of Theorem~\ref{thm:main2}  (B).

We have the representation of $\omega$ by \eqref{decomposition3}.

The following is the key  observation:
if the symplectic form $\omega$ is closed, expressing the
differential in coordinates we then have:
\begin{equation}\label{eqn:partial_omega}
\partial_{y} \omega_{xx}(x ,y)  = 0 .
\end{equation}

To show this, take a sufficiently small patch $U\subset \Lambda$,
where we can trivialize   $E^s$. Since $d\omega=0$, we have $d(\omega_{\mid W^s_\Lambda})=0$ which expressed in coordinates gives
\[\begin{split}
0=\sum_i\sum_{j<k} \partial_{x_i}\omega_{x_j x_k} dx_i\wedge dx_j\wedge dx_k+\sum_l\sum_{j<k} \partial_{ y_l }\omega_{x_j x_k} d y_l \wedge dx_j\wedge dx_k.
\end{split}
\]

As the terms in the above sum are linearly independent, it follows that $\partial_{ y_l }\omega_{x_j x_k}(x, y)  = 0$ for all $l$ and  $j< k$.
This shows \eqref{eqn:partial_omega}.

Therefore, $\omega_{xx}$ depends only on $x$.
Since in this system of coordinates we have $\Omega_+(x,y) = (x, 0)$ (see \eqref{eqn:Omega_plus coords}),
  we obtain directly that, if  $n$ sufficiently large and $\mathscr{C} \subset f^n(\Gamma)$ is a $2$-cell, then
  \[
  \omega( \Omega_+(\mathscr{C})) = \omega(\mathscr{C}) .
  \]
\qed

\subsubsection{Proof of part (B) of Theorem~\ref{thm:main2} based on Cartan's magic formula}
\label{sec:magic}
The following proof is similar in spirit to the one in Section \ref{sec:proof_main_2_b_2}
 but avoids the construction of a system of coordinates.

 To prove that $\Omega_+:  W^{s,\mathrm{loc}}_\Lambda \to \Lambda$
  satisfies
  $(\Omega_+)^*(\omega_{\mid \Lambda})=\omega_{\mid W^{s,\mathrm{loc}}_{\mid \Lambda}}$, we
proceed as follows.
Take any section $\Psi \subset W^{s,\mathrm{loc}}_\Lambda \cap \OO_\rho$ transversal to the foliation \eqref{foliationsstablemanifolds} (see condition  \eqref{gammatransversal}) and consider the restricted  wave map
\[
\Omega_+^\Psi\equiv (\Omega_+)_{\mid \Psi}:\Psi\to \Lambda.
\]
We will see that $\Omega_+^\Psi$
satisfies:
$ (\Omega_+^\Psi)^* (\omega_{\mid \Lambda}) = \omega_{\mid \Psi}$.

As usual, we can assume that $\Psi$ is $\C^1$ close to $\Lambda$ and
use a finite number of iterates to get to others.

For   $x \in \Omega_+(\Psi)\subset \Lambda $, using the implicit function theorem,
 we can associate  unique $\gamma(x)  \in \Psi$ and $v(x) \in T_x W^s_x = E^s_x$
in such a way that
 $\gamma(x) = \exp_x(  v(x) )$, where the exponential is along $W^s_x$
 and $v(x)$ is required to be in a sufficiently small ball.
Both $\gamma$ and $v$   depend on $x\in\Omega_+(\Psi )\subset  \Lambda$
in a continuously differentiable way. As we mention,
we can always restrict $\Psi$ so that $\Omega_+(\Psi)$ is bounded.

Consider the $\C^1$ family  of mappings $\phi_t: \Omega_+(\Psi ) \to  W^s_\Lambda$,  indexed by  $t \in [0,1]$:
\[
\phi_t(x) = \exp_x (t v(x) ).
\]
Clearly,
$\phi_0(x) = x, \phi_1(x)  = \gamma(x)$,
and, more  succinctly,
$\phi_0 = \Id$ and $\phi_1 = (\Omega_+^\Psi )^{-1}$.

We let $\frac{d}{dt} \phi_t = V\circ \phi_t$, where   $V(\phi_t (x)) $ is tangent to $W^s_x$ at $\phi_t(x)$.
This defines $V$ as a $\C^1$ vector field on some domain in
$W^s_\Lambda$.

We now compute,  using Cartan's magic formula
\[
\frac{d}{dt} \left(\phi^*_t\omega \right)=  \phi_t^* \left[   i(V) d \omega +  d i(V) \omega\right].
\]
 The first term above is zero  because  $\omega$ is closed.
The second term is also zero  by the  Vanishing Lemma \ref{lem:vanishingmanifolds}.
Therefore
\[
\omega_{\mid\Lambda} = \phi_0^*( \omega_{\mid\Lambda}) =
\phi_1^*( \omega_{\mid \Psi})=  ((\Omega_+^\Gamma)^{-1})^* (\omega_{\mid\Psi}).
\]
\qed

\subsubsection{Proof of part (B) of Theorem~\ref{thm:main2} based on
  graphs in products of  manifolds
 }  \label{sec:proof_main_2_b_L}

In this section we present another proof of part {\bf(B)} of
Theorem~\ref{thm:main2}, with the  standing assumptions from Section \ref{sec:standing}
but with {\textbf{(U5')}}.

We also assume that the rates \eqref{eq:rates0} do not  satisfy \eqref{eqn:conformal_rates} but they satisfy a different condition:
\begin{equation} \label{lagrangianrates}
 \mu_-\mu_+^2 \lambda_+ \eta^{-1} < 1, \quad  \mu_+\mu_-^2\lambda_- \eta < 1 .
\end{equation}

This proof is based on the  study of graphs.

First, we recall some standard results.
Given a pair of
manifolds $(M_1, \omega_1)$, $(M_2, \omega_2)$ where $\omega_i$ are two $2$-forms,
and a pair of maps $g_1:M_1\to M_1$, $g_2:M_2\to M_2$.
We define
\begin{equation}\label{doubling}
  \begin{split}
    &\tilde M   = M_1 \times M_2 ,
\quad  \tilde \omega   = (-\omega_1) \oplus \omega_2, \\
&\textrm{ that \  is, \ for } \ x_1\in M_1, \ x_2\in M_2, \ v_1,w_1\in T_{x_1}M_1, \ v_2,w_2\in T_{x_2}M_2:
\\
    &  \tilde\omega (x_1,x_2)( (v_1,v_2), (w_1,w_2) )= -\omega_1(x_1)(v_1,w_1) + \omega_2(x_2)(v_2,w_2) ,\\
  & \tilde g : \tilde M  \rightarrow \tilde M,\\
    & \tilde g (x_1,x_2)  = (  g_1(x_1),   g_2(x_2) ),
\end{split}
\end{equation}

Given a map $f: M_1 \rightarrow M_2$, we
define its graph $\G(f) \subset \tilde M$ by:
\[
\G(f) = \{(x, f(x) )\, |\,  x \in M_1 \}.
\]

The following result is well known:

\begin{lem}\label{lem:lagrangiangraph}
  \label{lagrangiangraph}
With the notations above, the diffeomorphism  $f$ satisfies $f^*(\omega_2)=\omega_1$ if and only if $\tilde \omega$ vanishes on $\G(f) \subset \tilde{M}$.
\end{lem}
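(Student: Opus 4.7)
The plan is to unpack the definitions and reduce the statement to a pointwise calculation on the graph. First I would parametrize the graph by the embedding
\[
\iota: M_1 \to \tilde M, \qquad \iota(x) = (x, f(x)),
\]
so that $\G(f) = \iota(M_1)$. Since $\iota$ is a diffeomorphism onto its image, the form $\tilde\omega$ vanishes on $\G(f)$ if and only if $\iota^* \tilde\omega = 0$ on $M_1$. Then I would compute $D\iota(x)\cdot v = (v, Df(x)\cdot v)$ for $v \in T_x M_1$, so that for any $v, w \in T_x M_1$
\[
(\iota^*\tilde\omega)(x)(v,w) = \tilde\omega(x,f(x))\bigl((v, Df(x)v),(w, Df(x)w)\bigr).
\]

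Using the definition of $\tilde\omega$ in \eqref{doubling} together with the definition of pullback \eqref{eq:pullback}, the right hand side becomes
\[
-\omega_1(x)(v,w) + \omega_2(f(x))\bigl(Df(x)v, Df(x)w\bigr) = -\omega_1(x)(v,w) + (f^*\omega_2)(x)(v,w).
\]
Hence $\iota^*\tilde\omega = -\omega_1 + f^*\omega_2$, and this vanishes identically on $M_1$ if and only if $f^*\omega_2 = \omega_1$, which is the desired equivalence.

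There is no real obstacle here; the whole content is the bookkeeping of the minus sign in $\tilde\omega = (-\omega_1)\oplus\omega_2$, which is precisely what arranges the cancellation making the graph isotropic for $\tilde\omega$ exactly when $f$ intertwines the two forms. I would not need any regularity beyond $f$ being a $\C^1$ diffeomorphism, and neither closedness nor non-degeneracy of $\omega_1,\omega_2$ is used, so the statement is truly a linear-algebraic identity applied pointwise.
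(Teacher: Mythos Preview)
Your proof is correct and essentially identical to the paper's: both compute that tangent vectors to $\G(f)$ at $(x,f(x))$ have the form $(v,Df(x)v)$ and then evaluate $\tilde\omega$ on such pairs to get $-\omega_1(x)(v,w)+\omega_2(f(x))(Df(x)v,Df(x)w)$. Your framing via the embedding $\iota$ and the identity $\iota^*\tilde\omega = -\omega_1 + f^*\omega_2$ is just a slightly more functorial packaging of the same pointwise calculation.
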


\begin{proof}
The standard and easy proof of Lemma~\ref{lagrangiangraph}
is just to observe that $T_{(x,f(x))}  \G(f)= \{(u, Df(x)u ) \, | u \in T_x M_1 \}$.
Hence, $\tilde \omega$ vanishes on $\G(f) \subset \tilde{M}$  is the same as  having for all
$x \in M_1$,
$u,v \in T_x M_1$,
\[
\begin{split}
0 &= \tilde{\omega}(x,f(x)) ((u, Df(x)u), (v, Df(x)v) )\\
&  =
- \omega_1 (x)(u,v) + \omega_2 (f(x))( Df(x)u, Df(x)v)
\end{split}
\]
\end{proof}

If $f^*(\omega_2)\ne \omega_1$,  the  form ${\tilde \omega}_{\mid\G(f)}$ does not vanish.
The size of  $\|\tilde {\omega}_{\mid\G(f)}\|$ is a measure of the failure of
$f^*\omega_1 = \omega_2$.

We now proceed with the proof of part {\bf(B)} of
Theorem~\ref{thm:main2}.
In this case, $M_1=M_2=M$, $\tilde{\omega}=(-\omega)\oplus\omega$,  and $g_1=g_2=f:M\to M$ which satisfies:
$f^*(\omega)=\eta \omega$,
and therefore
  \[
  \tilde f^* \tilde \omega = \eta \tilde \omega .
  \]

  To prove that $\Omega_+:  W^{s,\mathrm{loc}}_\Lambda \to \Lambda$
  satisfies
  $(\Omega_+)^*(\omega_{\mid \Lambda})=\omega_{\mid W^{s,\mathrm{loc}}_{\mid \Lambda}}$, we
proceed as we did in section \ref{sec:magic}, taking any section $\Psi \subset W^{s,\mathrm{loc}}_\Lambda$ transversal to the foliation \eqref{foliationsstablemanifolds} (see condition  \eqref{gammatransversal}) and considering the restricted  wave map
\[
\Omega_+^\Psi\equiv (\Omega_+)_{\mid \Psi}:\Psi\to \Lambda.
\]

We will see that $\Omega_+^\Psi$
satisfies:
$ (\Omega_+^\Psi)^* (\omega_{\mid \Lambda}) = \omega_{\mid \Psi}$.

To prove it,
we  take the  graph of $\Omega_+^\Psi$
\[
\G(\Omega_+^\Psi)\subseteq \Psi\times \Lambda\subseteq M\times M =\tilde M
\]
and prove that
$\tilde \omega_{\mid \G(\Omega_+^\Psi)}=0$.

To this end, we also consider
 $\textrm{Id}_{\Lambda}:\Lambda\to\Lambda$
 whose graph
 $\G(\textrm{Id}_{\Lambda})=\tilde\Lambda=\Lambda\times\Lambda\subseteq \tilde M$,
 and note that
$\tilde\omega_{\mid\tilde\Lambda}=0$.

Observe that the equivariance relation for the wave maps \eqref{eqn:intertwining} when restricted to a transversal manifold $\Psi$ to the foliation gives a relation similar \eqref{eqn:iterationsomega}, that is:
\begin{equation}\label{eqn:intertwiningupsilon}
\begin{split}
&\Omega_+^{\Psi}=f_{\mid\Lambda}^{-n}\circ\Omega_+^{f^n(\Psi)}\circ f^{n}, \quad n\ge 0
\end{split}
\end{equation}
If we reformulate the equivariance relation \eqref{eqn:intertwiningupsilon}  in
terms of graphs, we obtain:
\[
{\tilde f}^n( \G(\Omega_+^\Psi)) =
  \G(\Omega_+^{f^n(\Psi)}),\quad n\ge 0
\]
where
$\G(\Omega_+^{f^n(\Psi)})\subseteq  f^n(\Psi)\times\Lambda \subseteq \tilde M$ is the graph of $\Omega^{f^n(\Psi)}_+$.

Therefore, we have for all $n\ge 0$
  \begin{equation}\label{goodidentity}
 \tilde \omega_{\mid\G(\Omega_+^ \Psi)}=\eta^{-n}({\tilde f}^{n})^* (
  \tilde{\omega}_{\mid\G(\Omega_+^{f^n(\Psi)})} ) .
  \end{equation}

  Using Lemma~\ref{convergence_rates},
  we have that
  $d_{\C^1}( f^n(\Psi), \Lambda)  \le C (\lambda_+ \mu_-)^n $,
  hence
  \[
d_{\C^1}\left( \G(\Omega_+^{f^n(\Psi)}), \G( \textrm{Id}_{\Lambda})\right )
    \le  C (\lambda_+ \mu_-)^n .
    \]
    Using \eqref{doundedomega} and that
    $\tilde{\omega}_{\G( \textrm{Id}_{\Lambda})} = 0 $,
    we have:
    \[
    \begin{split}
  \| \tilde{\omega}_{\mid\G(\Omega_+^{f^n(\Psi)})}  \|_{\C_0}
  & =   \| \tilde{\omega}_{\mid\G(\Omega_+^{f^n(\Psi)})} -
\tilde{\omega}_{\mid\G( \textrm{Id}_{\Lambda})} \|_{\C_0} \\
  & \le\|\tilde \omega\|_{\C^1}
  d_{\C^1}( \G(\Omega_+^{f^n(\Psi)}), \G( \textrm{Id}_{\Lambda}))\\
    & \le C (\lambda_+ \mu_- )^n
    \end{split}
    \]

    Hence, we estimate  \eqref{goodidentity},
    using the obvious estimates for $f^*$ and the
    previous  estimates:
    \[
    \|\tilde{ \omega}_{\mid\G(\Omega_+^ \Psi)}\|_{\C^0} \le C \eta^{-n} \mu^{2n}_+ (\lambda_+ \mu_-)^n
    = C (\mu^{2}_+ \lambda_+ \mu_-\eta^{-1})^n, \quad n \ge 0 .
    \]

    We conclude that, under the assumptions \eqref{lagrangianrates}, the left hand side of the above vanishes and we obtain that
the form $\tilde \omega$ vanishes on $\G(\Omega_+^ \Psi)$,
or, equivalently by Lemma \ref{lem:lagrangiangraph}, that $(\Omega_+^ \Psi)^* (\omega)=\omega$.
As this is true for any section $\Psi$ the map $\Omega_+$ satisfies:
    \[
    \Omega^*_+(\omega_{\mid \Lambda}) = \omega_{\mid W^{s,\mathrm{loc}}_\Lambda}
    \]
    An analogous proof works for the map $\Omega_-$
  \qed

  \begin{rem}
The proof in this section,  based on
 the study of graphs, as well as the proof
 based in iteration  given in Section \ref{sec:iteration},
use only the convergence of $f^n(\Psi)$ to $\Lambda$. We use only the
most elementary bounds.

One advantage of the use of  elementary bounds is that the
proofs
work for largely arbitrary forms.  This allows us
to obtain results for more models. See Section~\ref{othermodels},
in  particular Section~\ref{thermostat}.

When $\omega$ is indeed a symplectic form, $\G(\Omega_+^ \Psi)$ is a Lagrangian manifold and
this gives extra properties to study the convergence of approximations and perturbation theory.
\end{rem}

\subsubsection{Proof of Theorem~\ref{thm:main2}  (B) for non-closed
  forms based on vanishing lemmas}\label{proofnonclosed}

In this section we present a version of part (B) of Theorem \ref{thm:main2} assuming the standing assumptions  of Section \ref{sec:standing}  without the hypothesis that the form $\omega$ is  closed but assuming  that it satisfies  \eqref{doundedomega}.
We also assume that the rates \eqref{eq:rates0}
satisfy
\eqref{tts_rates} and  \eqref{ttu_rates}.

The main tool will be the second item in  Lemma~\ref{derivative_vanish}, which claims that $d\omega$ vanishes on the leaves of the stable and unstable manifolds of $\Lambda$.

\begin{thm}\label{thm:mainvanishder}
  In the setup of Theorem~\ref{thm:main2}, do not assume   $d\omega = 0$.
  Assume that
$\omega$ satisfies {\textbf{(U5')}}.
  Assume that the hyperbolicity rates of $\Lambda$
  satisfy \eqref{tts_rates} and  \eqref{ttu_rates}.
  Then, we have
  \[
  \Omega_+^* (\omega_{\mid\Lambda}) = \omega_{\mid W^s_\Lambda}, \
  \Omega_-^* (\omega_{\mid\Lambda})  = \omega_{\mid W^u_\Lambda}.
  \]
\end{thm}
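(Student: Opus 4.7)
My plan is to adapt the Stokes-based proof of part (B) of Theorem~\ref{thm:main2} from Section~\ref{sec:main_2_B_proof_v}, replacing the role that the closedness of $\omega$ played there with the conclusion of Lemma~\ref{derivative_vanish} part (A). Focus on $\Omega_+$; the argument for $\Omega_-$ is symmetric, using \eqref{ttu_rates} in place of \eqref{tts_rates}.

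As in Section~\ref{sec:main_2_B_proof_v}, I would take an arbitrary point $y\in W^{s,\mathrm{loc}}_\Lambda$ and two tangent vectors $v^1,v^2\in T_y W^s_\Lambda$ transverse to $T_y W^s_{\Omega_+(y)}$, build a small $2$-cell $\Sigma\subset W^{s,\mathrm{loc}}_\Lambda$ tangent to them, and form the $3$-cell
\[
\tilde\Sigma(t,t_1,t_2)=\gamma(t;\Sigma(t_1,t_2),\Omega_+(\Sigma(t_1,t_2))), \qquad (t,t_1,t_2)\in[0,1]^3,
\]
where $\gamma(\cdot,a,\Omega_+(a))$ is a smooth path inside $W^s_{\Omega_+(a)}$ from $a$ to $\Omega_+(a)$, as in \eqref{eqn:gamma_def}. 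The boundary of $\tilde\Sigma$ decomposes into $\Sigma$, $-\Omega_+(\Sigma)$, and the lateral $2$-cell $\Upsilon(t,s)=\gamma(t,c(s))$, where $c$ parametrises $\partial\Sigma$. Stokes' theorem then gives
\[
\int_{\tilde\Sigma}d\omega=\int_\Sigma\omega-\int_{\Omega_+(\Sigma)}\omega+\int_\Upsilon\omega,
\]
which is the same identity as in \eqref{Stokes}, but now the left-hand side is no longer automatically zero.

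Both extra integrals will nevertheless vanish, because the construction forces the integrands to lie exactly in the combinations controlled by the vanishing lemmas. The tangent vectors $\partial_{t_1}\tilde\Sigma,\partial_{t_2}\tilde\Sigma$ are tangent to $\Sigma\subset W^s_\Lambda$, hence lie in $TW^s_\Lambda$, while $\partial_t\tilde\Sigma$ is tangent to the leaf $W^s_{\Omega_+(\Sigma(t_1,t_2))}$. So the integrand of $\int_{\tilde\Sigma}d\omega$ is precisely $d\omega$ evaluated on a pair of vectors in $TW^s_\Lambda$ and one in the strong stable fibre, which is the situation covered by Lemma~\ref{derivative_vanish}\,(A) under \eqref{tts_rates}; hence $\int_{\tilde\Sigma}d\omega=0$. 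For the lateral piece, $\partial_t\Upsilon\in TW^s_{\Omega_+(\Upsilon(t,s))}$ and $\partial_s\Upsilon\in TW^s_\Lambda$, so $\omega(\Upsilon)(\partial_t\Upsilon,\partial_s\Upsilon)=0$ by Lemma~\ref{lem:vanishingmanifolds}. Note that Lemma~\ref{lem:vanishingmanifolds} only requires $\mu_+\lambda_+\eta^{-1}<1$, which follows from \eqref{tts_rates} together with the standing hypothesis \eqref{eqn:mubounds} that $\mu_+\ge 1$. Combining the two vanishings yields $\int_\Sigma\omega=\int_{\Omega_+(\Sigma)}\omega$ for every such $\Sigma$, and by the usual density argument $(\Omega_+)^*(\omega|_\Lambda)=\omega|_{W^{s,\mathrm{loc}}_\Lambda}$.

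The main obstacle, already present in Section~\ref{sec:main_2_B_proof_v}, is verifying that the $3$-cell $\tilde\Sigma$ can be built with the required differentiability, uniformly over $y$, so that Stokes' theorem applies and the integrands are well defined; this uses the uniformity hypotheses \textbf{(U1)}, \textbf{(U2)} and the $\C^{1,1}$ regularity of the strong stable foliation from \textbf{(H4)}, together with condition \textbf{(U5$^\prime$)} to make sense of $d\omega$ as a bounded $3$-form. Once the cell is in place, the argument is essentially algebraic, the only substantive input beyond Stokes being the two vanishing statements. As usual, the statement globally on $W^s_\Lambda$ is then obtained from the one on $W^{s,\mathrm{loc}}_\Lambda\cap\OO_\rho$ via the equivariance relation $\Omega_+=f_{\mid\Lambda}^{-n}\circ\Omega_+\circ f^n$ and the invariance of the identity under pullback by $f$ (up to the constant $\eta^n$, which cancels in the pullback equation $\Omega_+^*\omega=\omega$).
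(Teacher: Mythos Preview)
Your proof is correct and follows essentially the same route as the paper's: both build the $3$-cell $\tilde\Sigma$ from Section~\ref{sec:main_2_B_proof_v}, apply Stokes, and then invoke Lemma~\ref{derivative_vanish}(A) to kill $\int_{\tilde\Sigma}d\omega$ and Lemma~\ref{lem:vanishingmanifolds} to kill $\int_\Upsilon\omega$. One small imprecision: for $t\neq 0$ the vectors $\partial_{t_1}\tilde\Sigma,\partial_{t_2}\tilde\Sigma$ are tangent to $W^s_\Lambda$ but not to $\Sigma$ itself; your stated conclusion that they lie in $TW^s_\Lambda$ is nonetheless correct and is all that is needed.
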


  \begin{proof}
  We do the proof for $\Omega_+$.
    We start as in the proof of part (B) of
    Theorem~\ref{thm:main2} in Section~\ref{sec:main_2_B_proof_v}

    Using the same notation, an application of
    Stokes' Theorem gives identity \eqref{Stokes}, that we write here:
\[ \int_{\tilde\Sigma } d\omega =
\int_{\partial \tilde\Sigma}\omega =
\int_\Sigma \omega-\int_{\Omega_+(\Sigma)}\omega+\int_\Upsilon \omega,
\]
where $\tilde \Sigma$ is the $3D$-cell defined in \eqref{eqn:3Dcell}.
As we are not assuming $\omega$ is closed, we need an argument to show that the left hand side of this equality vanishes.

We have that
\begin{equation} \label{explicit_integral}
\int_{\tilde \Sigma} d\omega = \int_{[0,1]^3}
d\omega \left(\tilde \Sigma(\tilde t)\right) \left( \partial_t \tilde \Sigma(\tilde t),
\partial_{t_1}\tilde \Sigma(\tilde t), \partial_{t_2} \tilde \Sigma(\tilde t)\right)
dt \, dt_1 \, dt_2 ,
\end{equation}
where we denote $\tilde t=(t,t_1,t_2)$.
Here $\tilde \Sigma(\tilde t)$ represents a point $y\in W^s_\Lambda$ and
  $\tilde \Sigma(1,t_1,t_2)$ represents  $\Omega_+(y)=x\in\Lambda$.

As we assume \eqref{tts_rates}, we can apply Lemma~\ref{derivative_vanish}
observing that
\[
\begin{split}
 \partial_t \tilde \Sigma(\tilde t) &\in T_yW^s_{x} , \\
\partial_{t_1}\tilde \Sigma(\tilde t), \partial_{t_2} \tilde \Sigma(\tilde t) &\in   T_{y}W^s_{\Lambda} ,
\end{split}
\]

Therefore, by \eqref{vanish_matrix_tts} of Lemma~\ref{derivative_vanish},  the integrand in
\eqref{explicit_integral} vanishes and we
obtain that $\int_{\tilde \Sigma}  d\omega = 0$.

From there, the proof does not need any change from
the proof in Section~\ref{sec:proof_main_2_b}.

The proof for $\Omega_-$  is analogous.
\end{proof}

\subsubsection{A proof   of Theorem~\ref{thm:main2} (B) for non-closed forms
  based on iteration}
\label{sec:iteration}

In this section we present a proof of
a version of part (B) of Theorem \ref{thm:main2} assuming the standing assumptions  of section \ref{sec:standing}, without the hypothesis that the form $\omega$ is  closed but assuming that the form $\omega$ satisfies  \eqref{doundedomega}.
We also assume that the rates \eqref{eq:rates0}  satisfy
\eqref{tts_rates} and  \eqref{ttu_rates}.

We will also assume that the manifold $W^{s,\textrm{loc}}_\Lambda$ is a $\C^2$-manifold, which is stronger than the standing assumption {\bf (H2)}.

We hope that this new proof provides some insights
that can be used  to develop perturbation theories or to
extend the theory to other contexts
involving non-closed forms  as in \cite{wojtkowski1998conformally}.

To prove that the wave maps  $\Omega_\pm$  preserve the  form $\omega$, as we did in section \ref{sec:proof_main_2_b_L}, we take any section $\Psi$ transversal to the foliation \eqref{foliationsstablemanifolds}, and prove that $\Omega^\Psi_\pm$  preserve the form $\omega$.

We use again that $\Omega^\Psi_\pm$ satisfy  \eqref{eqn:intertwiningupsilon}, to relate the projection on
$\Psi$ to the projection on $f^n(\Psi)$.
We will focus in $\Omega_+^\Psi$.
The heuristic idea is  that, by Lemma~\ref{convergence_rates},
$f^n(\Psi)$ approaches
$\Lambda$ for $n\ge N_+$ sufficiently large, so that we can approximate  $\Omega_+^{f^n(\Psi)}$ by the identity map.

The errors of symplecticity  in the approximation
amazingly wash away when put through the \eqref{eqn:intertwining}.
Let us emphasize that \eqref{eqn:intertwining} is an exact formula for every $n$
and that we do not need to take limits in the formula, only on
the estimates obtained  by applying it.

For a 2-cell $\mathscr{D}$, we denote
$\omega(\mathscr{D})= \int_\mathscr{D} \omega$ and $|\mathscr{D}| = \textrm{Area}(\mathscr{D})$  the Riemannian area.

\begin{rem}
In general, we have $\omega(\mathscr{D}) \le C |\mathscr{D}|$.
The converse inequality
\begin{equation}\label{converse}
  |\mathscr{D}| \le C |\omega(\mathscr{D})|
\end{equation}
is   true in bounded neighborhoods   when the  dimension of
$\Lambda$ is $2$,
but \eqref{converse} is false when $\Lambda$ has dimension $\ge 4$.

The fact that \eqref{converse} is true when the dimension of
$\Lambda$ is $2$, will be developed in Section~\ref{sec:2DLambda}.
\end{rem}

We use the coordinate system $(x,y)$ on $W^s_\Lambda$ described in Section \ref{rem:new coordinates}, and the approximation of $\omega$ near $\Lambda$ given by \eqref{eqn:decomposition2}.
More concretely, by \eqref{doundedomega} and \eqref{eqn:conformal_rates} -- which is implied by \eqref{tts_rates} --  we can apply \eqref{eqn:decomposition2}, obtaining
\[
|(\Omega^\Psi_+)^* \omega_{\mid\Lambda} (x,y)-\omega_{\mid \Psi}(x,y)|\le  M_\omega \|y\|.
\]
Then,  for any $2$-cell $\mathscr{D}$  in a neighborhood given
by $\|y\| \le \rho$ we have:
\begin{equation}
  \label{Omega_approx_symplectic}
| \omega( \Omega_+^\Psi (\mathscr{D}) )  - \omega(\mathscr{D}) |= \left|\int _\mathscr{D}  (\Omega^\Psi_+)^* \omega-\omega \right| \le C \rho |\mathscr{D}| .
\end{equation}

Given a $2$-cell $\mathscr{D}$ contained in $\Psi$, we
have, by the conformally symplectic property,
\[
\omega( f^n(\mathscr{D}) ) = \eta^n  \omega(\mathscr{D})
\]
and, by  \eqref{Omega_approx_symplectic} applied to $f^n(\mathscr{D}) $ using \eqref{eq:vsn} and \eqref{eqn:decomposition2}:
\begin{equation}\label{eqn:area}
\omega( \Omega_+^{f^n(\Psi)} (f^n(\mathscr{D}))) =
\eta^n  \omega(\mathscr{D}) + e_n |f^n(\mathscr{D})|
\end{equation}
with
\[
|e_n| \le C C_+ \rho  \lambda_+^n, \quad |f^n(\mathscr{D})| \le D_+^2\mu_+^{2n} |\mathscr{D}| .
\]
Finally, using \eqref{eqn:intertwining}:
\[
\begin{split}
\omega(\Omega_+^{\Psi}(\mathscr{D})) &=
\omega( f_{\mid\Lambda}^{-n} \circ \Omega_+^{f^n(\Psi)} \circ  f^n(\mathscr{D}) ) =
\eta^{-n}\left(  \eta^n  \omega(\mathscr{D})  +  e_n  |f^n(\mathscr{D})| \right)\\
&= \omega(\mathscr{D}) + \eta^{-n}e_n  |f^n(\mathscr{D})| .
\end{split}
\]
Using the previous bounds we obtain:
\[
 \eta^{-n} e_n  |f^n(\mathscr{D})| \le D_+^2 C_+ \rho (\lambda_+\mu_+^2\eta ^{-1})^n |\mathscr{D}| .
\]

Under the assumption \eqref{tts_rates},
by taking the limit as $n \to \infty$ we conclude that
$\omega(\Omega^{\Psi}_+(\mathscr{D})) = \omega(\mathscr{D})$ for all 2-cells $\mathscr{D}$ in $\Psi$.

\begin{rem}
Note that the only ingredients entering in the
proof are the equivariance relation \eqref{eqn:intertwining},
the infinitesimal vanishing lemma (Lemma~\ref{lem:vanishing}), and the fact that the stable manifold
is tangent to the stable bundle. None of those use
the fact that the form $\omega$ is closed nor that it is
non-degenerate.
\end{rem}

\paragraph{\em The case when $\Psi$ is $2$-dimensional:   Proof  of part \textbf{(B)}  of Theorem~\ref{thm:main2} without  assumption \eqref{eqn:conformal_rates}}
\label{sec:2DLambda}

When $\Psi$ is $2$-dimensional,
if we assume \eqref{converse} with a $C$ uniform on the whole
manifold, we can obtain a stronger result.

Suppose that $\omega(\mathscr{D}) \ge 0$  (otherwise change $\mathscr{D}$ into
the cell with opposite orientation). Then our assumption \eqref{converse} can be written
\begin{equation}\label{eqn:2D}
\forall \mathscr{D} \textrm{ $2$-cell},\, |\mathscr{D}|\le C\omega(\mathscr{D}).
\end{equation}

By the conformal symplectic property, we have:
\[
\omega( f^n(\mathscr{D}) ) = \eta^n  \omega(\mathscr{D}),
\]
and, by  \eqref{eqn:area} and \eqref{eqn:2D},
\[
\begin{split}
\omega( \Omega_+^{f^n(\Psi)} \circ  f^n(\mathscr{D}) ) &\le
\eta^n  \omega(\mathscr{D}) +  C e_n \omega(f^n(\mathscr{D}))\\
&=\eta^n(1+ C e_n) \omega(\mathscr{D})
\end{split}
\]
with $ |e_n| \le C C_+ \rho  \lambda_+^n$.

Finally:
\[
\begin{split}
\omega(\Omega_+^{\Psi}(\mathscr{D})) &=
\omega( f^{-n} \circ \Omega_+ ^{f^n(\Psi)}\circ  f^n(\mathscr{D}) ) \\
&\le
\eta^{-n}\left(  \eta^n  (1+e_n) \omega(\mathscr{D}) \right) = (1+e_n) \omega(\mathscr{D}) .
\end{split}
\]
Taking the limit as $n\to \infty$ we get the result.

So, we proved the desired result with the same rate assumptions \eqref{eqn:conformal_rates}
(but of course we need \eqref{eqn:2D} with uniform
bounds, which are guaranteed by \eqref{doundedomega}, as well that the manifold $\Upsilon$ is $2$-dimensional).

\medskip

\paragraph{\em Systematic construction of approximations to scattering map}
\label{systematic}

The method developed above
shows that  we can reconstruct the scattering
map through approximations that get washed away by \eqref{eqn:intertwining}.
Using the procedure above we can pass some properties of the
approximations of $\Omega_+ ^{f^n(\Gamma)}$ from the approximations to $\Omega_+^\Gamma$.

Even if Lemma~\ref{lem:basic} is  not used in this paper, we could use
it  to control perturbations (some version of
this was used in \cite{GideaLM21}) or to
generate numerical approximations.

\begin{lem}  \label{lem:basic}
  Assume that $\Upsilon^n: f^n(\Gamma) \rightarrow \Lambda, n> 0$
  satisfies
  \[
  \begin{split}
    &\lim_{n \to \infty}
(\mu_-^{n} )    \| \Upsilon^n -\Omega_+^{ f^n(\Gamma)} \|_{\C^0}  = 0 \\
    \end{split}
  \]

Then,
  \[
  \lim_{n \to \infty}
  \| \Omega_+^\Gamma - f_{\mid\Lambda}^{-n}\circ\Upsilon^n  \circ f^n \|_{\C^0} = 0
  \]
\end{lem}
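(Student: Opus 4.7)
The plan is to exploit the exact equivariance relation \eqref{eqn:intertwining}, which gives
\[
\Omega_+^\Gamma = f_{\mid\Lambda}^{-n}\circ \Omega_+^{f^n(\Gamma)}\circ f^n
\]
for every $n\ge 0$. Subtracting the quantity we wish to estimate we obtain, for each $x\in\Gamma$ and $y:=f^n(x)\in f^n(\Gamma)$,
\[
\Omega_+^\Gamma(x) - f_{\mid\Lambda}^{-n}\bigl(\Upsilon^n(f^n(x))\bigr)
= f_{\mid\Lambda}^{-n}\bigl(\Omega_+^{f^n(\Gamma)}(y)\bigr) - f_{\mid\Lambda}^{-n}\bigl(\Upsilon^n(y)\bigr).
\]
Both images land in $\Lambda$ (by hypothesis on $\Upsilon^n$), so we may measure the distance intrinsically and use the Lipschitz behaviour of $f_{\mid\Lambda}^{-n}$ on $\Lambda$.

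Next I would invoke the rate bound \eqref{eqn:NHIM} for vectors tangent to $\Lambda$ in the backward direction, namely $\|Df^{-n}(z)v\|\le D_-\mu_-^n\|v\|$ for $z\in\Lambda$, $v\in T_z\Lambda$ and $n\ge 0$. Integrating this bound along a minimizing path in $\Lambda$ joining $\Omega_+^{f^n(\Gamma)}(y)$ and $\Upsilon^n(y)$ (using the uniformity \eqref{U1}--\eqref{U2} to ensure such paths have bounded length comparable to the distance, at least for $y$ in the bounded channel $f^n(\Gamma)$) yields the uniform estimate
\[
d\bigl(f_{\mid\Lambda}^{-n}(p),f_{\mid\Lambda}^{-n}(q)\bigr)\le D_-\mu_-^n\, d(p,q),\quad p,q\in\Lambda.
\]
Applying this to $p=\Omega_+^{f^n(\Gamma)}(y)$ and $q=\Upsilon^n(y)$ and taking the supremum over $x\in\Gamma$ produces
\[
\bigl\|\Omega_+^\Gamma - f_{\mid\Lambda}^{-n}\circ\Upsilon^n\circ f^n\bigr\|_{\C^0}
\le D_-\,\mu_-^n\,\bigl\|\Omega_+^{f^n(\Gamma)}-\Upsilon^n\bigr\|_{\C^0}.
\]
The hypothesis $\mu_-^n\,\|\Upsilon^n-\Omega_+^{f^n(\Gamma)}\|_{\C^0}\to 0$ then delivers the conclusion immediately.

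The whole argument is short because the equivariance relation is an exact identity for every $n$ and the weighting by $\mu_-^n$ in the hypothesis is tailored precisely to cancel the worst-case expansion of $f_{\mid\Lambda}^{-n}$ along $\Lambda$. The only mildly delicate point, and what I would flag as the main obstacle, is turning the infinitesimal rate bound in \eqref{eqn:NHIM} into the global Lipschitz estimate displayed above: on a non-compact $\Lambda$ one must integrate along paths, which requires the uniformity hypotheses (finite injectivity radius and uniform tubular neighborhood) together with the fact that, since $\Gamma$ may be assumed bounded, the relevant pairs $(p,q)$ stay in a compact-like region where the intrinsic and ambient distances are comparable. Once that point is handled, no further ingredient (no symplectic structure, no closedness of $\omega$, no convergence $f^n(\Gamma)\to\Lambda$) is needed to conclude.
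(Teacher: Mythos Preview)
Your proof is correct and follows essentially the same approach as the paper: use the exact equivariance identity $\Omega_+^\Gamma = f_{\mid\Lambda}^{-n}\circ \Omega_+^{f^n(\Gamma)}\circ f^n$ and then the mean value theorem (which is exactly your integration of the backward rate bound along a path in $\Lambda$) to control $f_{\mid\Lambda}^{-n}$ by $D_-\mu_-^n$. The paper's proof is a one-line reference to \eqref{eqn:intertwining} and the mean value theorem, and you have simply unpacked that line.
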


\begin{proof}
  The proof of  Lemma~\ref{lem:basic} is immediate using  formula \eqref{eqn:intertwining} and the
  mean value theorem.
\end{proof}

\begin{lem}  \label{lem:basic2}
  Assume that $\Upsilon^n: f^n(\Gamma) \rightarrow \Lambda, n> 0$ is $\C ^1$ and
  satisfies
  \[
  \begin{split}
    &\lim_{n \to \infty}
  (\mu_-^2 \mu_+)^n  n^2   \| \Upsilon^n -\Omega_+^{ f^n(\Gamma)} \|_{\C^1} = 0 \\
    \end{split}
  \]
Then,
  \[
  \lim_{n \to \infty}
  \| \Omega_+^\Gamma - f_{\mid\Lambda}^{-n}\circ\Upsilon^n  \circ f^n \|_{\C^1} = 0
  \]
\end{lem}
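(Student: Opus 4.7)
The strategy is the $\C^1$ lift of the argument for Lemma~\ref{lem:basic}. Start from the exact equivariance \eqref{eqn:intertwining}, which for every $n\ge 0$ reads $\Omega_+^\Gamma = f_{\mid\Lambda}^{-n}\circ\Omega_+^{f^n(\Gamma)}\circ f^n$. Setting $R_n := \Upsilon^n - \Omega_+^{f^n(\Gamma)}$, the object to be estimated becomes
\[
\Omega_+^\Gamma - f_{\mid\Lambda}^{-n}\circ\Upsilon^n\circ f^n
= f_{\mid\Lambda}^{-n}\circ\Omega_+^{f^n(\Gamma)}\circ f^n \;-\; f_{\mid\Lambda}^{-n}\circ\Upsilon^n\circ f^n.
\]
The $\C^0$ part is exactly as in Lemma~\ref{lem:basic}: the contraction bound $\|Df_{\mid\Lambda}^{-n}\|\le D_-\mu_-^n$ from \eqref{eqn:NHIM} gives $\|\cdot\|_{\C^0}\le D_-\mu_-^n\|R_n\|_{\C^0}$, which is $o(1)$ because $(\mu_-^2\mu_+)^n n^2\ge \mu_-^n$ under \eqref{eqn:mubounds}. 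The $\C^1$ part is the heart of the matter.

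For the $\C^1$ bound, differentiate both compositions by the chain rule and compare by adding and subtracting the hybrid term $Df_{\mid\Lambda}^{-n}(\Omega_+^{f^n(\Gamma)}\!\circ\!f^n)\cdot D\Upsilon^n\!\circ\!f^n\cdot Df^n$; the difference then splits as
\textbf{(i)} $\bigl[Df_{\mid\Lambda}^{-n}(\Upsilon^n\!\circ\!f^n)-Df_{\mid\Lambda}^{-n}(\Omega_+^{f^n(\Gamma)}\!\circ\!f^n)\bigr]\cdot D\Upsilon^n\!\circ\!f^n\cdot Df^n$ plus
\textbf{(ii)} $Df_{\mid\Lambda}^{-n}(\Omega_+^{f^n(\Gamma)}\!\circ\!f^n)\cdot DR_n\!\circ\!f^n\cdot Df^n$.
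The NHIM inputs are: $\|Df^n|_{T\Gamma}\|\le C\mu_+^n$, since $T\Gamma\subset TW^s_\Lambda$ and the rate bounds on $W^s_\Lambda$ gathered in Lemma~\ref{lem:globalrates} give growth at most $\mu_+$; $\|Df_{\mid\Lambda}^{-n}\|\le D_-\mu_-^n$; and the Lipschitz constant of $Df_{\mid\Lambda}^{-n}$, estimated by a chain-rule induction on $\|D^2 f_{\mid\Lambda}^{-k}\|$, satisfies $\mathrm{Lip}(Df_{\mid\Lambda}^{-n})\le Cn\mu_-^{2n}$. Finally $\|D\Upsilon^n\|$ is uniformly bounded, because $f^n(\Gamma)\to\Lambda$ in $\C^1$ by Lemma~\ref{convergence_rates} (so $\Omega_+^{f^n(\Gamma)}$ is $\C^1$-close to the identity) and $\|R_n\|_{\C^1}\to 0$ by hypothesis. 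Plugging in, piece (i) is bounded by $Cn\mu_-^{2n}\cdot\mu_+^n\cdot\|R_n\|_{\C^0}\le Cn(\mu_-^2\mu_+)^n\|R_n\|_{\C^1}$, while piece (ii) is bounded by $C(\mu_-\mu_+)^n\|R_n\|_{\C^1}$, which is dominated by (i) under \eqref{eqn:mubounds}. Collecting,
\[
\|\Omega_+^\Gamma - f_{\mid\Lambda}^{-n}\circ\Upsilon^n\circ f^n\|_{\C^1}\;\le\; Cn(\mu_-^2\mu_+)^n\|R_n\|_{\C^1}\;\le\; C(\mu_-^2\mu_+)^n n^2\|R_n\|_{\C^1},
\]
which tends to $0$ by the assumed decay.

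The main technical obstacle is the Lipschitz estimate for $Df_{\mid\Lambda}^{-n}$: the chain-rule induction $b_{k+1}\le C_f\,a^{2k} + a\,b_k$ with $a=\mu_-$ and $b_k=\|D^2 f_{\mid\Lambda}^{-k}\|$ demands that $f_{\mid\Lambda}$ be $\C^2$ with uniformly bounded second derivative on $\Lambda$. In the standing framework $f$ is $\C^r$ for $r$ large, but the base regularity granted by \textbf{(H1)} is only $\C^1$, so one needs a mild strengthening of the rate assumptions of the type discussed in Appendix~\ref{sec:appendix_NHIM} to upgrade $\Lambda$ to $\C^2$ with uniform bounds. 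The generous $n^2$ factor in the hypothesis is designed precisely to absorb the polynomial factor produced by this induction, together with any polynomial correction incurred when one passes from optimal rates to admissible ones.
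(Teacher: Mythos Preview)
Your proof is correct and follows essentially the same approach as the paper's: both use the equivariance $\Omega_+^\Gamma = f_{\mid\Lambda}^{-n}\circ\Omega_+^{f^n(\Gamma)}\circ f^n$ and control the $\C^1$ difference by combining a second-derivative bound $\|D^2 f_{\mid\Lambda}^{-n}\|\le C\,\mathrm{poly}(n)\,\mu_-^{2n}$ with $\|Df^n|_{T\Gamma}\|\le C\mu_+^n$. The paper quotes this $D^2$ bound (with factor $n^2$) from \cite[Proposition~15]{DLS08} together with \eqref{U3}, whereas you derive it directly via the chain-rule recursion (obtaining the slightly sharper factor $n$, which you then relax); your closing remark about the need for $\Lambda$ to be $\C^2$ is exactly the regularity implicitly used when the paper invokes $D^2 f_{\mid\Lambda}^{-n}$.
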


\begin{proof}
  Using the estimates on the rates of growth of higher derivatives from
  \cite[Proposition 15]{DLS08} -- recall that in this paper
  we are assuming condition~\eqref{eqn:mubounds} -- and condition \eqref{U3}
  we obtain:
  \[
  \| D^2 f_{\mid\Lambda}^{-n} \| \le C_1   \mu_- ^{2  n} n^2
  \]
  and, therefore,
  \[\begin{split}
  \| f_{\mid\Lambda}^{-n} \circ  \Omega_+^{f^n \Gamma} \circ f^n
  - f_{\mid\Lambda}^{-n}\circ\Upsilon^n  \circ f^n \|_{\C^1}
  &\le C_1 \mu_-^{2n} n^2 \|   \Omega_+^{f^n \Gamma} \circ f^n -
  \Upsilon^n  \circ f^n \|_{\C^1}\\ \le&
  C \mu_-^{2n} n^2 \|   \Omega_+^{f^n \Gamma} - \Upsilon^n \|_{\C^1}  C_2 \mu_+^n
  \end{split}\]
\end{proof}

Now, we prove that if the approximated map $\Upsilon^n$ is approximately symplectic
in the weak sense this implies that the map $\Omega_+^\Gamma$ is symplectic.
The following norm is natural
\[
  [[ (\Upsilon^n)^* \omega - \omega ]] \equiv
  \sup_{\A} \frac{| \int_{\Upsilon^n (\A)} \omega - \int_\A \omega|}{|\A|}
  \]
where the supremum is taken over all $\A$, $\C^1$ 2-cells in $\Gamma$,
and $|\cdot|$ is the Riemannian area.

\begin{lem}
  \label{lem:basic3}

  Assume that we are in the conditions of Lemma~\ref{lem:basic2}

  Assume furthermore:
  \begin{equation}\label{eq:aprox}
  \begin{split}
    &\lim_{n \to \infty}
   (\eta ^{-1} \mu_+ ^{2})^n [[( \Upsilon^n)^* \omega - \omega ]] = 0
       \end{split}
  \end{equation}

Then
$\Omega_+^\Gamma$ is symplectic.
\end{lem}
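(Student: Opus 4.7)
My plan is to reduce the claim to the two previous lemmas plus a direct conformal-symplectic calculation. Define the approximate scattering maps
\[
\tilde\Omega_n := f_{\mid\Lambda}^{-n} \circ \Upsilon^n \circ f^n : \Gamma \to \Lambda.
\]
Lemma~\ref{lem:basic2} already tells me that $\tilde\Omega_n \to \Omega_+^\Gamma$ in $\C^1$ (its hypotheses are inherited from the present setup). So, for any $\C^1$ 2-cell $\A \subset \Gamma$, one has $\tilde\Omega_n^*\omega \to (\Omega_+^\Gamma)^*\omega$ uniformly on $\A$ (using that $\omega$ is bounded by \eqref{eqn:bounded_omega}), and hence $\int_{\tilde\Omega_n(\A)}\omega \to \int_{\Omega_+^\Gamma(\A)}\omega$. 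Thus it suffices to show that $\int_{\tilde\Omega_n(\A)}\omega - \int_\A \omega \to 0$ for every such $\A$; this will give $(\Omega_+^\Gamma)^*\omega = \omega$ on $\Gamma$ as forms, i.e.\ $\Omega_+^\Gamma$ is symplectic.

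The core step is the following computation. Using $(f^n)^*\omega = \eta^n\omega$ (so $(f^{-n}_{\mid\Lambda})^*\omega_{\mid\Lambda} = \eta^{-n}\omega_{\mid\Lambda}$) one has
\[
\int_{\tilde\Omega_n(\A)}\omega - \int_\A \omega = \eta^{-n}\Bigl(\int_{\Upsilon^n(f^n(\A))}\omega - \int_{f^n(\A)}\omega\Bigr).
\]
By the very definition of the seminorm $[[\cdot]]$, the parenthesis is bounded by $[[(\Upsilon^n)^*\omega - \omega]]\cdot|f^n(\A)|$. Next I estimate the Riemannian area $|f^n(\A)|$: since $\A \subset \Gamma$ and $\Gamma$ is a homoclinic channel whose forward iterates approach $\Lambda$ in $\C^1$ (Lemma~\ref{convergence_rates}), the tangent vectors to $\A$ propagate with norm bounded by the rate $\mu_+$ of $\Lambda$ (up to a uniform constant), giving $|f^n(\A)| \le C\mu_+^{2n}|\A|$ for all $n \ge 0$. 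Combining,
\[
\Bigl|\int_{\tilde\Omega_n(\A)}\omega - \int_\A \omega\Bigr| \le C\,|\A|\cdot \bigl(\eta^{-1}\mu_+^{2}\bigr)^n\,[[(\Upsilon^n)^*\omega - \omega]].
\]
The right-hand side tends to $0$ by the standing hypothesis \eqref{eq:aprox}, which finishes the argument.

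The delicate point is the area bound $|f^n(\A)| \le C\mu_+^{2n}|\A|$: one must control $\|Df^n_{\mid T\Gamma}\|$ uniformly by $\mu_+^n$, not just on $\Lambda$. I would justify it by writing tangent vectors to $\Gamma$ in the splitting $T\Lambda \oplus E^s \oplus E^u$ available on a tubular neighborhood (hypothesis \textbf{(U2)}); the component along $E^s$ contracts at rate $\lambda_+^n$, the component along $E^u$ is vacuous for forward iteration on $W^u_\Lambda \supset \Gamma$-after-iteration (more carefully, it is dominated by $\mu_+^n$ in the mixed bundle), and the tangential component grows at most like $\mu_+^n$ by \eqref{eqn:NHIM} combined with Lemma~\ref{lem:globalrates}. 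Everything else in the proof is formal: the conformal-symplectic rescaling of $\omega$ under $f^n$, the defining inequality of $[[\cdot]]$, and the $\C^1$-convergence from Lemma~\ref{lem:basic2} transferring pointwise pullbacks to integrals.
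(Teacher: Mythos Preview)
Your proof is correct and follows essentially the same approach as the paper: define $\tilde\Omega_n = f_{\mid\Lambda}^{-n}\circ\Upsilon^n\circ f^n$, use the conformal rescaling to get $\int_{\tilde\Omega_n(\A)}\omega-\int_\A\omega = \eta^{-n}\bigl(\int_{\Upsilon^n(f^n(\A))}\omega-\int_{f^n(\A)}\omega\bigr)$, bound this via the seminorm and the area estimate $|f^n(\A)|\le C\mu_+^{2n}|\A|$, and pass to the limit using the $\C^1$ convergence from Lemma~\ref{lem:basic2}. Your discussion of the area bound is slightly more detailed than the paper's (which simply asserts it), though your aside about the $E^u$ component is unnecessary since $\Gamma\subset W^s_\Lambda$ already places all tangent vectors in $TW^s_\Lambda$, where Lemma~\ref{lem:globalrates} gives the $\mu_+^n$ growth directly.
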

\begin{proof}
Let $\A$ be  a 2-cell in $ \Gamma$.
We have  $\omega( f(\A ) )= \eta \omega(\A)$.

Denote  $\omega(\A) = a$. We have:
  \begin{itemize}
  \item
   $\omega(f^n(\A)) =  \eta^n a$,
  \item
  By hypothesis \ref{eq:aprox},
  $\omega(\Upsilon^n\circ f^n(\A)) = \eta^n a +  \eps_n|f^n(\A)|$
  with $\eps_n \rightarrow 0$.
  \item
Then,
  $\omega( f_{\mid\Lambda}^{-n} \circ
\Upsilon^n\circ f^n(\A)) =\eta^{-n} ( \eta^n a +  \eps_n |f^n(\A)|)
= a +\bar \eps_n
$
with $|\bar \eps_n| \le C  \eta ^{-n} \mu_+ ^{2n}\eps_n  |\A|$.
By the assumption $ (\eta ^{-1} \mu_+ ^{2})^n \eps_n\to 0$,  we obtain that $\omega( f_{\mid\Lambda}^{-n}\circ \Upsilon^n\circ f^n(\A)) \to a$  as $n\to\infty$.

\item
Since by Lemma \ref{lem:basic2} we have $\| \Omega_+^\Gamma - f_{\mid\Lambda}^{-n}\circ\Upsilon^n  \circ f^n \|_{\C^1} \to 0$ this implies that $\omega(\Omega_+^\Gamma(A))=a=\omega(A)$, which is the integral version of $\Omega_+^\Gamma$ being symplectic.
\end{itemize}
\end{proof}

\subsection{Proof of part (C) of Theorem \ref{thm:main2} on the exact
  symplecticity of the scattering map}\label{sec:proofpartC}

We give two proofs that the scattering map is exact symplectic (even if the map $f$ is not).
The first proof is based on Stokes theorem, and the second one on Cartan's magic formula.

\subsubsection{Proof of part (C) of Theorem \ref{thm:main2} based on Stokes' theorem}\label{sec:proofpartCStokes}

To prove  that the scattering map is
exact symplectic, we prove this property for  $\Omega_+$ (a similar argument applies to $\Omega_-$).

We perform a construction similar
to that in the proof of   part {\bf(B)} of Theorem \ref{thm:main2} given in section \ref{sec:main_2_B_proof_v}.

Let $\sigma \subset W^{s,\mathrm{loc}}_\Lambda$ be a 1D-cell 
parameterized by
\[
\begin{split}
\sigma:[0,1]&\to W^{s,\mathrm{loc}}_\Lambda \\
u &\mapsto \sigma(u).
\end{split}
\]

We complete $\sigma$  to a 2D-cell $\tilde{\sigma}$ contained in $W^{s,\mathrm{loc}}_{\Omega_+(\sigma)}$  by
\[
\tilde{\sigma}(t,u)=\gamma(t;\sigma(u), \Omega_+(\sigma(u))), \quad (t,u)\in [0,1]\times [0,1],
\]
where
\[
\tilde{\sigma}(0,u)=\sigma(u), \
\tilde{\sigma}(1,u)= \Omega_+(\sigma(u))
\]
and the path $\gamma$ is defined as in \eqref{eqn:gamma_def}.
See Fig.~\ref{stokes_1_cell}.

\begin{figure}
\centering
\includegraphics[width=0.5\textwidth]{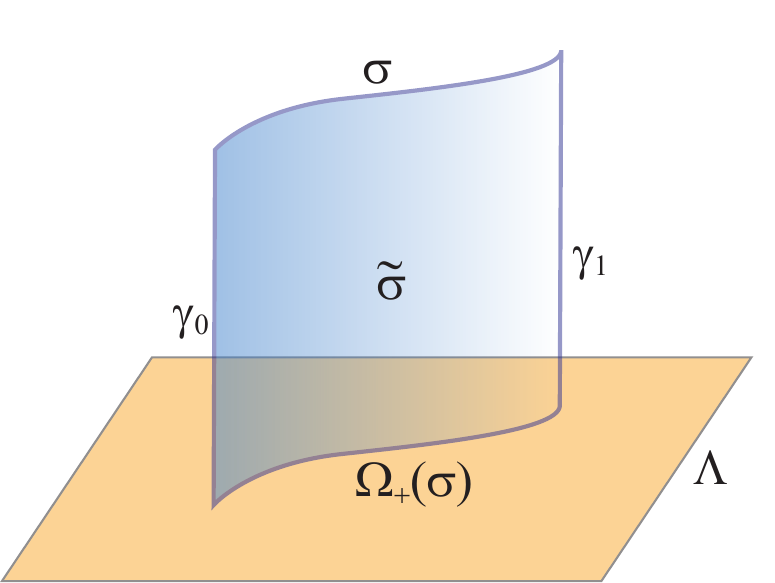}
\caption{A 1-cell $\sigma$ and its completion to a 2D-cell $\tilde{\sigma}$.}
\label{stokes_1_cell}
\end{figure}

We note that,
 $\tilde \sigma$ is contained in $W^{s,\mathrm{loc}}_{\Omega_+(\sigma)}$
and,  by Proposition \ref{lem:degeneracy},
we know that $\omega$ vanishes on $W^{s,\mathrm{loc}}_{\Omega_+^\Gamma(\sigma)}$, so
\[
\int_{\tilde\sigma}\omega= 0 .
\]
Then, using Stokes' Theorem, we obtain
\begin{equation}\label{eq:stokes1}
0=\int_{\tilde\sigma}\omega=\int_{\tilde\sigma}d\alpha=\int_{\partial\tilde\sigma}\alpha=\int_\sigma \alpha -\int_{\Omega_+(\sigma)}\alpha
+\int_{\gamma_1}\alpha
-\int_{\gamma_0}\alpha
\end{equation}
where
\[
\begin{split}
\gamma_1(t)=\tilde \sigma(t,1)=\gamma(t;\sigma(1),\Omega_+(\sigma(1)), \\
\gamma_0(t)=\tilde \sigma(t,0)=\gamma(t;\sigma(0),\Omega_+(\sigma(0)), \ t\in[0,1].
\end{split}
\]

Now, we define the following function on $W^{s,\mathrm{loc}}_\Lambda$:
\begin{equation}\label{eq:primitiveomega+}
P^+(x)=\int_{\gamma(\cdot;x,\Omega_+(x))}\alpha, \textrm { for } x\in W^{s,\mathrm{loc}}_\Lambda .
\end{equation} Note that
$P^+=0$ on $\Lambda$.
The function $P^+$, clearly depends on $\alpha$, but when $\alpha$ is fixed,
we will not include it in the notation. See Remark~\ref{rem:primitiveomega+} for the effects of
changing $\alpha$.

\begin{lem}\label{functiondefined}
The integral defining  $P^+(x)$
in \eqref{eq:primitiveomega+} does not depend on the path $\gamma(\cdot;x,\Omega_+(x))$ in
$W^{s,\mathrm{loc}}_{\Omega_+(x)}$ chosen to connect $x$ to $\Omega_+(x)$.
\end{lem}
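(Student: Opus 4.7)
The plan is to show that any two paths connecting $x$ to $\Omega_+(x)$ inside the strong stable leaf $W^{s,\mathrm{loc}}_{\Omega_+(x)}$ give the same integral of $\alpha$, by exhibiting the difference as $\int_D \omega$ for a disk $D$ lying inside the leaf, and then invoking the vanishing of $\omega$ on strong stable leaves.

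First, let $\gamma_1, \gamma_2:[0,1]\to W^{s,\mathrm{loc}}_{\Omega_+(x)}$ be two $\C^1$ paths with $\gamma_i(0)=x$ and $\gamma_i(1)=\Omega_+(x)$. I would form the closed loop $c = \gamma_1 \ast \bar\gamma_2$ (concatenation of $\gamma_1$ with $\gamma_2$ traversed in reverse) inside the leaf. Since the strong stable manifold $W^{s,\mathrm{loc}}_{\Omega_+(x)}$ is a $\C^1$ embedded disk diffeomorphic to a neighborhood of $0$ in $E^s_{\Omega_+(x)}$ (by \textbf{(H3)} and the standard characterization in Theorem~\ref{localstable}), it is contractible; hence the loop $c$ bounds a $\C^1$ 2-cell $D\subset W^{s,\mathrm{loc}}_{\Omega_+(x)}$ with $\partial D = c$. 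If the two paths were chosen so large that they leave the chosen local leaf, one can first enlarge the local leaf by iterating backwards using \eqref{eqn:intertwining} and the fact that $f^{-n}$ maps $W^{s,\mathrm{loc}}_{\Omega_+(x)}$ into $W^s_{f^{-n}(\Omega_+(x))}$; equivalently, pass to the global strong stable manifold $W^s_{\Omega_+(x)}$, which is still contractible as an injectively immersed copy of $E^s_{\Omega_+(x)}$.

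Second, I would apply Stokes' theorem to $D$:
\[
\int_{\gamma_1}\alpha - \int_{\gamma_2}\alpha = \int_{\partial D}\alpha = \int_D d\alpha = \int_D \omega .
\]
The key step is now to observe that the integrand vanishes identically. Under the standing assumption \eqref{eqn:conformal_rates}, together with $\lambda_+<1$ and $\mu_+\ge 1$ from \eqref{eq:rates0} and \eqref{eqn:mubounds}, we have
\[
\lambda_+^2\,\eta^{-1} \;<\; \lambda_+\mu_+\,\eta^{-1} \;<\; 1 ,
\]
so Lemma~\ref{lem:vanishingmanifolds} (specifically \eqref{eqn:vanishing_manifolds}) yields that $W^{s,\mathrm{loc}}_{\Omega_+(x)}$ is isotropic, i.e.\ $\omega_{\mid W^{s,\mathrm{loc}}_{\Omega_+(x)}}=0$. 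Since $D\subset W^{s,\mathrm{loc}}_{\Omega_+(x)}$, the integral $\int_D\omega$ vanishes, and hence $\int_{\gamma_1}\alpha=\int_{\gamma_2}\alpha$.

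The main (mildly technical) obstacle is purely topological: making sure that a disk $D$ filling the loop $c$ can be chosen to lie entirely in the strong stable leaf, so that the isotropic property can be invoked. This is handled by using that each leaf is diffeomorphic to a Euclidean space (or an open disk therein) through the graph parametrization over $E^s$, and that both paths already live in the same leaf by hypothesis; thus the loop is nullhomotopic in the leaf and bounds a smooth disk inside it. Once that is secured, the rest is a one-line application of Stokes plus the vanishing lemma.
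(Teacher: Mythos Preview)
Your proposal is correct and follows essentially the same approach as the paper: form a disk in the strong stable leaf bounding the two paths (using that the leaf is diffeomorphic to a ball, cf.\ Theorem~\ref{localstable}(IV)(ii)), apply Stokes, and invoke the vanishing of $\omega=d\alpha$ on the leaf via Lemma~\ref{lem:vanishingmanifolds}. Your extra remark deriving $\lambda_+^2\eta^{-1}<1$ from $\lambda_+<1\le\mu_+$ and \eqref{eqn:conformal_rates} is a helpful detail the paper leaves implicit.
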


Lemma~\ref{functiondefined}  shows that $P^+$ is indeed well defined as a
function on  $W^{s,\mathrm{loc}}_\Lambda$.

\begin{proof}
  Take another path $\tilde \gamma(\cdot;x,\Omega_+(x))$
contained in $W^{s,\mathrm{loc}}_{\Omega_+(x)}$, and call
\[
\tilde P(x)=\int_{\tilde \gamma(\cdot;x,\Omega_+(x))}\alpha .
\]
We know that
$d\alpha_{\mid W^s_{\Omega_+(x)}}=0$.
Since $W^s_{\Omega_+(x)}$ is simply connected  (by Theorem \ref{localstable}(IV)-(ii)) we have that $\gamma \cup \tilde \gamma$
bounds a 2D-cell $\mathcal{B}$   in $W^{s,\mathrm{loc}}_{\Omega_+(x)}$. Applying
Stokes theorem we get:
\[
\begin{split}
0= \int _{\mathcal{B}}d\alpha=
\int _{\gamma \cup \tilde \gamma}\alpha=P^+(x)-\tilde P(x) .
\end{split}
\]
\end{proof}

Using the function $P^+$  in \eqref{eq:primitiveomega+}, by \eqref{eq:stokes1} we obtain:
\[
\int_{\sigma}\Omega_+^*(\alpha)=\int_\sigma \alpha
+P^+(\sigma(1))-P^+(\sigma(0))= \int_\sigma \alpha +\int_{\sigma}dP^+ .\]

As this is true for any $1$-cell $\sigma$, we therefore have proved that
\begin{equation}\label{locally_exact}
\Omega_+^*\left(\alpha_{\mid \Lambda}\right) = \alpha_{\mid W^{s,\textrm{loc}}_ \Lambda} + dP^+ ,
\end{equation}
where  $P^+$ given by the formula  \eqref{eq:primitiveomega+} is the  primitive function of $\Omega_+$
with respect to $\alpha$ which satisfies $P^+_{\mid \Lambda} = 0 $.

\medskip

More important for our purposes,  we can restrict \eqref{locally_exact}
to the homoclinic channel $\Gamma$ and obtain:

\[
\left(\Omega_+^\Gamma\right)^*\left(\alpha _{\mid \Lambda}\right) = \alpha_{\mid \Gamma} + dP^+_{\mid \Gamma} ,
\]

A similar argument yields $\Omega_-$ is exact, with the primitive function $P^-$,
given by an integral formula similar to \eqref{eq:primitiveomega+}.

\medskip

Using the elementary calculation from Lemma \ref{exact_composition}, it follows
that the scattering map $S=\Omega^\Gamma_+\circ (\Omega^\Gamma_-)^{-1}$ is exact.
The derivation below (which uses \eqref{eqn:primitivefg} and \eqref{eqn:primitiveinverse}) gives a formula for the primitive function for $S$:
\begin{equation*}
  \begin{split}
  P^S &=
  P^{
  \Omega_+^\Gamma \circ (\Omega_-^\Gamma)^{-1}}=
P^ {({\Omega_-^\Gamma})^{-1}}+P^{{\Omega_+^\Gamma}}\circ  ({\Omega_-^\Gamma})^{-1} \\
  & = - P^{\Omega_-^\Gamma }\circ  (\Omega_-^\Gamma)^{-1}
+ P^{\Omega_+^\Gamma}\circ ({\Omega_-^\Gamma})^{-1},
\end{split}
    \end{equation*}
so, using the notation from above,
\begin{equation*}\label{eqn:primitive_scattering}
  P^S=(P^+-P^-)\circ(\Omega_-^\Gamma)^{-1}.
\end{equation*}

\begin{rem}
Assume that the symplectic form is exact $\omega=d\alpha$.
Since $\omega$ vanishes on $W^{u,s}_x$, and  $W^{u,s}_x$ is simply connected (see Theorem \ref{localstable}),
by applying the Poincar\'e Lemma we obtain that the restrictions of $\alpha$ to the stable/unstable fibers  $\alpha_{\mid W^{u,s}_x}$ are exact.
Then  \eqref{eq:primitiveomega+} shows that $-P^\pm_{\mid W^{s,u}_x}$ is a primitive of $\alpha_{\mid W^{s,u}_x}$.
\end{rem}

\begin{rem}\label{rem:primitiveomega+}
The integral formulas \eqref{eq:primitiveomega+} are for a fixed  action function $\alpha$.

  For the primitive $\tilde \alpha$
  related to the $\alpha$ by a gauge transformation, that is
  $\tilde \alpha = \alpha + dG$ with $G$ a real valued function on $M$,
   on $W^{s}_\Lambda$  we have:
  \begin{equation}\label{eqn:gauge_Omega}
  P^+_{\tilde \alpha} = P^+_\alpha + G \circ \Omega_+ - G.
  \end{equation}
  A similar formula holds for $P^-$ on $W^u_\Lambda$.

  The generating function $P^S_{\tilde \alpha}$ satisfies the following on the domain  $H^- \subset \Lambda$ of $S$:
  \[
  P^S_{\tilde \alpha} = P^S_\alpha + G \circ S   - G.
  \]
\end{rem}

\subsubsection{Proof of part (C) of Theorem \ref{thm:main2} based on Cartan's magic formula}\label{sec:Cartanexact}

With the same notation as in Section \ref{sec:magic}  we compute:
\[
\frac{d}{dt} \phi_t^* \alpha = \phi^*_t\left[   i_V d\alpha +   d (i_V\alpha)  \right]
 =  \phi_t ^* (d i_V \alpha)  =  d ( \phi^*_t ( i_V\alpha)),
\]
where we have  used $i_V (d\alpha)   = i_V (\omega) = 0$.
The last equality is because of the Vanishing Lemma~\ref{lem:vanishingmanifolds}.

Therefore we have
\[
((\Omega_+^\Gamma)^{-1})^*\alpha - \alpha  = \phi_1^* \alpha -  \phi^*_0 \alpha
= d   \int_0^1   \phi^*_t ( i_V\alpha)\, dt,
\]
showing that $(\Omega_+^\Gamma)^{-1}$  is exact. The fact that $\Omega_+^\Gamma $ is exact follows from applying \eqref{eqn:primitiveinverse} to the map $(\Omega_+^\Gamma)^{-1}$   for $\eta=1$. We also obtain the direct formula:
\[(\Omega_+^\Gamma)^*\alpha - \alpha=-d\left[   \int_0^1  \left (\phi_t\circ (\Omega_+^\Gamma))^* ( i_V\alpha)\, dt\right)\right].\]

\begin{rem} Note that this proof also gives that, if we have
a presymplectic manifold and consider the
foliation by the kernel, the holonomy maps between
two transversals (which are symplectic manifolds since the kernel
is excluded) are symplectic maps.\end{rem}

\subsection{Proof of Proposition \ref{lem:degeneracy} }\label{sec:proofisotropic}

The first item of Proposition \ref{lem:degeneracy} is a direct consequence of part {\bf(B)} of Theorem \ref{thm:main2}.
For $\Omega_{+\mid W^s_ N}:  W^s_ N \to N$ with $N\subset \Lambda$, as
$ (\Omega_+)^* (\omega_{|\Lambda}) = \omega_{|W^{s,\mathrm{loc}}_\Lambda}$, using that $\omega_{\mid N}=0$,  we get:
\[
0=(\Omega_{+\mid W^s_ N})^* \omega_{\mid N} = \omega_{\mid W^s_N}.
\]

To prove the second item, let  any $y \in W^{s,\mathrm{loc}}_\Lambda$. We have that $y \in W^{s,\mathrm{loc}}_x$ for
$x =\Omega_+(y)\in \Lambda$.
Using that $\Lambda$ is
symplectic (Part (A) of Theorem~\ref{thm:main1})
we can construct as isotropic manifold $N \subset \Lambda$ of
dimension $\frac{d_c}{2}$ (using, for example Darboux theorem) with $x\in N$.
Using Proposition~\ref{lem:degeneracy} part (i),
we obtain that $W^s_N$ is isotropic.
We note that $y \in W^s_N \subset W^s_\Lambda$
and that
\[
\text{dim}( W^s_N)  = \frac{d_c}{2} + d_s = \frac{1}{2}(d_c + d_s +d_u).
\]
Therefore $W^s_N$ is a Lagrangian submanifold of $M$.

Therefore,  $T_y W^s_N$ is a Lagrangian subspace of
$T_yM$.  Since $T_y W^s_N \subset T_y W^s_\Lambda$,
we conclude that  $T_y W^s_\Lambda$ is coisotropic,
and since $y$ was an arbitrary point, the manifold $T_y W^s_\Lambda$
is coisotropic but not Lagrangian.

\section{Formulas for the primitive functions of wave maps and  scattering map when $f$ is exact}
\label{sec:primitive_series}

When $f$ is exact conformally symplectic,  in this section
we obtain formulas for the primitive functions of  the wave maps
and the scattering map in terms of the primitive function of $f$.
The variational formulation for conformally symplectic systems
is given in \eqref{Aformal}.
The formulas  \eqref{sum_path_plus}
\eqref{sum_path_minus} and \eqref{eqn:primitive_SM_good_action}
provide a link with the calculus of variations for
conformally symplectic systems.
In the symplectic case,
\cite{Angenent93, Lomeli97,AmbrosettiB98}
develop variational descriptions of heteroclinic
connections. Such formulas have been used in numerical
calculations of orbits homoclinic (or heteroclinic) to periodic orbits  \cite{Tabacman95}
of twist maps.

\medskip

Fixing an action form $\alpha$, we rearrange the definition of an exact conformally symplectic map \eqref{eqn:exact_conformally_symplectic}
as
\begin{equation}\label{exact_cs}
\alpha = \eta^{-1} f^* \alpha -d \eta^{-1}P^f_\alpha .
\end{equation}

Applying formula \eqref{exact_cs}
repeatedly,   we obtain for any $N \in \N$
\begin{equation}\label{explicit_forward}
	\alpha = \eta^{-N} (f^*)^N  \alpha -
  d \left(\sum_{j= 0}^{N-1}  \eta^{-j-1} P^f_\alpha \circ f^{j} \right).
\end{equation}

Similarly, rearranging \eqref{eqn:exact_conformally_symplectic} as
\[
\alpha = \eta (f^*)^{-1} \alpha + d P^f_\alpha\circ f^{-1}
\]
and
iterating we have:
\begin{equation}\label{explicit_backward}
  \alpha  = \eta^{N} (f^*)^{-N}\alpha +
  d \left(\sum_{j= 1}^N  \eta^{j-1} P^f_\alpha \circ f^{-j} \right).
 \end{equation}

 Integrating \eqref{explicit_forward}  and \eqref{explicit_backward} over a path $\sigma$ and
 remembering that the integral of a differential over a path  is just the difference of the values at the ends, we obtain for
 any path $\sigma$:
 \begin{equation} \label{explicit_forward_path}
   \int_\sigma \alpha =  \eta^{-N} \int_{f^N (\sigma)} \alpha
   - \sum_{j= 0}^{N-1}  \eta^{-j-1} \left( P^f_\alpha \circ f^{j}(\sigma(1)) -
     P^f_\alpha \circ f^{j}(\sigma(0)) \right),
 \end{equation}
 \begin{equation} \label{explicit_backward_path}
   \int_\sigma \alpha =  \eta^{N} \int_{f^{-N} (\sigma)} \alpha
   +\sum_{j= 1}^N  \eta^{j-1} \left( P^f_\alpha \circ f^{-j}(\sigma(1)) -
     P^f_\alpha \circ f^{-j}(\sigma(0)) \right).
 \end{equation}

Given $x \in W^{s,\mathrm{loc}}_{\Lambda}$, when  $\sigma$ is chosen to be the path $\gamma_+(\cdot;x,\Omega_+(x)) \subset W^{s,\mathrm{loc}}_{\Omega_+(x)}$ given in \eqref{eqn:gamma_def}, and denoting by
\[
\gamma^N_+:=f^N(\gamma_+(\cdot;x,\Omega_+(x))=
\gamma_+(\cdot;f^N(x),\Omega_+(f^N(x))),\]
using the formula \eqref{eq:primitiveomega+} for the primitive function $P^+_\alpha$
(and, similarly, in the analogous formula for $P^-_\alpha$), we obtain:

\begin{lem}\label{lem:sum_path_plus_minus}
  The primitive functions $P^\pm_\alpha$ of $\Omega_\pm$  for the action   form $\alpha$ are given by:
\begin{equation}
  \label{sum_path_plus}
\begin{split}
P^+_\alpha(x)=&\eta^{-N} \int_{\gamma^N_+}\alpha +\sum_{j=0}^{N-1}\eta^{-j-1}[P^f_\alpha\circ f^j(\Omega_+^\Gamma(x))-P^f_\alpha\circ f^j(x)],
 \end{split}
\end{equation}
\begin{equation}\label{sum_path_minus}
\begin{split}
P^-_\alpha(x)=&\eta^{N} \int_{\gamma^N_-}\alpha +\sum_{j=1}^{N}\eta^{j-1}[P^f_\alpha\circ f^{-j}(\Omega_-^\Gamma(x))-P^f_\alpha\circ f^{-j}(x)].
 \end{split}
\end{equation}

The primitive function $P^S_\alpha$ of the scattering map is given by (see \eqref{eqn:primitivefg} and \eqref{eqn:primitiveinverse}):
\begin{equation}
  \label{sum_path_plus_minus}
\begin{split}
P^S_\alpha(x)&=\left[\eta^{-N} \int_{\gamma^N_+}\alpha +\sum_{j=0}^{N-1}\eta^{-j-1}[P_\alpha^f\circ f^j(\Omega^\Gamma_+)-P_\alpha^f\circ f^j]\right]\circ (\Omega^\Gamma_-)^{-1}(x)\\
&-\left[\eta^{N} \int_{\gamma^N_-}\alpha +\sum_{j=1}^{N}\eta^{j-1}[P_\alpha^f\circ f^{-j}(\Omega^\Gamma_-)-P_\alpha^f\circ f^{-j}]\right ]\circ (\Omega^\Gamma_-)^{-1}(x)\\
=&\eta^{-N} \int_{\gamma^N_+}\alpha\circ (\Omega^\Gamma_-)^{-1}(x) +\sum_{j=0}^{N-1}\eta^{-j-1}[P_\alpha^f\circ f^j\circ S(x) -P_\alpha^f\circ f^j\circ (\Omega^\Gamma_-)^{-1}(x)] \\
&- \eta^{N} \int_{\gamma^N_-}\alpha \circ(\Omega^\Gamma_-)^{-1}(x) -\sum_{j=1}^{N}\eta^{j-1}[P_\alpha^f\circ f^{-j}(x)-P_\alpha^f\circ f^{-j}\circ(\Omega^\Gamma_-)^{-1}(x) ] .
 \end{split}
\end{equation}
\end{lem}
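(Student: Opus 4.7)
The plan is to derive the three formulas by directly combining the integral representation \eqref{eq:primitiveomega+} of $P^+_\alpha$ (and the analogous formula for $P^-_\alpha$) with the telescoping identities \eqref{explicit_forward_path} and \eqref{explicit_backward_path}, and then use the composition/inversion rules for primitive functions (equations \eqref{eqn:primitivefg} and \eqref{eqn:primitiveinverse}) to obtain the formula for $P^S_\alpha$. No new geometric ingredient is needed beyond what has already been assembled in Section~\ref{sec:proofpartC} and the exactness calculus at the top of Section~\ref{sec:primitive_series}.

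For the formula \eqref{sum_path_plus} for $P^+_\alpha$, I would proceed as follows. Fix $x \in W^{s,\mathrm{loc}}_\Lambda$ and let $\sigma = \gamma_+(\cdot;x,\Omega_+(x))$ be the path used in \eqref{eq:primitiveomega+}, parametrized so that $\sigma(0) = x$ and $\sigma(1) = \Omega_+(x)$. Then $P^+_\alpha(x) = \int_\sigma \alpha$. Applying the telescoping identity \eqref{explicit_forward_path} to this particular $\sigma$, and observing that $f^N(\sigma) = \gamma_+^N$ by definition, yields
\[
P^+_\alpha(x) = \eta^{-N} \int_{\gamma_+^N} \alpha - \sum_{j=0}^{N-1} \eta^{-j-1}\left[P^f_\alpha \circ f^{j}(\Omega_+(x)) - P^f_\alpha \circ f^{j}(x)\right],
\]
which is precisely \eqref{sum_path_plus}. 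The analogous formula \eqref{sum_path_minus} for $P^-_\alpha$ is obtained the same way, using a path $\gamma_-(\cdot;x,\Omega_-(x))$ inside $W^{u,\mathrm{loc}}_{\Omega_-(x)}$ and applying the backward telescoping identity \eqref{explicit_backward_path}. The consistency between $f^j(\Omega_\pm(x))$ and $\Omega_\pm(f^j(x))$, should it be needed at any intermediate step, follows from the equivariance relations \eqref{eqn:intertwining}.

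For \eqref{sum_path_plus_minus}, I would combine the composition formula \eqref{eqn:primitivefg} applied to $S = \Omega_+^\Gamma \circ (\Omega_-^\Gamma)^{-1}$ with the inversion formula \eqref{eqn:primitiveinverse}. Since both $\Omega_+^\Gamma$ and $\Omega_-^\Gamma$ are symplectic (conformal factor~$1$) by Theorem~\ref{thm:main2}(B), the general rule \eqref{eqn:primitivefg} simplifies to
\[
P^S_\alpha = P^+_\alpha \circ (\Omega_-^\Gamma)^{-1} + P^{(\Omega_-^\Gamma)^{-1}}_\alpha,
\]
and \eqref{eqn:primitiveinverse} with $\eta=1$ gives $P^{(\Omega_-^\Gamma)^{-1}}_\alpha = -P^-_\alpha \circ (\Omega_-^\Gamma)^{-1}$. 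Substituting the expressions from \eqref{sum_path_plus} and \eqref{sum_path_minus} and noting that $\Omega_+^\Gamma \circ (\Omega_-^\Gamma)^{-1} = S$ converts $f^j(\Omega_+(\Omega_-^\Gamma)^{-1}(x))$ into $f^j(S(x))$, producing \eqref{sum_path_plus_minus} after regrouping terms.

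This is essentially a bookkeeping argument; the only step that deserves care is making sure the endpoint conventions for the paths $\gamma_\pm$ match the sign conventions in the telescoping identities \eqref{explicit_forward_path}--\eqref{explicit_backward_path} (in particular that $\sigma(0)$ is the moving basepoint $x$ and $\sigma(1) \in \Lambda$ is the projection), so that the correct variable appears with the minus sign in each bracket. The independence of $P^+_\alpha$ from the choice of path within $W^{s,\mathrm{loc}}_{\Omega_+(x)}$, established in Lemma~\ref{functiondefined}, guarantees that the right-hand side of \eqref{sum_path_plus} does not depend on the specific representative used to perform the computation, and similarly for $P^-_\alpha$; this consistency is the only point one needs to verify beyond the direct substitution.
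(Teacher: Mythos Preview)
Your approach is correct and is exactly the paper's own derivation: the paper also obtains \eqref{sum_path_plus}--\eqref{sum_path_minus} by specializing the telescoping identities \eqref{explicit_forward_path}--\eqref{explicit_backward_path} to the path $\gamma_\pm(\cdot;x,\Omega_\pm(x))$ that defines $P^\pm_\alpha$ in \eqref{eq:primitiveomega+}, and then assembles \eqref{sum_path_plus_minus} via the composition/inversion rules \eqref{eqn:primitivefg}--\eqref{eqn:primitiveinverse}. One bookkeeping point worth flagging: with your stated convention $\sigma(0)=x$, $\sigma(1)=\Omega_+(x)$, the formula \eqref{explicit_forward_path} produces the minus sign you wrote, not the plus sign appearing in the statement of \eqref{sum_path_plus}; this is the very endpoint-convention issue you anticipated, so it is a typo-level discrepancy rather than a gap in your argument.
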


\begin{rem} \label{rem:converegence_equivalent}
The function  $P^+_\alpha$ in \eqref{sum_path_plus} is well defined for all $x\in W^s_\Lambda$. Therefore, the series on the right-hand side of \eqref{sum_path_plus} is convergent if and only if  the sequence $\eta^{-N}\int_{\gamma^N_+}\alpha$
  converges to zero. Analogously for  $P^-_\alpha$ in \eqref{sum_path_minus}.
\end{rem}

The convergence of the series in \eqref{sum_path_plus}
and \eqref{sum_path_minus}  is very easy if the orbits of $f$ in $\Lambda$ are bounded. In such
a case we have uniform bounds on $ \alpha $ and the
paths $\gamma^N_\pm$ have lengths
bounded by $\lambda_\pm^{|N|}$ (when $N \rightarrow \pm \infty$).

However, even if $\omega$ is uniformly bounded,  $\alpha$ may
be unbounded (see  Section \ref{sec:unbounded_action} for lower bounds
for all forms).
If $f^N(x)$ escapes to infinity,
it could happen that $\|\alpha_{f^N(x)}\|$ grows
so fast that it overtakes the decrease of the length of $\gamma^N_\pm$ (and the powers of $\eta$).

In Section~\ref{convergence},
we show that there is a gauge in which the
formulas \eqref{sum_path_plus} and \eqref{sum_path_minus}
converge very fast.
Indeed, with the construction of Section~\ref{convergence}, the
formulas become finite  sums.

In Section~\ref{divergence} we will show
that, if there are orbits that escape to infinity, there is
always a gauge that makes
\eqref{sum_path_plus} and \eqref{sum_path_minus}
divergent.

\subsection{Construction of gauges yielding  convergence of the series
   for the primitives of the wave maps and the scattering map}
\label{convergence}

\begin{lem}\label{lem:construction_G}
Given an action form $\alpha$ for $\omega$, there exists an action form $\tilde\alpha=\alpha+dG$, for some smooth function $G:M\to\mathbb{R}$, such that
\begin{equation}
\label{eqn:primitive_SM_good_action}
\begin{split}
P^S_{\tilde\alpha}=& \sum_{j=0}^{\infty}\eta^{-j-1}[P^f_{\tilde\alpha}\circ f^j\circ S -P^f_{\tilde\alpha}\circ f^j\circ(\Omega_-)^{-1}] \\
& -\sum_{j=1}^{\infty}\eta^{j-1}[P^f_{\tilde\alpha}\circ f^{-j} - P^f_{\tilde\alpha}\circ f^{-j}\circ (\Omega_-)^{-1}] .
\end{split}
\end{equation}
\end{lem}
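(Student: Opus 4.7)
The plan is to construct a gauge function $G$ on $M$ so that, for $\tilde\alpha := \alpha + dG$, the two integral remainder terms $\eta^{-N}\int_{\gamma^N_+}\tilde\alpha$ and $\eta^{N}\int_{\gamma^N_-}\tilde\alpha$ appearing in the identity \eqref{sum_path_plus_minus} vanish identically for all sufficiently large $N$. Once this is achieved, the two partial sums in \eqref{sum_path_plus_minus} become finite sums that converge trivially as $N\to\infty$ to the two infinite series in \eqref{eqn:primitive_SM_good_action}, establishing the lemma.

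The key idea is to pick $G$ so that $G|_\Lambda = 0$, $G|_{W^{s,\mathrm{loc}}_\Lambda} = P^+_\alpha$ and $G|_{W^{u,\mathrm{loc}}_\Lambda} = P^-_\alpha$, where $P^\pm_\alpha$ are the primitive functions given by the integral formulas in \eqref{eq:primitiveomega+}. Both restrictions vanish on $\Lambda$ (the defining path is trivial there), so the two prescriptions are compatible on their local overlap $W^{s,\mathrm{loc}}_\Lambda\cap W^{u,\mathrm{loc}}_\Lambda = \Lambda$ inside $\OO_\rho$. Using the product structure of the tubular neighborhood provided by \textbf{(U2)}, together with the straightening of the local foliations afforded by \textbf{(H4)}, I would glue the two pieces into a single $\C^r$ function on $\OO_\rho$, for instance via the formula $G(x,s,u) := P^+_\alpha(x,s,0) + P^-_\alpha(x,0,u)$ in coordinates adapted to the two foliations, and then extend $G$ to all of $M$ by multiplication with a smooth cutoff supported in $\OO_\rho$.

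The core verification is the cancellation of the remainders. Fix $x\in H^\Gamma_-$ and set $y_-:=(\Omega^\Gamma_-)^{-1}(x)\in\Gamma$. For $N\ge N_0$ large enough, the iterate $f^N(y_-)$ lies in $W^{s,\mathrm{loc}}_\Lambda$, the path $\gamma^N_+$ lies inside $W^{s,\mathrm{loc}}_{\Omega_+(f^N(y_-))}$, and $\Omega_+(f^N(y_-))\in\Lambda$, so
$$\int_{\gamma^N_+}\tilde\alpha \;=\; \int_{\gamma^N_+}\alpha \;+\; G\bigl(\Omega_+(f^N(y_-))\bigr) \;-\; G\bigl(f^N(y_-)\bigr) \;=\; \int_{\gamma^N_+}\alpha \;-\; P^+_\alpha\bigl(f^N(y_-)\bigr) \;=\; 0,$$
where the last equality follows from the definition of $P^+_\alpha$ in \eqref{eq:primitiveomega+} and the path-independence established in Lemma \ref{functiondefined}. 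The identical computation in $W^{u,\mathrm{loc}}_\Lambda$ gives $\int_{\gamma^N_-}\tilde\alpha = 0$ for all $N\ge N_0$. Plugging this into \eqref{sum_path_plus_minus} and letting $N\to\infty$ reproduces \eqref{eqn:primitive_SM_good_action}.

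The main technical obstacle is the global construction of $G$: the building blocks $P^\pm_\alpha$ are initially defined only on the local (un)stable manifolds, which meet transversely along $\Lambda$, and one must produce a single $\C^r$ function on $M$ restricting correctly to both. The compatibility at $\Lambda$ (both pieces vanish) together with the tubular structure from \textbf{(U2)} and the foliation regularity \textbf{(H4)} resolves this. Crucially, no boundedness assumption on $\alpha$ or $G$ is needed: although $\alpha$ and consequently $G$ may be unbounded, the remainders are \emph{identically} zero along orbits asymptotic to $\Lambda$, so the convergence in \eqref{eqn:primitive_SM_good_action} is trivial rather than quantitative, which is essential in view of Remark \ref{rem:converegence_equivalent} and the possibility of escape to infinity discussed in Section \ref{divergence}.
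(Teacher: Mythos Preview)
Your proposal is correct and follows essentially the same approach as the paper: choose $G$ to equal $P^+_\alpha$ on $W^{s,\mathrm{loc}}_\Lambda$ and $P^-_\alpha$ on $W^{u,\mathrm{loc}}_\Lambda$ (both vanish on $\Lambda$, so they match on the overlap), extend to $M$, and observe that the integral remainders in \eqref{sum_path_plus_minus} then vanish identically for $N$ large, reducing the formula to a finite sum. The only minor difference is that the paper carries out the extension via a partition of unity subordinate to local trivializations $(c_i,s_i,u_i)$ in which both local invariant manifolds are coordinate planes (your formula $G(x,s,u)=P^+_\alpha(x,s,0)+P^-_\alpha(x,0,u)$ is exactly the local building block used there), whereas you invoke a single global product chart plus a cutoff; since the simultaneous straightening of $W^{s,\mathrm{loc}}_\Lambda$ and $W^{u,\mathrm{loc}}_\Lambda$ is in general only local, the partition-of-unity version is the safer formalization, but the substance is the same.
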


\begin{proof}
  Let $G^+=P^+_\alpha$  given in \eqref{eq:primitiveomega+} ,defined on $W^{s,\textrm{loc}}_\Lambda$, and note that $G^+=0$ on $\Lambda$,
  so, by \eqref{eqn:gauge_Omega} we have
  $P^+_{\alpha+dG^+}=P^+_\alpha  - G^+=0$ on $W^{s,\textrm{loc}}_\Lambda$.

  Note also that \eqref{sum_path_plus}can be written as:
  \[
  P^+_\alpha(x)= P^+_\alpha(F^N(x))+\sum_{j=0}^{N-1}\eta^{-j-1}[P^f_\alpha\circ f^j(\Omega_+^\Gamma(x))-P^f_\alpha\circ f^j(x)].
\]

 Applying this formula for $\alpha+dG^+$ we obtain that the sum is zero.

  Similarly, for $G^-=P^-_\alpha$ on  $W^{u,\textrm{loc}}_\Lambda$, we have $P^-_{\alpha+dG^-}=P^-_\alpha - G^-=0$    on $W^{u,\textrm{loc}}_\Lambda$.

    So, we have accomplished our goal of making the series convergent (in fact zero) using gauge
    functions $G^\pm$ defined on $W^{s,\textrm{loc}}_\Lambda \cup  W^{s,\textrm{loc}}_\Lambda$.
    What remains is to show that this partially defined function can be extended to
    the whole $M$ so that it is a well defined gauge function and makes the series convergent (but not zero).
    We now give the details, which are fairly standard.

Let $\OO_\rho$ be the uniform neighborhood of $\Lambda$ defined in \eqref{eqn:OOrho}.
Choose $\rho'>\rho$ such that   $\OO_{\rho'}$ is disjoint from the homoclinic channel $\Gamma$.

Now we construct a function $G^{ext}:M\to\mathbb{R}$ which agrees with $G^+$ on $W^{s,\textrm{loc}}_\Lambda\cap \OO_\rho$
and with $G^-$ on $W^{u,\textrm{loc}}_\Lambda \cap \OO_\rho$.
We now give the details
of this  extension by using partitions of unity,  and this finishes the proof of
Lemma~\ref{lem:construction_G}.

We can cover $\OO_\rho$  by a countable collection of uniform balls $\B_i$,  $i=1,\ldots,\infty$,  such
that:
\begin{itemize}
\item[(C1)] $
\OO_\rho\subseteq \bigcup_{i} \B_i\subseteq \OO_{\rho'}$;
\item[(C2)]
Each point $x\in \OO_\rho$ is contained in only finitely many balls $\B_{ i_1} ,\ldots, \B_{ i_{L+1} }$  (here $L$ is the covering dimension of the manifold $M$, which equals the dimension of the manifold);
\item [(C3)]
On each $\B_i$ we have a  local trivialization  of $E^s\oplus E^u$, that is,
 \begin{equation}\label{eqn:trivialization}
   (E^s\oplus E^u)_{\Lambda\cap\B_i}\simeq (\Lambda\cap \B_i)\oplus E^s\oplus E^u.
\end{equation}
\end{itemize}

By (C3), on each  open set $\B_i$ we can choose a system of coordinates $(c_i,s_i,u_i)$ such that
\begin{equation}\label{eqn:trivial_coords}
\begin{split}
\Lambda\cap\B_i      =&\{(c_i,s_i,u_i)\,|\, s_i=u_i=0\},\\
W^s_\Lambda\cap\B_i =&\{(c_i,s_i,u_i)\,|\, u_i=0\}, \\
W^u_\Lambda\cap\B_i  =&\{(c_i,s_i,u_i)\,|\, s_i=0\}.
\end{split}
\end{equation}
We note that the  systems of coordinates associated to two open sets $\B_i$ and $\B_j$ that have non-empty intersection do not have to agree with one another.

There exists a smooth partition of unity $\{\Psi_i\}$ subordinate to $\{\B_i\}$, with $\Psi_i:M\to\mathbb{R}$,  such that:
\begin{itemize}
\item
For each $x\in \OO_\rho$ and each $i$ we have $0\le \Psi_i(x)\le 1$;
\item
For each $x\in \OO_\rho$ there is an open neighborhood of  $x$ such that all but finitely many $\Psi_i$'s are $0$ on that open neighborhood of $x$;
\item
For each $x\in \OO_\rho$  we have $\Sigma_i\Psi_i(x)=1$ (by the previous condition this sum is finite on an open neighborhood of $x$);
\item
For each $i$, $\textrm{supp}(\Psi_i)\subseteq \B_i$.
\end{itemize}
This is a direct consequence of \cite[Theorem 3-11]{Spivak}.

Now define $G^0_i: \B_i\to\mathbb{R}$ by
\[
G^{0}_i(c_i,s_i,u_i)= G^+(c_i,s_i)+G^-(c_i,u_i),
\]
where the underlying coordinate system  corresponds to $\B_i$.
Since $G^+_{\mid\Lambda}=G^-_{\mid\Lambda}=0$, we then have $G^0_i=0$ on $\Lambda\cap\B_i$.
The function $G^0_i$ is a local extension of both $G^+$ and $G^-$, and depends on the underlying coordinate system.

Since $\textrm{supp}(\Psi_i)\subseteq \B_i$, we define a global extension $G^{ext}_i:M\to \mathbb{R}$ of $G^0_i$ given by:
\begin{equation*}
G^{ext}_i(x)= \left\{
\begin{array}{ll}
           G^0_i(x)\Psi_i(x)  & \hbox{ on $\textrm{supp}(\Psi_i)\subseteq \B_i$},\\
           0& \hbox{ on $ M\setminus \textrm{supp}(\Psi_i)$}.
\end{array}
\right.
\end{equation*}

Then,  we combine the  functions $G^{ext}_i$ into a single global extension $G^{ext}:M\to \mathbb{R}$ by
\begin{equation}\label{eqn:Gext_def}
\begin{split}
G^{ext}(x)=&\sum_i G^{ext}_i(x).
\end{split}
\end{equation}
Although  a point $x$ may be covered by finitely many open sets $\B_{i_1},\ldots, B_{i_{L+1}}$, in which case the point $x$ has different
coordinate representations
$x=x(c_{i_j},s_{i_j},u_{i_j})$, $j=1,\ldots,L+1$,
we have that
\[G^{ext}(x)= \sum_{j=1}^{L+1} (G^+(c_{i_j},s_{i_j})+G^-(c_{i_j},u_{i_j}))\Psi_{i_j}(c_{i_j},s_{i_j},u_{i_j})\]
is independent of the local system of coordinates.

By the uniformity assumption {\bf (U1)}, for some $C>0$ we also have
\[
\|G^{ext}\|_{\C^r(M)}\le C \left[\|G^+\|_{\C^r(W^{s,\mathrm{loc}}_{\mid \Lambda})} + \|G^-\|_{\C^r(W^{u,\mathrm{loc}}_{\mid \Lambda})}\right].
\]

Note that
\begin{equation}\label{eqn:Gext1}
G^{ext} =
\left\{
\begin{array}{ll}
           G^+  & \hbox{ on $W^{s,\mathrm{loc}}_{ \Lambda} \cap \OO_\rho$},\\
           G^- & \hbox{ on $ W^{u,\mathrm{loc}}_{ \Lambda} \cap \OO_\rho$}.
\end{array}
\right.
\end{equation}
and
\begin{equation}\label{eqn:Gext2}
\begin{split}
G^{ext} =   & 0  \textrm{ on } M\setminus \bigcup_{i} \B_i \implies G^{ext} =    0  \textrm{ on }M\setminus \OO_{\rho'}.
\end{split}
\end{equation}

We now show that for the modified action form $\tilde\alpha=\alpha+dG^{ext}$ the series in \eqref{sum_path_plus} is convergent (in fact, it becomes a finite sum for every point).

Let $x$ be a point in $W^s_\Lambda$. Since $d(f^n(x),\Lambda)\to 0$ as $n\to\infty$, there exists $N$ depending on $x$ such that $f^N(x)\in \OO_\rho$, and so $\gamma^N_+\subseteq  \OO_\rho$.
Then
\begin{equation}\label{eqn:Gext_cancellation}
  \begin{split}
\int_{\gamma^N_+}\alpha+dG^{ext}=&\int_{\gamma^N_+}\alpha+dG^+\\
=&P^+_\alpha(f^N(x))-G^+(f^N(x))=0,
\end{split}
\end{equation}
and therefore, using \eqref{sum_path_plus} yields
\begin{equation}\label{eqn:P_plus_3}
P^+_{\tilde\alpha}(x)= \sum_{j=0}^{N-1}\eta^{-j-1}[P^f_{\tilde\alpha}\circ f^j\circ \Omega_+(x) -P^f_{\tilde\alpha}\circ f^j(x)].
\end{equation}
Thus
\begin{equation}\label{eqn:P_plus_4}
   P^+_{\tilde\alpha} = \sum_{j=0}^{\infty}\eta^{-j-1}[P^f_{\tilde\alpha}\circ f^j\circ \Omega_+ -P^f_{\tilde\alpha}\circ f^j],
\end{equation}
where for each $x$ the above sum is finite.

Similarly
\begin{equation}\label{eqn:P_plus_5}
   P^-_{\tilde\alpha} = \sum_{j=1}^{\infty}\eta^{j-1}[P^f_{\tilde\alpha}\circ f^{-j}\circ \Omega_- -P^f_{\tilde\alpha}\circ f^{-j}].
\end{equation}

By substituting \eqref{eqn:P_plus_4} and  \eqref{eqn:P_plus_5} in \eqref{eqn:primitive_SM_good_action}, we obtain the desired conclusion.
\end{proof}

\subsection{Construction of gauges yielding  divergence of the series for the primitives  of the wave maps  when there are escaping orbits
}\label{divergence}

The purpose of this section is to show that, if  $f_{\mid\Lambda}$ has orbits that escape to infinity\footnote{the orbit $f^n(y)$ escapes to infinity if
every compact $K \subset \Lambda$ contains only finitely many points of the orbit}, there is always a function $G$ (in fact, plenty of them) such that the series \eqref{sum_path_plus}, which give the primitive $P_\alpha^+$ of the wave map $\Omega^+$, corresponding to the modified $1$-form $\alpha+dG$,
\begin{equation}\label{eqn:series0}
\begin{split}
&\sum_{j\ge 0}\eta^{-j-1}[P_{\alpha+dG}\circ f^j(\Omega_+(x))-P_{\alpha+dG}\circ f^j(x)]
\end{split}
\end{equation}
is divergent for some $x$.

More concretely, we have the following:
\begin{lem}
Assume that for a given $\alpha$ the series \eqref{eqn:series0} with $G=0$ is convergent.
Assume also  that there exists a point
$y\in \Lambda$ such that its orbit $y_n=f^n(y)$ escapes to infinity as $n\to \infty$.

Then, there exists $G:M\to \R$, such that the series \eqref{eqn:series0} for $\alpha +dG$
is divergent at the point $y$.
\end{lem}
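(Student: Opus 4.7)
The plan is to exploit the gauge transformation formula \eqref{eqn:Pf_gauge} to reduce convergence of the series \eqref{eqn:series0} for $\alpha + dG$ to the convergence of a single scalar sequence, and then to engineer $G$ along the escaping orbit so that this sequence oscillates. I interpret ``divergent at the point $y$'' as divergent at a point $x \in W^s_y\setminus\{y\}$ with $\Omega_+(x)=y$, since $\Omega_+(y)=y$ would make the series trivially zero; such $x$ exists as soon as $d_s\ge 1$ (otherwise $W^s_\Lambda=\Lambda$ and the statement is vacuous).

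Writing $y_j = f^j(y)$, $x_j = f^j(x)$, and substituting $P^f_{\alpha+dG} = P^f_\alpha + G\circ f - \eta G$ into \eqref{eqn:series0}, and setting
\[
h_j := G(y_j) - G(x_j),
\]
the $j$-th summand becomes
\[
\eta^{-j-1}\bigl[P^f_\alpha(y_j) - P^f_\alpha(x_j)\bigr] \;+\; \bigl(\eta^{-j-1} h_{j+1} - \eta^{-j} h_j\bigr).
\]
The parenthetical piece telescopes, with partial sum equal to $\eta^{-N} h_N - h_0$. Since the series for $\alpha$ converges by assumption, the series for $\alpha + dG$ converges at $x$ if and only if $\eta^{-N} h_N$ converges. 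Thus the task reduces to producing $G$ for which $\eta^{-N}[G(y_N) - G(x_N)]$ fails to converge.

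To this end, I would construct $G$ as a disjoint union of bumps centered along the escaping orbit of $y$. Condition \eqref{U2} implies that $\Lambda$ is properly embedded in $M$, so escape of $\{y_n\}$ in $\Lambda$ is equivalent to the fact that $\{y_n\}$ has no accumulation point in $M$. I then extract a subsequence $n_k\uparrow\infty$ and radii $r_k>0$ inductively so that the open balls $B_k := B(y_{n_k}, r_k)$ are pairwise disjoint, each $r_k < \tfrac{1}{2}\, d(y_{n_k}, x_{n_k})$ so that $x_{n_k}\notin B_k$ (recall that $d(y_n,x_n)\le C\lambda_+^n$ is strictly positive since $x\ne y$), and no $B_k$ contains any other $y_m$ or $x_m$ with $m\ne n_k$. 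The last requirement can always be met by choosing $r_k$ smaller than the positive distance from $y_{n_k}$ to the finite set of already-fixed orbit points in the relevant bounded window. Using a standard smooth bump function, define $G$ to be supported in $\bigcup_k B_k$, identically equal to $\eta^{n_k}$ on $B(y_{n_k}, r_k/2)$, and smoothly interpolated to $0$ on $\partial B_k$; extend by zero.

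With this $G$, we have $h_{n_k} = \eta^{n_k}$ while $h_m=0$ for every $m$ outside the subsequence (since both $y_m$ and $x_m$ lie in the complement of $\mathrm{supp}\,G$). Hence $\eta^{-N} h_N$ equals $1$ along $N=n_k$ and $0$ otherwise, so it does not converge; by the reduction of the first paragraph, the series for $\alpha + dG$ diverges at $x$. The main obstacle, and really the only delicate point, is the inductive selection of $\{n_k\}$ and $\{r_k\}$ with the required disjointness; this is routine because escape of $y_n$ in $M$ makes each selection constraint an open, hence satisfiable, condition. No uniform bound on $G$ or $dG$ is needed, in accordance with Section~\ref{sec:unbounded_action} which allows action forms to be unbounded even when $\omega$ is bounded.
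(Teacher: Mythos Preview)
Your argument is correct and follows the same core strategy as the paper: exploit the escaping orbit of $y$ to place pairwise disjoint smooth bumps along it, forcing the gauge-dependent part of the series to diverge. The two executions differ only in packaging. You work directly with the telescoping identity for the series, reducing convergence to that of $\eta^{-N}\bigl(G(y_N)-G(x_N)\bigr)$, and then make this sequence oscillate between $0$ and $1$; the paper instead invokes Remark~\ref{rem:converegence_equivalent} to pass to the integrals $\eta^{-N}\int_{\gamma^N_+}(\alpha+dG)$, places bumps in tubular neighborhoods of the entire curves $\gamma^{k_n}_+$, and chooses bump values $\sim(\eta\beta)^{k_n}$ with $\beta>1$ so that the sequence diverges to $+\infty$ rather than merely oscillating. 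Your telescoping computation is slightly more transparent and avoids the detour through the integral formula; the paper's version makes clearer that divergence can be made as violent as one likes. One small point: your phrase ``the finite set of already-fixed orbit points in the relevant bounded window'' would be cleaner if you simply observe that, since $\{y_m\}$ (hence also $\{x_m\}$, as $d(x_m,y_m)\to 0$) has no accumulation point in $M$, the distance from $y_{n_k}$ to the full discrete set $\{y_m:m\ne n_k\}\cup\{x_m:m\ge 0\}$ is strictly positive, and any $r_k$ below that distance works --- no inductive bookkeeping is needed.
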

\begin{proof}

If the  orbit $y_n=f^n(y)$ escapes to infinity, then it has a subsequence $f^{k_n}(y)$ such that for some $\delta>0$ we have
\[
d(y_{k_n},y_{k_{m}})\ge \delta
\]
for all $n,m$ with $m \ne n$.

Take $x\in W^s_y$,
and denote  $x_n =f^n(x)$ and $y_n =f^n(\Omega_+(x))=f^n(y)$.
The points $x_n,y_n$ are the endpoints of a curve $\gamma^n_+(\cdot)=f^n(\gamma_+(\cdot;x, \Omega_+(x)))$ in $W^s_{y_n}$.
Assume that the sequence
$\eta^{-n} \int_{\gamma^n_+}\alpha$ is convergent, otherwise there is nothing to prove (see Remark \ref{rem:converegence_equivalent}).

For any point $z_n=\gamma^n_+(t)$ on this curve,
 we know by  Theorem \ref{localstable} part (II)-(iv) that there exists $\tilde C >0$ such that:
\[
d(z_{k_n},y_{k_n})
\le \tilde C   \lambda_+ ^{k_n} \to 0, \ n\to \infty .
\]
Since $d(y_{k_n},y_{k_{n'}})\ge \delta$ for all $n \ge 0$,  it follows  that, changing $\delta$ if necessary:
\[
d_H(\gamma^{k_n}_+,\gamma^{k_{n'}}_+)\ge \delta , \textrm{ for } n \in \N,\, n \ne n'
\]
where $d_H$ is the Hausdorff distance.

Therefore, we can choose small, open neighborhoods
$V_n$  of the curves $\gamma^{k_n}_+$ in $M$, such that $V_{k_n}\cap V_{k_{n'}}=\emptyset$, and  construct    smooth functions $G_n$ with support in $V_n$ such that:
\[
G_n(x_{k_n})- G_n(y_{k_n})=(\eta\beta)^n, \ n\ge 0,
\]
for some $\beta>1$.
We  write $G = \sum_n G_n$ (recall that the supports of $G_n$ are disjoint).
Using that $x_{k_n}, y_{k_n}$ are in the support of $G_{k_n}$ and not
in the support of any other, we have
\[
G (x_{k_n})- G (y_{k_n})= (\eta\beta)^{k_n} , \ n\ge 0.
\]

Then the series
\begin{equation}
\begin{split}
&\eta^{-k_n} \int_{\gamma^{k_n}_+}\left(\alpha+dG\right)\\
&=\eta^{-k_n}\int_{\gamma^{k_n}_+}\alpha+\eta^{-k_n} (G(y_{k_n})-G(x_{k_n}))\\
&=\eta^{-k_n}  \int_{\gamma^{k_n}_+}\alpha+\beta^{k_n}.
\end{split}
\end{equation}
Since $\beta>1$, the sequence  is divergent.

Going back to formula \eqref{sum_path_plus} and using that   \eqref{eqn:series0}  is convergent, we obtain that the series for $P_{\alpha +dG}$ is divergent.
\end{proof}

\subsection{Variational interpretation of the iterative formulas for primitive functions of scattering map}
\label{sec:variational}

In this section, we discuss the variational interpretation of
\eqref{sum_path_plus_minus}.
The material in this section will not be used in this paper.
The sole purpose of this section is to point out a possible bridge between variational and geometric approaches to heteroclinic jumps.

Let $T^*Q$ be a cotangent bundle of a   manifold -- the standard
example of a symplectic manifold  -- with the canonical
$1$-form defined in coordinates $(p,q)$ as $\alpha_0 = pdq$
\footnote{A geometrically natural  definition  of $\alpha_0$ is
standard, see, for example:
\cite[Proposition 3.2,11 p. 180]{AbrahamM78}.}.
Let $f$ be a mapping on $T^*Q$ that is homotopic to the identity,   exact conformally symplectic
($f^*\alpha-\eta\alpha=dP^f_\alpha$ for some function $P^f_\alpha$ on $T^*Q$), and satisfies a \emph{twist condition},
\footnote{
The twist condition  is clearly
non-generic -- it is not
verified by the identity--  but  it is verified by the
geodesic flow of a compact manifold at time $t > 0 $ for small enough $t$
\cite{Gole94}.}
meaning that if $f(p,q)=(p',q')$ then $(q,q')$ gives a system of coordinates on $Q\times Q$.
The primitive function $P^f_\alpha$ has the interpretation
of an action and that the orbits of $f$ are critical points
of the formal action

\def\SS{\mathscr{S}}
\begin{equation}\label{Aformal}
  \SS(x) = \sum_n \eta^{-n} P^f_\alpha(x_n),
\end{equation}
where $x =(x_n)_n=(f^n(x))_n$.
See \cite{Haro00} for the  symplectic case $\eta = 1$.

\begin{rem}
The variational principle in \eqref{Aformal},
often called \emph{discounted} variational principle,
appears naturally in finance. Each $x_n$ is a transaction at time $n$
and $P^f_\alpha(x_n)$ is the cost of the transaction in currency.
If there is constant inflation, to evaluate the cost
of a strategy, it is natural to add the costs at different times
by reducing them to a common time \cite{Bensoussan88}.
The conversion of the cost $P^f_\alpha(x_n)$ to currency
at $n = 0$ is $\eta^{-n} P^f_\alpha(x_n)$.

The variational principles \eqref{Aformal} also appear in control theory under the
name \emph{finite horizon} approximation \cite{KaliseKR17}.
\end{rem}

We can write the function $P^f_\alpha$ on the manifold  in
a coordinate patch as $S(q,q')$, so
that the conformally symplectic property
can be written as
\[
p' dq' = \eta p dq + dS(q,q') .
\]
This is equivalent to
\begin{equation} \label{motion}
\begin{split}
  & p' =  \partial_2 S(q, q') , \\
  &  p =  - \eta^{-1}  \partial_1 S(q,q') .
\end{split}
\end{equation}

A sequence of points $(p_n,q_n)_{n \in \Z}$
is an orbit of  \eqref{motion} is equivalent
to the sequence $\{q_n\}_{n \in \Z}$
being a critical point of the
formal action\footnote{We recall that the critical points of
a formal action $\SS(q)$ are obtained by setting to
zero  the derivatives with respect to all arguments $q_n$ (ignoring
all the terms in the sum which do not involve $q_n$).
In our case,
the condition of equilibrium
becomes $\forall n,\quad \eta^{-(n-1)} \partial_2 S(q_{n-1}, q_n)+\eta^{-n} \partial_1 S(q_n, q_{n+1}) = 0$,
which is equivalent to \eqref{motion}.
}
\[
\SS(q) = \sum_{n \in \Z}
\eta^{-n} S(q_n, q_{n+1}).
\]

Given any real valued functions $\phi_n$
we can consider instead of the formal variational principle
\eqref{Aformal}, the variational principle
\[
\SS_\phi (x) = \sum_n \eta^{-n} P^f_\alpha(x_n) + \phi_n
\]
Clearly, the critical points of $\SS$ and $\SS_\phi$
are the same.
By making choices on the function $\phi_n$,
we can ensure that, for some sequences, the functional $\SS_\phi$
is well defined. For example, if $y=(y_n)_n$ is an orbit
we can imagine that taking $\phi_n  = -\eta^{-n} P^f_\alpha(y_n)$,
\begin{equation}\label{Arenormalized}
\SS_\phi (x) = \sum_n \eta^{-n}(  P^f_\alpha(x_n) -  P^f_\alpha(y_n) )
\end{equation}
the functional $\SS_\phi$
(sometimes called \emph{renormalized action})
may be well defined as a true functional for orbits that are fast
asymptotic in the future and in the past to $y_n$.

Hence, taking the variational principle consisting of the
primitive of the scattering map and the primitive of the map
gives a variational principle for orbits of the map that include
homoclinic excursions.

The variational approach to homoclinic orbits has also been used
as a numerical tool for symplectic  systems
\cite{Tabacman95,MacKayMS89}.
Using that
\eqref{explicit_forward_path}, \eqref{explicit_backward_path}
can be computed by other methods and
considered as boundary terms, it seems that the method could
be adapted to conformally symplectic systems.

\subsection{Unbounded  action forms}
\label{sec:unbounded_action}
It is well know that a $(2n)$-dimensional, connected, Riemannian manifold  $M$ that is closed (i.e., boundaryless and compact), cannot have a symplectic form $\omega$ that is exact.
  Indeed, if $\omega = d\alpha$ then
    $\omega^n = d(\alpha \wedge \omega^{n-1})$, and Stokes' Theorem  implies
    \begin{equation}\label{identity}
    \int_M \omega^n =  \int_M d(\alpha \wedge \omega^{n-1})
    = \int_{\partial M} \alpha \wedge \omega^{n-1} = 0 ,
    \end{equation}
    which contradicts the fact that $\omega^n$ is a volume form on $M$.
Therefore, if such a  manifold  has an exact symplectic form, the manifold cannot be closed.

\medskip

Below we show that if the  manifold has an exact symplectic form $\omega=d\alpha$ and satisfies some additional conditions, then $\|\alpha\|$ must be unbounded. Moreover, we can provide some quantitative estimates on the growth of $\|\alpha\|$ along geodesic balls $B_R$.

\begin{rem}\label{ex:noncompact_bounded}
  A symplectic manifold with an exact symplectic form can be with boundary or non-compact, but does not need to be unbounded. Of course, such phenomenon can only happen for manifolds
  which do not satisfy {\bf (U1)} or {\bf (U1')}

For instance, $M$  can be a bounded cylinder (with or without boundary)
\[M=\{(I,\theta)\,|\, I \in  B^n_R,\, \theta\in \T^n\},\]
where $B^n_R$ is a ball in $\R^n$ (open or closed).
The standard symplectic form $\omega=dI\wedge d\theta$ is exact with action form $\alpha=I d\theta$.
Moreover, $\alpha=I d\theta$ is bounded on $M$.

\end{rem}

We recall that if the Riemannian metric  is complete, then every geodesic can be extended to a all times.
Fixing a point $o\in M$, the geodesic ball $B_R$ is the set of points   $x\in M$ for which $d(o, x)\le R$.
The distance $d(0,x)$ is a smooth function in $x$ except for the cut locus\footnote{The cut locus consists of points that are conjugate to $o$ and points that have multiple minimal geodesics connecting them to $o$.} of $o$.
\medskip

Denote by $\textrm{Vol}_{d}$ the Riemannian volume on a $d$ dimensional manifold.

The form $\omega$ is  said to be \emph{uniformly nondegenerate}, if there exists a constant $C>0$ such that
 \begin{equation}\label{eqn:uniform_nondegenerate}
  |\omega^n| \ge C \cdot d{\textrm{Vol}_{2n}}
  \end{equation}
 where $d{\textrm{Vol}_{2n}}$ is the Riemannian volume form
 and we recall that volumes can be compared.

  \begin{lem}  \label{lem:lowerbound}
    Assume that $M$ is a $2n$-dimensional, connected,  complete Riemannian manifold, and $\omega=d\alpha$ is an exact symplectic form on $\omega$.

Assume  that $\omega$ is bounded on $M$ and uniformly nondegenerate.
Let $B_R$ be a geodesic ball of radius $R>0$ in $M$ such that $B_R$ and  $\partial B_R$
are piecewise differentiable manifolds.

Then, there exists a constant $\bar C>0$, depending on $\omega$ but not on $\alpha$, such that
  \begin{equation}
    \label{eqn:lowerbounds}
\begin{split}
  & \int_{\partial B_R} |\alpha| \ge
  \bar{C}\cdot \textrm{Vol}_{2n}(B_R), \\
  & \sup_{x \in  \partial B_R}\|\alpha(x)\| \ge
   \bar{C} \cdot \textrm{Vol}_{2n}(B_R)  / \textrm{Vol}_{2n -1}(\partial B_R)  .
   \end{split}
  \end{equation}

\end{lem}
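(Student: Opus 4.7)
The plan is to exploit the classical volume identity that
$\omega^n = d(\alpha \wedge \omega^{n-1})$ (which uses $d\omega = 0$) and then use Stokes' theorem on the ball $B_R$, just as in the closed‑manifold identity \eqref{identity}, but now keeping track of the boundary integral instead of deriving a contradiction. Concretely, I would first write
\begin{equation*}
\int_{B_R} \omega^n \;=\; \int_{B_R} d(\alpha \wedge \omega^{n-1}) \;=\; \int_{\partial B_R} \alpha \wedge \omega^{n-1},
\end{equation*}
which is legitimate because $B_R$ is a piecewise differentiable manifold with piecewise differentiable boundary, so Stokes' theorem applies.

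Next I would bound the two sides. The left‑hand side is bounded from below by the uniform nondegeneracy assumption \eqref{eqn:uniform_nondegenerate}, namely $|\omega^n| \ge C\, d\mathrm{Vol}_{2n}$, giving
\begin{equation*}
\left|\int_{B_R} \omega^n\right| \;\ge\; C\, \mathrm{Vol}_{2n}(B_R).
\end{equation*}
The right‑hand side is estimated pointwise using that $\omega$ is bounded on $M$: $\|\alpha \wedge \omega^{n-1}\|(x) \le \|\alpha(x)\|\, \|\omega\|_{\C^0}^{n-1}$, so
\begin{equation*}
\left|\int_{\partial B_R} \alpha \wedge \omega^{n-1}\right| \;\le\; \|\omega\|_{\C^0}^{n-1} \int_{\partial B_R} \|\alpha\|\, d\mathrm{Vol}_{2n-1}.
\end{equation*}
Chaining these two inequalities yields the first bound in \eqref{eqn:lowerbounds} with $\bar C = C/\|\omega\|_{\C^0}^{n-1}$, which depends only on $\omega$. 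The second bound then follows by the trivial estimate $\int_{\partial B_R} \|\alpha\|\, d\mathrm{Vol}_{2n-1} \le \sup_{\partial B_R} \|\alpha\| \cdot \mathrm{Vol}_{2n-1}(\partial B_R)$.

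I do not expect any serious obstacles in this argument: the whole proof is essentially the Stokes calculation of \eqref{identity}, only kept non‑trivial on a domain with boundary. The one small point to check is that the comparison between the absolute value of the $2n$‑form $\omega^n$ and $d\mathrm{Vol}_{2n}$, and the pointwise comparison $|\alpha \wedge \omega^{n-1}|(x) \le \|\alpha(x)\|\,\|\omega(x)\|^{n-1}$ on $\partial B_R$, are made with respect to the same Riemannian norms, so the final constant $\bar C$ is indeed independent of the particular choice of action form $\alpha$ (it is purely a function of the bound $M_\omega$ and the uniform nondegeneracy constant $C$ of $\omega$). No hypothesis beyond completeness, boundedness of $\omega$, and uniform nondegeneracy is needed, which matches the stated hypotheses.
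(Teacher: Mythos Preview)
Your proposal is correct and follows essentially the same approach as the paper: apply Stokes' theorem to $\omega^n = d(\alpha \wedge \omega^{n-1})$ over $B_R$, bound the volume integral from below by uniform nondegeneracy, and bound the boundary integral from above using the $\C^0$ bound on $\omega$. The paper's proof is precisely this chain of inequalities written in a single display.
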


\begin{proof}
Using  the uniform non-degeneracy of $\omega$, \eqref{eqn:uniform_nondegenerate},
 the assumption that $\omega$ is bounded,
and  Stokes' Theorem, we have
  \[
  \begin{split}
  C \cdot\textrm{Vol}_{2n}(B_R) \le & \left|\int_{B_R} \omega^n \right|=\left| \int_{\partial B_R} \alpha \wedge \omega^{n-1} \right|\le C'  \int_{\partial B_R}
 \| \alpha\|\\
  \le& C'    \cdot \sup_{x \in \partial B_R} \| \alpha(x)\|\cdot\textrm{Vol}_{2n-1}(\partial B_R)
  \end{split}\]
  for some $C' >0$, depending on the norm of $\omega$.
\end{proof}

Note that  right hand sides of \eqref{eqn:lowerbounds} has a factor depending on $R$ that
depends only on
the Riemannian metric. The symplectic properties enter only as a constant.
Hence, we obtain that any $\alpha$ has to be unbounded using only properties of
the Riemannian metric.

\begin{ex} An application of the Lemma~\ref{lem:lowerbound}
is when $M  = \R^n \times  \T^n$ with the standard symplectic form $dI\wedge d\theta$, where $(I,\theta)\in \R^n\times\T^n$.
In such a case, $\textrm{Vol}_{2n}(B_R) \approx  C_1 R^{2n}$,
$\textrm{Vol}_{2n-1}(\partial B_R) \approx  C_2 R^{2n-1}$.
Hence we obtain
\begin{equation}\sup_{x \in \partial B_R}  \|\alpha(x)\|  >  C\cdot R   \textrm { for all $R$ large.}
\label{linearbound}
\end{equation}

We conclude that any action form in the manifold has to grow
linearly. The standard symplectic form and action form
saturate the bound and show that the result
cannot be improved.
\end{ex}

\begin{rem} \label{tautologicalform}
When $M  = T^* Q$ -- the symplectic manifold is the
cotangent bundle of a compact Riemannian manifold $Q$ --
we see that there is $C'>0$ such that $\sup_{x \in \partial B_R}  \|\alpha (x) \|  \approx  C'\cdot R   \textrm { for all $R$ large}$.
This shows that the inequality \eqref{eqn:lowerbounds} is sharp in this case.

Similarly, we can consider other  action forms on cotangent bundles
\begin{equation}\label{eqn:minimalcoupling}
\alpha =\alpha_0 + \pi^* A
\end{equation}
where $A$ is a closed $1$-form on $Q$,
and $\pi$ is the projection in the bundle $T^*Q$,
and $\alpha_0$ is the standard action form.

The bound~\eqref{eqn:lowerbounds}
also applies to this case (which appears in the study of magnetic fields).
\end{rem}

\section{Proof  of Theorem \ref{mainpre}}
\label{sec:proofs_presymplectic}

Part \textbf{(A)}  is automatic in the pre-symplectic case.

The proof of Theorem~\ref{mainpre} is basically walking through the proofs of
Theorem~\ref{thm:main1} and Theorem~\ref{thm:main2}.
The proofs of the vanishing lemmas Lemma~\ref{lem:vanishing} and Lemma~\ref{lem:vanishingmanifolds} do not require any change since the proofs are just iterating the definition, and neither closedness
nor nondegeneracy play a role.

In the case the conformal factor is not constant  \eqref{iterates} has to be adapted to:
\begin{equation}\label{newiterates}
\begin{split}
|\omega(x) (u,v)| &\le C\eta_-^{-n} \| \omega (f^n(x)\| \|Df^n(x) u\| \|Df^n(x) v\|, \ \text{for} \ n\ge 0\\
|\omega(x) (u,v)| &\le C\eta_+^{-n} \| \omega (f^n(x)\| \|Df^n(x) u\| \|Df^n(x) v\|, \ \text{for} \ n\le 0\\
\end{split}
\end{equation}
Therefore the same proof of the vanishing lemma \ref{lem:vanishingmanifolds} works in the presymplectic case, under the conditions
$|\lambda_+\mu_+\eta_-^{-1} |< 1$ for the stable case and $|\lambda_-\mu_-\eta_+|<1$ in the unstable one.

Observe that, when the presymplectic factor $\eta$ is constant,
the rate  conditions  \eqref{eqn:rates_conditions_pre} entering in Theorem~\ref{mainpre} are
implied by the hypothesis \eqref{eqn:conformal_rates}  in Theorem~\ref{thm:main1}.

On the other hand, the pairing rules \eqref{eqn:pairing_rules} may
fail to hold for presymplectic  NHIMs,
as in Example~\ref{intermediatenew}.

To obtain the  proof of part \textbf{(B)} of Theorem~\ref{mainpre}
we can apply the proof  of part \textbf{(B)} of  Theorem \ref{thm:main2} given in section \ref{sec:main_2_B_proof_v}.
These proofs use the vanishing lemma \ref{lem:vanishingmanifolds} and the fact that the form is closed.
Hence, go through without change.
The proofs in Section~\ref{sec:proof_main_2_b_L}, \ref{sec:iteration}, which do not use any geometry,
(but use different hyperbolicity rates) do not require any adaptation.

Analogously, the proofs of part \textbf{(C)} of Theorem~\ref{mainpre} are basically the same as the ones to prove part \textbf{(C)} of  Theorem \ref{thm:main2} given in section \ref{sec:proofpartC}.

Of course, part \textbf{(D)}  on the leaf dynamics in Theorem~\ref{mainpre} does not have an analogue in Theorem~\ref{thm:main1} and in Theorem~\ref{thm:main2} but it is an easy consequence  of the conformal dynamics and the scattering map
preserving the kernel of the presymplectic form.

\appendix

\section{Summary of   the theory on  properties of  NHIMs}\label{sec:appendix_NHIM}

In this appendix, we collect, without detailed proofs but with
references,  several results on the theory of NHIMs
paying special care  to the case of unbounded manifolds
and the needed explicit uniformity assumptions.

The theory of NHIMs is very rich and there are many results
we do not use in this paper (e.g existence of locally invariant foliations,
linearization, persistence, etc.) and, hence, we do not mention
them in this appendix. We will concentrate in the
properties of (un)stable  and strong (un)stable manifolds of NHIMs
including dynamical characterizations and regularity properties.

Note that, in the unbounded case that we are considering in this paper,
it is important to make assumptions  that make it explicit that
the properties of the manifold are uniform.

For us, the most important result of  the theory of NHIM is
the characterization of invariant objects and their regularity.

We consider the setting from Section \ref{sec:standing} without assuming a symplectic structure on the manifold and that the map is conformally symplectic.

\begin{thm}
  \label{localstable}
  Consider a  manifold $M$ satisfying the assumption \eqref{U1}, and
  $f:M\to M$ a $C^r$ diffeomorphism on $M$.

  Let $\Lambda \subset M$  invariant under $f$,
  satisfy Definition~\ref{def:nhim} for rates $\lambda_\pm, \mu_\pm$, which, moreover, satisfy condition \eqref{eqn:mubounds}.

  Assume, furthermore, that   the manifold $\Lambda$ and
  the stable and unstable bundles satisfy the uniformity assumption \eqref{U2}.

  Then, there exist (rather explicit)
  $\ell, \tilde \ell, m, (\tilde m, m) $ -- called regularities
  of invariant objects --
  depending only on $r$, the regularity of the map,
  and the hyperbolicity rates $\lambda_\pm, \mu_\pm$ such that:
\begin{itemize}
\item [(I)]
$\Lambda$ is a $\C^\ell$-manifold;
\item[(II)]
There exist $0<\tilde\rho <\rho$, $\tilde{C} ,\tilde{D} >0$, and a $\C^{\tilde \ell}$-manifold  $W^{s,\textrm{loc}}_\Lambda$ in $\OO_{\tilde\rho}(\Lambda)$
described by the following equivalent conditions:
  \begin{itemize}
\item[(i)]
      $y \in  W^{s,\textrm{loc}}_\Lambda$;
\item[(ii)]
      $f^n(y) \in \OO_{\tilde\rho}(\Lambda)$,
  i.e., $d(f^n(y), \Lambda) \le \tilde\rho$  for all $n \ge 0$;
\item[(iii)]
      $f^n(y) \in \OO_{\tilde\rho}(\Lambda)$,
  i.e., $d(f^n(y), \Lambda) \le \tilde\rho$  for all $n \ge 0$;
  and
  \[
   \lim_{n \to +\infty} d( f^n(y), \Lambda)  = 0;
  \]
\item[(iv)] $d(f^n(y), \Lambda) \le \tilde C  (\lambda_+)^n$   for all $n \ge 0$;

\item[(v)]  $d(f^n(y), \Lambda) \le \tilde D \left (\mu_-\right )^{-n}$   for all $n \ge 0$;
\item[(vi)]
      There exists a unique $x \in \Lambda$ such that
      \[
      d(f^n(x), f^n(y) ) \le \tilde C  (\lambda_+)^n
      \quad \textrm{ for all }n \ge 0;
      \]

      We denote such $x = \Omega_+(y)$.

    \item[(vii)] For
       $x = \Omega_+(y)  \in \Lambda$ we have.
      \[
      d(f^n(x), f^n(y) ) \le \tilde D \left ( \mu_- \right)^{-n}
      \quad \textrm{ for all }n \ge 0; \]

      (that is, if  the orbit of $y$ converges to
      the orbit of $x\in \Lambda$ at a certain rate, it converges
      to another faster rate).

  \end{itemize}

  \item[(III)]  Given $x \in \Lambda$, we denote for $0< \rho $ sufficiently
  small:

  \[\begin{split}
  W^{s,\rm{\textrm{loc}}}_x
  =&\{y\in\OO_{\rho}\,|\, d(f^n(y), f^n(x)) \le \tilde C  (\lambda_+)^n \textrm{ for all $n \ge 0$} \}\\
  =&\{y\in\OO_{\rho}\,|\, d(f^n(y), f^n(x)) \le \tilde D  \left ( \mu_- \right)^{-n} \textrm{ for all $n \ge 0$} \}.
  \end{split}
  \]

  We refer to $W^{s, \textrm{loc}}_x$ as the \emph{local strong
  stable manifolds}.

\item[(IV)]
  \begin{itemize}
    \item[ (i)] The manifold $W^{s,\textrm{loc}}_\Lambda$ is diffeomorphic
  to a neighborhood of the zero section in $E^s_\Lambda$.

  \item[(ii)]
  Moreover,  $W^{s,\rm{\textrm{loc}} }_x $ is
a $\C^m $ manifold, diffeomorphic to a  ball in  $E^s_x$ centered at~$0$
and tangent to $E^s_x$ at $0$.

\item[(iii)]
  As a consequence of II.(iv) and II.(v),
  $\{W^{s, \textrm{loc}}_x\}_{x \in \Lambda}$ is a $C^{\tilde m, m}$ foliation  of
  a neighborhood of $\Lambda$ in $W^{s, \textrm{loc}}_\Lambda$, in the sense of
  Definition~\ref{Ctildemm}.
\end{itemize}
\end{itemize}

There are  similar characterizations of
  $W^{u,\textrm{loc}}_\Lambda$, $W^{u,\rm{\textrm{loc}}}_x$,  involving negative times, which we leave to
the reader. They can be obtained by noting that these
unstable objects are the stable objects for the  inverse map $f^{-1}$.

The regularities $\ell, \tilde \ell, m, (\tilde m, m)$ can
be made as large as desired by making assumptions on $\lambda_\pm, \mu_\pm,r$.
Hence,  the assumptions ${\bf (H1), (H2), (H3), (H4) }$ are assumptions
  on the rates.
\end{thm}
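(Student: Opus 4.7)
The plan is to follow the standard Fenichel/HPS approach adapted to the unbounded setting, with uniformity provided by assumptions \eqref{U1} and \eqref{U2}. Since \eqref{U1} gives a uniform atlas of $\C^r$ coordinate patches and \eqref{U2} gives a uniform tubular neighborhood diffeomorphic to a uniform neighborhood of the zero section in $E^s\oplus E^u$, one can work in a global system of ``fibered coordinates'' $(x,s,u)$ with $x\in\Lambda$, $s\in E^s_x$, $u\in E^u_x$, and express $f$ as a perturbation of its linearization along $\Lambda$. All estimates below are understood to be uniform in these patches; without \eqref{U1}, \eqref{U2} one would have constants depending on the base point that could degenerate at infinity.

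For parts (I)--(II), the approach is the usual graph transform. Fix small $\tilde\rho>0$; define on the space of uniformly $\C^0$ sections $\sigma\colon\Lambda\to E^s$ of norm $\le\tilde\rho$ the graph transform $\mathscr{G}f$ sending $\sigma$ to the section whose graph is $f(\operatorname{graph}\sigma)\cap \OO_{\tilde\rho}$. The rate conditions \eqref{eqn:NHIM} together with \eqref{eqn:mubounds} give that $\mathscr{G}f$ is a uniform contraction in a suitable weighted $\C^0$-norm, with contraction rate controlled by $\lambda_+\mu_-<1$; its unique fixed point is (a chart representation of) $W^{s,\mathrm{loc}}_\Lambda$. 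The equivalence of the characterizations (i)--(v) is straightforward: (i)$\Rightarrow$(ii) because invariance plus the tubular neighborhood keep orbits uniformly close; (ii)$\Rightarrow$(iv) uses the rate-dependent a priori bound that any orbit which stays in $\OO_{\tilde\rho}$ and does not converge to $\Lambda$ must project, via the stable foliation, onto an orbit in $\Lambda$ and must decay at the rate $\lambda_+$ (this is the Hadamard--Perron trichotomy argument). The upgrade (iv)$\Rightarrow$(v) -- replacing the rate $\lambda_+$ by $\mu_-^{-1}$ -- is the familiar ``bootstrap'' phenomenon: once convergence holds at any rate slower than $\mu_-^{-1}$, the dynamical equation for the displacement $z_n=f^n(y)-f^n(x)$ together with the spectral gap $\lambda_+\mu_- <1$ forces convergence at the faster rate. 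The existence of the unique shadow $x=\Omega_+(y)$ in (vi)--(vii) is obtained by applying the same graph-transform argument in the space of pairs $(x,y)$ and invoking the implicit function theorem using the transversality of the stable foliation to $\Lambda$.

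For parts (III)--(IV), I would invoke the Fiber Contraction Theorem as in Appendix \ref{sec:fiber_contraction}. Fix $x\in\Lambda$; define on the space of uniformly $\C^0$ graphs $y\mapsto u_x(y)$ over $E^s_x$ near $0$ the strong graph transform induced by $f$. The contraction rate here is $\lambda_+/\mu_-^{-1}=\lambda_+\mu_-<1$, giving $W^{s,\mathrm{loc}}_x$ as a fixed point that is uniformly $\C^0$ in $x$. To get $\C^m$ regularity of each leaf, and $\C^{\tilde m,m}$ regularity of the foliation, one iterates the fiber contraction on jet bundles: at the $k$-th level the contraction rate of the fibered transform is $\lambda_+^k\mu_-$, so as long as $\lambda_+^k\mu_-<1$ (respectively the appropriate ratio for transverse regularity), the corresponding derivatives converge and are bounded. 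The explicit relations between the rates and $m,\tilde m$ come precisely from requiring such inequalities. The foliation statement (IV)(iii) then follows from combining leaf regularity with the transverse continuity of the fixed point, using the characterization in Definition \ref{Ctildemm}.

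The main obstacle, and the reason this appendix is not just a citation, is the uniform control in the unbounded case. Classical references such as \cite{Fenichel71,Fenichel74,HPS,Pesin,BatesLZ08} either assume compactness of $\Lambda$ or invoke Fenichel's \emph{uniformity lemma}, which itself uses compactness. Here the compactness is replaced by \eqref{U1}--\eqref{U2} plus the standing bounded-derivative assumption \eqref{U3}, which supply uniform Lipschitz/$\C^r$ norms of $f$ in the fibered coordinates. Once these uniform constants are in place, every step of the graph-transform and fiber-contraction argument carries through with constants independent of the base point, and the example of Figure \ref{fig:manifold_no_unif_tubular} becomes precisely the case that assumption \eqref{U2} excludes. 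The explicit dependence of $\ell,\tilde\ell,m,\tilde m$ on $r$ and the rates is then obtained by tracking, at each differentiation level, the ratio of rates that governs the corresponding fiber contraction, exactly as in \cite[Appendix]{DLS08} and Appendix \ref{sec:fiber_contraction}.
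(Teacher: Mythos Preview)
The paper does not actually prove Theorem~\ref{localstable}: it is placed in Appendix~\ref{sec:appendix_NHIM}, which explicitly ``collect[s], without detailed proofs but with references, several results on the theory of NHIMs.'' The paper simply points to the literature, singling out \cite[p.~33--38]{Pesin} as ``an efficient proof of the local stable/unstable manifolds \ldots based on constructing first the embeddings giving the local stable/unstable manifolds by studying the functional equations satisfied via a fixed point argument,'' and noting that the uniformity assumptions \eqref{U1}--\eqref{U2} replace compactness. So there is no ``paper's proof'' to compare against beyond these references and remarks.

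Your sketch is a reasonable outline of the standard Hadamard/graph-transform route, which is one of the classical alternatives to the Perron-type functional-equation approach that the paper cites from Pesin. Both lead to the same objects and regularities. A couple of points are slightly off in your rate bookkeeping: the $\C^0$ graph transform for $W^{s,\mathrm{loc}}_\Lambda$ contracts at rate governed by $\lambda_+$ (not $\lambda_+\mu_-$, which is the relevant ratio for the $\C^1$ level and for the fiber contraction giving the strong-stable foliation); and the jet-level contraction rates are not simply $\lambda_+^k\mu_-$ --- the correct expressions involve ratios of the form $\lambda_+\mu_\pm^k$ or $\lambda_+/\mu_-^{-k}$ depending on whether one is tracking leaf regularity or transverse regularity (compare Example~\ref{regularitylimited} and Remark~\ref{alternative_regularity}). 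These are cosmetic in a sketch, but would matter if you were actually deriving the explicit $\ell,\tilde\ell,m,\tilde m$. Your emphasis on \eqref{U1}--\eqref{U2} as the substitute for compactness, and on \eqref{U2} specifically as what rules out the pathology of Figure~\ref{fig:manifold_no_unif_tubular}, matches exactly what the paper stresses in its commentary following the theorem.
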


\begin{rem}
In some treatments, the statements of Theorem~\ref{localstable} are given with the rates in (II)(iv), (II)(v), (II)(vi), (II)(vii),
and (III) being $\lambda_++\eps$, $\mu_-+\eps$ rather than $\lambda_+$, $\mu_- $, respectively, where $\eps>0$ can be chosen arbitrarily, and
$\tilde\rho$ depends on $\eps$. If we used such a statement, Lemma~\ref{lem:globalrates} would imply the statement of Theorem \ref{localstable},
that is, we can get rid of the $\eps$ terms by choosing the constants $\tilde C$, $\tilde D$ a little larger.
\end{rem}

\begin{rem} \label{subtleties}
For the sake of readability, we have decided
to consider only $\C^r$ regularity for integer $r$.
In many settings, $\C^r$ is also defined for non-integer $r$
with the fractional part interpreted as H\"older regularity.
The definition of H\"older regularity on manifolds is delicate
since it involves comparing geometric objects at separate points.
Even if the notion of H\"older function is non-controversial,
the notion of H\"older distance or H\"older norm  (needed
to work out proofs) is cumbersome.
Using fractional regularities is needed to obtain sharp regularity results.

Note that the mapping $(f,g) \rightarrow f\circ g$
is not continuous from $C^0 \times C^0 \rightarrow C^0$
unless $f$ is uniformly continuous.
If one does not use fractional regularities for $f$, the only way to obtain
uniform continuity for $f$ is to assume $f \in C^1$. This may lead
to extra losses of regularity in the conclusions.  One possibility used in several references
is to include the uniform continuity of the highest derivative
in the definition of $C^r$ but note that, when the domain is unbounded,
the uniform continuity is not preserved under uniform limits, so
that the space thus defined is not a Banach space.
\end{rem}

The way that Theorem~\ref{localstable} is usually proved is by
representing the  objects of interest  using functions
and solving functional equations that express invariance.
Many of these functional equations involve the composition operator
whose properties involve subtleties related to uniformity.

Theorem~\ref{localstable}  gives several equivalent  characterizations of the
local stable manifolds of $\Lambda$. The weaker ones are boundedness, others
are just convergence and  convergence with fast rates.
The fact that these characterizations are equivalent
is rather remarkable. Describing homoclinic excursions
by intersections of manifolds, allows to take advantage of the
regularity of the manifolds and their geometric properties to
compute homoclinic excursions.

For a point $y$ to be in $W^{s,\textrm{loc}}_x$ it is important
that the convergence of its orbit to the orbit of $x$
happens with a rate $\lambda_+$ faster  than the rate of convergence
$\mu_+$ of points in $\Lambda$ to the orbits of $x$.
It is not a characterization by topological properties
such as convergence of orbits.

The fact that if we fix $y \in W^{s,\textrm{loc}}_\Lambda$, $x \in \Lambda$, is
determined uniquely, as claimed in (II.vi) of
Theorem~\ref{localstable}, is crucial for this paper, since
the projection from $y$ to $x$ -- defined by $\Omega_+$ --
is an important ingredient.
This property requires assumption {\bf (U2)}.
\cite[Example 2.9]{Eldering12}
(See Figure~\ref{fig:manifold_no_unif_tubular})
provides an example of
NHIM for which  $W^s_{x} =  W^s_{x'}$ for
some $x\ne x'$.  Therefore,  the decomposition of $W^s_\Lambda$
into strong stable leaves $W^{s,loc}_x$  in \eqref{decomposition_strong}
is not a disjoint union  \footnote{Since the relation given by
$y \approx  \tilde y \iff d(f^n(y), f^n(\tilde y))
\le C_{y,\tilde y}\lambda_+^n, n \ge 0$ is an equivalence relation, the equivalence classes under
this relation
are leaves of a local foliation. Nevertheless, in  \cite[Example 2.9]{Eldering12}
a leaf can contain two
points in $\Lambda$, so that the leaves cannot
be labelled by their point of intersection
with $\Lambda$. For our purposes, this prevents us from
defining   $\Omega_+$. }.

To understand the phenomenon of non-unique projection $\Omega_+$
mentioned above, the following remark may be useful.

\begin{rem}
In the study of NHIM, it is natural to consider two distances on $\Lambda$:
$d_\Lambda(x,x')$ is the shortest length of paths in $\Lambda$ joining $x,x'$,
and $d_M(x,x') $ is the shortest length of paths in $M$.
Clearly, $d_M(x,x') \le d_\Lambda(x,x')$.
The distance that enters in the definition of $W^s_x$ is $d_M$.
It is easy to see that for $\eps > 0 $, $x,x'\in\Lambda$,
$d_\Lambda( f^n(x), f^n(x') ) \le C (\lambda_+ -\eps)^n, n >0$
implies $x = x'$.
On the other, hand, if $\Lambda$ folds into itself (as in \cite[Example 2.9]{Eldering12}) it
is possible that  for two different  points $x,x' \in \Lambda$,
$d_M( f^n(x), f^n(x')) \le C \lambda_+^n $.
\end{rem}

\begin{rem}
Note that the characterization $(IV)$ of local
stable manifolds and locally strong stable manifolds
involves the choice of a sufficiently small $\rho$.
Clearly, using the characterization by rates,
if we choose $\rho_1 < \rho_2$,
the set of manifolds corresponding to $\rho_1$ will
be contained in those corresponding to $\rho_2$.
\end{rem}

\begin{rem}
Note that the regularity of the manifolds $\Lambda$, $W^{u,\mathrm{loc}}_\Lambda$, $W^{s,\mathrm{loc}}_\Lambda$, is limited not just   by the regularity of the map $f$.
Next Example~\ref{regularitylimited} shows that the regularity of these objects also depends on relations between the rates.
It  shows that, even for analytic (indeed polynomial)
maps, the NHIM could be only finitely differentiable.
The differentiability is an expression in terms of the
hyperbolic rates.
\end{rem}

\begin{ex}\label{regularitylimited}
  Consider the map $f: \torus^d \times \real^2 \rightarrow  \torus^d \times \real^2$
  given by
  \begin{equation} \label{limited_map}
  f(\theta, s, u ) =
  (A \theta, \lambda_+s + a_s(\theta), (1 /\lambda_-)  u  + a_u(\theta))
  \end{equation}
  where
  $a_s, a_u: \torus^d \rightarrow \real$ are continuous functions
  (the concrete example is a trigonometric polynomial),
  $A \in SL(d, \Z)$ has spectrum contained in $1/\mu_-, \mu_+$.

  To make the example easier to analyze, we will assume that
  the leading modulus eigenvalues are simple and irrational.
  Hence,  $\lambda_\pm, \mu_\pm$ are real  numbers
    as in Figure~\ref{fig:rates}
  \[
  \lambda_+ < 1/\mu_- <  1 < \mu_+ <  1/\lambda_- .
  \]

  It is standard  that $A$ can be interpreted either as a diffeomorphism
  of $\torus^d$ (taking the action mod $1$)
  or as a linear  map  on $\integer^d
  $ (acting on the Fourier
  coefficients) \cite{KatokH95}.

  We search for an invariant set  of \eqref{limited_map}
of   the form or a graph
  \[
  \Lambda = \{ (\theta, b_s (\theta), b_u (\theta)) \,  | \theta \in \torus^d\} .
  \]

  The image of a point on $\Lambda$  under $f$ is
  \[
  ( A(\theta), \lambda_+  b_s (\theta) + a_s(\theta),
  (1/\lambda_-)   b_u(\theta) + a_u(\theta) ) .
  \]

  This image is in $\Lambda$  if
  and only if
  \[
  \begin{split}
    &  b_s(A\theta) =   \lambda_+  b_s (\theta) + a_s(\theta) ,\\
    &  b_u(A\theta) =   (1/\lambda_-)  b_u (\theta) + a_u(\theta) . \\
  \end{split}
  \]
  The above equations for $b_s, b_u$ can be rearranged as
  \begin{equation} \label{fixed_point}
  \begin{split}
    &  b_s(\theta) =   \lambda_+  b_s (A^{-1} \theta) + a_s(A^{-1} \theta) , \\
    &  b_u (\theta)  =  \lambda_- b_u(A\theta)  - \lambda_- a_u(\theta) . \\
  \end{split}
  \end{equation}

  The equations \eqref{fixed_point} can be thought of
  as fixed point equations for an operator given
  by the right hand side.
  They admit an unique  bounded
  solution obtained by iteration of the operator given
  by the right hand side.

 Analyzing these solutions reveals that, in many cases, they possess only a finite number of continuous derivatives.
  We will see that the analysis is very similar
  to the analysis of the classical Weierstrass example
  \footnote{
  \cite{Hardy16} considers the harder problem of no
  derivative of Weierstrass function at any point. }.

  A version of the argument
  close to the one here appears in \cite[Section 6.2]{Llave92}.
Similar arguments appears often in
  hyperbolic systems.

  We give the explicit formulas  only for the stable case. The
  unstable one is very similar.

  We observe that if \eqref{fixed_point} is to
  hold, substituting the right-hand side of \eqref{fixed_point}
  repeatedly, we obtain that for any finite $N$ we have:

  \[
  \begin{split}
  b_s(\theta) &= a_s(A^{-1} \theta)  + \lambda_+ a_s(A^{-2} \theta)
    + \lambda_+^2 a_s(A^{-3} \theta) + \cdots
    + \lambda_+^N a_s( A^{-(N+1)} \theta) \\
    &+ \lambda_+^{N+1} b_s(A^{-(N+1)}\theta)
    \end{split}
    \]

    If $b_s$ were bounded (or in any $L^p(\torus^d, \R)$ )
    the last term in the above formula would tend to zero.
    Hence the only possible bounded $b_s$  solving  \eqref{fixed_point} is
   \begin{equation}\label{solution}
  b_s(\theta)  = \sum_{j = 0}^\infty  (\lambda_+)^j  a_s( A^{-j -1} \theta )
   \end{equation}

   The series in \eqref{solution} is
   uniformly convergent because the general term is
   bounded by a geometric series.
   \[
   \| (\lambda_+)^j  a_s \circ  A^{-j -1}  \|_{\C^0}
   \le   (\lambda_+)^j  \| a_s\|_{\C^0}
   \]
   Hence, \eqref{solution} defines a continuous function on the torus
   and we can compute its Fourier coefficients term by term.

   Now we analyze \eqref{solution} to show that it cannot be very differentiable.

The chain rule gives
$ D^m  (a_s \circ A^{-j-1})  =(D^m  a_s ) \circ  A^{-j -1}  (A^{-j -1} )^{\otimes m }$ and, if $Av = \mu v$, we have:
\[
( v \cdot \partial)^m (a_s \circ A^{-j-1} )
=(  ( v \cdot \partial)^m  a_s ) \circ  A^{-j -1} \mu^m|v|^m.
\]
So, for high enough $m$,
the derivation term by term of
\eqref{solution} becomes problematic in general.

To show that indeed
the sum \eqref{solution} has only a limited number of derivatives,
we recall
that  for a  $\C^\ell $ function $b$ we have
$ | \hat b_k| \le C |k|^{- \ell}$, where $\hat b_k $ are its Fourier coefficients.

If we take $a_s$ in \eqref{solution} to be
$a_s(\theta)= \cos( 2 \pi k_0\cdot \theta)$, then
$a_s(A^{j-1}\theta)= \cos( 2 \pi A^{j-1}k_0\cdot \theta)$

We have that if $\tilde k  = (  A^{-(j+1)})^T k_0  $,
there is only one term in \eqref{solution}
with $\tilde k $ index.

Therefore, $ |\widehat{ (b_s)}_{\tilde k }| = \frac{1}{2} \lambda_+^j  $.
Since $|\tilde k |  \le  C  \mu_-^{j+1}$,
we see that it is impossible to have an inequality
of the form $| \widehat{ (b_s)}_k | \le C |k|^{-\ell} $ if
$\ell  $ is large enough that $\lambda_+ \mu_-^\ell  > 1$
and therefore, the  $b_s$ corresponding to
$a_s$ as before
is not $\C^{\ell}$.

A similar restriction happens for the unstable part.
\end{ex}

\begin{rem}
  The Example~\ref{regularitylimited}
  gives an idea of what are the optimal regularities.

  The final optimal regularities depend however
  upon subtleties such as those in Remark~\ref{subtleties}.

In the compact case, this  example gives the limit of the regularity
that can be obtained using the rates as input.
There are proofs that reach this limit.

The example  can be modified to yield restrictions on regularity even if the map $f$ is
furthermore assumed to be symplectic (take $d$ even,   $A$  symplectic  on $\torus^d$
and $\lambda_+/\lambda_-  = 1$).

The argument above can also be used for fractional regularities
and also to conclude  that this lack of differentiability
is generic (indeed, when $\lambda_+ \mu_-^\ell > 1$,
 $b_s$ is not  $\C^\ell$ for all trigonometric polynomials
except in a linear space of infinite codimension).

One interesting problem is to study deeper geometric properties
(fractal dimension, directional derivatives of these examples).

\end{rem}

\medskip

Once  we have the local stable and unstable manifolds,
we define the global stable and unstable manifolds.
\[
\begin{split}
  &W^s_\Lambda = \bigcup_{n \ge 0}  f^{-n}(W^{s,\textrm{loc}}_\Lambda);  \qquad
  W^u_\Lambda = \bigcup_{n \ge 0}  f^{n}(W^{u,\textrm{loc}}_\Lambda);  \qquad \\
  &W^s_x  = \bigcup_{n \ge 0}  f^{-n}(W^{s,\rm{\textrm{loc}}}_{f^n(x)});  \qquad
  W^u_x  = \bigcup_{n \ge 0}  f^{n}(W^{u,\rm{\textrm{loc}}}_{f^{-n}(x)}).
\end{split}
\]

This follows from the observation
that for any point $y \in W^s_x$, there exist $N$ such
that
$f^N(y) \in W^{s, \textrm{loc}}_{f^N (x)}$.
As a consequence, we obtain the point II.(v) in
Theorem~\ref{localstable}, giving a characterization
of the global stable manifold by rates of convergence.

Since $W^{s, \textrm{loc}}_{f^N(x)}$
is simply connected, so is $f^{-N}(W^{s, \textrm{loc}}_{f^N(x)} )$
and all of these sets overlap in an open set.

\[
\begin{split}
  &W^s_\Lambda = \bigcup_{x\in\Lambda}   W^{s}_x,  \qquad
  W^u_\Lambda = \bigcup_{x\in\Lambda}   W^{u}_x, \qquad \\
  &W^s_x\cap W^s_{x'}=\emptyset ,  \qquad
  W^u_x\cap W^u_{x'}=\emptyset , \textrm{ for } x\neq x'.
\end{split}
\]

The above can be described as saying that the
decomposition above is a foliation of $W^s_\Lambda$
with leaves $W^s_x$. Note that the leaves are in general
more regular than $W^s_\Lambda$.
The regularity of $W^s_\Lambda$ is determined both
by the regularity of the leaves and the way that they fit together.

There are many proofs of Theorem~\ref{localstable}, some of them
have more assumptions. For the purposes of this paper,
an efficient proof of the local stable/unstable manifolds
is in \cite[p. 33--38]{Pesin}. This proof is based on
constructing first the embeddings giving the local stable/unstable manifolds
by studying the functional equations satisfied via a fixed
point argument.
Note that this proof requires uniformity assumptions
for the map or the derivative in a neighborhood of $\Lambda$.

\begin{rem}
Large parts of Theorem~\ref{localstable} require only \eqref{U1}.

Nevertheless, \cite[Example~3.8]{Eldering12}
(Illustrated here in Figure~\ref{fig:manifold_no_unif_tubular} )
  shows  that to get
  $(II).(vi)$,$(II).(vii)$, $(III)$ and all
  the other properties of $W^{s,\textrm{loc}}_x$ in Theorem~\ref{localstable}
  one needs
    the uniformity assumption \eqref{U2}. For the purposes of this paper,
    the fact that $W^s_\Lambda$ is foliated by $W^s_x$ is crucial
    since it is what we use  to define $\Omega_+$ and the scattering map.

\end{rem}

\begin{rem}\label{alternative_regularity}
  The theory of regularity based on hyperbolicity rates
  as in Theorem~\ref{localstable} is
  not the only possible way to establish regularity of
  invariant objects.

  The paper \cite{Fenichel74}  establishes regularity
  based on the study of the numbers $\alpha$ for which
  \begin{equation} \label{fenichelconditions}
    \sup_x \|Df(x)_{\mid E^s}\| \|Df^{-1}(x)_{\mid T_x \Lambda}\|^\alpha  < 1.
    \end{equation}
  which is sharper than taking the supremum on all factors.
  as is done in \cite{HPS}.

  Conditions similar to \eqref{fenichelconditions}
  are used in the invariant cone approach to regularity
  of invariant manifolds \cite{CapinskiZ15}.

  For  the purposes of this paper, the use of rates is more natural
  because rates enter in the formulation of vanishing lemmas.
  We have, however, formulated the standing assumptions
  {\bf (H1)-(H4)} and their variants as regularity assumptions
  that can be verified in concrete examples either using rates
  or \eqref{fenichelconditions}  or any other method.
  \end{rem}

Boundedness of symplectic form indeed plays a role in the main results of
this paper, starting with the vanishing lemmas (Section~\ref{sec:vanishing_lemmas}).
When the symplectic form is unbounded some of the main results do not hold (see Example~\ref{ex:unbounded_symplectic_form}).
On the other hand,
in Section~\ref{sec:unbounded_forms} we give some results
that show that the boundedness assumption on the symplectic form
can be weakened at the price of other assumptions on rates. This seems
like a possible line of research that we hope to
pursue in the future.

\section{Some basic results on hyperbolicity rates}\label{sec:NHIM_results}
In this appendix, we collect and prove  some basic results on
hyperbolicity rates.

\subsection{Relations between forward and backwards uniform rates}

\begin{lem}\label{lem:rates_inverses}
Let $E$ be an invariant vector bundle over some space $X$. The statement

\[
\forall\, x\in X,\, v\in E_x,\,\|Df^n(x)v\|\leq C\lambda^n \|v\|\textrm{ for all $n\geq 0$}
\]
is equivalent to the statement
\[
\forall\, y\in X,\,  w\in E_y,\, \|Df^{-n}(y)w\|\ge \tilde{C}\lambda^{-n} \|w\|\textrm{ for all $n\geq 0$}.
\]
\end{lem}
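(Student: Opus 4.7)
The plan is to prove both implications by a symmetric change of variables: using the invariance of the bundle $E$ under $Df$, any forward orbit of a vector $v\in E_x$ can be read as a backward orbit of the vector $w = Df^n(x)v \in E_{f^n(x)}$, and vice versa. With this observation the two inequalities are literally the same statement with the roles of the base point and the vector swapped.

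More concretely, to prove ``forward $\Rightarrow$ backward'', I would fix $y \in X$, $w \in E_y$, and $n \ge 0$, set $x := f^{-n}(y)$ and $v := Df^{-n}(y)w$. Invariance of $E$ gives $v \in E_x$, and the chain rule together with $f^n \circ f^{-n} = \mathrm{Id}$ yields $Df^n(x)v = w$. Plugging this into the forward bound produces
\[
\|w\| = \|Df^n(x)v\| \le C\lambda^n \|v\| = C\lambda^n \|Df^{-n}(y)w\|,
\]
so the backward inequality holds with $\tilde C = C^{-1}$. The converse direction ``backward $\Rightarrow$ forward'' is obtained symmetrically: given $x,v,n$, set $y := f^n(x)$, $w := Df^n(x)v$, note $Df^{-n}(y)w = v$, and rearrange the backward bound into the forward one with $C = \tilde C^{-1}$.

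There is no real obstacle here; the only thing to double-check is that the quantifiers ``for all $x \in X$, $v \in E_x$'' on one side correspond, after the substitution, exactly to the quantifiers ``for all $y \in X$, $w \in E_y$'' on the other side. This uses that $f$ is a diffeomorphism of $X$ (so $x \mapsto f^n(x)$ is a bijection $X \to X$) and that $Df^n(x):E_x \to E_{f^n(x)}$ is a linear isomorphism, so $v$ ranges over all of $E_x$ iff $w = Df^n(x)v$ ranges over all of $E_{f^n(x)}$. Both facts are part of the standing setup (invertibility of $f$ and invariance of $E$), so the argument is complete in a few lines.
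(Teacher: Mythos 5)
Your proof is correct and matches the paper's argument: both use the substitution $x = f^{-n}(y)$, $v = Df^{-n}(y)w$ (equivalently $y = f^n(x)$, $w = Df^n(x)v$) to convert the forward contraction into a backward expansion with $\tilde C = C^{-1}$. The paper's one-line proof leaves the converse and the quantifier-matching implicit, which you spell out, but there is no substantive difference.
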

\begin{proof}
Given $y\in X$ and $w\in E_y$, by setting $y=f^n(x)$, $w=Df^n(x)v$, we have,
\[
Df^n(x)v=w \Leftrightarrow Df^{-n}(y)w =v
\]
and therefore
\[
\|w\|=\|Df^n(x)v\|\leq C\lambda^n \|v\|= C\lambda^n \|Df^{-n}(y)w\|
\]
which gives the result with $\tilde C=\frac{1}{C}$.
\end{proof}

Lemma \ref{lem:rates_inverses} illustrates one of the advantages of the convention of denoting the rates as in \eqref{eq:rates0} and \eqref{eqn:NHIM}, namely,   if we change $f$ to $f^{-1}$, it suffices to change $\mu_+$ to $\mu_-$ and $\lambda_+$ to $\lambda_-$.

\begin{lem}\label{rem:ratesplusminus}
For the rates \eqref{eq:rates0}  satisfying \eqref{eqn:NHIM} we always have:
\begin{equation}\label{eq:rem_ratesplusminus}
  \lambda_+ <\frac{1}{\lambda_-} \textrm { and }
\mu_+ \ge  \frac{1}{\mu_-}.
\end{equation}
\end{lem}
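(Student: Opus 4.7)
The first inequality $\lambda_+ < 1/\lambda_-$ is essentially free: by \eqref{eq:rates0} we have both $\lambda_+\in(0,1)$ and $\lambda_-\in(0,1)$, so $\lambda_+ < 1 < 1/\lambda_-$. No use of the dynamics is required; only the standing normalization on the contraction rates is used.

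The second inequality $\mu_+\mu_- \ge 1$ is the substantive content. My plan is to apply both the forward and backward bounds in \eqref{eqn:NHIM} to the same tangent vector. Fix $x\in\Lambda$, $v\in T_x\Lambda$ with $v\ne 0$, and set $w_n = Df^n(x)v \in T_{f^n(x)}\Lambda$ for $n\ge 0$ (using the invariance of $T\Lambda$). Applying the backward bound of \eqref{eqn:NHIM} at the base point $f^n(x)$ to the vector $w_n$ gives
\[
\|v\| \;=\; \|Df^{-n}(f^n(x))w_n\| \;\le\; D_-\mu_-^n \|w_n\|,
\]
and then the forward bound at $x$ gives $\|w_n\| \le D_+\mu_+^n\|v\|$. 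Combining, $\|v\| \le D_+D_-(\mu_+\mu_-)^n\|v\|$ for every $n\ge 0$. Dividing by $\|v\|>0$ yields $1 \le D_+D_-(\mu_+\mu_-)^n$ for all $n\ge 0$; if $\mu_+\mu_-<1$, the right hand side tends to $0$, a contradiction. Hence $\mu_+\mu_- \ge 1$, equivalently $\mu_+ \ge 1/\mu_-$.

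There is no real obstacle here: the argument is an immediate telescoping of the two one-sided bounds along the orbit of a single point and does not require any symplectic or geometric structure, only the invariance of $T\Lambda$ under $Df$ (built into Definition~\ref{def:nhim}) and the two-sided rate estimates \eqref{eqn:NHIM} on $T\Lambda$. The only place one must be slightly careful is in remembering that the constants $D_\pm$ are uniform in $x$, so the inequality $1\le D_+D_-(\mu_+\mu_-)^n$ holds for every $n$ with the same constants, which is what allows the limit $n\to\infty$ to deliver the strict conclusion. Note that this is exactly what underlies the analogous statement \eqref{eq:ratesplusminus} for the optimal rates stated in Section~\ref{sec:optimalrates}.
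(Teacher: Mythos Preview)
Your proof is correct. For the inequality $\mu_+\mu_-\ge 1$ you do essentially what the paper does: the paper phrases the step ``forward upper bound $\Rightarrow$ backward lower bound'' by invoking Lemma~\ref{lem:rates_inverses}, then confronts that lower bound with the backward upper bound from \eqref{eqn:NHIM}; you unwind this directly by evaluating both bounds along the orbit of a single vector and letting $n\to\infty$. The underlying computation is the same.

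For $\lambda_+<1/\lambda_-$ you take a shorter route than the paper. You simply use that \eqref{eq:rates0} already stipulates $0<\lambda_\pm<1$, giving $\lambda_+<1<1/\lambda_-$ with no further work. The paper instead deduces it from the just-proved $\mu_+\mu_-\ge 1$ together with the gap conditions $\lambda_+\mu_-<1$ and $\lambda_-\mu_+<1$ in \eqref{eq:rates0}, obtaining the chain $\lambda_+<1/\mu_-\le\mu_+<1/\lambda_-$. Your argument is more elementary; the paper's has the mild bonus of also exhibiting the ordering $\lambda_+<1/\mu_-\le\mu_+<1/\lambda_-$ (cf.\ \eqref{eqn:lambda_mu_gap}, \eqref{eqn:mu_gap}).
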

\begin{proof}
The reason for the second inequality is that if
for   $n\geq 0$ we have  for all $x\in \Lambda, v \in T_x\Lambda$:
\[
\| Df^n(x)(v)\|  \leq D_+\mu_+^n\|v\|
\]
then, by Lemma \ref{lem:rates_inverses}, we also have for  $y\in \Lambda, w \in T_y\Lambda$
\[
\| Df^{-n}(y)(w)\| \geq D_+^{-1}\mu_+^{-n}\|w\|.
\]
Thus, using \eqref{eqn:NHIM}, $\mu_+^{-1}\le \mu_- $.

The inequality $\lambda_+ < \lambda_- ^{-1}$ follows from the previous result about $\mu_+$ and $\mu_-$ and  the normal hyperbolicity
assumptions \eqref{eq:rates0}.
\end{proof}

With the above considerations,  we can also write as
a characterization of the tangent space to the NHIM.
\[
  v \in T_x \Lambda \iff \tilde D_-  \mu_-^{-n} \|v\|
  \le \| Df^n(x)v\| \le \tilde D_+  \mu_+ ^n \|v\|, \quad \forall n \ge 0.
  \]

  \subsection{Hyperbolicity rates in the stable manifold}
  \label{sec:globalrates}

  In this section we study the hyperbolicity rates for
  tangent vectors to the stable manifold. The intuition is that
  for orbits that converge to the manifold $\Lambda$,
  most of the factors are derivatives near the manifold.
  Of course, the convergence may be slow, therefore, one needs
  some precisions on the statements.

\begin{lem}\label{lem:globalrates}
Choosing   constants $C_+$, $D_+>0$
larger than  those in the manifold $\Lambda$,  for all $y \in W^{s,\mathrm{loc}}_{x}\cap\OO_\rho$ (resp., $y\in \cap W^{s,\mathrm{loc}}_{\Lambda}\cap\OO_\rho$) we have:
\begin{equation}\label{eqn:ratesinmanifolds}
\begin{split}
 v\in T_yW^{s,\mathrm{loc}}_{x} \Leftrightarrow&\| Df^n(y)(v)\|\leq  C_+\lambda_+^n\|v\| \textrm{ for all } n\geq N,\\
v\in T_yW^{s,\mathrm{loc}}_{\Lambda}\Leftrightarrow&\| Df^n(y)(v)\|\leq  D_+\mu_+^n\|v\| \textrm{ for all } n\geq N.
\end{split}
\end{equation}
An analogous property holds for points in $W^{u,\mathrm{loc}}_\Lambda\cap \OO_\rho$.
\end{lem}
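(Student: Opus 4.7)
The plan is to obtain the rate estimates in $W^{s,\mathrm{loc}}_\Lambda$ by transferring the rates available on $\Lambda$ itself, using that orbits in $W^{s,\mathrm{loc}}_\Lambda$ converge exponentially fast to $\Lambda$ by (II)(iv) of Theorem~\ref{localstable}, and that the derivative $Df$ is uniformly Lipschitz (from \eqref{U3}) while the strong stable foliation is $\C^{1,1}$ (from \textbf{(H4)}). I would work in the coordinate system $\varphi(x,y)$ constructed in Section~\ref{rem:new coordinates}, in which the leaves $W^{s,\mathrm{loc}}_x$ correspond to fibers $\{x\}\times B_{\tilde\rho}(0)$ and the map takes the block-triangular form $f(x,y) = (F(x), G(x,y))$ with $G(x,0) = 0$; the derivatives $\partial_x F(x) = Df\!\restriction_{T_x\Lambda}$ and $\partial_y G(x,0) = Df(x)\!\restriction_{E^s_x}$ satisfy the rate bounds of \eqref{eqn:NHIM} by hypothesis, uniformly in $x$, while \eqref{eqn:mixed partials reg 1} gives $\|\partial_y G(x,y) - \partial_y G(x,0)\| \le L\|y\|$ uniformly.

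For the forward implication of the first equivalence, I would take $v\in T_y W^{s,\mathrm{loc}}_x$, which in the chart has the form $(0,w)$, and iterate: writing $(x_k,y_k) = f^k(\varphi(x,y))$, we have $\|y_k\| \le \tilde C \lambda_+^k$ by Theorem~\ref{localstable}(II)(iv), so the cocycle $\Pi_n := \prod_{k=0}^{n-1} \partial_y G(x_k,y_k)$ differs multiplicatively from $\Pi_n^0 := \prod_{k=0}^{n-1} \partial_y G(x_k,0)$ by a factor bounded by $\prod_{k\ge 0}(1 + C\lambda_+^k) < \infty$. Since $\|\Pi_n^0\| \le C_+\lambda_+^n$ by the definition of $E^s$ in \eqref{eqn:NHIM}, we obtain the bound with a new constant, valid for all $n\ge 0$ (hence for $n\ge N$). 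For the second equivalence involving $T_y W^{s,\mathrm{loc}}_\Lambda$, I would decompose $v = v_{\text{leaf}} + v_{\text{base}}$ according to $T_y W^s_\Lambda = T_y W^s_x \oplus (T_x\Lambda\text{-lift})$, apply the first case to $v_{\text{leaf}}$ (which gives the better rate $\lambda_+^n$), and handle $v_{\text{base}}$ by a nearly identical Taylor/iteration argument applied to $\partial_x F$, whose rate is $\mu_+$. Both pieces are then bounded by $D_+\mu_+^n$.

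For the reverse implications, I would use the rate gaps in \eqref{eq:rates0}. If $v\in T_y W^s_\Lambda$ satisfies $\|Df^n(y)v\| \le C_+\lambda_+^n\|v\|$ for $n\ge N$ but is not in $T_y W^s_x$, then its component transverse to the leaf inside $W^s_\Lambda$ is base-like and, by the lower bounds one obtains from (II)(v) of Theorem~\ref{localstable} combined with Lemma~\ref{lem:rates_inverses}, must grow at least like $\mu_-^{-n}$; since $\lambda_+\mu_- < 1$ this forces the component to vanish. The same idea handles the second equivalence: a $v\in T_yM$ with $\|Df^n(y)v\|\le D_+\mu_+^n\|v\|$ cannot have a component transverse to $W^s_\Lambda$, because such components would inherit unstable-type growth with rate $\lambda_-^{-n}$, contradicting $\mu_+\lambda_- < 1$. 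The main obstacle I anticipate is the careful bookkeeping in the iteration argument of the previous paragraph: one must verify uniformity of the Lipschitz constants under the trivializations and show that the infinite product of perturbative factors converges uniformly in $x$, which is exactly where hypotheses \textbf{(H4)} and the uniformity assumptions \eqref{U1}, \eqref{U2} are essential.
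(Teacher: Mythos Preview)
Your approach is essentially the same as the paper's: reduce to a perturbed cocycle along the orbit of the base point and use that the perturbation is summable because $\|y_k\|\le \tilde C\lambda_+^k$. The paper works with connectors between neighboring tangent spaces rather than the coordinate chart $\varphi$, but the content is identical. Your treatment of the reverse implications via the rate gaps is correct and is something the paper leaves implicit.

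There is, however, one genuine gap. Your claim that $\Pi_n$ ``differs multiplicatively from $\Pi_n^0$ by a factor bounded by $\prod_{k\ge 0}(1+C\lambda_+^k)$'' is not valid as written for non-commuting operator products. The bound $\|\partial_yG(x_k,y_k)-\partial_yG(x_k,0)\|\le L\|y_k\|$ does \emph{not} yield $\|\Pi_n\|\le\|\Pi_n^0\|\prod_k(1+C\lambda_+^k)$, because the individual factors $\|\partial_yG(x_k,0)\|$ need not be bounded by $\lambda_+$; only the full cocycle satisfies $\|\Pi_n^0\|\le C_+\lambda_+^n$. (One could rescue the multiplicative bound by first passing to an adapted Lyapunov metric in which each factor is contracted by $\lambda_+$, but that is an extra construction you do not mention, and its uniformity on the unbounded $\Lambda$ is not automatic.) The paper isolates this step as a separate cocycle-perturbation lemma, Proposition~\ref{ratesmatrices}: one expands $B_{n-1}\cdots B_0$ as a sum over all subsets of indices at which $\alpha_k$ is replaced by $\beta_k-\alpha_k$, bounds each term using the cocycle estimate $\|A_{n,m}\|\le C\lambda^{n-m}$ on the surviving $A$-blocks, and sums to obtain $\|B_{n,m}\|\le \tilde C\lambda^{n-m}$ with $\tilde C = C\exp\bigl(C\lambda^{-1}\sum_j\|\beta_j-\alpha_j\|\bigr)$. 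Your summability $\sum_k L\tilde C\lambda_+^k<\infty$ is exactly the hypothesis this lemma needs, so your outline becomes a proof once this step is filled in.
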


\begin{rem}\label{rem:globalrates}
Recalling that if
$y\in W^{s,\mathrm{loc}}_\Lambda$, there exists $N=N(y) > 0 $ such
that $f^N(y) \in \OO_{\bar \rho}$, we have
that \eqref{eqn:ratesinmanifolds} holds for  any
$y\in W^{s,\mathrm{loc}}_\Lambda$ but with constants $C_\pm, D_\pm$
which depend on $y$.
\end{rem}

\begin{rem} Versions of Lemma \ref{lem:globalrates} appear in the classical references
with the rates of convergence are
slightly worse that those in the NHIM.
These results imply that the stable/unstable manifolds are NHIMs themselves.

Lemma \ref{lem:globalrates} is an improvement from previous results in the literature,
because  the rates claimed in the stable manifold are  exactly
the same as the rates for the linearization in the NHIM.
For most of the results in this paper, the
classical results with slightly worse rates are enough,
so the proof can be just skimmed.
\end{rem}

Lemma~\ref{lem:globalrates} will be a consequence  of the following preliminary result (Proposition~\ref{provisional}),  the chain rule,
\begin{equation}\label{chain}
Df^n(y) = Df(f^{n-1}(y)) Df(f^{n-2}(y))\cdots Df(y),
\end{equation}
and Proposition~\ref{ratesmatrices}.  We postpone the details of the proof of
Lemma \ref{lem:globalrates} after these two  propositions.

The first preliminary result is fairly standard and is indeed enough for many applications.
For us, it will be the first step and it will be later bootstrapped
to get Lemma \ref{lem:globalrates}.

\begin{prop} \label{provisional}
 In the conditions of Lemma~\ref{lem:globalrates}, given $\eps  >0$ we can find a radius
 $0<\bar \rho =\bar \rho (\eps) <\rho$ and a constant $\tilde{C}_{+}=\tilde{C}_{+}(\eps)$
 such that if we take  any
 $y \in W^{s,\mathrm{loc}}_\Lambda$,
 $d(y, \Lambda) \le \bar \rho$, and take $x=\Omega^+(y)$, so that $y \in W^{s,\mathrm{loc}}_x$,
 we have for all $n >0 $, and for all
$v\in T_{y} W_{x}^{s,\mathrm{loc}}$
\begin{itemize}
\item[(i)]
\[
\|Df^n(y) v\|  \le \tilde{C}_{+} (\lambda_+ +\eps)^n  \|v\|
\]
\item[(ii)]
As a consequence,
\[
    d(f^n(x), f^n(y)) \le \rho \tilde{C}_{+} (\lambda + \eps)^n .
\]
\end{itemize}
\end{prop}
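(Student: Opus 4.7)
The plan is to prove (i) by combining an adapted Mather/Pesin norm on the stable bundle with the continuity of the strong stable foliation, and to deduce (ii) by integrating (i) along a path in the strong stable leaf connecting $x$ to $y$.

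First, fix $\eps>0$ small enough that $\lambda_+ + \eps < 1$. I would introduce on the stable bundle $E^s$ over $\Lambda$ the adapted norm
\[
\||v|\|_x := \sum_{k=0}^{\infty} (\lambda_+ + \eps/2)^{-k} \|Df^k(x) v\|, \quad v \in E^s_x,
\]
whose convergence and uniform equivalence with the Riemannian norm (with constants depending on $C_+$, $\lambda_+$, $\eps$) are immediate from the NHIM bound $\|Df^k(x) v\| \le C_+ \lambda_+^k \|v\|$. By construction $\||Df(x) v|\|_{f(x)} \le (\lambda_+ + \eps/2)\||v|\|_x$ uniformly in $x \in \Lambda$. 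Using hypotheses \textbf{(H3)}, \textbf{(H4)} and \eqref{U2}, I would then extend $\||\cdot|\|_x$ continuously in $y$ to a norm on the bundle $y \mapsto T_y W^{s,\mathrm{loc}}_{\Omega_+(y)}$ in a uniform neighborhood of $\Lambda$.

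Second, by \eqref{U3} (continuity of $Df$), the continuity of the extended norm, and the continuity of the bundle $T_y W^{s,\mathrm{loc}}_{\Omega_+(y)}$, together with the global uniformity provided by \eqref{U1} and \eqref{U2}, there exists $\bar\rho = \bar\rho(\eps) > 0$ such that for every $y \in W^{s,\mathrm{loc}}_\Lambda$ with $d(y,\Lambda)\le\bar\rho$ and every $w \in T_y W^{s,\mathrm{loc}}_{\Omega_+(y)}$,
\[
\||Df(y) w|\|_{f(y)} \le (\lambda_+ + \eps)\||w|\|_y.
\]
To iterate this one-step bound I must know that $f^n(y) \in \OO_{\bar\rho}$ for all $n \ge 0$. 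I would connect $x := \Omega_+(y) \in \Lambda$ to $y$ by a smooth path $\gamma \subset W^{s,\mathrm{loc}}_x$ of length $\le \rho$, parameterized monotonically in the radial exponential coordinate on $W^{s,\mathrm{loc}}_x$ (possible by Theorem~\ref{localstable}(IV)(ii)). The path $f \circ \gamma \subset W^{s,\mathrm{loc}}_{f(x)}$ has tangent vectors in the strong stable direction, and applying the one-step estimate at each point of $\gamma$ yields $d(f(y), f(x)) \le (\lambda_+ + \eps) K \, d(y,\Lambda)$ for a constant $K$ bounded by the $\C^1$-regularity of the leaves. Shrinking $\bar\rho$ so that $(\lambda_+ + \eps) K \le 1$ closes the iteration, so $f^n(y) \in \OO_{\bar\rho}$ for all $n \ge 0$. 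Iterating the derivative bound gives $\||Df^n(y) w|\|_{f^n(y)} \le (\lambda_+ + \eps)^n \||w|\|_y$, and the equivalence constant $K(\eps)$ between the adapted and Riemannian norms yields (i) with $\tilde C_+ = K(\eps)^2$.

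Statement (ii) then follows by integrating (i): the path $f^n\circ\gamma \subset W^{s,\mathrm{loc}}_{f^n(x)}$ has length at most $\tilde C_+ (\lambda_+ + \eps)^n \, \mathrm{length}(\gamma) \le \rho \tilde C_+ (\lambda_+ + \eps)^n$, and this length bounds $d(f^n(y), f^n(x))$. The main obstacle is the delicate interplay between $\bar\rho$, $\eps$, and the continuity moduli: $\bar\rho$ must be small enough both to secure the one-step derivative bound and to make the distance-contraction factor $(\lambda_+ + \eps)K$ bounded by $1$, while remaining uniform over the unbounded manifold $\Lambda$. The uniformity assumptions \eqref{U1}, \eqref{U2}, \eqref{U3}, together with \textbf{(H3)}, \textbf{(H4)}, are precisely what is needed to prevent constants from degenerating at infinity.
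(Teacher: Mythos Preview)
Your proof is correct and follows a genuinely different route from the paper. The paper does not introduce an adapted (Mather/Lyapunov) norm; instead it uses the classical ``block iteration'' argument: it first fixes an integer $N=N(\eps)$ large enough that $C_+\lambda_+^N \le \tfrac{1}{10}(\lambda_+ + \eps)^N$, then uses uniform continuity of $Df$ (and the chain rule for the finite product $Df^N$) to find $\bar\rho$ so that $\|Df^N(y)|_{T_yW^s_x}\| \le (\lambda_+ + \eps)^N$ for all $y$ in the $\bar\rho$-neighborhood, absorbs the first $N-1$ iterates into a large constant $\tilde C_+$, and writes $n = kN + j$. Part (ii) follows in both proofs by integrating along a path in the leaf. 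Your adapted-norm approach is arguably more elegant and gives slightly cleaner constants, while the paper's block argument is more elementary (it avoids defining a new norm on a bundle over a non-compact base and the attendant extension via connectors).

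One step in your write-up is garbled and would benefit from a cleaner formulation: the sentence ``Shrinking $\bar\rho$ so that $(\lambda_+ + \eps)K \le 1$'' cannot work as stated, since $(\lambda_+ + \eps)K$ does not depend on $\bar\rho$. What you actually need is to run the induction entirely in the adapted length: if the one-step bound holds on $\OO_{\bar\rho'}$, and the adapted and Riemannian metrics are $K(\eps)$-equivalent, then taking $\bar\rho \le \bar\rho'/K(\eps)$ ensures that a point at Riemannian distance $\le \bar\rho$ has adapted distance $\le \bar\rho'$, and since the adapted length strictly contracts by $(\lambda_+ + \eps) < 1$ at each step, all iterates remain in $\OO_{\bar\rho'}$. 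You should also be explicit that the extension of $\||\cdot|\|$ to $T_yW^{s,\mathrm{loc}}_x$ is done by pulling back via connectors from $E^s_x$ (not by the series formula at $y$, which would be circular); the one-step estimate at $y$ then follows from the $(\lambda_+ + \eps/2)$-bound at $x$ together with uniform continuity of $Df$ and of the connectors.
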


\begin{proof}
Because of the uniformity of the bounds assumed in Definition~\ref{def:nhim} we have that, for all $x\in \Lambda$ we have, for any $n\ge 0$:
\[
\|Df^n (x)|_{E^s_{f^n(x)}} \| \le C_+ \lambda_+^n
\]
Of course, $E^s_{f^n(x)} = T_{f^n(x)} W^s_{f^n(x)}$ and this will be useful later.

Once we choose $\eps > 0 $, we can find $N= N(\eps) > 0 $ so
that,
\[
 C_+ \lambda_+^N \le \frac{1}{10} (\lambda_+ + \eps)^N
\]
Therefore, for any $x\in \Lambda$ we have:

\[
\| Df^N (x)|_{E^s_x} \| \le \frac{1}{10} (\lambda_+ + \eps)^N .
\]
Because of the chain rule \eqref{chain} and the uniformity
of the bounds on the derivatives and the uniform continuity of
the derivatives, we can find $\bar \rho =\bar \rho (\eps) > 0$ so
that for all $y \in W^s_x$, $d(y, x) \le \rho$ we have
\[
\| Df^N (y)|_{T_y W^s_x } \| \le  (\lambda_+ + \eps)^N .
\]
Using \eqref{chain} and that all the derivatives of $f$ are uniformly bounded and uniformly continuous, we have
that there exists a (big enough) constant $ \tilde C$ such that
\[
\| Df^j (y)|_{T_y W^s_x } \| \le \tilde C_{+} (\lambda_+ + \eps)^j \quad \textrm { for }0< j < N
\]
Using that any positive number $n$ can be written $n = k N +j $ with $0 < j \le  N$ we have, using again \eqref{chain},
\[
\| Df^n (y)|_{T_y W^s_x } \| \le (\lambda_+ + \eps)^{kN} \tilde C_{+} (\lambda_+ + \eps)^j  =\tilde C_+ (\lambda_+ +\eps)^n
\]
    From this, using that
    $d(f^n(x), f^n(y)) \le \int_0^1  \|Df^n(\gamma(s)) \| \gamma'(s)| \, d s$
    we obtain  the last consequence.
   \end{proof}

\subsubsection{A sharp result on perturbations of products
  of a sequence of operators (cocycles)}
\label{sec:sharpperturbation}

The following result assumes rates of growth for a sequence
of successive products  of a sequence of
operators (sometimes called \emph{cocycles}).
and the factors are changed by a summable sequence, then, the
new sequence grows at the same rate.

The interesting thing for us is that it shows that the
rates of growth of vectors do not need to be modified at all.

\begin{prop}\label{ratesmatrices}
  Given a sequence of Banach spaces $\{ X_j\}_{j \in \mathbb{N}}$

  Let $\{\alpha_j\}_{j \in \mathbb{N}}$ and
   $\{\beta_j\}_{j \in \mathbb{N}}$
  be two sequences of operators
  $\alpha_j, \beta_j: X_j \rightarrow X_{j+1}$
  Denote by $A_{n,m}, B_{n,m}$ the associated \emph{cocycles}
  \begin{equation}\label{cocycle}
    \begin{split}
      &A_{n,m} = \alpha_{n-1} \alpha_{n-2} \cdots \alpha_m , \\
      &B_{n,m} = \beta_{n-1} \beta_{n-2} \cdots \beta_m , \\
      \end{split}
  \end{equation}
  Therefore, we have for all $n -1\ge m \ge l$, $A_{n,m} A_{m,l} = A_{n,l}$,
  $B_{n,m} B_{m,l} = B_{n,l}$.

  Assume that
  \begin{itemize}
  \item[(i)] $ \exists C, \lambda  > 0 $ s.t. $\forall n> m \in \mathbb{N}$
    \[
    |A_{n,m} |  \le C \lambda^{(n-m)},
    \]
  \item[(ii)]
    \[
    \sum_{j=0}^{\infty} | \beta_j - \alpha_j | < \infty .
    \]
  \end{itemize}

  Then, there exists $ \tilde C$ such that  for all $n -1\ge m$
    \[
    |B_{n,m} |  \le \tilde C \lambda^{(n-m)}
    \]
  \end{prop}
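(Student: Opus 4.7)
My plan is to use a discrete Duhamel-type identity to compare the perturbed cocycle to the unperturbed one, and then apply a discrete Gronwall inequality.

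Setting $\gamma_j := \beta_j - \alpha_j$, so that $\sum_j |\gamma_j| < \infty$, I would first establish by induction on $n - m$ the identity
\begin{equation*}
 B_{n,m} = A_{n,m} + \sum_{k=m}^{n-1} A_{n,k+1}\,\gamma_k\, B_{k,m}.
\end{equation*}
The base case $n=m$ is trivial (both sides equal the identity on $X_m$), and the inductive step uses $\beta_n = \alpha_n + \gamma_n$ together with the cocycle property $\alpha_n A_{n,k+1} = A_{n+1,k+1}$, absorbing the extra term $\gamma_n B_{n,m}$ as the $k=n$ contribution in the sum at level $n+1$.

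Taking norms and applying hypothesis (i) gives
\begin{equation*}
 |B_{n,m}| \le C\lambda^{n-m} + C\sum_{k=m}^{n-1} \lambda^{n-k-1}\,|\gamma_k|\,|B_{k,m}|.
\end{equation*}
Dividing through by $\lambda^{n-m}$ and writing $u_n := \lambda^{-(n-m)}|B_{n,m}|$, this becomes
\begin{equation*}
 u_n \le C + C\lambda^{-1}\sum_{k=m}^{n-1} |\gamma_k|\, u_k,
\end{equation*}
and the discrete Gronwall inequality then yields
\begin{equation*}
 u_n \le C \prod_{k=m}^{n-1}\bigl(1 + C\lambda^{-1}|\gamma_k|\bigr) \le C\exp\!\Bigl(C\lambda^{-1}\sum_{k=0}^{\infty} |\gamma_k|\Bigr) =: \tilde C,
\end{equation*}
which is the desired bound $|B_{n,m}| \le \tilde C \lambda^{n-m}$, with $\tilde C$ independent of $n$ and $m$.

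The only potentially delicate point is the dependence on $\lambda$: the factor $\lambda^{-1}$ appearing in the Gronwall step is harmless because it is absorbed into a universal constant, and the convergence of the product is controlled solely by the summability hypothesis (ii); in particular, there is no issue even when $\lambda < 1$ (the natural case for stable cocycles). I do not expect any obstacle beyond bookkeeping: the argument is the standard Duhamel--Gronwall scheme, and the strength of the conclusion (no enlargement of the rate $\lambda$) is a direct consequence of the summability --- rather than mere smallness --- of the perturbations $\gamma_k$.
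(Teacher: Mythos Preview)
Your argument is correct. The paper takes a closely related but differently packaged route: instead of a one-step Duhamel identity followed by Gronwall, it writes out the full Dyson-type expansion
\[
B_{n,m}=\sum_{k=0}^{n-m}\ \sum_{n-1\ge j_1>\cdots>j_k\ge m} A_{n,j_1+1}\,\gamma_{j_1}\,A_{j_1,j_2+1}\,\gamma_{j_2}\cdots\gamma_{j_k}\,A_{j_k,m},
\]
bounds the $k$-th layer by $C^{k+1}\lambda^{\,n-m-k}\cdot\tfrac{1}{k!}\bigl(\sum_j|\gamma_j|\bigr)^k$, and sums directly to $C\lambda^{\,n-m}\exp\bigl(C\lambda^{-1}\sum_j|\gamma_j|\bigr)$. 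Your recursive identity is precisely what one obtains by expanding only once (peeling off the \emph{topmost} $\gamma$-factor), and the Gronwall step then performs the resummation implicitly; the resulting constant $\tilde C$ is the same. Your version is a bit more economical and does not need the normalization to $\lambda<1$ that the paper invokes; the paper's version makes the combinatorial structure fully explicit. One trivial point, implicit in both arguments: take $C\ge 1$ in hypothesis~(i) so that the bound also covers the empty cocycle $A_{k,k}=\mathrm{Id}$ (equivalently, so that $u_m=1\le C$ at the base of your Gronwall step).
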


\begin{proof}
  If we multiply $\alpha$, $\beta$ by a constant $\sigma$, then both
  $A_{n,m},  B_{n,m}$ multiply by $\sigma^{(n-m)}$,  so without
  loss of generality, we can assume that $\lambda <1 $ in the
  hypothesis of Lemma~\ref{ratesmatrices}.

  We fix $n> m$ and
  note that we have adding and subtracting and  grouping the terms
  with different numbers of factors $\beta -\alpha$
  \[
  \begin{split}
    B_{n,m}= &A_{n,m} +
     \sum_{n-1\ge j_1\ge m}  A_{n, j_1+1} (\beta_{j_1} -  \alpha_{j_1}) A_{j_1, m}
     \\
     &+ \sum_{n-1\ge j_1> j_2\ge m} A_{n, j_1+1} (\beta_{j_1} -  \alpha_{j_1})
     A_{j_1, j_2+1}
  (\beta_{j_2} - \alpha_{j_2} ) A_{j_2, m} \\
  &+ \cdots \\
 & + (\beta_{n-1} - \alpha_{n-1}) \cdots (\beta_{m} - \alpha_{m}) .
  \end{split}
  \]

  If we write
  \[
  B_{n,m} = (\alpha_n + (\beta_n - \alpha_n) ) \cdots (\alpha_m + (\beta_m - \alpha_m) )
  \]
  and expand the product and group by the number of factors
  $\beta -\alpha$.

  The general term in the sum above is obtained by replacing $k$
  factors $\alpha$ in
  the expression for $A_{n,m}$
  with $\beta_j - \alpha_j$ and leaving all the others as $\alpha_j$.
  The products of consecutive factors $\alpha_j$ are
  transformed into the $A$ cocycles.

  Using the assumptions, the first term
  in the sum above is bounded by
  \[
  \| A_{n,m} \| \le C\lambda^{(n-m)} .
  \]
  The second term is bounded
  \[
  \begin{split}
\|  \sum_{n-1\ge j_1\ge m}  A_{n, {j_1}+1} (\beta_{j_1}-  \alpha_j) A_{{j_1}, m} \|
&\le C^2 \sum_{n-1\ge j_1\ge m}\lambda^{n-{j_1}-1} \| \beta_{j_1} -  \alpha_{j_1} \|  \lambda^{{j_1}-m} \\
&\le C^2 \lambda^{-1}  \lambda^{(n-m)} \sum_{j=0}^{\infty} \| \beta_{j} - \alpha_{j}\|
  \end{split}
  \]
  Similarly, we bound the next term
  as:
  \[
  \begin{split}
    &\| \sum_{n-1\ge j_1> j_2\ge m} A_{n, j_1+1} (\beta_{j_1} -  \alpha_{j_1}) A_{j_1, j_2+1}
(\beta_{j_2} - \alpha_{j_2} ) A_{j_2, m} \|\\&\quad \le
C^3  \sum_{n-1\ge j_1> j_2\ge m} \lambda^{n-j_1-1} \| \beta_{j_1} -  \alpha_{j_1}\|  \lambda^{j_1-j_2-1}
 \| \beta_{j_2} -  \alpha_{j_2}\|\lambda^{j_2- m }
 \\
&\quad \le C^3 \lambda^{-2}  \lambda^{n-m} \frac{1}{2}
 \left( \sum_{j=0}^{\infty} \| \beta_j - \alpha_j\| \right)^2 .
  \end{split}
  \]

  Note that both bounds include factors $\lambda^{n -m}$
  (remember we are assuming $\lambda <  1$).
  Similarly the general term  consists in product of cocycles of
  $(k+1)$ intervals that cover  $[m,n-1]$ with $k$ factors that have
  been changed into $\beta_j - \alpha_j$.

  The last term is  bounded (very wastefully)  as
  \[
  C^{n-m+1} \lambda^{-(n-m)} \lambda^{(n-m)} \frac{1}{(n-m)!}
  \left( \sum_{j=0}^{\infty} \| \beta_j - \alpha_j\| \right)^{n-m}.
  \]

  Therefore, adding all the bounds we get:
  \[
  \lambda^{n-m} C \sum_{k = 0}^{n-m} C^k \frac{1}{k!} \lambda^{-k}
  \left( \sum_{j=0}^{\infty} \| \beta_j - \alpha_j\| \right)^{k}
  \le \lambda^{n-m} C \exp\left( C \lambda^{-1} \sum_{j=0}^{\infty} \| \beta_j - \alpha_j\| \right).
  \]
 We conclude:
 \[B_{n,m}\le  \lambda^{n-m} C\left(1+\exp\left( C \lambda^{-1} \sum_{j=0}^{\infty} \| \beta_j - \alpha_j\|\right )\right):=\lambda^{n-m}\tilde{C}.\]

\end{proof}

\begin{proof}[Proof of Lemma~\ref{lem:globalrates}]

  If the manifold $M$ is Euclidian and the bundle $E^s$ is trivial,
  we can identify all the tangent spaces with an Euclidean
  space.

  In such a geometrically trivial case, to prove the first inequality in
  Lemma~\ref{lem:globalrates} it suffices to take:
  \[
  \begin{split}
    &\alpha_j = Df( f^j(x)): E^s_{f^j(x)} \rightarrow E^s_{f^{j+1}(x)} ,\\
      &\beta_j = Df( f^j(y)): T_{f^j(y)}(f^j(W^s_x) )
      \rightarrow T_{f^j(y)}(f^j(W^s_x)) ,
      \end{split}
        \]
but, in the geometrically trivial case, we identify $E^s_{f^j(x)}$
with  $T_{f^j(y)}(f^j(W^s_x)$.

Similarly, to prove the second inequality
Lemma~\ref{lem:globalrates} it suffices to take:
   \[
  \begin{split}
    &\alpha_j = Df( f^j(x)): E^s_{f^j(x)} \oplus T_{f^j(x)}\Lambda
    \rightarrow E^s_{f^{j+1}(x)} \oplus T_{f^{j+1}(x)}\Lambda , \\
      &\beta_j = Df( f^j(y)): T_{f^j(y)}(W^s_\Lambda)
      \rightarrow T_{f^{j+1}(y)}(f^{j+1}(W^s_\Lambda)) .
      \end{split}
        \]

  In the case that $M$ is a manifold or $E^s$ is a non-trivial bundle,
use a system of coordinates
on $\OO_\rho$
assumed to exist in {\bf (U2)}.
In such a geometric adaptation, we  need to include explicitly
  the connectors (see \eqref{eqn:connectors}) identifying the neighboring spaces and check
  that all remains uniform in the number of iterates.
  Even if it is mostly routine, we include the details.

  To prove the first inequality in Lemma~\ref{lem:globalrates}
  we observe that, geometrically:
  \[
  \begin{split}
   &Df (f^n(y)): T_{f^n(y)}f^n(W^s_x)
\rightarrow T_{f^{n+1}(y)}f^{n+1}(W^s_x), \\
    &Df (f^n(x)): T_{f^n(x)}f^n(W^s_x) \equiv E^s_{f^n(x)}
    \rightarrow T_{f^{n+1}(x)}f^{n+1}(W^s_x) \equiv E^s_{f^n(x)}.
  \end{split}
  \]
  The operators $Df (f^n(y))$, $ Df (f^n(x))$ act in different spaces
  and we have to identify them.

  Using the system of coordinates in $\OO_\rho$ we can use connectors (see \eqref{eqn:connectors})
  to identify $E^s_x$ with  $T_y W^s_x$.
  A geometrically natural way to identify these spaces is
  to take $S_x^y = (D \exp_x)( \exp^{-1}_x(y))$.
  We note that the norm of these operators is bounded
  and  in $\OO_\rho$ and the norm becomes close to $1$
  if $\rho$ is small.

  We apply the Proposition~\ref{ratesmatrices}
taking $X_j = E^s_{f^n(x)}$,
\[\begin{split}
\alpha_j =& Df( f^n(x) ),  \\
\beta_j =&
\left( S_{f^{j+1}(x)}^{f^{j+1}(y)} ) \right)^{-1}
 Df( f^j(x)) S_{f^j(x)}^{f^j(y)}.
\end{split}
\]

  Note that
  \[
  B_{n,m} = \left( S_{f^{n+1}(x)}^{f^{n+1}(y)} ) \right)^{-1}
    Df^{n-m}(f^m(x)) S_{f^j(x)}^{f^j(y)},
    \]
    so that the bounds on the cocycle are
    equivalent to the desired bounds on the derivatives.
\end{proof}

\section{Rates of convergence of homoclinic channels to a NHIM}
\label{sec:fiber_contraction}
The goal of this section is to study quantitatively the convergence of
the iterates of
channel $\Gamma$ to the invariant manifold $\Lambda$.
The explicit values of the rates of convergence enter into several proofs.
For example Sections~\ref{sec:proof_main_2_b_L}, \ref{sec:iteration}
as well in other future work.

More precisely, we prove:

\begin{lem}\label{convergence_rates}
  With the notations in the previous sections,
  assume, for simplicity  of statements and without loss of generality,
  that
  \begin{equation}\label{simplicity}
    \mu_+, \mu_- > 1.
  \end{equation}
\begin{itemize}
\item[(i)]
Assume that $ r\ge 2$ and  the foliation of $W^{s,u, \textrm{loc}}_\Lambda$ by $W^{s, u,\textrm{loc}}_x$ is of class  $\C^{1, 1}$.
  \begin{itemize}
  \item[(i.a)]
  If  $\lambda_+\mu_-<1$ then:

 \begin{equation}\label{Cr-ratess}
   d_{\C^1} (f^n(\Gamma), \Lambda) \le
      C  (\lambda_+  \mu_-)^n, \quad n > 0 .
  \end{equation}
\item [(i.b)]
If  $\lambda_-\mu_+<1$ then:
  \begin{equation}\label{Cr-ratesu}
   d_{\C^1} (f^n(\Gamma), \Lambda) \le    C  (\lambda_- \mu_+)^{|n|}, \quad n < 0 .
  \end{equation}
 \end{itemize}
 \item[(ii)]
 Assume moreover that $ r\ge j+1$ and the foliation of
 $W^{s,u, \textrm{loc}}_\Lambda$ by $W^{s, u,\textrm{loc}}_x$ is of class  $\C^{j, 1}$.
  \begin{itemize}
  \item[(ii.a)]
  If $\lambda_+ \mu_-^j<1$ then:
 \begin{equation}\label{Cr-ratessj}
   d_{\C^j} (f^n(\Gamma), \Lambda) \le     C  (\lambda_+ \mu_-^j)^n, \quad n > 0 .
  \end{equation}
\item[(ii.b)]
 If $\lambda_- \mu_+^j<1$ then:
  \begin{equation}\label{Cr-ratesuj}
   d_{\C^j} (f^n(\Gamma), \Lambda) \le      C  (\lambda_- \mu_+^j)^{|n|}, \quad n < 0 .
  \end{equation}
 \end{itemize}
\end{itemize}

\end{lem}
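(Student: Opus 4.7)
The plan is to use the graph-transform (Fiber Contraction / Inclination Lemma) strategy to represent $f^n(\Gamma)$ as a graph over $\Lambda$ in the tubular neighborhood $\OO_\rho$, and then track the $\C^j$ norm of the graph under iteration. By symmetry, applying the forward statements to $f^{-1}$ (where the roles of $\lambda_\pm$ and $\mu_\pm$ swap, and the strong unstable foliation replaces the strong stable one) reduces everything to proving (i.a) and (ii.a). A channel $\Gamma$ is transverse to the strong stable foliation, so Proposition~\ref{provisional} (together with condition \textbf{(U2)}) gives an $N_0$ such that for all $n\ge N_0$ the iterate $f^n(\Gamma)$ lies in $\OO_{\bar\rho}$ and may be written, in the tubular coordinates $(x,s,u)\in\Lambda\times E^s\times E^u$, as the graph of a map $\gamma_n: U_n\subseteq\Lambda\to E^s\oplus E^u$. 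The proof only needs to estimate $\|\gamma_n\|_{\C^j}$; then $d_{\C^j}(f^n(\Gamma),\Lambda)$ is controlled by $\|\gamma_n\|_{\C^j}$ up to a uniform constant.

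In the tubular coordinates, $f$ has the block form
\[
f(x,s,u)=\bigl(f_\Lambda(x)+R_c(x,s,u),\;A^s(x)s+R_s(x,s,u),\;A^u(x)u+R_u(x,s,u)\bigr),
\]
where the remainders vanish to second order on $\Lambda$ and the linear parts satisfy $\|A^s(x)\|\le C_+\lambda_+$, $\|(Df_\Lambda)^{-1}(x)\|\le D_-\mu_-$, etc. The graph transform $\gamma_n\mapsto\gamma_{n+1}$ reads, to leading order,
\[
\gamma_{n+1}(y)\;=\;A^s\bigl(f_\Lambda^{-1}(y)\bigr)\,\gamma_n\bigl(f_\Lambda^{-1}(y)\bigr)\;+\;(\text{nonlinear terms of order }\|\gamma_n\|^2),
\]
which immediately gives $\|\gamma_n\|_{\C^0}\le C\lambda_+^n$. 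Differentiating once and applying the chain rule, the linear part of $D\gamma_{n+1}$ is $A^s\cdot D\gamma_n\cdot Df_\Lambda^{-1}$, whose iterated cocycle has norm bounded by $(\lambda_+\mu_-)^n$. The hypothesis that the strong stable foliation is $\C^{1,1}$ ensures that the initial $\gamma_{N_0}$ (obtained by straightening the foliation) is $\C^1$ with uniformly bounded derivative, and that the nonlinear remainders inherit a Lipschitz dependence on $y$. The effect of the nonlinear corrections is then a summable perturbation of the linear cocycle, and Proposition~\ref{ratesmatrices} applies verbatim to conclude $\|D\gamma_n\|_{\C^0}\le C(\lambda_+\mu_-)^n$, which is (i.a).

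For part (ii.a), I would argue by induction on $j$. Differentiating the graph transform $j$ times and using a Faà di Bruno expansion, the top-order term is
\[
D^j\gamma_{n+1}(y)\;\approx\;A^s\bigl(f_\Lambda^{-1}(y)\bigr)\,D^j\gamma_n\bigl(f_\Lambda^{-1}(y)\bigr)\,\bigl(Df_\Lambda^{-1}(y)\bigr)^{\otimes j},
\]
contributing a factor $\lambda_+\mu_-^j$ per iterate; the remaining lower-order terms involve $D^i\gamma_n$ with $i<j$ multiplied by derivatives of the map, and by the inductive hypothesis each such term is bounded by $(\lambda_+\mu_-^i)^n\le (\lambda_+\mu_-^j)^n$ since $\mu_-\ge 1$ (using \eqref{simplicity}). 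The $\C^{j,1}$ assumption on the foliation guarantees that $\gamma_{N_0}\in\C^j$ with a uniform bound, and the regularity $r\ge j+1$ of the map provides enough smoothness for the Faà di Bruno expansion; Proposition~\ref{ratesmatrices}, applied this time to the cocycle acting on $j$-jets, absorbs the lower-order and nonlinear contributions as a summable perturbation and yields the claimed bound.

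The main obstacle is the uniformity on the unbounded manifold. Concretely, one must (a) verify that the graph representation of $f^n(\Gamma)$ persists with uniformly bounded derivatives, which ultimately rests on \textbf{(U1)}, \textbf{(U2)} and on the $\C^{j,1}$ character of the foliation as formalized in Definition~\ref{Ctildemm}; and (b) control the \emph{accumulation} of nonlinear errors in the $j$-jet cocycle so that they remain summable even when taking $n$ very large. For (b) the key observation is that each nonlinear error carries a factor $\|\gamma_n\|_{\C^0}\le C\lambda_+^n$, producing a geometric factor that dominates any polynomial growth of the derivatives of the map. Once these uniformity points are pinned down, Proposition~\ref{ratesmatrices} does the rest in a single stroke, and the rate asserted in each of \eqref{Cr-ratess}, \eqref{Cr-ratesu}, \eqref{Cr-ratessj}, \eqref{Cr-ratesuj} comes out directly from the linear part of the cocycle.
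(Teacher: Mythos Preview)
Your overall strategy is the same as the paper's --- a graph-transform/Hadamard argument in adapted coordinates near $\Lambda$ --- but two aspects differ and one invocation does not work as stated.

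\textbf{Coordinates and the $j=1$ estimate.} The paper works on $W^{s,\textrm{loc}}_\Lambda$ (not on $M$) in the coordinates of Section~\ref{rem:new coordinates}, where the leaves of the strong stable foliation are coordinate planes. In these coordinates $f(x,y)=(a(x),b_x(y))$ is \emph{exactly} block triangular: the $x$-component depends on $x$ alone, so there is no ``nonlinear remainder $R_c$'' to track. One then represents $\Gamma=\{(x,\sigma(x))\}$ and expands $Df^n$ as a product of lower-triangular blocks $\begin{pmatrix}A_k&0\\C_k&B_k\end{pmatrix}$. The crucial point --- which in your language is ``nonlinear errors carry a factor $\|\gamma_n\|_{\C^0}$'' --- becomes the clean estimate $\|C_k\|=\|\partial_x b_x(y_k)\|\le C\lambda_+^k$, obtained from $\partial_x b_x(0)=0$ together with the $\C^{1,1}$ hypothesis on the foliation. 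The slope of $T(f^n\Gamma)$ is then an explicit Duhamel-type sum \eqref{new_graph}, and bounding it term by term gives $C(\lambda_+\mu_-)^n$ via $\sum_k\mu_-^{-k}<\infty$ (this is where \eqref{simplicity} is used).

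\textbf{On your use of Proposition~\ref{ratesmatrices}.} That proposition compares two \emph{multiplicative} cocycles $\prod\alpha_j$ and $\prod\beta_j$ with $\sum\|\alpha_j-\beta_j\|<\infty$. Your iteration $D\gamma_{n+1}=L_nD\gamma_n+E_n$ is \emph{affine}, so the proposition does not apply ``verbatim'': iterating gives $D\gamma_n=(\prod L_j)D\gamma_0+\sum_{k}(\prod_{i>k}L_i)E_k$, and it is this inhomogeneous sum that must be bounded, not a perturbed product. The bound works (with $\|E_k\|\le C\lambda_+^k$ and $\|\prod_{i>k}L_i\|\le C(\lambda_+\mu_-)^{n-k}$) but it is precisely the explicit computation the paper carries out in \eqref{eqn:remaining terms}, not an application of Proposition~\ref{ratesmatrices}.

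\textbf{Higher $j$.} Instead of Fa\`a di Bruno plus induction, the paper uses the \emph{tangent functor trick}: once the $j=1$ case is proved, apply it to $Tf:TM\to TM$ with $T\Lambda$ playing the role of $\Lambda$ and $T\Gamma$ the channel; the stable rate for $Tf$ is $\lambda_+\mu_-$, so the $\C^1$ result for $Tf$ gives the $\C^2$ result for $f$ with rate $\lambda_+\mu_-^2$, and one iterates. This bypasses the combinatorics entirely and explains transparently why the rate is $\lambda_+\mu_-^j$.
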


\begin{rem}
The Lemma~\ref{convergence_rates}
is closely related to results in the literature such as the
\emph{graph transform}, the \emph{inclination lemma} -- a.k.a. \emph{$\lambda$-lemma},
and even closer, to \emph{fiber contraction lemma}, or the  more general
\emph{cone conditions}, which have many variants. In our case, the result is easier
since we already know the fixed point of the contraction.
Unfortunately, many of the versions in the literature are only qualitative or involve
extra assumptions (e.g. \cite[p. 35]{HPS} assumes compactness).
\end{rem}

The tangent functor trick \cite{AbrahamR67, HirschP70} -- which we explain later --
shows that the case $j = 1$ implies the results for other $j$ in
Lemma~\ref{convergence_rates}. The case $j=1$ we present,  basically goes back to
\cite{Hadamard98} (translated to English  in \cite{Hasselblatt17}).

\subsection{Proof of Lemma~\ref{convergence_rates} }

  We will consider here  only the case of $n \rightarrow \infty$.
  The case $n \rightarrow -\infty$  is identical, up to a change in typography.

By Theorem \ref{localstable}, item (II) (iv), by choosing a suitable constant  $C$ we have that

  \[
  d_{\C^0}(f^n(\Gamma), \Lambda) \le C\lambda_+^n \textrm{ for } n>0 .
  \]

Hence, it suffices to prove the estimates under the extra assumption
that $\Gamma$ and its iterates remain in small  neighborhood of $\Lambda$.
Concretely, we will assume that
\[
\Gamma \subset \OO_\rho,
\]
where $\OO_\rho$ is the neighborhood of $\Lambda$ given in assumption {\textbf U2}.
Furthermore, to estimate derivatives, we can work in   arbitrary small patches.

In $\OO_\rho$, and in small enough patch, we can take the system of coordinates of section  \ref{rem:new coordinates} given in  \eqref{eqn:varphi_x_y} such that $W^{s,\textrm{loc}}_x$
can be identified with
\[
W^{s, \textrm{loc}}_x\simeq  \{(x, y) \,| y \in B_\rho(0) \}
 \]
 with $B_\rho(0)\subset E^s_x$.

All such coordinate systems can be made to have uniform differentiability
properties.
Remember that the foliation  $W^{s,\textrm{loc}}_x$ is invariant (the leaves are mapped into leaves by the
dynamics).
Therefore, in this system of coordinates, the map can be represented as
\begin{equation}\label{map_coordinate}
  f(x,y) = ( a(x), b_x(y) )
\end{equation}
The map $b_x$ represents in coordinates the motion on the leaf $W^{s,\textrm{loc}}_x$,
to the leaf $W^{s, \textrm{loc}}_{a(x)}$, where $x$ and $a(x)$ represent the foot-points of those leaves.

Note that the coordinate patches are different for the domain and the range.
Even if the domain and the range patches overlap,  we do not identify the coordinates.
We do not attempt to identify the points belonging to two coordinate
patches. Note, however, that the coordinate patches will only enter in
the proof to perform some algebraic operations with derivatives
and the patches covered by the coordinate systems may be arbitrarily
small.

The derivative of \eqref{map_coordinate} is, in these coordinates,
given by an upper diagonal matrix.
\[
Df(x,y) = \begin{pmatrix} Da(x) & 0  \\
  \partial_x b_x(y)  &  D b_x(y)
  \end{pmatrix} .
\]
We  can write $\Gamma$ as the graph of a section on $E^s$ in the foliation by the $W^{s,\textrm{loc}}_x$:
\[
\Gamma = \{(x,\sigma(x)), \ x\in \Lambda \} .
\]

Given a point $p = (x, \sigma(x) )$ in the section, we consider
its orbit $f^n(p):=p_n =(x_n,y_n)$.

\smallskip
For future reference, we compute some explicit
and elementary expressions for the  derivatives of iterations of $f$
(similarly to \cite[Proposition 15]{DLS08}).

To simplify notation, we write:
$p_n = f^n( x,y)$ and,
\[ Df(p_n) =  \begin{pmatrix} A_n  & 0 \\
  C_n  &  B_n
\end{pmatrix} .
\]
In the notation for $A_n, B_n,C_n$ we omit
the dependence on the point $p_n$.

We have:
\[
\begin{split}
  Df^n(p_0)  = Df(p_{n-1}) \cdots Df(p_0)  =
  \begin{pmatrix} A_{n-1}  &  0 \\
    C_{n-1} &  B_{n-1}
  \end{pmatrix} \cdots
  \begin{pmatrix} A_{0}  & 0\\
    C_{0} &  B_{0}
    \end{pmatrix} .
  \end{split}
 \]

Following \cite{Hadamard01,Hasselblatt17},
we consider $(\Delta,(D\sigma(x)) \Delta)$ the representation of
a tangent vector of $\Gamma$ at $(x, \sigma(x))$.
When $\Delta$ ranges over all the possible values,
$(\Delta,(D\sigma(x)) \Delta)$ ranges over the graph of $D\sigma(x)$,
the tangent  space of $\Gamma$.

We have that
\[
Df(p) \begin{pmatrix} \Delta \\ d\sigma(p) \Delta
\end{pmatrix}
= \begin{pmatrix}  A_0 \Delta \\
C_0 \Delta   + B_{0} D \sigma(p) \Delta
\end{pmatrix},
\]
\[
Df^2(p) \begin{pmatrix} \Delta \\ d\sigma(p) \Delta
\end{pmatrix}
= \begin{pmatrix} A_{1} A_0 \Delta \\
 (C_1 A_{0}+  B_1 C_0)\Delta+B_1B_0 D \sigma(p) \Delta
\end{pmatrix},
\]
and, for $n \ge 3$:
\[
\begin{split}
Df^n(p) \begin{pmatrix} \Delta \\ D\sigma(p) \Delta
\end{pmatrix}
&= \begin{pmatrix} A_{n-1} \cdots A_0 \Delta \\
 E_n\Delta
     \end{pmatrix}
\end{split}
\]
where
\[\begin{split}
E_n=& \sum_{k=1}^{n-2} B_{n-1} \cdots B_{k+1} C_k A_{k-1} \cdots A_0\\
&+  C_{n-1}  A_{n-2} \cdots A_0
+ B_{n-1} \cdots B_1 C_0
+ B_{n-1} \cdots B_0 D \sigma(p) .
\end{split}\]

As $\Delta$ ranges over the tangent space of $\Gamma$
at $(x,\sigma(x))$, the above  iterated tangent space
can be described as the graph of
the function obtained by writing the
second component as a function of the first component.

That is, the graph of the iterated
tangent space is the graph of the function:
\begin{equation}\label{new_graph}
\begin{split}
& \Big[ \sum_{k=1}^{n-2} B_{n-1} \cdots B_{k+1} C_k (A_{n-1} \cdots A_k)^{-1} \\
&    +C_{n-1}A_{n-1}^{-1}+
  B_{n-1} \cdots B_1 C_0 (A_{n-1} \cdots A_0)^{-1}\Big]\\
&  + B_{n-1} \cdots B_0 D \sigma(p) (A_{n-1} \cdots A_0)^{-1} .
\end{split}
\end{equation}

Now, we proceed to estimate the terms in
\eqref{new_graph}.

By Theorem \ref{localstable},  items II (iv), (v), the last term in \eqref{new_graph}  is straightforwardly  estimated by
  \begin{equation}\label{eqn:last term}
   \|  B_{n-1} \cdots B_0 \, D \sigma(p) \, (A_{n-1} \cdots A_0)^{-1} \|
  \le C  (\lambda_+)^n \| D \sigma\|_{\C^0}   (\mu_-)^n ,
  \end{equation}
where we combine all constants into a new constant which we still denote by $C$ (as we will do in subsequent estimates).

The first term of \eqref{new_graph}
  require  a bit more care.
  We observe that, since $b_x(0) = 0 $ for all $x$
  we have $\partial_x b_x(0) = 0 $.
  Taking into account that $C_k = \partial_x b( y_k) $,
  and that $\|y_k\| \le C\lambda_+^k$,
  since the foliation $\{W^{s,\textrm{loc}}_x\}_x$ is $\C^{1,1}$ and using Schwarz' theorem on mixed partials, we have
  $\| C_k \| \le C\lambda_+^k $.

  Hence
  the  first term  of \eqref{new_graph} can
  be estimated by:
  \begin{equation}\label{eqn:remaining terms}
  \begin{split}
&  \sum_{k=1}^{n-2} C (\lambda_+ )^{n-k-1}  (\lambda_+)^k (\mu_-)^{n-k}
  +C(\lambda_+)^{n-1}\mu_- +C(\lambda_+\mu_-)^{n-1}\\
&  \le
  C (\lambda_+ \mu_-)^n \left(
  \sum_{k=1}^{n-2}  (\lambda_+)^{-1}(\mu_-)^{-k}+(\lambda_+)^{-1}(\mu_-)^{1-n}+(\lambda_+\mu_-)^{-1}\right)\\
&  \le C (\lambda_+ \mu_-)^n ,
  \end{split}
  \end{equation}
where in the last inequality we have used  \eqref{simplicity}.
  Combining \eqref{eqn:last term} and \eqref{eqn:remaining terms} gives the desired result and finishes the proof
  of the case $j=1$ of Lemma~\ref{convergence_rates}.

 Once we have established   the case $j=1$ of Lemma~\ref{convergence_rates},
  the other cases are a corollary.

  For this, we  use the `tangent functor trick' \cite{AbrahamR67,HirschP70}.
   Note that given  $f:M \rightarrow N$,  $g:N\rightarrow P$,
   differentiable maps among manifolds, defining
   $Tf: TM \rightarrow  TN$,
   $Tg: TN \rightarrow  TP $ by $Tf(x,v) = (f(x),Df(x)v)$,
   we have:
   \[
   T(g\circ f) = Tg \circ Tf .
   \]
   Also, if $\Lambda$ is an invariant
   manifold for $f$, $T\Lambda$ is an invariant manifold
   for $Tf$.

  Applying the $j = 1$ result for the $\C^1$-convergence of $T\Gamma$  to
   $T\Lambda$ under $Tf$ iteration, with $\lambda_+$ replaced by $\lambda_+\mu_-$,
   we obtain
 \[d_{\C^1} (Tf^n(T\Gamma), T\Lambda) \le
      C  (\lambda_+  \mu_-^2)^n, \quad n > 0 ,\]
      which implies the $\C^2$ convergence of $\Gamma$ under $f$ iteration
\[d_{\C^2} ( f^n( \Gamma),  \Lambda) \le
      C  (\lambda_+  \mu_-^2)^n, \quad n > 0 .\]
Repeating the tangent functor trick yields the desired conclusion.\qed

\begin{rem} Assume that $\mu_+,\mu_-$ satisfy \eqref{eqn:mubounds} rather than \eqref{simplicity}. Then, we can obtain the same results as in \eqref{Cr-ratess} and \eqref{Cr-ratesu}, for $|n|$ sufficiently large, provided that either $\lambda_\pm$ or $\mu_\pm$ are chosen not to be the optimal rates, that is, \begin{equation}\label{eqn:non_optimal} \lambda^*_\pm<\lambda_\pm \textrm { or }\mu_\pm^*<\mu_\pm.\end{equation}
This is because we can choose $\lambda^*_\pm<\tilde\lambda_\pm<\lambda_\pm$ or $\mu_\pm^*<\tilde\mu_\pm<\mu_\pm$ and run the argument in the proof of Lemma~\ref{convergence_rates} for the rates $\tilde\lambda_\pm$, $\tilde\mu_\pm$, except for the last inequality in  \eqref{eqn:remaining terms}, when we use $n(\tilde\lambda_\pm\tilde\mu_\mp)^n \le C(\lambda_\pm \mu_\mp)^n$ for $|n|$ sufficiently large.
Without the extra condition \eqref{eqn:non_optimal},
instead of \eqref{Cr-ratess} and \eqref{Cr-ratesu} we obtain
we obtain $d_{\C^1} (f^n(\Gamma), \Lambda) \le
      C  n(\lambda_+  \mu_-)^n, \quad n > 0$, and $d_{\C^1} (f^n(\Gamma), \Lambda) \le
      C  n(\lambda_-  \mu_+)^n, \quad n < 0$, respectively.

Similar statements hold for \eqref{Cr-ratessj} and \eqref{Cr-ratesuj}.
\end{rem}

\section{Extensions to other models}
\label{othermodels}

The machinery developed in this paper is rather robust and
produces similar results for other models.

In this Appendix, we discuss two models
(see sections~\ref{partially},~\ref{thermostat})
of physical interest that have appeared in the literature.
We show in Section~\ref{newvanishing} that the
vanishing lemmas apply to these models, and in Section~\ref{newscattering}
that the scattering map also preserves the  corresponding forms.

The methods of this paper apply to these cases,
and we hope they could lead to results that complement the ones presented here.

\subsection{Partially conformally symplectic systems}
\label{partially}

We consider products of symplectic manifolds
\[
(M,\omega) =
(M_1 \times M_2 \times \cdots M_L, \omega_1 \oplus \omega_2 \oplus
\cdots \oplus \omega_L)
\]
where $\omega_i$ is a symplectic form  on $M_i$, $i=1,\ldots, L$.

We consider maps $f$ on $M$ such that
\begin{equation}\label{partialcs}
f^* \omega = \eta_1 \omega_1 \oplus \eta_2 \omega_2
\oplus  \cdots \oplus \eta_L \omega_L,
\end{equation}
for $\eta_i>0$, $i=1,\ldots, L$. Such a map is not conformally symplectic (see Definition \ref{def:conformally_symplectic}).

Systems of the form \eqref{partialcs} appear in mechanics when we consider models of
$L$ particles interacting by a Hamiltonian, where each particle is subject to
  a friction  proportional to its velocity.
See \cite[Remark 3]{CallejaCL13}.

Such models have been explored in the literature for various applications. Below, we provide only a few examples.

\begin{itemize}
\item
  The spin-spin model\footnote{The spin-spin model
is a time dependent flow. The time advance maps are of the form \eqref{partialcs}.} describing two
rotating bodies, each with its own tidal friction
\cite[eq. (42)]{Misquero21}.
\item
Networks of oscillators with Hamiltonian coupling among nearest neighbors, where one node experiences dissipative effects
\cite{EckmannW20,EckmanW18,CuneoEW17,DolgopyatFKY2025}.
These are models of the form \eqref{partialcs}
in which $\eta_i = 1 \textrm{ for  } i \ne 1$, $\eta_1 < 1$.
There are invariant manifolds that affect the transfer of energy
from the conservative modes to the dissipative modes.
\item
Models of planets with a two-layer  structure
subject to viscous and tidal friction, each of them
with different friction coefficients. See \cite{PinzariSV24}.
\end{itemize}

Formally,  several Hamiltonian PDEs
subject to dissipation are of the form \eqref{partialcs}.
For example, consider  the telegraph equation
\[
u_{tt} + u_t - u_{xx} = 0
\]
with periodic boundary conditions.
Writing  $u(t,x) = \sum_k \hat u_k(t) e^{2 \pi \imath k x} $,
we obtain formally
\[
\hat u_k''(t) + \hat u_k'(t) + k^2 \hat u_k (t) = 0,
\]
which is a system of uncoupled
conformally symplectic oscillators with uniform dissipation.
Other models with strong dissipation $-\frac{d}{dt}u_{xx}$ instead of
$u_t $ have also been considered.
Similar considerations apply to other Hamiltonian Partial Differential equations
and   infinite dimensional coupled systems.
For example, \cite{EckmannW20, EckmanW18, CuneoEW17} explore analogies with locally dissipative variants of the nonlinear Schr\"odinger  equation.

Making rigorous sense of geometric properties of NHIMs in PDEs
seems an interesting problem, but it is tractable for finite NHIMs \cite{BatesLZ08}.

One should also note that in infinite dimensional models, there may be dissipation of energy
even in Hamiltonian systems \cite{CorsiG22,ComechKK23}.

\subsection{The Gaussian thermostat}
\label{thermostat}
Several models of non-equilibrium thermodynamics
are based on introducing some forcing as well as some dissipation to
keep energy constant. See \cite{wojtkowski1998conformally}.

These systems have several extra structures,
but they lead to (see \cite[equation (2.4)]{wojtkowski1998conformally})
maps that satisfy:
\begin{equation}\label{isokinetic}
  (f^* \omega)(x) = \eta(x) \omega( x)
\end{equation}
where $\omega$ is a non-closed form (which  allows for $\eta$ to be
not constant), and $\eta(x)$ is a bounded function,
whose inverse is also bounded.

The form $\omega$ and  the factor $\eta(x)$
used in \cite{wojtkowski1998conformally}
satisfy several other properties that lead to other
consequences, but they will not be used below.

Condition \eqref{isokinetic} appears in this paper
in the consideration of presymplectic maps  (see \eqref{confpresymplectic}) with rank smaller than $4$.

\subsection{Vanishing lemmas for the generalized models}
\label{newvanishing}

Both the models in \eqref{partialcs}  and models
of the form \eqref{isokinetic}
are particular cases of  maps that satisfy
\begin{equation}
  \begin{split}
  & |   \omega (x) (u,v)| \le
 C \eta_-^{-n} \|\omega(f^n(x)) \| \| Df^n(x) u\| \| Df^n(x)v\|, \quad n \ge 0,\\
& |  \omega (x)(u,v) | \le C \eta_+^{-n}\|\omega (f^n (x)) \|\| Df^n(x) u\|, \| Df^n(x)v\| \quad n \le 0 .
    \end{split}
\end{equation}
Instead of \eqref{iterates}.
Observe that these inequalities are similar to the ones obtained in the presymplectic case \eqref{newiterates}.

In the case of the models \eqref{partialcs}
we take $\eta_ - = \min(\eta_1,\ldots, \eta_L)$,
$\eta_+ = \max (\eta_1,\ldots, \eta_L)$.

In the case of the model~\eqref{isokinetic}, which corresponds to the presymplectic case studied in \ref{confpresymplectic},
we can take\footnote{
Since $\eta_\pm$ are just bounds, one can use sharper bounds  to improve the conditions of theorems.
Some simple improved bounds used in
\cite{wojtkowski1998conformally} are,  for any $K$:
$\eta_- = \inf_x \left( |\eta(x) \eta(f(x))\cdots \eta(f^K(x))|\right)^{1/(K+1)}$,
$\eta_+ = \sup_x \left( |\eta(x) \eta(f^{-1}(x))\cdots \eta(f^{-L}(x))|\right)^{1/(K+1)}$.
}
$\eta_+ = \sup_x |\eta(x)| $, $\eta_- = \inf_x |\eta(x)|$.

For these models, the proofs of the vanishing lemmas
remain valid under hypotheses that the rates
of vectors  and the numbers $\eta_\pm$ are appropriately related.

For example, if  for $x\in\Lambda$, $u\in T_x M$ and $v\in E^s_x$, and for all $n\ge 0$ we have
\[
\begin{split}
  &\|Df^n(x) u \| \le C  \mu_+^n \|u\|, \\
  & \|Df^n(x) v \| \le C  \lambda_+^n \|v\|, \textrm { with }\\
  &\lambda_+ \mu_+ \eta_-^{-1} < 1 ,
\end{split}
\]
then we conclude , by the vanishing Lemma \ref{lem:vanishingmanifolds}, that  $\omega(u,v) = 0 $.

Observe that this condition on rates also appears  in the statements of Theorem \ref{mainpre}. See \eqref{eqn:rates_conditions_pre}.

In conclusion, the proofs of the vanishing lemmas can be adapted
without  change. Unfortunately,
the fact that it could happen that $\eta_+ \ne \eta_-$ prevents a proof of the
pairing rules.

\subsection{Geometric properties of the scattering map for the generalized models}
\label{newscattering}
Many of the proofs of part (B)
generalize to these cases (as noted before, the statement  that $S$ is symplectic,
when $\omega$ is not a symplectic form should be understood to mean that $S$
preserves $\omega$).

For both  models in Section~\ref{partialcs} and ~\ref{isokinetic}
we can use the proofs of simplecticity of
scattering map in Sections~\ref{sec:proof_main_2_b_L}, \ref{sec:iteration},
using the appropriate assumptions on rates. As remarked in the text,
these proofs do not require that $\omega$ is closed or non-degenerate.
So, they can be used modulo changing the assumptions\footnote{
As it turns out, the proof in Section~\ref{proofnonclosed} can also
be adapted. Taking $d$ of \eqref{isokinetic}, we obtain that
$f^* d \omega = \eta d\omega + d\eta \wedge \omega$.
The geometric setup of \cite{wojtkowski1998conformally}
implies that $ d\eta \wedge \omega = 0$.} on the rates
to include $\eta_\pm$ instead of $\eta$.

For the  models in Section ~\ref{partialcs} $\omega$ is a symplectic form.
Hence we can use without change (except in the assumptions on
 the rates) the proofs
in Sections~\ref{sec:main_2_B_proof_v}, \ref{sec:proof_main_2_b_2},
which  use that $\omega$ is closed.

\bibliographystyle{alpha}
\bibliography{conformal}
\end{document}